\tikzset{inner sep=0pt,
	root/.style={circle,draw,minimum size=7pt,thick},
	fatroot/.style={circle,draw,minimum size=10pt,thick},
	short root/.style={circle,fill,minimum size=7pt},
	doublearrow/.style={postaction={decorate},
		decoration={markings,mark=at position .7
			with {\arrow{angle 60}}},double distance=3pt,thick}
}
\newtheorem{proposition}{Proposition}[section]
\newtheorem{definition}[proposition]{Definition}
\newtheorem{remark}[proposition]{Remark}
\newtheorem{theorem}[proposition]{Theorem}
\newtheorem{lemma}[proposition]{Lemma}
\newtheorem{corollary}[proposition]{Corollary}
\numberwithin{equation}{subsection}
\newcommand{\G}{\mathbb{G}}
\DeclareMathOperator{\GL}{GL}
\DeclareMathOperator{\PGL}{PGL}
\DeclareMathOperator{\Sp}{Sp}
\DeclareMathOperator{\PSp}{PSp}
\DeclareMathOperator{\SO}{SO}
\DeclareMathOperator{\liesl}{\mathfrak{sl}}
\DeclareMathOperator{\lieh}{\mathfrak{h}}
\DeclareMathOperator{\liet}{\mathfrak{t}}
\DeclareMathOperator{\Ad}{Ad}
\DeclareMathOperator{\der}{der}
\DeclareMathOperator{\Id}{Id}
\DeclareMathOperator{\Hom}{Hom}
\DeclareMathOperator{\Aut}{Aut}
\DeclareMathOperator{\Out}{Out}
\DeclareMathOperator{\Gal}{Gal}
\DeclareMathOperator{\Sym}{Sym}
\DeclareMathOperator{\vol}{vol}
\DeclareMathOperator{\rank}{rank}
\DeclareMathOperator{\image}{image}
\DeclareMathOperator{\Mat}{Mat}
\DeclareMathOperator{\Isom}{Isom}
\DeclareMathOperator{\Spec}{Spec}
\DeclareMathOperator{\Frac}{Frac}
\newcommand{\sh}[1]{\mathscr{#1}}
\newcommand{\A}{\mathbb{A}}
\renewcommand{\P}{\mathbb{P}}
\renewcommand{\O}{\mathcal{O}}
\newcommand{\GIT}{\mathbin{/\mkern-6mu/}}
\newcommand{\HH}{\mathrm{H}}
\newcommand{\Real}{\mathbb{R}}
\newcommand{\CC}{\mathbb{C}}
\newcommand{\Q}{\mathbb{Q}}
\newcommand{\Z}{\mathbb{Z}}
\newcommand{\F}{\mathbb{F}}
\DeclareMathOperator{\Sel}{Sel}
\DeclareMathOperator{\rk}{rk}
\DeclareSymbolFont{cyrletters}{OT2}{wncyr}{m}{n}
\DeclareMathSymbol{\Sha}{\mathalpha}{cyrletters}{"58}
\newcommand{\extp}{\@ifnextchar^\@extp{\@extp^{\,}}}
\def\@extp^#1{\mathop{\bigwedge\nolimits^{\!#1}}}
\newcommand{\overbar}[1]{\mkern 1.5mu\overline{\mkern-1.5mu#1\mkern-1.5mu}\mkern 1.5mu}
\newcommand{\define}[1]{{\fontfamily{cmss}\selectfont{#1}}}
\DeclareMathOperator{\GrLie}{GrLie}
\DeclareMathOperator{\GrLieE}{GrLieE}
\newcommand{\intbigG}{\underline{G}}
\newcommand{\intbigH}{\underline{H}}
\newcommand{\intbigV}{\underline{V}}
\newcommand{\intbigB}{\underline{B}}
\newcommand{\intbigT}{\underline{T}}
\newcommand{\intbigt}{\underline{\mathfrak{t}}}
\newcommand{\intlieh}{\underline{\mathfrak{h}}}
\newcommand{\bigH}{H}
\newcommand{\bigh}{\lieh}
\newcommand{\bigG}{G}
\newcommand{\bigB}{B}
\newcommand{\bigP}{P}
\newcommand{\bigT}{T}
\renewcommand{\bigg}{\mathfrak{g}}
\newcommand{\bigV}{V}
\newcommand{\bigtheta}{\theta}
\newcommand{\bigsigma}{\sigma}
\newcommand{\bigkappa}{\kappa}
\newcommand{\bigLambda}{\Lambda}
\newcommand{\bigpi}{\pi}
\newcommand{\bigA}{A}
\newcommand{\liec}{\mathfrak{c}}
\newcommand{\gencurve}{X}
\newcommand{\intbigcurve}{\mathcal{C}}
\newcommand{\bigaffcurve}{C^{\circ}}
\newcommand{\bigprojcurve}{C}
\newcommand{\Jac}{J}
\newcommand{\intJac}{\mathcal{J}}
\newcommand{\NeronJac}{\mathscr{J}}
\newcommand{\genJac}{J_{X}}
\newcommand{\CJac}{\bar{\mathcal{J}}}
\newcommand{\bigHeis}{\mathscr{H}}
\newcommand{\bigExt}{\mathscr{U}}
\newcommand{\genHeis}{\mathcal{H}}
\newcommand{\genExt}{\mathcal{U}}
\newcommand{\height}{\mathrm{ht}}
\newcommand{\Gmtorsor}[1]{\mathbb{V}(#1)^{\times}}
\DeclareMathOperator{\disc}{disc}
\DeclareMathOperator{\ord}{ord}
\DeclareMathOperator{\rs}{rs}
\DeclareMathOperator{\reg}{reg}
\title{The average size of the 2-Selmer group of a family of non-hyperelliptic curves of genus $3$}
\author{Jef Laga}
\begin{document}
\maketitle
\begin{abstract}
	We show that the average size of the $2$-Selmer group of the family of Jacobians of non-hyperelliptic genus-$3$ curves with a marked rational hyperflex point, when ordered by a natural height, is bounded above by $3$.
	We achieve this by interpreting $2$-Selmer elements as integral orbits of a representation associated with a stable $\Z/2\Z$-grading on the Lie algebra of type $E_6$ and using Bhargava's orbit-counting techniques. 
	We use this result to show that the marked point is the only rational point for a positive proportion of curves in this family. 
	The main novelties are the construction of integral representatives using certain properties of the compactified Jacobian of the simple curve singularity of type $E_6$, and a representation-theoretic interpretation of a Mumford theta group naturally associated to our family of curves.
	
\end{abstract}
\tableofcontents	
\section{Introduction}

The statistical behaviour of Selmer groups of Jacobians of families of algebraic curves is a topic that has seen many advances in recent years. 
In \cite{BS-2selmerellcurves}, Bhargava and Shankar determined the average size of the $2$-Selmer group of the family of elliptic curves in short Weierstrass form when ordered by height, showing that it is equal to $3$. 
Bhargava and Gross \cite{Bhargava-Gross-hyperellcurves} generalized their results to the family of hyperelliptic curves of genus $g$ with a marked rational Weierstrass point. Poonen and Stoll \cite{PoonenStoll-Mosthyperellipticnorational} used the latter to prove that for each $g\geq 3$, a positive proportion of such hyperelliptic curves have exactly one rational point, and this proportion tends to $1$ as $g$ tends to infinity. See \cite{Shankar-2selmerhypermarkedpoints}, \cite{ShankarWang-hypermarkednonweierstrass} for similar results for families of hyperelliptic curves with other types of marked points and \cite{BS-3Selmer}, \cite{BS-4Selmer},\cite{BS-5Selmer}, \cite{Thorne-Romano-E8} for analogous results for $n$-Selmer groups of (hyper)elliptic curves with $n\geq 3$.

\subsection{Statement of results}

This paper is a contribution to the arithmetic statistics of non-hyperelliptic genus-$3$ curves. 
Such curves are canonically embedded in $\P^2$ as smooth plane quartics.
Let $X$ be a (smooth, projective, geometrically connected) genus-3 curve over $\Q$ that is not hyperelliptic and $P\in X(\Q)$ a marked rational point. 
We say $P$ is a \define{hyperflex} if $4P$ is a canonical divisor or equivalently, the tangent line at $P$ in the canonical embedding meets $X$ only at $P$. 
Any pair $(X,P)$ with $P$ a hyperflex is isomorphic to a pair $(\bigprojcurve_b,P_{\infty})$ where $\bigprojcurve_b$ is the projective completion of the plane curve
\begin{equation}\label{equation: e6 family beginning paper}
y^3 = x^4+(p_2x^2+p_5x+p_8)y+p_6x^2+p_9x+p_{12}, 
\end{equation}
where $b = (p_2,\dots,p_{12}) \in \Q^6$, and where $P_{\infty}$ is the unique point at infinity. Pairs $(\bigprojcurve_b,P_{\infty})$ given by Equation (\ref{equation: e6 family beginning paper}) are isomorphic if and only if the coefficients are related by a substitution $(p_i) \mapsto (\lambda^ip_i)$ for some $\lambda\in \Q^{\times}$, which explains the subscripts of the coefficients. 
Call such an equation \define{minimal} if $p_i \in \Z$ and the following two conditions are satisfied:
\begin{itemize}
	\item There exists no prime $q$ such that $q^i$ divides $p_i$ for all $i \in \{2,5,6,8,9,12\}$.
	\item Either we have $p_5 >0$, or we have $p_5=0$ and $p_9 \geq 0$. 
\end{itemize}
Then any pair $(X,P)$ arises from a unique minimal equation. 
Write $\sh{E} \subset \Z^6$ for the subset of integers $(p_2,p_5,p_6,p_8,p_9,p_{12})$ such that Equation (\ref{equation: e6 family beginning paper}) defines a smooth curve $\bigprojcurve_b$, and write $\sh{E}_{\min} \subset \sh{E}$ for the subset for which the equation is minimal. 
For $b \in \sh{E}$, write $\Jac_b$ for the Jacobian variety of $\bigprojcurve_b$, a principally polarized abelian threefold over $\Q$.
For $b\in \sh{E}$ we define the \define{height} of $b$ by the formula 
$$\height(b) \coloneqq \max_i |p_i(b)|^{72/i}.$$
Note that for any $a >0$, the set $\{b \in \sh{E} \mid \height(b) <a \}$ is finite. 
Our first main theorem concerns the average size of the $2$-Selmer group of $\Jac_b$. In what follows, we let $\mathcal{F}$ be either $\sh{E}_{\min}$ or a subset of $\sh{E}$ defined by finitely many congruence conditions (see \S\ref{section: proof of main theorems}).
\begin{theorem}[Theorem \ref{theorem: main theorem}]\label{theorem: first main theorem intro}
	When ordered by height, the average size of the $2$-Selmer group $\Sel_2\Jac_b$ for $b\in \mathcal{F}$ is bounded above by $3$. More precisely, we have 
	\begin{equation*}
	\limsup_{a\rightarrow \infty} \frac{ \sum_{b\in \mathcal{F},\; \height(b)<a }\# \Sel_2\Jac_b    }{\#  \{b \in \mathcal{F}\mid \height(b) < a\}}	\leq 3.
	\end{equation*}
\end{theorem}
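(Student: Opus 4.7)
The plan is to follow the Bhargava--Gross template, but adapted to a stable Vinberg pair on $\esix$ rather than a split hyperelliptic representation. First I would fix the $\Z/2\Z$-grading $\esix = \lieh \oplus V$ attached to a stable involution, with $H$ the connected reductive subgroup acting on $V$, and verify via Vinberg invariant theory that $\Q[V]^H = \Q[p_2, p_5, p_6, p_8, p_9, p_{12}]$, the degrees matching exactly the weights in our family. A Kostant-type section $\kappa \colon \A^6 \to V$ then identifies the categorical quotient $V \GIT H$ with the base affine space containing $\sh{E}$, and a uniform dimension count should identify the centralizer in $H$ of a regular semisimple $v \in V_b$ with $\Jac_b[2]$. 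The cleanest way to check this last statement is to match the spectral/discriminant curve associated to $V$ with $\bigprojcurve_b$ itself, so that the relative stabilizer scheme over the base is exactly the relative $2$-torsion of the Jacobian family.

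With this identification, standard nonabelian Galois cohomology produces a map
\begin{equation*}
\{ v \in V(\Q) \mid \bigpi(v) = b \} / H(\Q) \longrightarrow \HH^1(\Q, \Jac_b[2]),
\end{equation*}
and, by tracking local conditions, an injection $\Sel_2 \Jac_b \hookrightarrow H(\Q) \backslash V(\Q)_b$ whose image consists of the everywhere locally soluble orbits. The main arithmetic input is to integralize this map: for each class in $\Sel_2 \Jac_b$ with $b \in \sh{E}_{\min}$ one must produce an orbit representative in $V(\Z)$, uniformly in $b$. I would build such representatives from torsion-free rank-one sheaves on $\bigprojcurve_b$ specializing to sheaves on the plane $E_6$ singularity $y^3 = x^4$; the compactified Jacobian $\CJac$ of that singularity carries a torsor structure compatible with the $H$-action on $V$, and a relative version of this over the family yields the desired integral orbits. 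A closely related ingredient is a representation-theoretic incarnation of the Mumford theta group of $(\Jac_b, 2\Theta)$ inside $H$, ensuring that the integral stabilizer of each orbit is exactly $\Jac_b[2]$ rather than a proper subgroup.

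Once every Selmer element has an integral representative, the proof reduces to bounding $\#\{H(\Z)\cdot v \subset V(\Z) \mid \height(\bigpi(v)) < a \}$. I would deploy Bhargava's averaging over a fundamental domain for $H(\Z)\backslash H(\Real)$ with the usual cusp cutoff, separating orbits into reducible ones (corresponding to the trivial Selmer class, contributing $1$ per $b$ to the average) and irreducible ones (whose asymptotic count per $b$, by a volume-of-fundamental-domain computation of Tamagawa type, contributes $2$). Summing yields the bound $3$, and the minimality requirement or the finitely many congruence conditions defining $\mathcal{F}$ are handled by standard Ekedahl-type tail estimates. The hardest step, and the one singled out as the paper's main novelty, is the integral orbit construction via the compactified Jacobian of the $E_6$ singularity, together with the representation-theoretic avatar of the theta group: this is what makes the integral stabilizer exactly $\Jac_b[2]$ and prevents the orbit counts from overshooting or falling short of the expected Selmer heuristic.
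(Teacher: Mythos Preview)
Your outline is broadly correct and matches the paper's architecture, but two of the key technical steps are mischaracterized in ways that would derail an actual proof attempt.

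First, the role of the Mumford theta group. You say it ensures ``the integral stabilizer of each orbit is exactly $\Jac_b[2]$ rather than a proper subgroup.'' That identification is already secured by Vinberg theory and Thorne's earlier work (the paper's Proposition~\ref{proposition: bridge jacobians root lattices}). What the theta group actually does is lift the isomorphism $\Jac_b[2]\simeq Z_{\bigG}(\sigma(b))$ to an isomorphism of the \emph{central extensions} $\bigHeis_b\simeq Z_{\bigG^{sc}}(\sigma(b))$ (Proposition~\ref{propostion: 2 central extensions coincide}). This lifting is needed to show that every class in the image of the $2$-descent map $\Jac_b(R)/2\Jac_b(R)\hookrightarrow \HH^1(R,\Jac_b[2])$ dies in $\HH^1(R,\bigG)$, hence corresponds to an honest $\bigG(R)$-orbit, \emph{and} that the identity goes to the Kostant section. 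Without this, you cannot separate the trivial Selmer class as the ``reducible'' contribution of $1$; since the counting only controls irreducible orbits, the whole $1+2=3$ bookkeeping collapses.

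Second, the integral representatives. You describe building them directly from torsion-free rank-one sheaves via a ``torsor structure compatible with the $H$-action on $V$.'' The paper does not do this. The compactified Jacobian $\CJac\to\intbigB_S$ is used only as an ambient smooth space in which to run a Bertini argument over $\Z_p$ (Proposition~\ref{proposition: Bertini type theorem} and Corollary~\ref{corollary: deform the point in the jacobian general case integral}): one slices $\CJac_{\Z_p}$ by a relative curve $\mathcal{X}$ through the given point so that the discriminant becomes square-free on $\mathcal{X}_{\Q_p}$. The square-free case is handled separately (Proposition~\ref{prop: integral reps squarefree discr}) using N\'eron models, and the object is then extended across the finitely many bad points of $\mathcal{X}$ by a codimension-$2$ extension lemma for reductive group schemes (Lemma~\ref{lemma: extend objects complement codimension 2}). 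The smoothness of the total space $\CJac$---which is where the $E_6$ singularity being a semi-universal deformation enters---is the crucial input, not any torsor structure over $V$.

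One smaller point: you invoke ``Ekedahl-type tail estimates'' to handle congruence conditions. The paper does not prove a uniformity estimate, which is precisely why the result is only an upper bound (Theorem~\ref{theorem: counting infinitely many congruence conditions}) rather than an equality.
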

We expect that the limit exists and equals $3$, see the discussion of Step $(3)$ in \S\ref{subsection: intro methods}.
Thorne \cite{Thorne-E6paper} has proved that the average size of the $2$-Selmer set of $\bigprojcurve_b$ (a pointed subset of $\Sel_2 \Jac_b$) for $b\in \sh{E}$, when ordered by height, is finite. From this he deduces that a positive proportion of members of the family of affine curves $\bigaffcurve_b$ for $b\in \sh{E}$, obtained from $\bigprojcurve_b$ by removing the point at infinity, have integral points everywhere locally but no integral points globally. 
Theorem \ref{theorem: first main theorem intro} provides an explicit estimate on the size of the full $2$-Selmer group, not just the $2$-Selmer set. We therefore obtain more Diophantine consequences. 
For example, Bhargava and Shankar observed that bounding the $2$-Selmer group gives an upper bound on the average rank of elliptic curves. 
In our case we can bound the average of the Mordell--Weil rank $\rk(\Jac_b)$ of $\Jac_b$, the rank of the finitely generated abelian group $\Jac_b(\Q)$.
Using the inequalities $2 \rk(\Jac_b) \leq 2^{\rk(\Jac_b)} \leq \# \Sel_2 \Jac_b $, we obtain:
\begin{corollary}
	The average rank $\rk(\Jac_b)$ for $b\in \mathcal{F}$ is bounded above by $3/2$. 
\end{corollary}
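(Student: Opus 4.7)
The plan is to deduce the rank bound directly from Theorem \ref{theorem: first main theorem intro} by the elementary manipulation already indicated in the paragraph preceding the corollary statement.

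First I would record the pointwise chain of inequalities $2\rk(\Jac_b) \leq 2^{\rk(\Jac_b)} \leq \#\Sel_2\Jac_b$ for every $b \in \mathcal{F}$. The left-hand inequality is the elementary bound $2n \leq 2^n$ valid for all integers $n \geq 0$. The right-hand inequality is standard: the descent exact sequence for multiplication by $2$ on $\Jac_b$ gives an injection $\Jac_b(\Q)/2\Jac_b(\Q) \hookrightarrow \Sel_2\Jac_b$, and the free part of $\Jac_b(\Q)$ already contributes a factor of $2^{\rk(\Jac_b)}$ to the order of the quotient on the left. No finer information about torsion is needed.

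Summing these pointwise inequalities over $b \in \mathcal{F}$ with $\height(b) < a$, dividing by $\#\{b \in \mathcal{F} \mid \height(b) < a\}$, and passing to the $\limsup$ as $a\to \infty$, I would invoke Theorem \ref{theorem: first main theorem intro} to bound the $\limsup$ of the average of $\#\Sel_2\Jac_b$ by $3$. Monotonicity and non-negativity of all the summands ensure the $\limsup$ passes through the inequality, so $2 \cdot (\text{average rank}) \leq 3$, which gives the claimed bound of $3/2$. There is essentially no obstacle here: the corollary is the standard Bhargava--Shankar-type one-line deduction from a $2$-Selmer bound to a Mordell--Weil rank bound, and all of the substantive work is concentrated in Theorem \ref{theorem: first main theorem intro}. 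The only interpretive point worth flagging is that the ``average rank'' in the statement must be understood as the $\limsup$ of the height-truncated averages, which is exactly what this argument produces.
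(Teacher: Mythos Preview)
Your proposal is correct and matches the paper's approach exactly: the paper deduces the corollary in one line from the inequalities $2\rk(\Jac_b) \leq 2^{\rk(\Jac_b)} \leq \#\Sel_2\Jac_b$ together with Theorem~\ref{theorem: first main theorem intro}.
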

Another corollary is a bound on the number of rational points of $\bigprojcurve_b$ for $b\in \sh{E}$, in the spirit of \cite[Corollary 1.4]{Bhargava-Gross-hyperellcurves}. 
Write $\delta$ for the proportion of curves in $\mathcal{F}$ satisfying Chabauty's condition, namely $\rk(\Jac_b) \leq \text{genus}(\bigprojcurve_b)-1 = 2$. 
Then Theorem \ref{theorem: first main theorem intro} implies that 
$$
\delta+ (1-\delta)\cdot 2^3 \leq 3,
$$
so $\delta \geq 5/7$. 
A computation shows that at least $85.7\%$ of curves in our family have good reduction at $7$, and for such curves we have $\#\bigprojcurve_b(\F_7)\leq 22$.
Stoll's refined bound \cite[Corollary 6.7]{Stoll-Twists} on the Chabauty method implies: 
\begin{corollary}
	A majority (in fact at least $61\%$) of curves $\bigprojcurve_b$ for $b\in \sh{E}$ have at most $26$ rational points.
\end{corollary}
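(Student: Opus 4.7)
The strategy is to combine Theorem~\ref{theorem: first main theorem intro} with an explicit Chabauty--Coleman estimate carried out at the prime $p=7$. The first ingredient, already isolated in the preceding paragraph, is the lower bound $\delta \geq 5/7$ on the proportion of $b$ with $\rk \Jac_b \leq g-1 = 2$. This follows from the trivial inequalities $\#\Sel_2 \Jac_b \geq 1$ and $\#\Sel_2 \Jac_b \geq 2^{\rk \Jac_b}$, which force $\delta \cdot 1 + (1-\delta) \cdot 8 \leq 3$ once the average-Selmer bound of Theorem~\ref{theorem: first main theorem intro} is invoked. On its own, however, this bound on $\delta$ does not control $\#\bigprojcurve_b(\Q)$: one must still execute Chabauty at a specific prime of good reduction.

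The second ingredient is therefore a uniform $7$-adic estimate. I would carry this out as a finite enumeration: reduce the six-parameter family~\eqref{equation: e6 family beginning paper} modulo $7$ to obtain a family of quartics over $\F_7^6$, and for each of the $7^6$ parameter tuples determine whether the fibre is smooth and, if so, count its $\F_7$-rational points. The two statements that need verification are (i) at least $85.7\%$ of tuples yield a smooth fibre, and (ii) every smooth fibre $\bigprojcurve_{\bar b}$ over $\F_7$ satisfies $\#\bigprojcurve_{\bar b}(\F_7) \leq 22$. Claim~(i) transfers to a lower bound on the proportion of $b \in \sh{E}$ of bounded height with good reduction at $7$ by the same Ekedahl-sieve-style argument that enters the proof of Theorem~\ref{theorem: first main theorem intro}.

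The conclusion then assembles the pieces. Stoll's refinement \cite[Corollary~6.7]{Stoll-Twists} of Chabauty--Coleman asserts that for $\bigprojcurve_b$ with good reduction at an odd prime $p$ satisfying $\rk \Jac_b(\Q) \leq g-1$, one has $\#\bigprojcurve_b(\Q) \leq \#\bigprojcurve_b(\F_p) + 2 \rk \Jac_b(\Q)$; applied with $p=7$, $\rk \leq 2$ and $\#\bigprojcurve_b(\F_7) \leq 22$, this yields $\#\bigprojcurve_b(\Q) \leq 26$. It remains to intersect the two subsets of $\sh{E}$. By inclusion--exclusion, both conditions hold for at least a proportion $5/7 + \rho - 1$ of $b \in \sh{E}$, where $\rho$ is the lower bound on the good-reduction density at $7$ coming from the enumeration. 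Pushing $\rho$ sharply past $6/7$ by a careful $\F_7^6$-count (roughly $\rho \gtrsim 0.897$ suffices) pushes this proportion past $61\%$. The main obstacle is thus the $\F_7$ enumeration and the verification that $22$ is actually the correct uniform upper bound on $\#\bigprojcurve_{\bar b}(\F_7)$; everything else is a black-box application of known theorems.
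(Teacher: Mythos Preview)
Your argument is essentially the paper's, with one structural difference at the final step---and that difference is exactly what makes the numbers fall short. You apply the Selmer bound to all of $\sh{E}$ to obtain $\delta \geq 5/7$, and then intersect with the good-reduction-at-$7$ locus by naive inclusion--exclusion, arriving at $5/7 + \rho - 1$. As you note yourself, this forces $\rho \gtrsim 0.897$ to reach $61\%$. But the paper only claims $\rho \geq 6/7 \approx 0.857$, and you have not carried out the $\F_7^6$-enumeration to verify the sharper bound; so as written your argument does not close.

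The paper avoids this by exploiting the full scope of Theorem~\ref{theorem: first main theorem intro}: it applies not just to $\sh{E}$ but to any subfamily $\mathcal{F}$ cut out by finitely many congruence conditions. Taking $\mathcal{F}$ to be the set of $b$ whose reduction mod $7$ is smooth (a mod-$7$ congruence condition), the average-Selmer bound gives $\delta \geq 5/7$ \emph{within} $\mathcal{F}$. Since every $b \in \mathcal{F}$ already satisfies $\#\bigprojcurve_b(\F_7) \leq 22$, Stoll's bound gives $\#\bigprojcurve_b(\Q) \leq 26$ for at least $5/7$ of $\mathcal{F}$. As $\mathcal{F}$ has density $\geq 6/7$ in $\sh{E}$, the overall proportion is at least $\tfrac{5}{7}\cdot\tfrac{6}{7} = \tfrac{30}{49} \approx 0.612$. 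The product replaces your inclusion--exclusion sum, and the stated $85.7\%$ then suffices exactly.
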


Our second main result shows that the Chabauty method at the prime $2$ implies that a positive proportion of curves in our family have only one rational point, using the methods of Poonen and Stoll \cite{PoonenStoll-Mosthyperellipticnorational}.

\begin{theorem}[Theorem \ref{theorem: poonen stoll analogue}]\label{theorem: intro poonen stoll}
	A positive proportion of curves $\bigprojcurve_b$ for $ b\in\sh{E}$ have only one rational point. More precisely, the quantity
	\begin{equation*}
	\liminf_{a\rightarrow \infty} \frac{ \# \{b \in \sh{E} \mid \height(b)<a, \, \bigprojcurve_b(\Q)=\{P_{\infty}\} \}   }{\#  \{b \in \sh{E} \mid \height(b) < a \}}
	\end{equation*}
is strictly positive.	
\end{theorem}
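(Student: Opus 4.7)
The plan is to adapt the Poonen--Stoll strategy \cite{PoonenStoll-Mosthyperellipticnorational} to our family. For each $b \in \sh{E}$, consider the descent map
\begin{equation*}
\delta_b : \bigprojcurve_b(\Q) \longrightarrow \Sel_2(\Jac_b), \qquad Q \longmapsto [Q - P_\infty],
\end{equation*}
which sends $P_\infty$ to $0$. Composing with localization at $2$ gives a map $\iota_b : \bigprojcurve_b(\Q_2) \to \Jac_b(\Q_2)/2\Jac_b(\Q_2)$; write $W_b$ for its image. To prove $\bigprojcurve_b(\Q) = \{P_\infty\}$ it suffices to establish two conditions: (A) the localization at $2$ of $\Sel_2(\Jac_b)$ meets $W_b$ only at the identity, and (B) $P_\infty$ is the only $\Q$-rational point in the preimage $\iota_b^{-1}(0) \subset \bigprojcurve_b(\Q_2)$.

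Condition (B) is of $2$-adic analytic nature. The preimage $\iota_b^{-1}(0)$ is contained in a union of $2$-adic residue disks, and a Chabauty-style argument at $p = 2$ (using the $2$-adic logarithm on the formal group of $\Jac_b$ together with a Strassmann-type bound on the number of zeros of a $2$-adic power series) shows that for $b$ in a subset of $\sh{E}$ of positive density, $P_\infty$ is the only $\Q$-rational zero of $\iota_b$. This reduces to imposing finitely many congruence conditions on the coefficients $p_i$.

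Condition (A) is the arithmetic heart of the proof. For each $b$, let $n(b)$ be the number of nontrivial elements of $\Sel_2(\Jac_b)$ whose localization at $2$ lies in $W_b$; then (A) is equivalent to $n(b) = 0$. I would refine the orbit-counting argument behind Theorem \ref{theorem: first main theorem intro} by imposing the extra local condition at $2$ that the corresponding integral orbit parametrize a class in $W_b$. Since $W_b$ has controllable $2$-adic measure inside $\Jac_b(\Q_2)/2\Jac_b(\Q_2)$, the upper bound of $3$ on $\#\Sel_2(\Jac_b)$ translates into an upper bound on the average of $n(b)$ that is strictly less than $1$, yielding a positive proportion of $b$ with $n(b) = 0$. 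Intersecting with the positive-density set from condition (B) gives the required positive proportion of $b$ with $\bigprojcurve_b(\Q) = \{P_\infty\}$.

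The main obstacle is the quantitative local comparison at $p = 2$: one must compute (or bound) the $2$-adic measure of the image $W_b$ inside $\Jac_b(\Q_2)/2\Jac_b(\Q_2)$, uniformly in $b$, and match it with the corresponding orbit-space measure on the $\Z/2\Z$-graded $E_6$ representation side provided by the integral orbit parametrization constructed earlier in the paper. This delicate volume computation is what ensures the strict inequality that drives the averaging argument; once it is in place, the conclusion follows by the same logic as in \cite{PoonenStoll-Mosthyperellipticnorational}.
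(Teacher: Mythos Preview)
Your overall plan to adapt the Poonen--Stoll strategy is correct, and your condition (A) is close to one of the two conditions the paper actually uses. The genuine gap is condition (B). As you have stated it, (B) asks that $P_\infty$ be the only \emph{$\Q$-rational} point of $\iota_b^{-1}(0)\subset \bigprojcurve_b(\Q_2)$. This is a global statement about $\Q$-points, not a $2$-adic condition, and it cannot be secured by congruence conditions on the coefficients. A Chabauty/Strassmann argument bounds $\Q$-points in a residue disk only after one knows that the $2$-adic closure of $\Jac_b(\Q)$ has dimension $<g$, i.e.\ after a rank bound---precisely what you do not yet have. (The alternative reading, that $\iota_b^{-1}(0)=\{P_\infty\}$ inside $\bigprojcurve_b(\Q_2)$, is false for almost all $b$: the fibre over $0$ contains an open $2$-adic neighbourhood of $P_\infty$.)

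The paper avoids (B) altogether by refining the local invariant. Instead of the coarse map $\iota_b$ to $\Jac_b(\Q_2)/2\Jac_b(\Q_2)\cong\F_2^3$, one uses the projectivized logarithm $\rho\log\colon \bigprojcurve_b(\Q_2)\setminus\{P_\infty\}\to\P^2(\F_2)$, which records the direction of $\log([Q-P_\infty])$ after clearing powers of $2$; this is well-defined once one knows $\bigprojcurve_b(\Q_2)\cap \Jac_b(\Q_2)_{\mathrm{tors}}=\{P_\infty\}$, which the paper establishes via a generic Manin--Mumford argument for this family. The criterion from \cite[Corollary 6.3]{PoonenStoll-Mosthyperellipticnorational} then reads: if $\sigma\colon\Sel_2\Jac_b\to\F_2^3$ is \emph{injective} and $\P\sigma(\Sel_2\Jac_b)$ is disjoint from $I\coloneqq\rho\log\bigprojcurve_b(\Q_2)$, then $\bigprojcurve_b(\Q)=\{P_\infty\}$. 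Injectivity of $\sigma$ forces the $\Z_2$-closure of $\log\Jac_b(\Q)$ to be saturated in $\Z_2^3$, so that $\rho\log(Q)\in\P\sigma(\Sel_2\Jac_b)$ for every $Q\in\bigprojcurve_b(\Q)\setminus\{P_\infty\}$; this is what replaces your (B). To make the argument quantitative, the paper exhibits an explicit $b$ with $\#I=2$ and proves an equidistribution statement (each nonzero $w\in\F_2^3$ is, on average over a trivializing congruence class, the image of at most $1/4$ of a nontrivial Selmer element), yielding the explicit lower bound $1-1/4-\#I/4=1/4$.
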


\subsection{Methods}\label{subsection: intro methods}

Bhargava and his collaborators have developed a general strategy for obtaining statistical results on $2$-Selmer groups of families of curves (and many other arithmetic objects). 
Roughly speaking, the proofs of these theorems have the following structure. 
For a family of curves $\mathcal{F}$ of interest, one hopes to find a representation $V$ of a reductive group $G$ over $\Q$ so that the $2$-Selmer groups of (the Jacobians of) the curves in $\mathcal{F}$ can be embedded in the set of $G(\Q)$-orbits of $V(\Q)$. Moreover, after fixing integral structures on $G$ and $V$, orbits corresponding to $2$-Selmer elements should have integral representatives. 
If the representation $V$ is coregular (meaning that $V\GIT G \coloneqq \Spec \Q[V]^G$ is isomorphic to affine space) and satisfies some additional properties, then Bhargava's orbit-counting techniques allow us to count integral orbits in $V$ and sieve out those orbits not corresponding to $2$-Selmer elements. 
In \cite{Bhargava-Gross-hyperellcurves}, Bhargava and Gross studied the $2$-Selmer group of odd hyperelliptic curves of genus $g$ in this way, using the representation of $\SO_{2g+1}$ on the space of traceless, self-adjoint $(2g+1)\times (2g+1)$-matrices. 
Our proof of Theorem \ref{theorem: first main theorem intro} has the same structure, although most of the proofs of the individual parts are very different in nature. 
We now explain in steps how we $(1)$ find the representation $(G,V)$, $(2)$ prove that $2$-Selmer elements admit integral representatives and $(3)$ count integral orbits.

For Step $(1)$, we follow the approach taken by Thorne \cite{Thorne-thesis} using a combination of Vinberg theory and the Grothendieck--Brieskorn correspondence. Given a (split, adjoint) simple algebraic group $H$ over $\Q$ with Lie algebra $\lieh$, there exists an involution $\theta\colon H\rightarrow H$, uniquely defined up to conjugation by an element of $H(\Q)$,
with the property that the group $G \coloneqq \left(H^{\theta}\right)^{\circ}$ is split and that the $G$-representation $V \coloneqq \lieh^{d\theta = -1}$ has good invariant-theoretic properties.
We call such $\theta$ a \define{stable involution}.
(In \S\ref{section: setup} we make an explicit choice for such an involution in the $E_6$ case, after having fixed a pinning of $H$.)
Write $B \coloneqq V\GIT G = \Spec \Q[V]^G$ and $\pi: V\rightarrow V\GIT G$ for the canonical projection. Vinberg theory shows that $B$ is isomorphic to $\A^{\rank H}$, so $V$ is coregular. 
The theory of the Kostant section shows that $\pi$ has a section $\sigma : B \rightarrow V$ and for each $b\in B(\Q)$ we call $\sigma(b) \in \bigV(\Q)$  
the `distinguished orbit' or `reducible orbit' (playing a role analogous to that of reducible binary quartic forms in \cite{BS-2selmerellcurves}).  
Taking a transverse slice to the $G$-action on $V$ at a subregular nilpotent element of $V$ defines a closed subscheme $\bigaffcurve \rightarrow V$. 
The restriction of $\pi$ to $\bigaffcurve$ defines a family of curves $\bigaffcurve  \rightarrow B$.   

If $H$ is simply laced (so of type $A_n, D_n$ or $E_n$), Thorne \cite[Theorem 3.8]{Thorne-thesis} shows that the fibre $\bigaffcurve_0$ above $0 \in B$ is a simple curve singularity of the same type as $H$ and $\bigaffcurve \rightarrow B$ is a semi-universal deformation of its central fibre.
Moreover in each case there exists a natural compactification $\bigprojcurve \rightarrow B$ of the family $\bigaffcurve \rightarrow B$, and if the fibre $\bigprojcurve_b$ above a point $b\in B(\Q)$ is smooth then he shows that there is a natural Galois equivariant isomorphism $\Jac_b[2] \simeq Z_G(\sigma(b))$ where $\Jac_b$ is the Jacobian of $\bigprojcurve_b$. This last isomorphism, combined with the well-known interpretation of the $\bigG(\Q)$-orbits on $\bigV_b(\Q)$ in terms of the Galois cohomology of $Z_G(\sigma(b))$ (Lemma \ref{lemma: AIT}), gives the link between the $2$-Selmer group of $\Jac_b$ and the orbits of the representation $V$. 
If $H$ is of type $A_{2g}$, the singularity is of the form $(y^2 = x^{2g+1})$ and the family $\bigprojcurve \rightarrow B$ is isomorphic to the family of odd hyperelliptic curves of genus $g$ considered by Bhargava and Gross.
If $H$ is of type $E_6$, the singularity is of the form  $(y^3 = x^4)$ and if we write $B = \Spec \Q[p_2,p_5,p_6,p_8,p_9,p_{12}]$ then the family $\bigprojcurve \rightarrow B$ is isomorphic to the family given by Equation (\ref{equation: e6 family beginning paper}). 

For Step $(2)$, we follow the same strategy as \cite{Thorne-Romano-E8} where the authors prove a similar result for a different representation in their study of the $3$-Selmer groups of genus-$2$ curves. It turns out that proving that a $G(\Q_p)$-orbit has an integral representative amounts to proving that a certain object, consisting of a reductive group over $\Q_p$ with extra data, extends to an object over $\Z_p$ (see Proposition \ref{proposition: G-orbits in terms of groupoids}).
We achieve this by deforming to the case of square-free discriminant and using a general result on extending reductive group schemes over open dense subschemes of regular arithmetic surfaces (Lemma \ref{lemma: extend objects complement codimension 2}).  
In \cite{Thorne-Romano-E8} the authors use the Mumford representation to perform this step explicitly. 
Here we complete the deformation step by exploiting properties of the compactified Jacobian of the $E_6$ curve singularity $(y^3 = x^4)$ in the sense of Altman and Kleiman \cite{AltmanKleiman-CompactifyingThePicardScheme}, and by using Bertini theorems over $\Q_p$ and $\F_p$. 
A crucial ingredient is the fact that the total space of the relative compactified Jacobian of the semi-universal deformation of the singularity is nonsingular. 
The techniques applied here work verbatim for any of the families described in Step $(1)$ where the centre of the simply connected group of the corresponding Dynkin diagram has odd order, namely $A_{2n}, E_6$ and $E_8$. (This condition ensures that $\bigprojcurve \rightarrow \bigB$ has geometrically integral fibres, which leads to a good theory of the compactified Picard scheme.)
This provides a way of proving the existence of integral representatives in many of the previously considered cases in the literature. (Our method only works for sufficiently large primes $p$ but this does not cause any problems in the counting step.)
It should be straightforward to make this strategy work for all the families of Step $(1)$.

For Step $(3)$, we follow the ideas of Bhargava closely, about which we will make two remarks. 
First of all, because we cannot prove a uniformity estimate like \cite[Theorem 2.13]{BS-2selmerellcurves}, we only obtain an upper bound in our estimates on integral orbits. We expect that similar uniformity estimates hold in our case, which would allow us to use the so-called square-free sieve to show that the average size of the $2$-Selmer group of $\Jac_b$ is in fact equal to $3$.
Secondly, the substantial work of `cutting off the cusp' has already been done in \cite{Thorne-E6paper} so counting integral orbits is a formal matter for us given the robustness of Bhargava's counting techniques.

Why are we able to estimate the size of the full $2$-Selmer group and not just the $2$-Selmer set as in \cite{Thorne-E6paper}? Apart from a way of constructing integral representatives mentioned above, this is based on the following novelty. 
Thanks to \cite{thorne-planequarticsAIT}, we have a way of embedding the full $2$-Selmer group of curves in the orbits of our representations.
But the construction does not make it clear that the $2$-Selmer group has in its image the reducible orbit. 
Controlling this is crucial for our counting techniques since we only count irreducible orbits in Step $(3)$. 

We prove that in fact the identity element of the $2$-Selmer group is mapped to the reducible orbit, using the following strategy. Fix $b\in \bigB(\Q)$ such that $\bigprojcurve_b$ is smooth with Jacobian $\Jac_b$, and let $G^{sc}$ denote the simply connected cover of $G$. It turns out that proving this statement for $\bigprojcurve_b$ amounts to proving that the simply connected centralizer $\mathcal{U}\coloneqq Z_{G^{sc}}(\sigma(b))$ of $\sigma(b)$ is isomorphic to a subgroup $\mathcal{H}$ of the Mumford theta group related to a certain canonical line bundle on $J_b$. 
The strategy to prove that $\mathcal{U}$ and $\mathcal{H}$ are isomorphic is inspired by the following simple observation: let $C$ be a smooth projective geometrically connected genus-$g$ curve over $\Q$ with Jacobian $J_C$ and let $Z$ be a finite group scheme over $\Q$ that satisfies $Z_{\overbar{\Q}} \simeq \left(\Z/2\Z\right)^{2g}$. If there exists a $Z$-torsor $\widetilde{C} \rightarrow C$ such that $\widetilde{C}$ is geometrically connected, then $Z \simeq \Jac[2]$ as finite group schemes over $\Q$. 
In our case roughly the same principles apply. The finite \'etale $\Q$-groups $\mathcal{U}$ and $\mathcal{H}$ are central extensions of $\Jac_b[2]$ by $\{\pm 1\}$ which lie in the same isomorphism class over $\overbar{\Q}$. 
By constructing a $\mathcal{U}$-torsor and $\mathcal{H}$-torsor arising from the properties of our representation and the geometry of Mumford theta groups respectively, we realize $\mathcal{U}$ and $\mathcal{H}$ as quotients of the \'etale fundamental group of the open curve $\bigaffcurve_{b,\overbar{\Q}}$ with respect to some rational basepoint (we actually have to take a tangential basepoint following \cite{Deligne-droiteprojective}). 
We then show that this \'etale fundamental group essentially has only one quotient with the required group-theoretic properties, so $\mathcal{U}$ and $\mathcal{H}$ both inherit the same Galois action from this fundamental group. This proves that $\mathcal{U}$ and $\mathcal{H}$ are isomorphic over $\Q$, proving the required statement. 
This argument proves a conjecture of Thorne \cite[Conjecture 4.16]{Thorne-thesis} in the $E_6$ case.
We note that although our construction of orbits (in particular Theorem \ref{theorem: inject 2-descent into orbits}) is very much based on ideas developed in \cite{thorne-planequarticsAIT}, we have phrased the proofs in a way independent of that paper because of some simplifications in the argument and the more general form that we prove.

To prove Theorem \ref{theorem: intro poonen stoll} we merely have to adapt certain arguments of \cite{PoonenStoll-Mosthyperellipticnorational} in a straightforward way. 
One of the crucial ingredients in their argument for hyperelliptic curves is an equidistribution result for $2$-Selmer elements under the mod $2$ reduction of the logarithm map. 
Since we only obtain an upper bound in Theorem \ref{theorem: first main theorem intro}, we only obtain an `at most equidistribution' result (Theorem \ref{theorem: equidistribution selmer}) but this is enough for our purposes. 

The results of this paper will be used in forthcoming work \cite{Laga-F4paper} (which was in fact the main motivation for this paper) where we consider the subfamily of curves defined by setting $p_5=p_9=0$ in Equation (\ref{equation: e6 family beginning paper}). The Jacobian of a curve in this subfamily splits as a product of an elliptic curve and a Prym variety, which in that case is a $(1,2)$-polarized abelian surface. 
Studying the Lie algebra embedding $F_4\subset E_6$ leads to estimates of Selmer groups of these Prym surfaces, which provides evidence for the heuristics of Poonen and Rains \cite{PoonenRains-maximalisotropic} in the case of non-principally polarized abelian varieties.

\subsection{Organization}

We now describe the organization of the paper. 
In \S\ref{section: setup} we define the group $\bigG$ and the representation $\bigV$ of $\bigG$ whose orbits we study as a central topic of this paper. 
We review some properties of this representation and make the connection to the family of curves with Equation (\ref{equation: e6 family beginning paper}). 
In \S\ref{section: orbit parametrization}, we construct orbits associated with $2$-Selmer elements. 
In \S\ref{section: integral representatives} we prove that orbits coming from $2$-Selmer elements admit integral representatives away from small primes. 
In \S\ref{section: counting}, we employ Bhargava's orbit-counting techniques to give the estimates we need in order to prove Theorem \ref{theorem: first main theorem intro}.
In \S\ref{section: proof of main theorems} we combine all of these ingredients and prove Theorem \ref{theorem: first main theorem intro}.
Finally in \S\ref{section: applications to rational points} we prove Theorem \ref{theorem: intro poonen stoll}.

\subsection{Acknowledgements}

I thank my supervisor Jack Thorne for suggesting the problem, providing many useful suggestions and his constant encouragement. I also want to thank Marius Leonhardt, Davide Lombardo and Beth Romano for their comments on an earlier draft of this paper.
Finally I wish to thank Bjorn Poonen for sharing with me the proof of Lemma \ref{lemma: sqfree disc implies regular node}. 

This project has received funding from the European Research Council (ERC) under the European Union’s Horizon 2020 research and innovation programme (grant agreement No. 714405).

\subsection{Notation and conventions}

For a field $k$ we write $k^s$ for a fixed separable closure and $\Gamma_k = \Gal(k^s/k)$ for its absolute Galois group. 

We will often use the equivalence of categories between finite \'etale group schemes over $k$ (called finite $k$-groups) and finite groups with a continuous $\Gamma_k$-action. As such we may identify a finite $k$-group with its set of $k^s$-points. 

We define a \define{lattice} to be a finitely generated free $\Z$-module $\Lambda$ together with a symmetric and positive-definite bilinear form $(\cdot,\cdot)\colon \Lambda\times  \Lambda \rightarrow \Z$. We write $\Lambda^{\vee}\coloneqq \{\lambda\in \Lambda \otimes \Q \mid (\lambda, \Lambda) \subset \Z\}$ for the \define{dual lattice} of $\Lambda$, which is naturally identified with $\Hom(\Lambda,\Z)$. 
We say $\Lambda$ is a \define{root lattice} if $(\lambda,\lambda)$ is an even integer for all $\lambda\in \Lambda$ and the set 
$$\{ \alpha \in \Lambda \mid (\alpha,\alpha) =2 \}$$
generates $\Lambda$. If $\Phi \subset \Real^n$ is a simply laced root system then $\Lambda= \Z\Phi$ is a root lattice. In that case we define the type of $\Lambda$ to be the Dynkin type of $\Phi$. 

If $S$ is a scheme, an \define{\'etale sheaf of root lattices} $\Lambda$ over $S$ is defined as a locally constant \'etale sheaf of finite free $\Z$-modules together with a bilinear pairing $\Lambda\times \Lambda \rightarrow \Z$ such that for every geometric point $\bar{s}$ of $S$ the stalk $\Lambda_{\bar{s}}$ is a root lattice. In that case $\Aut(\Lambda)$ is a finite \'etale $S$-group.

If $X$ is a scheme over $S$ and $T\rightarrow S$ a morphism we write $X_T$ for the base change of $X$ to $T$. If $T = \Spec A$ is an affine scheme we also write $X_A$ for $X_T$. 

If $G$ is a smooth group scheme over $S$ then we write $\HH^1(S,G)$ for the set of isomorphism classes of \'etale sheaf torsors under $G$ over $S$, which is a pointed set coming from non-abelian \v{C}ech cohomology. If $S = \Spec R$ we write $\HH^1(R,G)$ for the same object.


If $G\rightarrow S$ is a group scheme acting on $X\rightarrow S$ and $x \in X(T)$ is a $T$-valued point, we write $Z_G(x) \rightarrow T$ for the centralizer of $x$ in $G$. It is defined by the following pullback square:
\begin{center}
	\begin{tikzcd}
		 Z_G(x) \arrow[d] \arrow[r ] & T \arrow[d] \\
		G\times_S X \arrow[r]   & X\times_S X                        
	\end{tikzcd}
	\end{center}
Here $G\times_S X \rightarrow X \times_S X$ denotes the action map and $T \rightarrow X\times_S X$ denotes the composite of $x$ with the diagonal $X\rightarrow X \times_S X$. 

If $x$ is an element of a Lie algebra $\lieh$ then we write $\mathfrak{z}_{\lieh}(x)$ for the centralizer of $x$ in $\lieh$, a subalgebra of $\lieh$.


A \define{$\Z/2\Z$-grading} on a Lie algebra $\lieh$ over a field $k$ is a direct sum decomposition 
$$\lieh = \bigoplus_{i\in \Z/2\Z} \lieh(i) $$
of linear subspaces of $\lieh$ such that $[h(i),h(j)] \subset \lieh(i+j)$ for all $i,j \in \Z/2\Z$. This is equivalent to giving a $\mu_2$-action on $\lieh$ by considering the $(\pm 1)$-part of such an action.
If $2$ is invertible in $k$ then giving a $\Z/2\Z$-grading is equivalent to giving an involution of $\lieh$. 

We call a triple $(X,H,Y)$ an \define{$\liesl_2$-triple} of a Lie algebra $\lieh$ if $X,Y,H$ are nonzero elements of $\lieh$ satisfying the following relations:
\begin{equation*}
[H,X] = 2X , \quad [H,Y] = -2Y ,\quad [X,Y] = H .
\end{equation*}

If $V$ is a vector space over a field $k$ we write $k[V]$ for the graded algebra $\Sym(V^{\vee})$. Then $V$ is naturally identified with the $k$-points of the scheme $\Spec k[V]$, and we call this latter scheme $V$ as well.
If $G$ is a group scheme over $k$ we write $V \GIT G\coloneqq \Spec k[V]^G$ for the \define{GIT quotient} of $V$ by $G$.

\begin{table}
\centering
\begin{tabular}{|c | c | c |}
	\hline
	   Symbol & Definition & Reference in paper \\
	\hline       
		$\bigH$ & Split adjoint group of type $E_6$ & \S\ref{subsection: a stable grading} \\
		$\bigT$ & Split maximal torus of $H$ & \S\ref{subsection: a stable grading} \\
		$\bigtheta$ & Stable involution of $H$ & \S\ref{subsection: a stable grading} \\
		$\bigG$ & Fixed points of $\theta$ on $H$ & \S\ref{subsection: a stable grading} \\
		$\bigV$ & $(-1)$-part of action of $\bigtheta$ on $\bigh$ & \S\ref{subsection: a stable grading}\\
		$\bigB$ & GIT quotient $\bigV\GIT \bigG$ & \S\ref{subsection: a stable grading} \\
		$\Delta \in \Q[B]$ & Discriminant polynomial & \S\ref{subsection: a stable grading} \\
		$\pi\colon \bigV \rightarrow \bigB$ & Invariant map & \S\ref{subsection: a stable grading} \\
		$\sigma\colon \bigB \rightarrow \bigV$ & Kostant section & \S\ref{subsection: distinguished orbit} \\
		$\bigaffcurve \rightarrow \bigB$ & Family of affine curves & \S\ref{subsection: a family of curves}  \\
		$\bigprojcurve \rightarrow \bigB$ & Family of projective curves & \S\ref{subsection: a family of curves} \\
		$\Jac \rightarrow \bigB^{\rs}$ & Jacobian variety of $\bigprojcurve^{\rs} \rightarrow \bigB^{\rs}$ & \S\ref{subsection: a family of curves} \\
		$p_2,\dots ,p_{12}$ & Invariant polynomials of $\bigG$-action on $\bigV$ &  \S\ref{subsection: a family of curves} \\
		$\bigA \rightarrow \bigB^{\rs}$ & Centralizer of $\sigma|_{\bigB^{\rs}}$ in $H$ & \S\ref{subsection: a family of curves}\\
		$\bigLambda \rightarrow \bigB^{\rs}$ & Character group scheme of the torus $A\rightarrow \bigB^{\rs}$& \S\ref{subsection: a family of curves}\\
		$\bigHeis \rightarrow \bigB^{\rs}$ & Subgroup of Mumford Theta group & \S\ref{subsection: Mumford theta groups} \\
		$\bigExt \rightarrow \bigB^{\rs}$ & Centralizer of $\sigma|_{\bigB^{\rs}}$ in $\bigG^{sc}$ & \S\ref{subsection: comparting two central extensions} \\
		$N$ & Sufficiently large integer &\S\ref{subsection: integral structures} \\
		$S$ & $\Z[1/N]$ &\S\ref{subsection: integral structures} \\
		$\intbigH, \intbigG, \intbigV$ & Extensions of above objects over $\Z$ &\S\ref{subsection: integral structures} \\
		$\intbigcurve \rightarrow \intbigB$ & Extension of $\bigprojcurve \rightarrow \bigB$ over $\Z$ &\S\ref{subsection: integral structures} \\
		$\intJac \rightarrow \intbigB_S^{\rs}$ & Jacobian of $\intbigcurve^{\rs}_S \rightarrow \intbigB^{\rs}_S$ &\S\ref{subsection: integral structures} \\
		$\CJac \rightarrow \intbigB_S$ & Compactification of $\intJac \rightarrow \intbigB_S^{\rs}$ & \S\ref{subsection: compactifications} \\

	\hline
\end{tabular}
\caption{Notation used throughout the paper}
\label{table 1}
\end{table}

\section{Setup} \label{section: setup}

\subsection{Definition of the representation}\label{subsection: a stable grading}

Let $\bigH$ be a split adjoint semisimple group of type $E_6$ over $\Q$. We suppose that $\bigH$ comes with a pinning $(\bigT,\bigP,\{X_{\alpha}\})$. So $\bigT \subset \bigH$ is a split maximal torus (which determines a root system $\Phi(\bigH,\bigT) \subset X^*(T)$), $\bigP\subset \bigH$ is a Borel subgroup containing $\bigT$ (which determines a root basis $\Delta_{\bigH} \subset \Phi(\bigH,\bigT)$) and $X_{\alpha}$ is a generator for each root space $\bigh_{\alpha}$ for $\alpha \in \Delta_{\bigH}$. The group $\bigH$ is of dimension $78$.

Write $\check{\rho} \in X_*(\bigT)$ for the sum of the fundamental coweights with respect to $\Delta_{\bigH}$, characterised by the property that $(\alpha\circ \check{\rho})(t) = t$ for all $\alpha \in \Delta_{\bigH}$. 
Write $\zeta\colon H\rightarrow H$ for the unique nontrivial automorphism preserving the pinning: it is an involution inducing the order-$2$ symmetry of the Dynkin diagram of $E_6$. Let $\bigtheta \coloneqq \zeta \circ \Ad(\check{\rho}(-1)) = \Ad(\check{\rho}(-1)) \circ\zeta$. Then $\theta$ defines an involution of $\lieh$ and thus by considering $(\pm1)$-eigenspaces it determines a $\Z/2\Z$-grading 
$$\bigh = \bigh(0) \oplus \bigh(1).$$
Let $\bigG \coloneqq \bigH^{\bigtheta}$ be the centralizer of $\bigtheta$ in $\bigH$ and let $\bigV\coloneqq \bigh(1)$: the space $V$ defines a representation of $\bigG$ by restricting the adjoint representation. If we write $\bigg$ for the Lie algebra of $\bigG$ then $\bigV$ is a Lie algebra representation of $\bigg = \bigh(0)$. The pair $(\bigG,\bigV)$ is the central object of study of this paper.  

The results of \cite{Reeder-torsion} applied to the Kac diagram of $\bigtheta$ \cite[\S7.1; Table 3]{GrossLevyReederYu-GradingsPosRank} show that $G$ is isomorphic to $\PSp_8$ and $V$ is the unique irreducible $42$-dimensional subrepresentation of $\wedge^4(8)$, where $(8)$ denotes the defining representation of $\Sp_8$. 

The following proposition summarizes some properties of the representation $\bigV$. In particular, it shows that regular semisimple orbits over algebraically closed fields are well understood. For a field $k/\Q$ and $v\in \bigV(k)$, we say $v$ is \define{regular, nilpotent, semisimple} respectively if it is so when considered as an element of $\bigh(k)$.

\begin{proposition} \label{prop : graded chevalley}
	Let $k/\Q$ be a field. The following properties are satisfied:
	\begin{enumerate}
		\item $\bigV_k$ satisfies the Chevalley restriction theorem: if $\mathfrak{a} \subset \bigV_k$ is a Cartan subalgebra, then the map $N_{\bigG}(\mathfrak{a}) \rightarrow W_{\mathfrak{a}} \coloneqq N_{\bigH}(\mathfrak{a})/Z_{\bigH}(\mathfrak{a})$ is surjective, and the inclusions $\mathfrak{a} \subset \bigV_k \subset \lieh_k$ induce isomorphisms
		$$\mathfrak{a}\GIT W_{\mathfrak{a}} \simeq \bigV_k\GIT \bigG \simeq \lieh_k \GIT \bigH  .$$
		In particular, the quotient is isomorphic to affine space. 
		\item Suppose that $k$ is separably closed and let $x,y\in \bigV(k)$ be regular semisimple elements. Then $x$ is $\bigG(k)$-conjugate to $y$ if and only if $x,y$ have the same image in $\bigV\GIT \bigG$. 
		\item Let $\Delta \in \Q[\bigV]^{\bigG}$ be the restriction of the Lie algebra discriminant of $\bigh$ to the subspace $\bigV$. Then for all $x\in \bigV(k)$, $x$ is regular semisimple if and only if $\Delta(x) \neq 0$, if and only if the $\bigG$-orbit of $x$ is closed in $\bigV_k$ and the stabilizer $Z_{\bigG}(x)$ is finite. 
	\end{enumerate}
\end{proposition}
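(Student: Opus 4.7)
The three parts are all instances of general facts in the Vinberg theory of $\theta$-groups, specialized to the stable $\Z/2\Z$-grading on $\lieh$ of type $E_6$ singled out in \S\ref{subsection: a stable grading}. My plan is to invoke this general machinery, pointing out why stability of the grading is what makes everything work, and to reduce part (3) to standard invariant theory.

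For part (1), I would first check that the grading is stable in the sense of Vinberg, i.e.\ that $\bigV_k$ contains a regular semisimple element of $\lieh_k$. The identification of $\bigV$ as the $42$-dimensional $\PSp_8$-subrepresentation of $\wedge^4(8)$ makes this easy to verify, but in fact it follows directly from the inspection of the Kac coordinates of $\theta$ via the criterion of Reeder--Levy--Gross--Yu. Stability guarantees the existence of Cartan subspaces $\mathfrak{a}\subset \bigV_k$ (maximal abelian subspaces of semisimple elements), and Vinberg's theorem then says that (i) any two Cartan subspaces are $\bigG(k^s)$-conjugate, (ii) the little Weyl group $W_{\mathfrak{a}} = N_{\bigH}(\mathfrak{a})/Z_{\bigH}(\mathfrak{a})$ is finite, is realized by the graded normalizer $N_{\bigG}(\mathfrak{a})$, and acts on $\mathfrak{a}$ as a complex reflection group. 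The Chevalley restriction theorem for $\theta$-groups (essentially Panyushev/Vinberg) gives the isomorphism $k[\bigV]^{\bigG} \simeq k[\mathfrak{a}]^{W_{\mathfrak{a}}}$, and the further identification with $k[\lieh]^{\bigH}$ is automatic since $\mathfrak{a}$ is contained in a Cartan subalgebra of $\lieh$ and $W_{\mathfrak{a}}$ injects into the full Weyl group. Polynomiality of the invariant ring follows from Chevalley--Shephard--Todd once we know $W_{\mathfrak{a}}$ is a reflection group; alternatively Reeder's tables give the degrees $(2,5,6,8,9,12)$ explicitly, compatible with the six invariants named in \S\ref{subsection: a family of curves}.

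For part (2), once $k$ is separably closed, all Cartan subspaces of $\bigV_k$ containing a given regular semisimple element $x$ coincide with $\mathfrak{z}_{\bigV_k}(x)$, and any two Cartan subspaces are $\bigG(k)$-conjugate. Thus after translating by $\bigG(k)$ I may assume $x,y$ lie in a common Cartan subspace $\mathfrak{a}$. The map $\mathfrak{a}\to \mathfrak{a}\GIT W_{\mathfrak{a}}\simeq \bigV \GIT \bigG$ is a geometric quotient on regular semisimple locus (since $W_{\mathfrak{a}}$ acts freely there), so $x$ and $y$ are $W_{\mathfrak{a}}$-conjugate, hence $\bigG(k)$-conjugate by the surjectivity statement in part (1).

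Part (3) is a formal consequence of the preceding plus classical invariant theory. Since $\Delta$ is the restriction to $\bigV$ of the discriminant on $\lieh$ (the resultant of $\ad(x)$ with its derivative), $\Delta(x)\ne 0$ is by definition the condition that $x\in \lieh_k$ be regular semisimple. If $x$ is regular semisimple, then $Z_{\bigH}(x)$ is a maximal torus so $Z_{\bigG}(x)\subset Z_{\bigH}(x)$ is finite (a $2$-torsion subgroup of this torus), and the orbit $\bigG\cdot x$ is closed because regular semisimple orbits in $\lieh$ are closed and $\bigG\cdot x = (\bigH\cdot x)\cap \bigV$ by part (2) applied over $k^s$ together with Galois descent. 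Conversely, if $Z_{\bigG}(x)$ is finite then $\dim \bigG\cdot x = \dim \bigG$; comparing with $\dim \bigV$ and using that the only closed orbits of this maximal dimension are the regular semisimple ones (a general property of $\theta$-representations via the Kempf--Ness / Luna slice argument) forces $x$ to be regular semisimple. The main obstacle is really the black box of part (1): all the stability-theoretic statements in parts (2) and (3) rest on Chevalley restriction and the description of $W_{\mathfrak{a}}$, so the cleanest exposition is to cite Vinberg/Panyushev (or equivalently Thorne's thesis in the $E_6$ case) for part (1) and derive parts (2) and (3) as short corollaries.
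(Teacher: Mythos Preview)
Your approach is essentially the same as the paper's: the paper simply cites Vinberg and Kostant--Rallis (via \cite[\S2]{Thorne-thesis}) for all three parts, and your proposal is a correct unpacking of exactly that material. One small inaccuracy: the parenthetical describing $\Delta$ as ``the resultant of $\ad(x)$ with its derivative'' is not right (that quantity vanishes identically since $0$ is always a repeated eigenvalue of $\ad(x)$); the paper defines $\Delta$ as the image of $\prod_{\alpha}d\alpha$ under the Chevalley isomorphism, and your argument only uses the correct characterization ``$\Delta(x)\neq 0$ iff $x$ is regular semisimple,'' so this does not affect the proof.
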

\begin{proof}
	These are classical results in the invariant theory of graded Lie algebras due to Vinberg and Kostant--Rallis; we refer to \cite[\S2]{Thorne-thesis} for precise references. Note that the discriminant of a Lie algebra is by definition the image of the product of all the roots in a fixed Cartan subalgebra under the Chevalley isomorphism. 
\end{proof}

We note that Cartan subalgebras of $\lieh$ contained in $\bigV$ do exist: we will construct a family of tori $\bigA \rightarrow \bigB^{\rs}$ in \S\ref{subsection: a family of curves} whose Lie algebras provide such examples.
We write $\bigB \coloneqq \bigV\GIT \bigG = \Spec \Q[\bigV]^{\bigG}$ and $\bigpi\colon \bigV \rightarrow \bigB$ for the natural quotient map.
We have a $\G_m$-action on $V$ given by $\lambda \cdot v = \lambda v$ and there is a unique $\G_m$-action on $B$ such that $\pi$ is $\G_m$-equivariant.

\subsection{The distinguished orbit}\label{subsection: distinguished orbit}

We describe a section of the GIT quotient $\pi\colon \bigV \rightarrow \bigB$ whose construction is originally due to Kostant. 
Let $E \coloneqq \sum_{\alpha \in \Delta_{\bigH}} X_{\alpha} \in \bigh$. Then $E\in  \bigh(1)$ is regular and nilpotent. By \cite[Lemma 2.14 and Lemma 2.17]{Thorne-thesis} there exists a unique normal $\liesl_2$-triple $(E,X,F)$ containing $E$.  By definition, this means that $(E,X,F)$ is an $\liesl_2$-triple with the additional property that $X\in \bigh(0)$ and $F \in \bigh(1)$. We define the affine linear subspace $\bigkappa \coloneqq \left(E +\mathfrak{z}_{\bigh}(F) \right) \cap \bigV \subset \bigV$.
\begin{proposition}\label{proposition: Kostant section E6}
    \begin{enumerate}
        \item The composite map $\bigkappa \hookrightarrow \bigV\rightarrow \bigB$ is an isomorphism. 
        \item $\bigkappa$ is contained in the open subscheme of regular elements of $\bigV$.
        \item The morphism $\bigG \times \kappa \rightarrow \bigV, (g,v) \mapsto g\cdot v$ is \'etale.
    \end{enumerate}
\end{proposition}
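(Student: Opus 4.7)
The strategy is to deduce everything by intersecting the classical Kostant section for $\lieh$ with $V$ and leveraging the Chevalley restriction theorem from Proposition \ref{prop : graded chevalley}. The key observation is that since $F \in V$ is homogeneous of degree $1$, the centralizer $\mathfrak{z}_{\bigh}(F)$ is $\Z/2\Z$-graded: if $z = z_0 + z_1 \in \mathfrak{g}\oplus V$ centralizes $F$, then $[F,z_0]\in V$ and $[F,z_1]\in\mathfrak{g}$ must vanish separately. Consequently $\bigkappa = E + (\mathfrak{z}_{\bigh}(F)\cap V)$ and, by Proposition \ref{prop : graded chevalley}(1), the composite $\bigkappa \hookrightarrow \bigV \to B$ factors as the restriction of the classical Kostant section $E + \mathfrak{z}_{\bigh}(F) \to \bigh\GIT H$ to a graded component, composed with the iso $\bigh\GIT H \simeq B$.

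For (1), I would introduce $\G_m$-equivariance. Since $X \in \mathfrak{g}$, the $\liesl_2$-triple exponentiates to a cocharacter $\lambda\colon \G_m \to G$, and one twists the natural scaling on $V$ by setting $t\cdot v := t^2 \Ad(\lambda(t^{-1}))v$. This action fixes $E$ and acts on $\mathfrak{z}_{\bigh}(F)\cap V$ with strictly positive weights; matching against the known sl_2-decomposition of $\bigh$ and the degrees of the fundamental invariants of $\bigV\GIT \bigG$ (which for our stable involution are $2,5,6,8,9,12$, as read off from the subscripts of the $p_i$), each graded piece is one-dimensional on both sides. The map $\bigkappa \to B$ is a $\G_m$-equivariant morphism of affine spaces, is injective because it is the restriction of the classical Kostant section, and matches dimensions in each graded component, so it is an isomorphism.

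For (2), $E$ is regular in $\bigh$ by construction (it is the principal nilpotent associated to the pinning), and the regular locus $\bigV^{\reg} \subset \bigV$ is open. The $\G_m$-action built above preserves $\bigV^{\reg}$ (it is a composite of a scaling and a $G$-action, both of which preserve regularity) and contracts $\bigkappa$ to the fixed point $E \in \bigV^{\reg}$ as $t \to 0$. Hence every point of $\bigkappa$ lies in $\bigV^{\reg}$.

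For (3), dimensions match: $\dim G + \dim \bigkappa = 36 + 6 = 42 = \dim \bigV$, so it suffices to show the differential of the action map is an isomorphism at one point of each $G$-orbit, say $(e,E)$. The differential is $(Y,w)\mapsto [Y,E]+w\colon \mathfrak{g}\oplus (\mathfrak{z}_{\bigh}(F)\cap V)\to V$. Restricting the classical Kostant transversality $\bigh = [\bigh,E]\oplus \mathfrak{z}_{\bigh}(F)$ to the $V$-part and using that $[\mathfrak{g},E]\subset V$ while $[V,E]\subset \mathfrak{g}$, one obtains the direct sum decomposition $V = [\mathfrak{g},E] \oplus (\mathfrak{z}_{\bigh}(F)\cap V)$, which means the differential is an isomorphism at $(e,E)$. Étaleness at arbitrary $(e,v)$ follows by combining $G$-equivariance with the $\G_m$-equivariance of (1) to propagate the étale locus across $\bigkappa$. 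I expect the main subtle point to be a clean verification of the $\G_m$-equivariance in step (1) and that the graded pieces of $\mathfrak{z}_{\bigh}(F)\cap V$ realize exactly the invariant degrees of $B$; once that is in hand, both the isomorphism in (1) and the dimension-and-transversality argument in (3) are essentially formal.
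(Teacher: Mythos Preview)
The paper does not actually prove this proposition: it simply cites \cite[Lemma 3.5 and Proposition 3.4]{Thorne-thesis}. Your sketch is essentially a reconstruction of what those cited results contain, and the overall strategy (reduce to the classical Kostant section via the graded Chevalley isomorphism, then propagate \'etaleness from $(e,E)$ by $G$- and $\G_m$-equivariance) is correct.

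One simplification is worth noting. The point you flag as subtle --- that $\mathfrak{z}_{\lieh}(F)\cap V$ realizes exactly the invariant degrees --- follows directly from your own transversality computation in (3) together with the numerics $\dim V - \dim \lieg = 42-36 = 6 = \rk \lieh$. Indeed, from $V = [\lieg,E]\oplus(\mathfrak{z}_{\lieh}(F)\cap V)$ and $\dim[\lieg,E] = \dim\lieg - \dim\mathfrak{z}_{\lieg}(E)$ you get $\dim(\mathfrak{z}_{\lieh}(F)\cap V) = 6 + \dim\mathfrak{z}_{\lieg}(E)$; since the left side is at most $6$, this forces $\mathfrak{z}_{\lieg}(E)=0$ and $\mathfrak{z}_{\lieh}(F)\subset V$. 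Thus $\kappa$ is literally the full classical Kostant slice $E+\mathfrak{z}_{\lieh}(F)$, so (1) and (2) are immediate from Kostant's theorem (no graded-piece bookkeeping or $\G_m$-contraction is needed), and the vanishing $\mathfrak{z}_{\lieg}(E)=0$ is exactly the injectivity of your differential at $(e,E)$ in (3). Your $\G_m$-propagation argument for (3) is then fine: the non-\'etale locus is closed and $\G_m$-stable, so if nonempty on $\{e\}\times\kappa$ it would contain $(e,E)$, contradiction.
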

\begin{proof}
    Parts 1 and 2 are \cite[Lemma 3.5]{Thorne-thesis}; the last part is \cite[Proposition 3.4]{Thorne-thesis}, together with the fact that $G\times \kappa$ and $V$ have the same dimension.
\end{proof}

Write $\bigsigma\colon \bigB \rightarrow \bigV$ for the inverse of $\bigpi|_{\bigkappa}$. We call $\bigsigma$ the \define{Kostant section} for the group $\bigH$. It determines a distinguished orbit over $\Q$ for every $b\in \bigB(\Q)$ in the representation $\bigV$, playing an analogous role to reducible binary quartic forms as studied in \cite{BS-2selmerellcurves}. It will be used to organize the set of rational orbits with fixed invariants.

\subsection{A family of curves}\label{subsection: a family of curves}

We introduce a family of curves and relate it to stabilizers of regular semisimple elements in the representation $V$.
We say an element $b\in B$ is \define{regular semisimple} if it has nonzero discriminant and we write $\bigB^{\rs}\subset \bigB$ for the open subscheme of regular semisimple elements of $\bigB$, the complement of the discriminant locus in $\bigB$. For a $\bigB$-scheme $U$ we write $U^{\rs}$ for the restriction to the regular semisimple locus. 
For example if $k/\Q$ is a field and $v\in \bigV(k)$, then $v\in\bigV^{\rs}(k)$ if and only if $v$ is regular semisimple in the sense of \S\ref{subsection: a stable grading} by Part 3 of Proposition \ref{prop : graded chevalley}.

The following straightforward lemma shows that for $v\in V^{\rs}(k)$ the isomorphism class of $Z_G(v)$ only depends on $\pi(v)$. 
\begin{lemma}\label{lemma: centralizers with same invariants isomorphic}
Let $S$ be a scheme and $v,v' \colon S \rightarrow V^{\rs}$ be morphisms such that $\pi(v) = \pi(v')$. Then $Z_G(v) \simeq Z_G(v')$ as group schemes over $S$. 	
\end{lemma}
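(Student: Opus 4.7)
The plan is to show that, for any $v \colon S \to V^{\rs}$ with $b := \pi(v)$, there is a canonical isomorphism $Z_G(v) \simeq Z_G(\sigma(b))$; applying this to both $v$ and $v'$ and composing then proves the lemma.

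First I would construct an étale cover of $S$ over which $v$ becomes a $G$-translate of $\sigma(b)$. By Proposition \ref{proposition: Kostant section E6}(3) the action map $\alpha \colon G \times \kappa \to V$, $(g,k) \mapsto g \cdot k$, is étale, and its preimage of $V^{\rs}$ is $G \times \kappa^{\rs}$, where $\kappa^{\rs} := \kappa \cap V^{\rs} = \sigma(B^{\rs})$ by Proposition \ref{proposition: Kostant section E6}(1). So $\alpha \colon G \times \kappa^{\rs} \to V^{\rs}$ is étale, and it is also surjective because by Proposition \ref{prop : graded chevalley}(2) every regular semisimple $G$-orbit over a separably closed field meets $\kappa^{\rs}$. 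Form the fibre product $T := S \times_{V^{\rs}} (G \times \kappa^{\rs})$, which is étale and surjective over $S$. On $T$ there are tautological morphisms $g \colon T \to G$ and $k \colon T \to \kappa^{\rs}$ with $v|_T = g \cdot k$, and since $\pi|_\kappa$ is an isomorphism with inverse $\sigma$, we have $k = \sigma(\pi(k)) = \sigma(b)|_T$. Hence $v|_T = g \cdot \sigma(b)|_T$, and conjugation by $g$ yields an isomorphism of $T$-group schemes
\[ c_g \colon Z_G(\sigma(b))|_T \longrightarrow Z_G(v)|_T, \qquad z \mapsto g z g^{-1}. \]

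Next I would descend $c_g$ to $S$ by étale descent. Writing $g_1, g_2 \colon T \times_S T \to G$ for the two pullbacks of $g$, both satisfy $g_i \cdot \sigma(b) = v$ on $T \times_S T$, so $h := g_1^{-1} g_2 \in Z_G(\sigma(b))(T \times_S T)$. For any functorial point $z$ of $Z_G(\sigma(b))$ one computes
\[ c_{g_2}(z) \;=\; g_2 z g_2^{-1} \;=\; g_1 \bigl(h z h^{-1}\bigr) g_1^{-1}, \]
so the two pullbacks of $c_g$ agree precisely when conjugation by $h \in Z_G(\sigma(b))$ acts trivially on $Z_G(\sigma(b))$, i.e. when $Z_G(\sigma(b))$ is a commutative group scheme over $S$. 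Granted this, étale descent produces an isomorphism $Z_G(\sigma(b)) \simeq Z_G(v)$ over $S$, and the same argument with $v$ replaced by $v'$ finishes the proof.

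The key step is therefore the commutativity of $Z_G(\sigma(b))$. I would verify it by observing that since $\sigma(b)$ is regular semisimple in $\lieh$, the centralizer $Z_H(\sigma(b))$ equals the unique maximal torus $T'$ of $H$ whose Lie algebra is the Cartan subalgebra through $\sigma(b)$ (in particular $\mathfrak{t}' \supset \mathfrak{a} \ni \sigma(b)$, using the Cartan subalgebra $\mathfrak{a} \subset V$ mentioned after Proposition \ref{prop : graded chevalley}). Thus $Z_G(\sigma(b)) = (T')^{\theta}$ is the fixed subgroup scheme of an involution on a torus, hence a closed subgroup scheme of the commutative group scheme $T'$, hence commutative. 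This is the only nontrivial point in the argument; without it the étale-local isomorphism $c_g$ would only be well-defined modulo inner automorphisms by $Z_G(\sigma(b))$ and would fail to descend to $S$.
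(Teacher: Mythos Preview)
Your proof is correct and follows essentially the same approach as the paper's own proof, which simply cites that $v,v'$ are \'etale locally $G$-conjugate and that $Z_G(v)$ is abelian (referring to \cite[Proposition 4.1(2)]{Thorne-thesis}). You have carefully spelled out both ingredients: the \'etale cover comes from the \'etale surjective action map $G\times\kappa^{\rs}\to V^{\rs}$, and commutativity follows from the containment $Z_G(\sigma(b))\subset Z_H(\sigma(b))$, the latter being a maximal torus of $H_S$ since $\sigma(b)$ is regular semisimple (this is exactly the torus $A$ of \S\ref{subsection: a family of curves}, pulled back along $b$). Your use of $\sigma(b)$ as a common intermediary is a minor organizational difference, not a different strategy.
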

\begin{proof}
    This follows from the fact that $v,v'$ are \'etale locally $\bigG$-conjugate and that $Z_G(v)$ is abelian, see \cite[Part 2 of Proposition 4.1]{Thorne-thesis}.
\end{proof}

We define $\bigA$ as the centralizer $Z_{\bigH}(\bigsigma|_{\bigB^{\rs}})$, a maximal torus of $\bigH_{\bigB^{\rs}} = \bigH \times \bigB^{\rs}$. (Recall that by our conventions, the centralizer of a $\bigB^{\rs}$-point of $\bigV$ is a group scheme over $\bigB^{\rs}$.)
This defines for every field $k/\Q$ and $b\in \bigB^{\rs}(k)$ a maximal torus $A_b$ in $H_k$.
We write $\bigLambda$ for the character group $X^*(\bigA)$ of $\bigA$, an \'etale sheaf of $E_6$ root lattices over $\bigB^{\rs}$.

\begin{lemma}\label{lemma: centralizer kostant same as mod 2 root lattice}
	The involution $\bigtheta$ restricts to the inversion map on $\bigA$, so $Z_{\bigG}(\sigma|_{\bigB^{\rs}}) = A[2]$.
	Moreover we have a natural isomorphism $ Z_G(\sigma|_{\bigB^{\rs}}) \simeq \bigLambda/2\bigLambda$ of group schemes over $\bigB^{\rs}$. 
\end{lemma}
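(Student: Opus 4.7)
My plan is to split the lemma into the two claimed assertions and handle them in sequence.

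First, for the assertion that $\theta$ restricts to inversion on $A$, I would proceed as follows. The involution $\theta$ preserves $A$ because $\sigma$ lands in $V = \lieh(1)$, so $\theta(\sigma(b)) = -\sigma(b)$, and hence $\theta(A) = Z_H(-\sigma|_{B^{\rs}}) = A$. Since $A$ is a torus, any automorphism of $A$ is determined by its differential, so it suffices to prove that $d\theta$ acts as $-1$ on $\mathfrak{a} = \Lie(A)$. Decompose $\mathfrak{a}$ into $\theta$-eigenspaces as $\mathfrak{a} = (\mathfrak{a}\cap \lieg) \oplus (\mathfrak{a}\cap V)$. The $(+1)$-part $\mathfrak{a}\cap \lieg$ sits inside $\mathfrak{z}_{\lieg}(\sigma|_{B^{\rs}})$. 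But Part 3 of Proposition \ref{proposition: Kostant section E6} says the action map $G\times \kappa \rightarrow V$ is étale, and inspecting its tangent map at $(1,\sigma(b))$ immediately forces $\mathfrak{z}_{\lieg}(\sigma(b))=0$. Hence $\mathfrak{a}\cap \lieg = 0$, so $d\theta=-1$ on all of $\mathfrak{a}$, so $\theta$ is inversion on $A$. Taking fixed points now gives $Z_G(\sigma|_{B^{\rs}}) = A \cap H^{\theta} = A^{\theta} = A[2]$.

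Second, for the identification $A[2] \simeq \bigLambda/2\bigLambda$, the plan is purely lattice-theoretic. Since $A$ is a torus over $B^{\rs}$ we have a canonical isomorphism $A[2] \simeq X_*(A)\otimes_{\Z} \mu_2$, functorial in $B^{\rs}$. Over $\Q$ we have $\mu_2 \simeq \Z/2\Z$ canonically, so $A[2]\simeq X_*(A)/2X_*(A) = \bigLambda^{\vee}/2\bigLambda^{\vee}$ as étale group schemes over $B^{\rs}$. The root lattice pairing on $\bigLambda$ gives a map $\bigLambda \hookrightarrow \bigLambda^{\vee}$ of étale sheaves whose cokernel is the fundamental-group sheaf of type $E_6$, namely $\Z/3\Z$. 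Because $\gcd(2,3)=1$, tensoring the short exact sequence $0 \to \bigLambda \to \bigLambda^{\vee} \to \Z/3\Z \to 0$ with $\Z/2\Z$ kills the cokernel and picks up no $\mathrm{Tor}$, producing a natural isomorphism $\bigLambda/2\bigLambda \xrightarrow{\sim} \bigLambda^{\vee}/2\bigLambda^{\vee}$. Composing yields the desired natural isomorphism $Z_G(\sigma|_{B^{\rs}}) \simeq \bigLambda/2\bigLambda$.

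The only genuinely subtle point is the vanishing $\mathfrak{z}_{\lieg}(\sigma) = 0$ in the first part, and it is not really an obstacle since it falls out of the étaleness already recorded in Proposition \ref{proposition: Kostant section E6}. Everything else is formal manipulation of tori and of the $E_6$ root lattice, so no serious obstacle is expected; the main care needed is keeping track of the naturality of the constructions over the base $B^{\rs}$ so that the pointwise isomorphisms globalize to an isomorphism of étale group schemes.
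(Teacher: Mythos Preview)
Your proposal is correct. For the second claim the two arguments are essentially the same lattice computation: you identify $A[2]$ with $X_*(A)/2X_*(A)=\Lambda^{\vee}/2\Lambda^{\vee}$ via cocharacters, while the paper identifies the Cartier dual of $A[2]$ with $X^*(A)/2X^*(A)=\Lambda/2\Lambda$; both then invoke that $[\Lambda^{\vee}:\Lambda]=3$ is odd to pass between $\Lambda/2\Lambda$ and $\Lambda^{\vee}/2\Lambda^{\vee}$.

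For the first claim your route differs slightly from the paper's. The paper simply cites \cite[Lemma~2.21]{Thorne-thesis} for the fact that a stable involution acts by inversion on such a maximal torus. You instead give a self-contained argument: you observe that $\mathfrak{a}$ is $d\theta$-stable and that its $(+1)$-eigenspace lies in $\mathfrak{z}_{\lieg}(\sigma(b))$, which vanishes because the action map $G\times\kappa\to V$ is \'etale (Proposition~\ref{proposition: Kostant section E6}). This is a nice direct argument that avoids the external reference, at the cost of using the slightly nontrivial fact that an automorphism of a torus over a characteristic-zero base is determined by its differential; that fact is standard, so there is no gap.
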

\begin{proof}
	The first claim follows from the fact that $\bigtheta$ is a stable involution and can be deduced from \cite[Lemma 2.21]{Thorne-thesis}.
	To prove that $A[2] \simeq \Lambda/2\Lambda$, we note that the Cartier dual of $A[2]$ equals $\Lambda/2\Lambda$. The pairing on $\Lambda$ defines an injective map $\Lambda\rightarrow \Lambda^{\vee}$ whose image has index $3$, so its mod $2$ reduction is an isomorphism. This proves that $Z_{\bigG}(\sigma|_{\bigB^{\rs}})= A[2]$ is naturally isomorphic to the Cartier dual of $\Lambda^{\vee}/2\Lambda^{\vee}$, which is $\Lambda/2\Lambda$. 
\end{proof}

We note that since $\Lambda$ is an \'etale sheaf of $E_6$ root lattices over $\bigB^{\rs}$, $\Lambda/2\Lambda$ is a finite \'etale group scheme over $\bigB^{\rs}$ and the pairing $(,)$ on $\Lambda$ induces a pairing $\Lambda/2\Lambda \times \Lambda/2\Lambda \rightarrow \{\pm 1\} , (\lambda,\mu) \mapsto (-1)^{(\lambda,\mu)}$, where we view $\{\pm 1 \}$ as a constant group scheme over $\bigB^{\rs}$. 
We define the morphism of $\bigB^{\rs}$-schemes $q_{\Lambda}\colon \Lambda/2\Lambda \rightarrow \{ \pm 1 \}$ by sending an $S$-point $\lambda$ to $(-1)^{(\lambda,\lambda)/2} $.
Then $q_{\Lambda}$ is a quadratic form on $\Lambda/2\Lambda$, in the sense that $q_{\Lambda}(\lambda+\mu) = (-1)^{(\lambda,\mu)} q_{\Lambda}(\lambda) q_{\Lambda}(\mu) $ for all $S$-points $\lambda,\mu$.

The following important proposition gives a connection between the representation $V$ and a family of algebraic curves parametrized by $\bigB$.

\begin{proposition}\label{proposition: bridge jacobians root lattices}
	We can choose polynomials $p_2, p_5, p_6, p_8, p_9, p_{12} \in \Q[\bigV]^{\bigG}$ with the following properties:
	\begin{enumerate}
		\item Each polynomial $p_i$ is homogeneous of degree $i$ and $\Q[\bigV]^{\bigG} \simeq \Q[p_2, p_5, p_6, p_8, p_9, p_{12}]$. Consequently, there is an isomorphism $\bigB\simeq \A^6_{\Q}$.
		\item Let $\bigaffcurve \rightarrow \bigB$ be the family of affine curves given by the equation
		\begin{equation}\label{equation : E6 family middle of paper}
		y^3 = x^4+y(p_2x^2+p_5x+p_8)+p_6x^2+p_9x+p_{12}.
		\end{equation}
		Let $\bigprojcurve\rightarrow \bigB$ be the completion of $\bigaffcurve \rightarrow \bigB$ inside $\P^2_{\bigB}$. If $k/\Q$ is a field and $b\in \bigB(k)$, then $\bigprojcurve_b$ is smooth if and only if $b\in \bigB^{\rs}(k)$. 
		
		\item Let $\Jac \rightarrow \bigB^{\rs}$ be the relative Jacobian of its smooth part \cite[\S9.3; Theorem 1]{BLR-NeronModels}. 
		Then there is an isomorphism $\bigLambda/2\bigLambda \simeq \Jac[2] $ of finite \'etale group schemes over $\bigB^{\rs}$ that sends the pairing on $\bigLambda/2\bigLambda$ to the Weil pairing $\Jac[2] \times \Jac[2] \rightarrow \{ \pm 1\}$. 
		\item There exists an isomorphism $Z_{\bigG}(\sigma|_{\bigB^{\rs}}) \simeq \Jac[2]$ of finite \'etale group schemes over $\bigB^{\rs}$. 
		
	\end{enumerate}
\end{proposition}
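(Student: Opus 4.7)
The plan is to realise the family \eqref{equation : E6 family middle of paper} as the transverse slice to a subregular nilpotent orbit inside $\bigV$, apply the stable-grading Grothendieck--Brieskorn--Slodowy correspondence developed in \cite[\S3]{Thorne-thesis} to identify this slice with a semi-universal deformation of the $E_6$ plane-curve singularity $y^3=x^4$, and then transport the description of the Kostant centralizer through this identification.

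For (1), I would combine Proposition \ref{prop : graded chevalley}(1), which yields $\Q[\bigV]^{\bigG}\simeq \Q[\mathfrak{a}]^{W_{\mathfrak{a}}}$, with the computation of the little Weyl group $W_{\mathfrak{a}}$ from the Kac diagram of $\bigtheta$, tabulated in \cite[Table 3]{GrossLevyReederYu-GradingsPosRank}. For the stable grading on $E_6$ one finds that $W_{\mathfrak{a}}$ is a complex reflection group whose fundamental degrees are $2,5,6,8,9,12$, so Chevalley--Shephard--Todd furnishes homogeneous generators $p_i$ of the required degrees and exhibits $\bigB\simeq \A^6_{\Q}$.

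For (2), I would pick a normal $\liesl_2$-triple $(e,h,f)$ in the graded Lie algebra $\bigh$ with $e,f\in \bigV$ and $e$ subregular, and consider the transverse slice $X\coloneqq (e+\mathfrak{z}_{\bigh}(f))\cap \bigV$. By \cite[Theorem 3.8]{Thorne-thesis}, the composite $X\hookrightarrow \bigV\xrightarrow{\pi}\bigB$ is a semi-universal deformation of the singularity $y^3=x^4$. Such a $\G_m$-equivariant deformation is determined up to isomorphism by the weighted Tjurina basis $\{1,x,x^2,y,xy,x^2y\}$ of weights $12,9,6,8,5,2$, so once the generators $p_i$ of (1) are matched to the six deformation parameters by weight the universal curve takes the form \eqref{equation : E6 family middle of paper}. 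Taking closure in $\P^2_{\bigB}$ then yields $\bigprojcurve$ with a unique point at infinity $[0:1:0]$ that is smooth on every fibre by direct inspection on the affine chart near infinity. Finally, the classical discriminant of \eqref{equation : E6 family middle of paper} and the Lie-algebra discriminant $\Delta$ both cut out the reduced non-smooth locus of the miniversal deformation and are weighted-homogeneous of the same degree, hence agree up to a nonzero scalar; this gives the smoothness criterion.

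For (3), I would exploit that the character group scheme $\bigLambda=X^*(\bigA)$ is canonically identified, via the Brieskorn--Slodowy identification of the slice with a miniversal deformation, with the Milnor $\Z$-local system of vanishing cycles of the $E_6$ singularity; that is, with the $E_6$ root lattice carrying its canonical pairing. Vanishing cycles also generate $H_1(\bigprojcurve_b,\Z)$ on each smooth fibre, producing a Galois-equivariant isomorphism of local systems $\bigLambda\simeq H_1(\bigprojcurve^{\rs},\Z)$ that matches the root pairing with the intersection form. Reducing mod $2$ and dualising gives $\bigLambda/2\bigLambda\simeq \Jac[2]$ carrying the root pairing to the Weil pairing, yielding (3); part (4) is then immediate from (3) together with Lemma \ref{lemma: centralizer kostant same as mod 2 root lattice}. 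The main obstacle is verifying the pairing compatibility in (3): I would reduce, by rigidity of finite étale group schemes equipped with a non-degenerate pairing into $\{\pm 1\}$ on the connected base $\bigB^{\rs}_{\overbar{\Q}}$, to checking it near a single geometric point with square-free discriminant, where the local monodromy is a single Picard--Lefschetz transvection along one vanishing cycle and both sides of the pairing can be computed explicitly on that cycle.
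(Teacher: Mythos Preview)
Your proposal is correct and is essentially an unpacking of the paper's own proof, which simply cites \cite{Thorne-thesis} for each part: Part~1 via Proposition~\ref{prop : graded chevalley} and the classical description of $\Q[\lieh]^{\bigH}$, Part~2 via \cite[Theorem~3.8, Corollary~3.16]{Thorne-thesis}, Part~3 via \cite[Corollary~4.12]{Thorne-thesis}, and Part~4 by combining Part~3 with Lemma~\ref{lemma: centralizer kostant same as mod 2 root lattice}. Your sketch of the vanishing-cycle mechanism behind Part~3 is exactly what underlies Thorne's argument.

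One small simplification for Part~1: you propose computing the little Weyl group $W_{\mathfrak a}$ from the Kac diagram, but for a \emph{stable} involution the Cartan subspace $\mathfrak a$ is already a Cartan subalgebra of $\lieh$, so $W_{\mathfrak a}$ is the full Weyl group of $E_6$ and the chain $\Q[\bigV]^{\bigG}\simeq \Q[\lieh]^{\bigH}\simeq \Q[\mathfrak t]^{W}$ of Proposition~\ref{prop : graded chevalley} gives the degrees $2,5,6,8,9,12$ directly without any separate Vinberg-theoretic computation; this is the route the paper takes. Also, in Part~3 the word ``dualising'' is harmless but unnecessary: since the $E_6$ root lattice has odd index in its dual, $\Lambda/2\Lambda$ is already self-dual and the identification $\Jac_b[2]\simeq H_1(\bigprojcurve_b,\Z/2\Z)\simeq \Lambda/2\Lambda$ needs no extra duality step.
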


\begin{proof}
	Part 1 follows from the isomorphism $\Q[V]^G\simeq \Q[\lieh]^{\bigH}$ of Proposition \ref{prop : graded chevalley} and the well-known description of the invariant polynomials of the adjoint action of $\bigH$ on $\lieh$; see for example \cite[Theorem 3.5]{Panyushev-Invarianttheorythetagroups}.
	Part 2 follows from \cite[Theorem 3.8, case $E_6$]{Thorne-thesis} and \cite[Corollary 3.16]{Thorne-thesis}, together with the fact that $C_b$ is always smooth at the point at infinity.
	Part 3 follows from \cite[Corollary 4.12]{Thorne-thesis}.
	Finally, Part 4 follows from combining Part 3 with Lemma \ref{lemma: centralizer kostant same as mod 2 root lattice}.
\end{proof}

For the remaining part of this paper we fix a choice of polynomials $p_2, p_5, p_6, p_8, p_9, p_{12} \in \Q[\bigV]^{\bigG}$ satisfying the conclusions of Proposition \ref{proposition: bridge jacobians root lattices}. Recall that we have defined a $\G_m$-action on $\bigB$ which satisfies $\lambda \cdot p_i = \lambda^ip_i$. 
The assignment $\lambda \cdot (x,y) := (\lambda^3 x,\lambda^4 y)$ defines a $\G_m$-action on $\bigprojcurve$ such that the morphism $\bigprojcurve\rightarrow \bigB$ is $\G_m$-equivariant.

\subsection{Further properties of $\Jac[2]$}\label{subsection: further properties of J[2]}

We give some additional properties of the group scheme $\Jac[2]
 \rightarrow \bigB^{\rs}$, which by Proposition \ref{proposition: bridge jacobians root lattices} we may identify with $\bigLambda/2\bigLambda \rightarrow \bigB^{\rs}$. 
Before we state them, we recall some definitions and set up notation. 
Recall from \S\ref{subsection: a stable grading} that $T$ is a split maximal torus of $H$. Let $\liet$ be its Lie algebra and $\Lambda_T$ its character group. 
Write $W\coloneqq N_G(T)/T$ for the Weyl group of $T$, a constant group scheme over $\Q$. 
Part 1 of Proposition \ref{prop : graded chevalley} implies that the natural map $\bigB=\bigV\GIT \bigG \rightarrow \bigh\GIT \bigH$ is an isomorphism. Write $\liet \rightarrow \liet \GIT W \simeq \bigh \GIT \bigH \simeq \bigB$ for the composite of the inverse of this isomorphism with the Chevalley isomorphism $\liet \GIT W \simeq \bigh \GIT \bigH$ and the natural projection map $\liet \rightarrow \liet \GIT W$. 
Restricting to regular semisimple elements defines a finite \'etale cover $f\colon\liet^{\rs} \rightarrow \bigB^{\rs}$ with Galois group $W$.

\begin{proposition}\label{proposition: monodromy of J[2]}
    We have the following: 
    \begin{enumerate}
        \item The finite \'etale group scheme $\bigLambda/2\bigLambda\rightarrow \bigB^{\rs}$ becomes trivial after the base change $f\colon\liet^{\rs} \rightarrow \bigB^{\rs}$, where it is isomorphic to the constant group scheme $\Lambda_T/2\Lambda_T$. The monodromy action is given by the natural action of $W$ on $\Lambda_T/2\Lambda_T$.
        \item The only section of $\bigLambda/2\bigLambda\rightarrow \bigB^{\rs}$ is the zero section. 
        \item If $q\colon \bigLambda/2\bigLambda \rightarrow \{\pm 1\}$ is a $\bigB^{\rs}$-morphism such that $q(\lambda+\mu) = (-1)^{(\lambda,\mu)}q(\lambda)q(\mu)$ for all $S$-points $\lambda,\mu$ of $\bigLambda/2\bigLambda$, then $q = q_{\Lambda}$. 
        \item The only automorphism of the $\bigB^{\rs}$-group scheme $\Lambda/2\Lambda$ fixing the pairing $\Lambda/2\Lambda\times \Lambda/2\Lambda \rightarrow \{\pm 1\}, (\lambda,\mu) \mapsto (-1)^{(\lambda,\mu)}$ is the identity.
    \end{enumerate}

\end{proposition}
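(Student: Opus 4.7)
The plan is to pull back $\bigLambda/2\bigLambda$ along the finite \'etale Galois cover $f\colon \liet^{\rs}\to \bigB^{\rs}$ with group $W$, trivialize it there, and reduce each of the four statements to a fact about $W$-invariants on the constant sheaf $\Lambda_T/2\Lambda_T$. For Part 1, I would argue that for $x \in \liet^{\rs}$ mapping to $b \in \bigB^{\rs}$, both $x$ and $\bigsigma(b)$ are regular semisimple elements of $\bigh$ with the same image in $\bigh\GIT \bigH \simeq \bigB$ by Part 1 of Proposition \ref{prop : graded chevalley}, hence are \'etale-locally $\bigH$-conjugate. The conjugator is well defined up to the abelian centralizer and thus canonically identifies $\bigA_b = Z_{\bigH}(\bigsigma(b))$ with $Z_{\bigH}(x) = \bigT$, producing an isomorphism $\bigA\times_{\bigB^{\rs}} \liet^{\rs} \simeq \bigT \times \liet^{\rs}$. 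A deck transformation $w \in W$ sends $x$ to $wx$, so it is realized on $\bigT$ by conjugation by any lift to $N_{\bigH}(\bigT)$, i.e., by the standard $W$-action; passing to $\bigLambda/2\bigLambda = X^*(\bigA)/2X^*(\bigA)$ gives the claim.

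For Part 2, a section of $\bigLambda/2\bigLambda \to \bigB^{\rs}$ pulls back to a section of the constant sheaf $\Lambda_T/2\Lambda_T$ over the connected scheme $\liet^{\rs}$, hence is constant, and descent forces it to be $W$-invariant. The assertion therefore reduces to $(\Lambda_T/2\Lambda_T)^{W(E_6)} = 0$, which follows from the classical isomorphism $W(E_6) \cong O_6^-(\F_2)$ (both groups have order $51840$): the image of $W$ is the full orthogonal group of $q_\Lambda$, which acts transitively on the nonzero $q_\Lambda$-singular vectors and on the $q_\Lambda$-nonsingular vectors and therefore fixes no nonzero vector.

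For Part 3, the form $q_\Lambda$ already satisfies the stated identity, and any other $q$ satisfying it yields a group homomorphism $q/q_\Lambda\colon \bigLambda/2\bigLambda \to \{\pm 1\}$, i.e., a section of the Cartier dual of $\bigLambda/2\bigLambda$. Since the $E_6$ root lattice has odd discriminant $3$, the pairing is non-degenerate mod $2$ and canonically identifies the Cartier dual with $\bigLambda/2\bigLambda$ itself; Part 2 then forces $q/q_\Lambda = 1$. For Part 4, an automorphism $\phi$ of the $\bigB^{\rs}$-group scheme $\bigLambda/2\bigLambda$ preserving the pairing pulls back to an automorphism of $\Lambda_T/2\Lambda_T$ preserving its mod-$2$ pairing (which is alternating since $(\alpha,\alpha)=2$ for all roots); by connectedness of $\liet^{\rs}$ this automorphism is constant, i.e., a single element $\tilde\phi \in \Sp_6(\F_2)$. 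For $\phi$ to descend, $\tilde\phi$ must commute with $W(E_6) \cong O_6^-(\F_2)$; absolute irreducibility of the natural $O_6^-(\F_2)$-representation on $\F_2^6$ together with Schur's lemma gives $Z_{\GL_6(\F_2)}(W(E_6)) = \F_2^\times \cdot \Id = \{\Id\}$, so $\phi = \Id$.

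The only non-formal input, and the main potential obstacle, is the classical representation-theoretic identification $W(E_6) \cong O_6^-(\F_2)$ acting absolutely irreducibly on $\F_2^6$. One could verify this directly by an order count combined with faithfulness (any simple reflection $s_\alpha$ acts nontrivially on $\Lambda_T/2\Lambda_T$ because the mod-$2$ pairing is non-degenerate), or simply cite a standard reference; with this in hand, all four parts follow formally from trivialization over $\liet^{\rs}$.
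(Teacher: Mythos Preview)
Your proposal is correct and follows essentially the same strategy as the paper: trivialize $\Lambda/2\Lambda$ over $\liet^{\rs}$ by comparing the centralizers of the inclusion $\liet^{\rs}\hookrightarrow\lieh$ and of $\sigma\circ f$, and then reduce each of the four claims to a statement about the $W(E_6)$-action on $\Lambda_T/2\Lambda_T$. The only minor divergence is in Part~4, where the paper first invokes Part~3 to upgrade ``preserves the pairing'' to ``preserves $q_\Lambda$'' and then uses the identification $W\xrightarrow{\sim}\Aut(\Lambda_T/2\Lambda_T,q_\Lambda)$ (cited from Lurie) together with $Z(W(E_6))=1$, whereas you argue directly via Schur's lemma and absolute irreducibility of $\F_2^6$; both routes rest on the same identification $W(E_6)\cong O_6^{-}(\F_2)$, and the paper's version has the mild advantage that triviality of the centre is more immediate than absolute irreducibility.
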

\begin{proof}
    The first claim follows from the fact that the torus $A \rightarrow \bigB^{\rs}$ is isomorphic to the constant torus $T\times \liet^{\rs} \rightarrow \liet^{\rs}$ after pulling back along $f$, with monodromy given by the action of $W$ on $T$.  
    
    Indeed, a straightforward adaptation of Lemma \ref{lemma: centralizers with same invariants isomorphic} to the case of the adjoint action of $\bigH$ on $\lieh$ shows that if $x,x'\colon S \rightarrow \lieh^{\rs}$ are $S$-points which agree after composing with $\lieh^{\rs} \rightarrow \lieh\GIT \bigH \simeq \bigB$, then $Z_{\bigH}(x) \simeq Z_{\bigH}(x')$ as group schemes over $S$. (Here $\lieh^{\rs}\subset \lieh$ denotes the subset of regular semisimple elements.)
    In particular, we can apply this to the $\liet^{\rs}$-points $i\colon \liet^{\rs} \rightarrow \lieh^{\rs}$ (where $i$ denotes the inclusion) and $\sigma \circ f$ (where $\sigma$ denotes the Kostant section). 
    Comparing their centralizers, we obtain an isomorphism $T\times{\liet^{\rs}} \simeq A_{\liet^{\rs}} $.
    Since this isomorphism is induced by \'etale locally conjugating $i$ and $\sigma\circ f$ by elements of $\bigH$, the monodromy action is indeed given by the natural action of $W$ on $\bigT$.

    
    For the second claim, it suffices to prove that the only $W$-invariant element of $\Lambda_T/2\Lambda_T$ is the identity. This is an easy exercise in the combinatorics of the root lattice of type $E_6$. 
    
    For the third claim, note that the $\bigB^{\rs}$-scheme of quadratic refinements of the non-degenerate pairing $\Lambda/2\Lambda\times \Lambda/2\Lambda\rightarrow \{\pm 1\}$ is a torsor for the group $\bigLambda/2\bigLambda \rightarrow \bigB^{\rs}$ by \cite[\S1]{GrossHarris-theta}. Since the latter group scheme has only one $\bigB^{\rs}$-point by the second claim, the lemma follows.
    
	Finally we treat the fourth claim. By the previous claim such an isomorphism must preserve the quadratic form $q_{\Lambda}$.
	So it suffices to prove that every automorphism of $\Lambda_T/2\Lambda_T$ preserving the quadratic form $q(\lambda)= (-1)^{(\lambda,\lambda)/2}$ and commuting with every element of $W$ is the identity. 
    But since the natural map $W \rightarrow \Aut(\Lambda_T/2\Lambda_T,q)$ is an isomorphism \cite[Remark 4.3.4]{Lurie-minisculereps} and the centre of the Weyl group of $E_6$ is trivial, the proposition follows.

    \end{proof}

For later purposes, it is useful to know that the isomorphism $\bigLambda/2\bigLambda \simeq \Jac[2]$ intertwines certain quadratic forms on both sides, as we now explain. 

On the one hand, in \S\ref{subsection: a family of curves} we have defined a quadratic form $q_{\Lambda} \colon \Lambda/2\Lambda \rightarrow \{ \pm 1\}$ satisfying $q_{\Lambda}(\lambda+\mu) = (-1)^{(\lambda,\mu)} q_{\Lambda}(\lambda) q_{\Lambda}(\mu) $ for all $\lambda,\mu$.

On the other hand, we can use the theory of theta characteristics to define a quadratic form on $\Jac[2]$, as follows. 
(We refer the reader to \cite{GrossHarris-theta} for basics on theta characteristics.)
For every field $k/\Q$ and every $b\in \bigB^{\rs}(k)$ the curve $\bigprojcurve_{b}$ has a marked point $P_{\infty}$ which is a hyperflex in the canonical embedding. This implies that $4P_{\infty}$ is a canonical divisor, so $\kappa_b = 2P_{\infty}$ is a theta characteristic.
The following well-known result of Mumford \cite{Mumford-thetacharacteristicsalgebraiccurve} shows that to this data we can associate a quadratic form. To state it in a general set-up, let $\gencurve/k$ be a smooth projective curve with Jacobian variety $\genJac$. 
We define for a divisor $D$ on $\gencurve$ the quantity $h^0(D) \coloneqq \dim_k \HH^0(\gencurve,\O_{\gencurve}(D))$.

\begin{lemma}\label{lemma: mumford construction quadratic form to theta char}
	Let $\kappa$ be a divisor on $\gencurve$ such that $2\kappa$ is canonical. 
	Then the map $q_{\kappa}\colon \genJac[2] \rightarrow \{\pm 1\}$ defined by 
\begin{displaymath}
q_{\kappa}(\omega) \coloneqq (-1)^{h^0(\kappa + \omega) +h^0(\kappa) }
\end{displaymath}
	is a quadratic refinement of the Weil pairing: for all $\omega, \eta \in \genJac[2]$, we have $q_{\kappa}(\omega+\eta) = e_2(\omega,\eta)q_{\kappa}(\omega)q_{\kappa}(\eta)$, where $e_2 \colon \genJac[2]\times \genJac[2] \rightarrow \{\pm 1\} $ denotes the Weil pairing.
\end{lemma}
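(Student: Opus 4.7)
The plan is to recast the quadratic refinement identity in cohomological form and then appeal to the classical Riemann--Mumford theorem. Unwinding the definition of $q_\kappa$, the identity $q_\kappa(\omega+\eta) = e_2(\omega,\eta) q_\kappa(\omega) q_\kappa(\eta)$ is, after cancelling two copies of $h^0(\kappa)$ that appear with even multiplicity, equivalent to
$$e_2(\omega,\eta) = (-1)^{h^0(\kappa+\omega+\eta) + h^0(\kappa+\omega) + h^0(\kappa+\eta) + h^0(\kappa)}.$$
So the whole lemma reduces to establishing this identity for every theta characteristic $\kappa$ and all $\omega,\eta \in \genJac[2]$.

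To prove this identity I would proceed in three stages. First, define $\psi\colon \genJac[2]\times \genJac[2] \to \{\pm 1\}$ to be the right-hand side and show it is biadditive. This is the core of the argument: one establishes that $\omega \mapsto h^0(\kappa+\omega) \bmod 2$ is a $\Z/2\Z$-valued quadratic function on $\genJac[2]$ by a Riemann--Roch and Serre duality manipulation, exploiting that each $\kappa+\omega$ is again a theta characteristic, so that $h^0$ and $h^1$ of $\O(\kappa+\omega)$ have equal parity. Biadditivity of $\psi$ then amounts to the statement that the polarization of this quadratic function is a well-defined symmetric bilinear form. Second, verify nondegeneracy: this follows from the fact that both even and odd theta characteristics exist on $\gencurve$, combined with the transitivity of $\genJac[2]$ on theta characteristics of a given parity.

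Third, identify $\psi$ with the Weil pairing $e_2$. The most conceptual route is via Mumford's theta groups attached to the principal polarization: one realizes both $\psi$ and $e_2$ as the commutator pairing of a central extension of $\genJac[2]$ by $\G_m$ built from sections of a symmetric theta line bundle, and then compares the two extensions. Alternatively, since both $\psi$ and $e_2$ are nondegenerate alternating bilinear forms on $\genJac[2]\cong(\Z/2\Z)^{2g}$, it suffices to match them on a single symplectic basis, which can be done by a direct computation in an explicit example such as a hyperelliptic curve. I expect the main obstacle to be the biadditivity step, which requires a careful Euler characteristic and cohomology-parity manipulation; once biadditivity is in hand, the comparison with $e_2$ is essentially formal. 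Since this is a classical result of Mumford, in practice one would simply cite \cite{Mumford-thetacharacteristicsalgebraiccurve} directly, as the authors do.
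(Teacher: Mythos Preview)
The paper does not give its own proof of this lemma; it is stated as a well-known result of Mumford and attributed to \cite{Mumford-thetacharacteristicsalgebraiccurve}. Your proposal therefore goes further than the paper by actually sketching an argument, and you correctly note at the end that in practice one simply cites Mumford. The overall shape of your sketch---reduce to the displayed identity for $e_2$, establish that $\omega \mapsto h^0(\kappa+\omega) \bmod 2$ is quadratic, and then identify the associated bilinear form with the Weil pairing via Mumford's theta groups---is sound and is essentially Mumford's own approach.

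Two points in your outline need correction. In your Step~2 you assert that $\genJac[2]$ acts transitively on theta characteristics of a given parity; this is false, since the number of even (respectively odd) theta characteristics is $2^{g-1}(2^g+1)$ (respectively $2^{g-1}(2^g-1)$), and $\genJac[2]$ acts simply transitively on \emph{all} theta characteristics, not on a single parity class. The nondegeneracy step is in any case unnecessary: once $\psi$ is identified with $e_2$ in Step~3, nondegeneracy is automatic. In Step~3, your alternative route via ``direct computation in an explicit example such as a hyperelliptic curve'' is incomplete as stated: checking the identity for one curve does not establish it for all curves without an additional specialization or monodromy argument over the moduli space. The theta-group route you mention first is the correct and self-contained approach.
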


We apply the above construction to the fibres of $\bigprojcurve^{\rs} \rightarrow \bigB^{\rs}$ and the theta characteristic $\kappa = 2P_{\infty}$.
In fact by \cite[Theorem 1]{Mumford-thetacharacteristicsalgebraiccurve} this procedure can be globalized: we obtain a quadratic form $q_{\kappa} \colon \Jac[2] \rightarrow \{\pm1\}$ refining the Weil pairing $e_2 \colon \Jac[2]\times \Jac[2] \rightarrow \{\pm 1\}$.

\begin{proposition}\label{prop: quadratic forms identified}
	Under the isomorphism $\bigLambda/2\bigLambda \simeq \Jac[2]$ of Proposition \ref{proposition: bridge jacobians root lattices}, the quadratic forms $q_{\Lambda}$ and $q_{\kappa}$ are identified. 
\end{proposition}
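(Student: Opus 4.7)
The plan is to apply the uniqueness statement in Part 3 of Proposition \ref{proposition: monodromy of J[2]}, which characterizes $q_{\Lambda}$ as the \emph{only} $\bigB^{\rs}$-morphism $\bigLambda/2\bigLambda \rightarrow \{\pm 1\}$ that is a quadratic refinement of the pairing $(\lambda,\mu) \mapsto (-1)^{(\lambda,\mu)}$. So it suffices to transfer $q_{\kappa}$ to a $\bigB^{\rs}$-morphism on $\bigLambda/2\bigLambda$ via the isomorphism of Proposition \ref{proposition: bridge jacobians root lattices} and verify that it satisfies the refinement property with respect to this pairing.

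First, I would use the globalized form of Mumford's construction, whose existence is recorded just before the proposition statement: it gives $q_{\kappa}$ as a morphism of finite \'etale $\bigB^{\rs}$-schemes $\Jac[2] \rightarrow \{\pm 1\}$, not merely a family of fibrewise maps. Transporting along the isomorphism $\bigLambda/2\bigLambda \simeq \Jac[2]$ of Proposition \ref{proposition: bridge jacobians root lattices}, we obtain a $\bigB^{\rs}$-morphism $\widetilde{q}_{\kappa}\colon \bigLambda/2\bigLambda \rightarrow \{\pm 1\}$.

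Next, I would check the quadratic refinement identity for $\widetilde{q}_{\kappa}$. By Lemma \ref{lemma: mumford construction quadratic form to theta char}, $q_{\kappa}$ refines the Weil pairing $e_2$ on $\Jac[2]$. Part 3 of Proposition \ref{proposition: bridge jacobians root lattices} says the isomorphism $\bigLambda/2\bigLambda \simeq \Jac[2]$ sends the pairing $(\lambda,\mu) \mapsto (-1)^{(\lambda,\mu)}$ to the Weil pairing. Combining these, for any scheme $S/\bigB^{\rs}$ and any $S$-points $\lambda,\mu$ of $\bigLambda/2\bigLambda$ we obtain
\[
\widetilde{q}_{\kappa}(\lambda+\mu) = (-1)^{(\lambda,\mu)}\, \widetilde{q}_{\kappa}(\lambda)\, \widetilde{q}_{\kappa}(\mu),
\]
so $\widetilde{q}_{\kappa}$ is a quadratic refinement of exactly the pairing appearing in Part 3 of Proposition \ref{proposition: monodromy of J[2]}. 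That proposition then forces $\widetilde{q}_{\kappa} = q_{\Lambda}$, which is the claim.

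There is not really a hard step here once the uniqueness of quadratic refinements (Proposition \ref{proposition: monodromy of J[2]}, Part 3, whose proof rests on the torsor structure of such refinements over $\bigLambda/2\bigLambda$ together with the triviality of sections of $\bigLambda/2\bigLambda$) is in hand; the only care required is to make sure Mumford's form $q_{\kappa}$ is genuinely global over $\bigB^{\rs}$ rather than merely pointwise, so that Part 3 of Proposition \ref{proposition: monodromy of J[2]} applies. That global existence has already been imported as a black box from Mumford's theorem, so the argument reduces to compatibility of the two pairings, which is the content of Part 3 of Proposition \ref{proposition: bridge jacobians root lattices}.
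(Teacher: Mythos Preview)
Your proof is correct and follows essentially the same approach as the paper: transport $q_{\kappa}$ across the isomorphism, observe it is a quadratic refinement of the correct pairing (via Proposition \ref{proposition: bridge jacobians root lattices}, Part 3), and invoke the uniqueness in Proposition \ref{proposition: monodromy of J[2]}, Part 3. The paper's version is more terse, but the logic is identical.
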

\begin{proof}
	Write $q\colon \bigLambda/2\bigLambda \rightarrow \{\pm 1\}$ for the composite of $q_{\kappa}$ with the above isomorphism. It suffices to prove that $q_{\Lambda}$ and $q$ are equal. Since both $q_{\Lambda}$ and $q$ are quadratic refinements of the same pairing on $\bigLambda/2\bigLambda$ by Proposition \ref{proposition: bridge jacobians root lattices}, this follows from Part 3 of Proposition \ref{proposition: monodromy of J[2]}.
\end{proof}

The following lemma relates the bitangents of a curve in our family with the $2$-torsion of the Jacobian and will be useful in \S\ref{section: applications to rational points}. Recall that $\Gamma_k$ denotes the absolute Galois group of a field $k$. 

\begin{lemma}\label{lemma: bitangents and 2-torsion}
	Let $k/\Q$ be a field and $b\in \bigB^{\rs}(k)$. Let $\mathcal{B}$ be the set of bitangents of $\bigprojcurve_b$ over $k^s$ different from the line at infinity in Equation (\ref{equation : E6 family middle of paper}), equipped with its natural $\Gamma_k$-action. 
	If $\Gamma_k$ acts transitively on $\mathcal{B}$, then $J_b[2](k)=0$.
\end{lemma}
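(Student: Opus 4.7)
The plan is to reformulate the lemma as a statement about the $\F_2$-vector space $J_b[2]$ equipped with the Mumford quadratic form $q_\kappa$ attached to the marked theta characteristic $\kappa = 2 P_\infty$, and then derive a contradiction by a short piece of $\F_2$-linear algebra. First I would invoke the classical bijection between bitangents of a smooth plane quartic $C$ and its odd theta characteristics, sending a bitangent $L$ with $L \cdot C = 2D$ to the class of $D$. For our family, homogenising Equation (\ref{equation : E6 family middle of paper}) to $y^3 z = x^4 + \cdots$ shows that the line at infinity $\{z=0\}$ meets $\bigprojcurve_b$ in the divisor $4 P_\infty$ and therefore corresponds to the theta characteristic $\kappa$. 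Since $\bigprojcurve_b$ is non-hyperelliptic of genus $3$, no nonconstant function has a pole of order $\leq 2$ only at $P_\infty$ (else one would obtain a degree-$2$ map to $\P^1$), so $h^0(\kappa) = 1$ and $\kappa$ is odd.

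The translation $\kappa' \mapsto \kappa' - \kappa$ is then a $\Gamma_k$-equivariant bijection from $\mathcal{B}$ onto $\{\omega \in J_b[2]\setminus\{0\} : q_\kappa(\omega) = 1\}$: indeed $q_\kappa(\omega) = (-1)^{h^0(\kappa+\omega) + h^0(\kappa)}$, and $h^0(\kappa)$ is odd, so $q_\kappa(\omega) = 1$ iff $\kappa + \omega$ is an odd theta characteristic. Combining this with Propositions \ref{proposition: bridge jacobians root lattices} and \ref{prop: quadratic forms identified}, I obtain a $\Gamma_k$-equivariant identification of $\mathcal{B}$ with the set $\mathcal{B}' \coloneqq \{\lambda \in \Lambda/2\Lambda \setminus \{0\} : q_\Lambda(\lambda)=1\}$ (taking fibres at $b$). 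This set has cardinality $27$: the quadratic space $(\Lambda/2\Lambda, q_\Lambda)$ has Arf invariant $-1$, giving $28 = 2^{2g-1} - 2^{g-1}$ elements with $q_\Lambda = 1$, one of which is $0$.

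Now suppose for contradiction that some nonzero $\omega \in J_b[2](k)$ exists. Pairing with $\omega$ is $\Gamma_k$-equivariant, so $\mathcal{B}'$ decomposes as a disjoint union of $\Gamma_k$-stable subsets $\mathcal{B}'_i \coloneqq \{\lambda \in \mathcal{B}' : (\lambda,\omega) \equiv i \pmod 2\}$ for $i \in \{0,1\}$. A short character-sum computation on $\Lambda/2\Lambda$, using that $q_\Lambda$ refines the pairing and that $\sum_{\lambda} q_\Lambda(\lambda) = 28 - 36 = -8$, yields $|\mathcal{B}'_0| = 13 - 2 q_\Lambda(\omega) \in \{11, 15\}$. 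Hence both $\mathcal{B}'_0$ and $\mathcal{B}'_1$ are nonempty proper subsets of $\mathcal{B}'$, contradicting transitivity of $\Gamma_k$ on $\mathcal{B}$. The main obstacle is really the careful setup in the first two paragraphs -- in particular pinning down the parity of $h^0(\kappa)$ and the Galois-equivariance of the identification $\mathcal{B} \simeq \mathcal{B}'$; once $\mathcal{B}$ is recast as the set of nonzero $q=1$ vectors in a $6$-dimensional $\F_2$-quadratic space, the concluding step is an elementary incidence count.
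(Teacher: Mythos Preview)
Your proof is correct and follows essentially the same route as the paper: identify $\mathcal{B}$ $\Gamma_k$-equivariantly with the set of nonzero $q_\Lambda=1$ vectors in $\Lambda_b/2\Lambda_b$ (via odd theta characteristics, Proposition~\ref{proposition: bridge jacobians root lattices} and Proposition~\ref{prop: quadratic forms identified}), then show that a nonzero $\Gamma_k$-fixed point $\omega$ would split this set into two nonempty $\Gamma_k$-stable pieces by the value of the pairing $(\,\cdot\,,\omega)$.

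The only difference is in how you verify that both pieces are nonempty. The paper (Lemma~\ref{lemma: root lattice transitive zero set no invariants}) first observes that transitivity forces $q_\Lambda(\omega)=-1$, then reduces to checking a single $\omega$ by using that $\Aut(\Lambda/2\Lambda,q_\Lambda)$ acts transitively on the $q_\Lambda=-1$ locus (every such element comes from a root). You instead compute $|\mathcal{B}'_0|=13-2q_\Lambda(\omega)$ directly via the character sum $\sum_\lambda q_\Lambda(\lambda)(-1)^{(\lambda,\omega)}=-8\,q_\Lambda(\omega)$, which handles both signs of $q_\Lambda(\omega)$ at once and avoids the appeal to the automorphism group. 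Your count is correct and arguably more self-contained; the paper's version trades the small computation for a structural fact about the $E_6$ lattice.
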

\begin{proof}
	It is well-known \cite[\S4]{GrossHarris-theta} that bitangents of $\bigprojcurve_b$ correspond to odd theta characteristics of $\bigprojcurve_b$; this correspondence identifies the line at infinity with $2P_{\infty}$. (Recall that $P_{\infty}$ denotes the unique point at infinity of $\bigprojcurve_b$.)
	The assignment $\kappa \mapsto \kappa - 2P_{\infty}$ defines a $\Gamma_k$-equivariant bijection from the set of theta characteristics to the set of $2$-torsion points on $\Jac_b$. Moreover under the identification $\Jac_b[2] \simeq \Lambda_b/2\Lambda_b$ from Proposition \ref{proposition: bridge jacobians root lattices} which identifies the quadratic forms $q_{\kappa_b}$ and $q_{\Lambda_b}$ (Proposition \ref{prop: quadratic forms identified}), the set of odd theta characteristics is mapped bijectively to the zero set of the quadratic form $q_{\Lambda_b}$ on $\Lambda_b/2\Lambda_b$. The proof now follows from Lemma \ref{lemma: root lattice transitive zero set no invariants} below.
	
\end{proof}

\begin{lemma}\label{lemma: root lattice transitive zero set no invariants}
	Let $\Lambda$ be a root lattice of type $E_6$ with quadratic form $q\colon \Lambda/2\Lambda \rightarrow \{\pm 1\},\, \lambda \mapsto (-1)^{(\lambda,\lambda)/2}$. Let $G$ be a subgroup of $\Aut\left(\Lambda/2\Lambda, q \right)$ such that $G$ acts transitively on the set $\{v\in \Lambda/2\Lambda \mid v\neq 0 , \, q(v) =1 \}$.
	Then $\left(\Lambda/2\Lambda \right)^G=\{0\}$.
\end{lemma}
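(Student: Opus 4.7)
My plan is a combinatorial argument that exploits the parity $|X|=27$ together with the non-degeneracy of the symmetric bilinear pairing $(\cdot,\cdot)$ on $\Lambda/2\Lambda$. Write $X = \{v \in \Lambda/2\Lambda \mid v \neq 0,\; q(v) = 1\}$. The first step is to verify that $|X| = 27$: the $36$ pairs of opposite roots of $E_6$ give $36$ distinct anisotropic classes in $\Lambda/2\Lambda \cong \F_2^6$, leaving $64 - 1 - 36 = 27$ nonzero isotropic classes; equivalently, $q$ is a non-degenerate quadratic form of minus type on $\F_2^6$. I will also record, for later use, the identity $q(w+v) = (-1)^{(w,v)} q(w) q(v)$.

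Now I would suppose, for contradiction, that $w \in (\Lambda/2\Lambda)^G$ is nonzero. First I rule out $w \in X$: in that case the $G$-orbit of $w$ is simultaneously $\{w\}$ (since $w$ is fixed) and all of $X$ (by transitivity), which contradicts $|X|=27$. So $w$ is anisotropic, i.e.\ $q(w)=-1$. Because $w$ is $G$-fixed and $G$ preserves the pairing, the function $v \mapsto (w,v)$ is constant on the $G$-orbit $X$; denote its value $c \in \F_2$.

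The heart of the argument splits on $c$. If $c=1$, then for every $v\in X$ one computes $q(w+v)=1$, and $w+v\neq 0$ (since $w\notin X$ but $v\in X$), so $w+v\in X$; thus translation by $w$ is an involution of $X$, which must be fixed-point free (a fixed point would force $w=0$), hence $|X|$ would be even, contradicting $|X|=27$. If instead $c=0$, then the linear functional $(w,\cdot)$ vanishes on $X$, and I would then show that $X$ spans $\Lambda/2\Lambda$ as an $\F_2$-vector space; this forces $(w,\cdot)$ to vanish identically, hence $w=0$ by non-degeneracy of the pairing.

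The only step requiring real checking is the spanning claim, which I expect to be the main (but still easy) obstacle. Equivalently, I must show that no proper subspace $H \subsetneq \Lambda/2\Lambda$ contains $X$, for which it suffices to bound the number of isotropic vectors in any such $H$ by $20<27$. When $\dim H \leq 4$ the trivial bound $2^4=16$ suffices. When $\dim H = 5$, the restriction of the alternating form to $H$ has a one-dimensional radical $(r)$, and a short case analysis on whether $q(r)=0$ or $q(r)=1$, combined with the classification of non-degenerate quadratic forms on $\F_2^4$, gives at most $2\cdot 10=20$ isotropic vectors (the maximum being realized in the split case where $q(r)=0$ and the induced form on $H/(r)$ has plus type). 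This completes the plan.
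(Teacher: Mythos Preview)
Your proof is correct and takes a genuinely different route from the paper's. Both arguments begin the same way: a nonzero $G$-fixed $w$ must be anisotropic, and then the linear functional $(w,\cdot)$, being $G$-invariant, is constant on the single $G$-orbit $X$. The paper finishes by observing that it suffices to exhibit, for \emph{one} anisotropic $w$, elements of $X$ in both $\{(w,\cdot)=0\}$ and $\{(w,\cdot)=1\}$; it reduces to a single $w$ using transitivity of the full orthogonal group on anisotropic vectors and then appeals to an explicit check. You instead handle the two constant values conceptually: for $c=1$ you use that translation by $w$ would be a fixed-point-free involution on $X$, contradicting $|X|=27$ odd; for $c=0$ you show $X$ spans $\Lambda/2\Lambda$ by bounding the number of isotropic vectors in any hyperplane, forcing $w$ into the radical of the non-degenerate pairing. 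Your approach trades the paper's explicit verification for a short counting argument, at the cost of the hyperplane case analysis; one minor point is that your stated maximum of $20$ isotropic vectors in a $5$-dimensional subspace actually includes $0$ (the nonzero count is $19$), but since $20<27$ this does not affect the argument.
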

\begin{proof}
	Suppose that $v \in \Lambda/2\Lambda$ is a nonzero element fixed by every element of $G$. The assumptions on $G$ imply that $q(v)=-1$.
	For $i \in \{0,1\}$ define
	$$
	S_i \coloneqq \{w \in \Lambda/2\Lambda \mid (v,w)=i \}.
	$$
	Then $\Lambda/2\Lambda = S_0\sqcup S_1$ and each $S_i$ is stable under $G$. We claim that both $S_0$ and $S_1$ contain nonzero elements which take the value $1$ at $q$. 
	This would prove the lemma since it contradicts the transitivity of $G$ on such elements.
	To prove the claim, note that the group $\Aut\left(\Lambda/2\Lambda,q\right)$ acts transitively on the set of non-zero elements of $\Lambda/2\Lambda$ which take the value $-1$ at $q$ since every such element is the image of a root in $\Lambda$. So it suffices to prove the claim for a single $v$, in which case it can easily be checked explicitly.

\end{proof}

\subsection{The discriminant polynomial}\label{subsection: discriminant polynomial}

We compare the discriminant $\Delta\in \Q[\bigB]$ which is defined using Lie theory with the discriminant of a plane quartic curve. We keep the notation of the beginning of \S\ref{subsection: further properties of J[2]}.

Recall that $\Delta$ is defined as the image of $\prod_{\alpha} d\alpha\in \Q[\liet]^W$ under the chain of isomorphisms $\Q[\liet]^W \rightarrow \Q[\lieh]^{\bigH} \rightarrow \Q[\bigV]^{\bigG} = \Q[\bigB]$, where $\alpha \in \Phi(H,T)$ runs over the set of roots of $H$. 
Since $\Phi(H,T)$ has $72$ elements, $\Delta$ is homogenous of degree $72$.

\begin{lemma}\label{lemma: discriminant geometrically irreducible}
	For every field $k/\Q$, $\Delta$ is irreducible in $k[\bigB]$.
\end{lemma}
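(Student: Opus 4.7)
The plan is to lift any putative factorization of $\Delta$ from $k[\bigB]$ up to the UFD $k[\liet]$, where the irreducible factors of $\Delta$ are explicit linear forms permuted transitively by $W$. Via the identification $k[\bigB] \simeq k[\liet]^W$ from Proposition \ref{prop : graded chevalley}, $\Delta$ corresponds to $\prod_{\alpha \in \Phi(\bigH,\bigT)}d\alpha$, which equals $\prod_{\alpha > 0}(d\alpha)^2$ (using $|\Phi^+|=36$ and $d(-\alpha)=-d\alpha$). So the distinct prime divisors of $\Delta$ in $k[\liet]$ are the linear forms $d\alpha$ for $\alpha \in \Phi^+$, each occurring with multiplicity $2$.

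Suppose $\Delta = f \cdot g$ in $k[\bigB]$. Unique factorization in $k[\liet]$ forces
\[
f = c\prod_{\alpha > 0}(d\alpha)^{m_\alpha}, \qquad g = c'\prod_{\alpha > 0}(d\alpha)^{m'_\alpha}
\]
with $m_\alpha + m'_\alpha = 2$, so each $m_\alpha \in \{0,1,2\}$. The $W$-invariance of $f$ forces the exponents $m_\alpha$ to be constant on $W$-orbits of roots, and because $E_6$ is simply laced the Weyl group $W$ acts transitively on $\Phi(\bigH,\bigT)$, so a single value $m \in \{0,1,2\}$ occurs. For $m = 0$ or $m = 2$, one of $f, g$ is a nonzero constant, hence a unit of $k[\bigB] \simeq k[p_2,p_5,p_6,p_8,p_9,p_{12}]$ by Proposition \ref{proposition: bridge jacobians root lattices}, giving a trivial factorization.

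The remaining case $m=1$ would make $f$ proportional to $\prod_{\alpha>0}d\alpha$, but a simple reflection $s_\alpha$ sends $\alpha \mapsto -\alpha$ and permutes the other positive roots, so $s_\alpha$ acts on this polynomial by $-1$; it therefore transforms through the nontrivial sign character of $W$ and cannot be $W$-invariant in characteristic zero. This contradicts $f \in k[\liet]^W$ and rules out the case, completing the proof. There is no real obstacle here: the decisive inputs are the transitivity of $W$ on roots (immediate from simply-lacedness) and the anti-invariance of $\prod_{\alpha>0}d\alpha$, and the argument is uniform in $k/\Q$ since characteristic zero is used only to ensure the sign character remains nontrivial.
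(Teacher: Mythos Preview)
Your proof is correct and follows essentially the same idea as the paper's one-line argument, namely that transitivity of $W$ on $\Phi(\bigH,\bigT)$ forces any $W$-invariant divisor of $\Delta$ in $k[\liet]$ to involve all roots uniformly. Your treatment is in fact more careful: the paper's proof (``partition $\Phi$ into two nonempty $W$-invariant subsets'') tacitly skips the case where $f$ and $g$ are both proportional to $\prod_{\alpha>0}d\alpha$, whereas you explicitly rule this out via the anti-invariance of $\prod_{\alpha>0}d\alpha$ under simple reflections.
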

\begin{proof}
	It suffices to prove that we cannot partition $\Phi(H,T)$ into two nonempty $W$-invariant subsets, which is true since $W$ acts transitively on $\Phi(H,T)$.
\end{proof}

Now let $R$ be any ring and $F\in R[x,y,z]$ be a homogenous polynomial of degree $4$. In \cite[Definition 2.2]{Saito-Discriminanthypersurfacevendim}, the (divided) discriminant $\disc(F)\in R$ is defined. 
It is an integral polynomial in the coefficients of $F$ and $\disc(F) \in R^{\times}$ if and only if the plane quartic $(F=0) \subset \P^2_{R}$ is smooth over $R$.  
It satisfies the transformation properties $\disc(\lambda F) = \lambda^{27} F$ and $F((x,y,z)\cdot A) = (\det A)^{36} F(x,y,z)$ for every $\lambda\in R$ and $A \in \Mat_3(R)$ (Equations (2.2.3), (2.2.4) in loc. cit.).

We define $\Delta_0\in\Q[\bigB]$ as the discriminant of the (homogenized) polynomial appearing in Equation (\ref{equation : E6 family middle of paper}):
\begin{equation}\label{equation: definition discriminant Delta0}
	\Delta_0 \coloneqq \disc(y^3z-x^4-(p_2x^2z+p_5xz^2+p_8z^3)y-(p_6x^2z^2+p_9xz^3+p_{12}z^4)).
\end{equation}

\begin{proposition}\label{proposition: discriminant Delta and Delta0 agree}
	The polynomials $\Delta$ and $\Delta_0$ agree up to an element of $\Q^{\times}$. 
\end{proposition}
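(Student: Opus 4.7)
The plan is to observe that $\Delta$ and $\Delta_0$ are nonzero weighted-homogeneous elements of $\Q[\bigB]$ of the same weighted degree that cut out the same closed subscheme of $\bigB$, and then invoke the irreducibility of $\Delta$ from Lemma \ref{lemma: discriminant geometrically irreducible}.

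First I would check that $\Delta$ and $\Delta_0$ vanish on the same set $\bigB \setminus \bigB^{\rs}$. For $\Delta$, this is exactly Part 3 of Proposition \ref{prop : graded chevalley} transported to $\bigB$ via the Chevalley isomorphism. For $\Delta_0$, the defining property of Saito's discriminant recalled from \cite{Saito-Discriminanthypersurfacevendim} gives $\Delta_0(b)\neq 0$ if and only if the plane quartic $\bigprojcurve_b$ is smooth, which is equivalent to $b\in \bigB^{\rs}$ by Part 2 of Proposition \ref{proposition: bridge jacobians root lattices}. Both polynomials are easily seen to be nonzero (e.g.\ by exhibiting a single $b$ where the fibre is smooth).

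Next I would compare weighted degrees under the $\G_m$-action on $\bigB$ with $\lambda\cdot p_i = \lambda^i p_i$. The polynomial $\Delta$ has weighted degree $|\Phi(H,T)| = 72$, being the image of the degree-$72$ form $\prod_{\alpha} d\alpha \in \Q[\liet]^W$ under the isomorphism $\Q[\liet]^W \simeq \Q[\bigB]$. For $\Delta_0$, extending the $\G_m$-action from $\bigprojcurve \to \bigB$ to $\P^2_{\bigB}$ by giving $z$ weight $0$, I would verify that the polynomial $F$ appearing in (\ref{equation: definition discriminant Delta0}) transforms as
$$F_{\lambda\cdot b}(\lambda^3 x,\lambda^4 y, z) \;=\; \lambda^{12} F_b(x,y,z).$$
Applying Saito's transformation formulas $\disc(\mu F) = \mu^{27}\disc(F)$ and $\disc(F\circ A) = (\det A)^{36}\disc(F)$ with the diagonal matrix $A = \mathrm{diag}(\lambda^3,\lambda^4,1)$ of determinant $\lambda^7$ to both sides gives
$$\lambda^{7\cdot 36}\,\Delta_0(\lambda\cdot b) \;=\; \lambda^{12\cdot 27}\,\Delta_0(b),$$
so $\Delta_0(\lambda\cdot b) = \lambda^{72}\Delta_0(b)$ as well.

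Finally I conclude: since $\Delta$ is irreducible in $\Q[\bigB]$ and $\sqrt{(\Delta_0)} = \sqrt{(\Delta)} = (\Delta)$ by Step 1, unique factorization in $\Q[\bigB]$ forces $\Delta_0 = c\cdot \Delta^k$ for some $c\in \Q^\times$ and $k\geq 1$. Matching weighted degrees pins down $k=1$, giving the result. The only substantive verification is the scaling computation for $\Delta_0$, where one must carefully combine the $\G_m$-equivariance of the family with Saito's transformation rules; everything else is formal.
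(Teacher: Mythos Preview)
Your proposal is correct and follows essentially the same approach as the paper: both arguments compare vanishing loci, invoke the irreducibility of $\Delta$ from Lemma \ref{lemma: discriminant geometrically irreducible}, and verify via Saito's transformation rules that $\Delta_0$ is weighted homogeneous of degree $72$. The only cosmetic difference is that the paper writes the scaling identity as $F_{\lambda\cdot b}(x,y,z)=\lambda^{12}F_b(\lambda^{-3}x,\lambda^{-4}y,z)$ and computes $\lambda^{12\cdot 27-7\cdot 36}=\lambda^{72}$ in one line, whereas you substitute with positive exponents and equate the two sides; the arithmetic is the same.
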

\begin{proof}
Since $\Delta$ and $\Delta_0$ have the same vanishing locus (Part 2 of Proposition \ref{proposition: bridge jacobians root lattices}) and $\Delta$ is irreducible (Lemma \ref{lemma: discriminant geometrically irreducible}), it suffices to prove that $\Delta_0$ is weighted homogenous of degree $72$ in the variables $p_2,\cdots,p_{12}$. 
Write $F_B\in \Q[B][x,y,z]$ for the polynomial appearing in the right hand side of Equation (\ref{equation: definition discriminant Delta0}). 
Using the transformation properties of $\disc$ we obtain
\begin{align*}
	\Delta_0(\lambda \cdot b )= \disc(\lambda^{12}F_B(\lambda^{-3}x,\lambda^{-4}y,z)) = \lambda^{12.27-7.36}\disc(F_B) = \lambda^{72}\Delta_0(b),
\end{align*}
as desired.
\end{proof}

\section{Orbit parametrization}\label{section: orbit parametrization}

The purpose of this section is to prove that for each $b\in \bigB^{\rs}(\Q)$, we can construct a natural injection $\Sel_2\Jac_b \hookrightarrow \bigG(\Q)\backslash \bigV_b(\Q)$, see Corollary \ref{corollary: Sel2 embeds}. 
In \cite{thorne-planequarticsAIT}, such an embedding was already constructed, but it is crucial to know that the distinguished orbit $\bigG(\Q)\cdot \sigma(b)$ lies in the image of this embedding and to have a more general version for the purposes of constructing integral representatives (Theorem \ref{theorem: inject 2-descent into orbits}).
The technical input is an isomorphism between two central extensions (Proposition \ref{propostion: 2 central extensions coincide}), established in \S\ref{subsection: comparting two central extensions}. 
The reader is advised to read \S\ref{subsection: Mumford theta groups}, take Corollary \ref{corollary: commutative diagram corollary central extensions} on faith and jump straight to \S\ref{subsection: twisting and embedding the selmer group}.

\subsection{Mumford theta groups}\label{subsection: Mumford theta groups}

In this subsection we construct a finite subgroup $\mathcal{H}$ of a certain Mumford theta group when a curve with a rational theta characteristic is given and realize this group as the deck transformations of a covering of schemes. 
A general reference is \cite[Chapters 6, 11]{BirkenhakeLange-CAV}.

Let $k/\Q$ be a field and $\gencurve/k$ a smooth projective geometrically integral curve of genus $g\geq 2$. 
Write $\genJac$ for its Jacobian variety and $\genJac^{g-1}$ for the $\genJac$-torsor of line bundles of degree $g-1$ on $\gencurve$. 
The variety $\genJac^{g-1}$ has a distinguished divisor $W_{g-1}$ given by the image of the Abel--Jacobi map $\gencurve^{g-1}  \rightarrow \genJac^{g-1}$, called the \define{theta divisor}. 
For an element $a\in \genJac(k)$ (respectively $a\in \genJac^{g-1}(k)$), we write $t_a$ for the translation map $t_a\colon \genJac\rightarrow \genJac$ (respectively $t_a\colon \genJac \rightarrow \genJac^{g-1}$).
We say a line bundle $\kappa \in \genJac^{g-1}(k^s)$ (or any divisor representing it) is a \define{theta characteristic} if $\kappa^{\otimes 2}$ is isomorphic to the canonical bundle.

Suppose that $\kappa \in \genJac^{g-1}(k)$ is a $k$-rational theta characteristic.
In this case $\sh{M} = \O_{\genJac}(t_{\kappa}^{*}W_{g-1})$ is a symmetric line bundle. We define the \define{Mumford theta group} $G(\sh{M}^2)$ of $\sh{M}^2$ to be the set
$$\left\{ (\omega,\phi) \mid \omega \in \genJac[2](k^s),\phi \colon \sh{M}^2 \xrightarrow{\sim} t_{\omega}^*\sh{M}^2 \right\}$$
with multiplication given by 
$(\omega,\phi)\cdot (\tau,\psi) = (\omega+\tau, t^*_{\omega}\psi \circ \phi).$
This group admits a natural $\Gamma_k$-action and fits into a central extension 
$$1\rightarrow \G_{m,k} \rightarrow G(\sh{M}^2) \rightarrow \genJac[2] \rightarrow 1.$$
The next lemma follows from the definition of the Weil pairing. 
\begin{lemma}
	Let $\omega,\tau\in \genJac[2](k^s)$, and let $\tilde{\omega},\tilde{\tau}$ be lifts of these elements to $G(\sh{M}^2)(k^s)$. 
	Then $\tilde{\omega}\tilde{\tau}\tilde{\omega}^{-1}\tilde{\tau}^{-1} = e_2(\omega,\tau)$, where $e_2\colon \genJac[2] \times \genJac[2] \rightarrow \{\pm 1\}$ denotes the Weil pairing on $\genJac[2]$. 
\end{lemma}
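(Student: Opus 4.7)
The plan is to reduce this to the standard fact that for any line bundle $L$ on an abelian variety, the commutator pairing on the theta group $G(L)$ coincides with the Weil pairing $e^L$ on $K(L)$, and then identify $e^L$ for $L = \sh{M}^2$ with the Weil pairing $e_2$ on $\genJac[2]$.

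First I would observe that the commutator $[\tilde{\omega},\tilde{\tau}] \coloneqq \tilde{\omega}\tilde{\tau}\tilde{\omega}^{-1}\tilde{\tau}^{-1}$ is independent of the choice of lifts: any two lifts of $\omega$ differ by a central element of $G(\sh{M}^2)$, namely an element of $\G_{m,k}$, and such elements commute with everything. Hence the commutator lands in the kernel $\G_{m,k}$ of the projection $G(\sh{M}^2)\to \genJac[2]$, and defines an alternating bilinear map $c\colon \genJac[2]\times \genJac[2]\to \G_m$. Since $\genJac[2]$ is $2$-torsion, $c$ takes values in $\mu_2 = \{\pm 1\}$.

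Second, I would verify that $\sh{M}$ defines the canonical principal polarization of $\genJac$. The divisor $W_{g-1}$ on $\genJac^{g-1}$ defines the principal polarization up to the identification $\genJac\simeq \genJac^{g-1}$ via $t_\kappa$, so $\sh{M}=\O_{\genJac}(t_\kappa^* W_{g-1})$ is a symmetric line bundle defining the principal polarization. Therefore $\sh{M}^2$ has type $(2,2,2)$ and its kernel of polarization satisfies $K(\sh{M}^2) = \genJac[2]$.

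Third, I would invoke the classical identification of the commutator pairing on the theta group with the Weil pairing attached to the line bundle (e.g.\ \cite{BirkenhakeLange-CAV}*{Proposition 6.3.1 and Lemma 6.6.6}, or \cite{Mumford-AV}*{\S23}). Under $K(\sh{M}^2)=\genJac[2]$, the associated pairing $e^{\sh{M}^2}$ coincides with the Weil pairing $e_2$, by the standard characterisation of $e_2$ via the theta group of twice a principal polarization. Chaining these identifications gives $c(\omega,\tau)=e_2(\omega,\tau)$.

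The proof is really a sequence of invocations of standard material; the only genuinely content-bearing step is keeping the sign conventions and the identification $\genJac\simeq\genJac^{g-1}$ consistent, which is why the hypothesis that $\kappa$ is a \emph{rational} theta characteristic matters (so that $\sh{M}$ is defined over $k$ and the commutator pairing is $\Gamma_k$-equivariant). No additional ingredient beyond the formalism of Mumford theta groups is needed.
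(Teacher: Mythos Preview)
Your proposal is correct and matches the paper's approach: the paper simply remarks that the lemma ``follows from the definition of the Weil pairing'' without further elaboration, and your argument is precisely an unpacking of that remark via the standard identification of the commutator pairing on a theta group with the Weil pairing $e^L$, together with the observation that $\sh{M}$ induces the principal polarization so that $e^{\sh{M}^2}=e_2$.
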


Since $\sh{M}$ is symmetric, there exists a unique isomorphism $f\colon\sh{M}\xrightarrow{\sim}[-1]^*\sh{M}$ that is the identity at the fibre $\sh{M}_0$ above $0 \in \genJac$. 
For every $\omega\in \genJac[2]$, we thus obtain an isomorphism $f_{\omega}\colon \sh{M}_{\omega} \xrightarrow{\sim} \sh{M}_{-\omega} = \sh{M}_{\omega}$, hence a scalar $q_{\sh{M}}(\omega) \in \left(k^s\right)^{\times}$. 
Since $[-1]^*f \circ f = \Id_{\sh{M}}$, we see that $q_{\sh{M}}(\omega) = \pm 1$. 
The next proposition shows that $q_{\sh{M}} $ is a quadratic refinement of the Weil pairing $e_2$. 

\begin{lemma}\label{lemma: quadratic form mumford theta group is in fact a quadratic form}
	The map $q_{\sh{M}}$ agrees with the quadratic form $q_{\kappa}$ from Lemma \ref{lemma: mumford construction quadratic form to theta char}: for every $\omega \in \genJac[2](k^s)$ we have 
	$$q_{\sh{M}}(\omega) = (-1)^{h^0(\omega+\kappa)+h^0(\kappa)},$$
	where $h^0(D) = \dim_k \HH^0(X,\O_{X}(D))$. 
	Consequently for every $\omega,\tau\in \genJac[2](k^s)$ we have 
	\begin{equation*}
	q_{\sh{M}}(\omega+\tau) = e_2(\omega,\tau) q_{\sh{M}}(\omega) q_{\sh{M}}(\tau).
	\end{equation*}
\end{lemma}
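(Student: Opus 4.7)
The plan is to realize $q_{\sh{M}}$ geometrically as a sign coming from the Serre-duality involution acting on the theta divisor, apply Mumford's theorem on theta characteristics to compute that sign, and then read off the quadratic-refinement property directly from Lemma~\ref{lemma: mumford construction quadratic form to theta char}.

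First I would use $\kappa$ to transport the situation to $\genJac^{g-1}$. Because $2\kappa$ is canonical, the translation $t_\kappa\colon \genJac \xrightarrow{\sim} \genJac^{g-1}$ intertwines $[-1]_\genJac$ with the Serre-duality involution $\iota\colon L \mapsto \omega_{\gencurve}\otimes L^{-1}$. Riemann--Roch in degree $g-1$ shows that $W_{g-1}$ is $\iota$-stable, so $\O_{\genJac^{g-1}}(W_{g-1})$ carries a canonical isomorphism $\iota^*\O(W_{g-1}) \simeq \O(W_{g-1})$; pulling back along $t_\kappa$ and then rescaling so as to be the identity at $0 \in \genJac$ recovers $f$.

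The main step is to compute, for $\omega \in \genJac[2](k^s)$, the scalar by which $f$ acts on $\sh{M}_\omega = \O(W_{g-1})_{\kappa+\omega}$. Since $2\omega = 0$, the point $\kappa+\omega$ is fixed by $\iota$, so the scalar in question equals the ratio of the signs by which $\iota$ acts on the one-dimensional fibres $\O(W_{g-1})_{\kappa+\omega}$ and $\O(W_{g-1})_{\kappa}$. Mumford's theorem~\cite{Mumford-thetacharacteristicsalgebraiccurve} gives that for any $\iota$-fixed point $L$ this sign is $(-1)^{h^0(L)}$; the input is Riemann's singularity theorem (the multiplicity of $W_{g-1}$ at $L$ equals $h^0(L)$), together with an analysis of how $\iota$ acts on a local defining equation of $W_{g-1}$ at $L$. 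Taking the ratio yields $q_{\sh{M}}(\omega) = (-1)^{h^0(\kappa+\omega)+h^0(\kappa)} = q_\kappa(\omega)$, and the displayed quadratic-refinement identity then follows from Lemma~\ref{lemma: mumford construction quadratic form to theta char} applied to $q_\kappa$. The only real obstacle is the sign computation for $\iota$ on fibres of $\O(W_{g-1})$ at theta characteristics, which is Mumford's classical result and can simply be cited rather than reproved.
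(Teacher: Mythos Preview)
Your approach is essentially the same as the paper's: both reduce the computation of $q_{\sh{M}}(\omega)$ to the multiplicity of the theta divisor at $\kappa+\omega$ and $\kappa$ via Mumford, invoke Riemann's singularity theorem to convert multiplicities to $h^0$, and then read off the quadratic-refinement identity from Lemma~\ref{lemma: mumford construction quadratic form to theta char}. The only difference is bibliographic: the precise result you need (the sign on the fibre of a symmetric line bundle at a $2$-torsion point equals $(-1)$ to the difference of multiplicities of the associated symmetric divisor) is \cite[\S2, Proposition~2]{Mumford-eqdefAVs}, not \cite{Mumford-thetacharacteristicsalgebraiccurve}, and citing it directly makes the detour through $\genJac^{g-1}$ and the Serre-duality involution unnecessary.
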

\begin{proof}
	By \cite[Proposition 2 of \S2]{Mumford-eqdefAVs} we have $q_{\sh{M}}(\omega) = (-1)^{m_{\omega+\kappa}(W_{g-1}) + m_{\kappa}(W_{g-1})}$, where $m_x(D)$ denotes the multiplicity of a divisor $D$ at a point $x$.
	By Riemann's singularity theorem, the multiplicity of the theta divisor $W_{g-1}$ at a point $a\in J^{g-1}_X$ is exactly $h^0(a)$.
	Combining the last two sentences proves the first identity.
	The second one follows from the first one and Lemma \ref{lemma: mumford construction quadratic form to theta char}.
\end{proof}

Following Mumford \cite[Definition above Proposition 3 of \S2]{Mumford-eqdefAVs}, the quadratic form $q_{\sh{M}}$ allows us to define the subgroup $\mathcal{H} \subset G(\sh{M}^2)$ as 
$$\mathcal{H} \coloneqq \left\{\widetilde{\omega} \in G(\sh{M}^2) \mid \widetilde{\omega}^2 = q_{\sh{M}}(\omega)    \right\}.$$
(Here we write $\omega$ for the projection of $\widetilde{\omega}$ in $\genJac[2]$.)
Lemma \ref{lemma: quadratic form mumford theta group is in fact a quadratic form} implies that $\mathcal{H}$ is indeed a subgroup and it inherits a $\Gamma_k$-action since $\sh{M}$ is defined over $k$. 
It fits into the central extension 
\begin{equation*}
1\rightarrow \{\pm1 \} \rightarrow \mathcal{H} \rightarrow \genJac[2] \rightarrow 1.
\end{equation*}
We now show how we can realize $\mathcal{H}$ as the Galois group of a covering space of schemes. This approach is certainly not new but we have been unable to find an adequate reference for it. 
First we recall for an invertible sheaf $\sh{L}$ on $\genJac$ its associated $\G_m$-torsor $\Gmtorsor{\sh{L}} \rightarrow \genJac$, the complement of the zero section in the total space of $\sh{L}$. 
For a scheme $S$ over $\Spec k$, the $S$-points of $\Gmtorsor{\sh{L}}$ are given by pairs $(x,\alpha)$ where $x: S\rightarrow \genJac$ is an $S$-valued point of $\genJac$ and $\alpha$ is an isomorphism $\O_S \xrightarrow{\sim} x^*\sh{L}$. 

We now define the morphism $p\colon \Gmtorsor{\sh{M}^2} \rightarrow \Gmtorsor{\sh{M}}$ which will be the desired $\mathcal{H}$-torsor and sits in the following commutative diagram:
\begin{center}
	\begin{tikzcd}
		\Gmtorsor{\sh{M}^2} \arrow[d] \arrow[r, "p"] & \Gmtorsor{\sh{M}} \arrow[d] \\
		\genJac \arrow[r, "\times 2"]                      & \genJac                          
	\end{tikzcd}
\end{center}
First we choose a rigidification of $\sh{M}$ i.e. an isomorphism $\sh{M}_0 \simeq k$. (The morphism we will construct depends on this choice but this does not cause any problems.)
This induces rigidifications of the line bundles $[2]^*\sh{M}$ and $\sh{M}^4$ and there is a unique isomorphism $F\colon [2]^*\sh{M} \xrightarrow{\sim} \sh{M}^4$ afforded by the theorem of the cube which respects these rigidifications. 
Given a pair $(x,\alpha)$ corresponding to an $S$-valued point of $\Gmtorsor{\sh{M}^2}$, consider the tensor square $\alpha^{\otimes2}$ of $\alpha$, which is an isomorphism $\alpha^{\otimes 2}\colon \O_S \xrightarrow{\sim} x^*\sh{M}^4$. 
Pulling back $F$ along $x$ defines an isomorphism
$$\left([2]\circ x\right)^*\sh{M}  = x^*\left([2]^*\sh{M}\right) \simeq x^*\sh{M}^4. $$
Composing $\alpha^{\otimes2}$ with the inverse of this isomorphism defines an isomorphism $\beta\colon \O_S \xrightarrow{\sim}  \left([2]\circ x\right)^*\sh{M}$. 
We define $p$ on $S$-points of $\Gmtorsor{\sh{M}^2}$ by sending the pair $(x,\alpha)$ to the pair $([2]\circ x,\beta)$ via the procedure just described.

\begin{proposition}\label{proposition: V(M) admits a H-torsor}
	The morphism $p\colon \Gmtorsor{\sh{M}^2}\rightarrow \Gmtorsor{\sh{M}}$ has the natural structure of a right $\mathcal{H}$-torsor.
\end{proposition}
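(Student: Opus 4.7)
The plan is to define a natural right action of the Mumford theta group $G(\sh{M}^2)$ on $\Gmtorsor{\sh{M}^2}$, show that the subgroup $\mathcal{H}$ is exactly the stabiliser of the image under $p$, and conclude by a degree count that this action is simply transitive on the fibres of $p$. For an $S$-point $(x, \alpha)$ of $\Gmtorsor{\sh{M}^2}$ and $(\omega, \phi) \in G(\sh{M}^2)(S)$ I would set
\[
(x, \alpha) \cdot (\omega, \phi) \coloneqq (x + \omega,\; x^*\phi \circ \alpha),
\]
where $x^*\phi$ is viewed as an isomorphism $x^*\sh{M}^2 \xrightarrow{\sim} (x+\omega)^*\sh{M}^2$; that this defines a right action is a routine consequence of the group law in $G(\sh{M}^2)$.

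The key structural observation is the factorisation
\[
p\colon \Gmtorsor{\sh{M}^2} \xrightarrow{\;\mathrm{sq}\;} \Gmtorsor{\sh{M}^4} \xrightarrow{\;\sim\;} \Gmtorsor{[2]^*\sh{M}} \longrightarrow \Gmtorsor{\sh{M}},
\]
in which the first arrow is the $\mu_2$-cover $\alpha \mapsto \alpha^{\otimes 2}$, the middle isomorphism is induced by $F$, and the final arrow is the pullback along $[2]\colon \genJac\to\genJac$, i.e.\ a $\genJac[2]$-torsor. Hence $p$ is finite \'etale of degree $2\cdot|\genJac[2](k^s)|$, which equals $|\mathcal{H}(k^s)|$.

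The main obstacle is matching the fibre stabiliser of $p$ inside $G(\sh{M}^2)$ with $\mathcal{H}$. Unwinding the construction of $\beta$, the element $(x,\alpha)\cdot(\omega,\phi)$ has the same $p$-image as $(x,\alpha)$ if and only if $\phi^{\otimes 2} = \Phi_\omega$ as isomorphisms $\sh{M}^4 \to t_\omega^*\sh{M}^4$, where $\Phi_\omega \coloneqq t_\omega^*F\circ F^{-1}$ (the identification $t_\omega^*[2]^*\sh{M} = [2]^*\sh{M}$ uses $[2]\circ t_\omega=[2]$ for $\omega\in\genJac[2]$). To match this with Mumford's condition $\tilde\omega^2 = q_{\sh{M}}(\omega)$, i.e.\ $t_\omega^*\phi\circ\phi = q_{\sh{M}}(\omega)\cdot\Id_{\sh{M}^2}$, I would restrict to the fibre $\sh{M}_0$, where the chosen rigidification of $\sh{M}$ trivialises every tensor power and makes $\Phi_\omega|_0$ explicit in terms of the symmetric isomorphism $\sh{M}\to[-1]^*\sh{M}$; comparison with Lemma \ref{lemma: quadratic form mumford theta group is in fact a quadratic form} (via Mumford's description of $q_{\sh{M}}$ in terms of this symmetric structure) then yields the equivalence of the two conditions. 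The sign bookkeeping at the identity section is where the technical work lies.

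Given the stabiliser computation, simple transitivity is then immediate: $\mu_2\subset\mathcal{H}$ acts freely on the fibres of $\mathrm{sq}$, while $\genJac[2]=\mathcal{H}/\mu_2$ translates $x$ without fixed points, so the $\mathcal{H}$-action on fibres of $p$ is free; combined with the degree count $\deg p = |\mathcal{H}(k^s)|$ from the factorisation, this completes the proof.
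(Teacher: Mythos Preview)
Your approach is correct and structurally identical to the paper's: define the right $G(\sh{M}^2)$-action $(x,\alpha)\cdot(\omega,\phi)=(t_\omega\circ x,\, x^*\phi\circ\alpha)$, identify $\mathcal{H}$ as the subgroup preserving $p$, and conclude simple transitivity on fibres. The one substantive difference is in how the compatibility of the $\mathcal{H}$-action with $p$ is established. The paper avoids your ``sign bookkeeping at the identity section'' entirely by invoking an alternative characterisation of $\mathcal{H}$ due to Mumford \cite[Proposition~6 of \S2]{Mumford-eqdefAVs}: for $(\omega,\phi)\in G(\sh{M}^2)$, the induced automorphism $\Phi$ of $[2]^*\sh{M}$ (defined by $\Phi = (\text{can})\circ t_\omega^*F^{-1}\circ\phi^{\otimes 2}\circ F$) is the identity if and only if $(\omega,\phi)\in\mathcal{H}$. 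Your condition $\phi^{\otimes 2}=\Phi_\omega$ is exactly $\Phi=\Id$, so you have rediscovered this characterisation; what you leave as a sketch (matching this to $\tilde\omega^2=q_{\sh{M}}(\omega)$ via restriction to the fibre at $0$) is precisely the content of Mumford's proposition. Citing it directly would replace the vague step in your argument with a clean reference.
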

\begin{proof}
	For the proof of this proposition it will be  useful to give a different interpretation of $\mathcal{H}$. For any $(\omega,\phi) \in G(\sh{M}^2)$, there is a unique $\Phi \colon [2]^*\sh{M} \rightarrow [2]^*\sh{M}$ such that the following diagram commutes:
	\begin{center}
		\begin{tikzcd}
			\sh{M}^4 \arrow[r, "\phi^{\otimes2}"] \arrow[d, "F^{-1}"] & t_{\omega}^*\sh{M}^4 \arrow[d, "t_{\omega}F^{-1}"]      \\
			{[2]^*\sh{M}} \arrow[rd, "\Phi", dashed]                  & {t^*_{\omega}[2]^*\sh{M}} \arrow[d, "\simeq"] \\
			& {[2]^*\sh{M}}                                
		\end{tikzcd}
	\end{center}
	Here $t_{\omega}^*[2]^*\sh{M}\simeq [2]^*\sh{M}$ is the canonical isomorphism. The morphism $\Phi$ does not depend on the choice of rigidification of $\sh{M}$. 
	Then \cite[Proposition 6 of \S2]{Mumford-eqdefAVs} shows that $(\omega,\phi)$ lies in the subgroup $\mathcal{H}$ of $G(\sh{M}^2)$ if and only if $\Phi$ is the identity. 
	Using this fact we can define the action of $\mathcal{H}$ on $\Gmtorsor{\sh{M}^2}$ as follows. Take a pair $(x,\alpha)$ corresponding to an $S$-valued point of $\Gmtorsor{\sh{M}^2}$ and an $S$-valued point $(\omega,\phi) \in \mathcal{H}$. We define 
	$$(x,\alpha) \cdot (\omega,\phi) := (t_{\omega}\circ x, x^*\phi \circ \alpha).$$
	One readily checks that this is a well-defined right action of $\mathcal{H}$ on $\Gmtorsor{\sh{M}^2}$ which is $\Gamma_k$-equivariant. The different interpretation of $\mathcal{H}$ shows that the action commutes with $p$.  Moreover, it acts simply transitively on the geometric fibres of $p$.
	
\end{proof}

We specialize the above construction to our situation of interest: for each $b\in \bigB^{\rs}(k)$, the theta characteristic $\kappa = 2P_{\infty}$ on $\bigprojcurve_b$ defines a central extension of finite group schemes over $k$:
\begin{equation} \label{equation: subgroup mumford theta extension}
1 \rightarrow \{\pm 1 \} \rightarrow \mathcal{H}_b \rightarrow \Jac_b[2] \rightarrow 1.
\end{equation}

We can globalize this to the family of smooth projective curves $\bigprojcurve^{\rs}\rightarrow \bigB^{\rs}$. Indeed, recall that $\Jac\rightarrow \bigB^{\rs}$ denotes the relative Jacobian of this family. 
Since $\bigprojcurve^{\rs} \rightarrow \bigB^{\rs}$ has a section $P_{\infty}$, the scheme $\Jac$ parametrizes rigidified line bundles on $\bigprojcurve^{\rs} \rightarrow \bigB^{\rs}$ \cite[Theorem 9.2.5]{Kleiman-PicardScheme}.
We can define a line bundle $\sh{M}$ on $\Jac$ using the relative theta divisor (see the proof of \cite[\S9.4; Proposition 4]{BLR-NeronModels} for its construction). By adapting the definition of $G(\sh{M})$ to the relative situation (see \cite[Expos\'e 7; Definition 3.1]{Pinceauxcourbesgenresdeux}), we obtain a $\bigB^{\rs}$-group scheme $\sh{G}(\sh{M})$ sitting in an exact sequence of smooth group schemes (Proposition 3.2 of loc. cit.):
$$
1\rightarrow \G_{m,\bigB^{\rs}} \rightarrow \sh{G}(\sh{M}^2) \rightarrow \Jac[2] \rightarrow 1.
$$
By the same procedure as the beginning of this section, we obtain a quadratic form $q_{\sh{M}}\colon \Jac[2] \rightarrow \{\pm 1\}$ and we define $\bigHeis$ as the kernel of the group homomorphism $\sh{G}(\sh{M}) \rightarrow \{\pm 1\}, \widetilde{\omega} \mapsto q_{\sh{M}}(\omega) \widetilde{\omega}^2$. 
It sits in an exact sequence of finite \'etale group schemes
\begin{align*}
1\rightarrow \{\pm 1\} \rightarrow \bigHeis \rightarrow \Jac[2] \rightarrow 1
\end{align*}
which for each $k$-point $b$ specializes to the exact sequence (\ref{equation: subgroup mumford theta extension}). 
Once a rigidification for $\sh{M}$ is chosen (which is possible since $\bigB^{\rs}$ has trivial Picard group), we can define a morphism $p\colon \Gmtorsor{\sh{M}^2} \rightarrow \Gmtorsor{\sh{M}}$ and the same logic as the proof of Proposition \ref{proposition: V(M) admits a H-torsor} shows that $p$ acquires the structure of an $\bigHeis$-torsor.

\subsection{Comparing two central extensions}\label{subsection: comparting two central extensions}

In this section we compare $\bigHeis$ with a finite \'etale group scheme coming from the representation theory of the pair $(G,V)$. The consequences of this comparison that will be used later in the paper are summarized in Corollary \ref{corollary: commutative diagram corollary central extensions}.

Recall from \S\ref{subsection: a stable grading} that the group $\bigG$ is a split simple group over $\Q$ isomorphic to $\PSp_8$. Write $\bigG^{sc} \rightarrow \bigG$ for its simply connected cover. We have an exact sequence 
$$1\rightarrow \{\pm 1\} \rightarrow \bigG^{sc} \rightarrow \bigG \rightarrow 1. $$
In \S\ref{subsection: a family of curves} we have defined a family of maximal tori $\bigA\rightarrow \bigB^{\rs}$ in $\bigH$ with the property that $\bigA \cap \bigG_{\bigB^{\rs}} = \bigA[2]$. 
By Lemma \ref{lemma: centralizer kostant same as mod 2 root lattice} there is a natural isomorphism of $\bigB^{\rs}$-group schemes $\bigA[2] \simeq \bigLambda/2\bigLambda$. 
Taking the pullback of the inclusion $\bigLambda/2\bigLambda \hookrightarrow \bigG_{\bigB^{\rs}} = \bigG\times \bigB^{\rs}$ along the morphism $\bigG_{\bigB^{\rs}}^{sc}  \rightarrow \bigG_{\bigB^{\rs}}$ yields a commutative diagram with exact rows
\begin{center}
\begin{tikzcd}
		1 \arrow[r] & \{\pm1\} \arrow[r]                & \bigG_{\bigB^{\rs}}^{sc} \arrow[r] & \bigG_{\bigB^{\rs}} \arrow[r] & 1 \\
		1 \arrow[r] & \{\pm1\} \arrow[r] \arrow[u, "="] & \bigExt \arrow[r] \arrow[u]       & {\bigLambda/2\bigLambda   } \arrow[r] \arrow[u] & 1
\end{tikzcd}
\end{center}
where the righthand square is pullback. The finite \'etale group scheme $\bigExt \rightarrow \bigB^{\rs}$ is a central extension of $\bigLambda/2\bigLambda$ by $\{\pm 1\}$. It is isomorphic to $Z_{\bigG^{sc}}(\sigma|_{\bigB^{\rs}})$, the simply connected centralizer of the Kostant section.

On the other hand, in the previous subsection we have defined a group scheme $\bigHeis$, a subgroup of a Mumford theta group, which fits in the exact sequence of \'etale group schemes 
$$1\rightarrow \{\pm 1\} \rightarrow \bigHeis \rightarrow \Jac[2] \rightarrow 1.$$

The following proposition is a central technical result of this paper. It lifts the isomorphism $\Lambda/2\Lambda \rightarrow \Jac[2]$ obtained in \cite[Corollary 4.12]{Thorne-thesis} to an isomorphism between the nonabelian groups $\bigExt$ and $\bigHeis$. 

\begin{proposition}\label{propostion: 2 central extensions coincide}
	There exists a unique isomorphism $\bigExt\simeq\bigHeis$ of group schemes over $\bigB^{\rs}$ that preserves the subgroup $\{\pm 1\}$ and such that the induced isomorphism $\bigLambda/2\bigLambda\simeq \Jac[2]$ coincides with the one from Proposition \ref{proposition: bridge jacobians root lattices}. 
\end{proposition}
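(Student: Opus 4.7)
The plan is to follow the strategy outlined in \S\ref{subsection: intro methods}: produce the isomorphism geometrically using a classification of central extensions, and then deduce Galois equivariance by realising both sides as quotients of a common étale fundamental group.

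Uniqueness is formal. Any two isomorphisms with the stated properties differ by an automorphism of $\bigHeis$ fixing $\{\pm 1\}$ and the quotient $\bigLambda/2\bigLambda$ pointwise. Such automorphisms are parameterised by $\Hom_{\bigB^{\rs}}(\bigLambda/2\bigLambda,\{\pm 1\})$; using the non-degenerate pairing on $\bigLambda/2\bigLambda$ to identify it with its Cartier dual, this Hom group is isomorphic to the group of sections of $\bigLambda/2\bigLambda \to \bigB^{\rs}$, which is trivial by Part 2 of Proposition \ref{proposition: monodromy of J[2]}.

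For existence I first produce an isomorphism geometrically on each fibre. A central extension of a finite-dimensional $\F_2$-vector space by $\{\pm 1\}$ is classified up to isomorphism by the pair consisting of its commutator pairing and its squaring map (the latter being a quadratic refinement of the former). Both $\bigExt$ and $\bigHeis$ are extensions of $\bigLambda/2\bigLambda \simeq \Jac[2]$ by $\{\pm 1\}$ whose commutator pairing is the Weil pairing on $\Jac[2]$ (for $\bigExt$ this is a standard computation of commutators of root lifts in the simply connected cover of type $E_6$). The squaring map of $\bigHeis$ is $q_{\sh{M}} = q_{\kappa}$ by Lemma \ref{lemma: quadratic form mumford theta group is in fact a quadratic form}; for $\bigExt$, a direct computation of squares of root lifts in $\bigG^{sc}$ shows that it equals $q_{\Lambda}$. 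By Proposition \ref{prop: quadratic forms identified} these agree under the chosen identification, so there is an abstract isomorphism of central extensions fibrewise over $\overline{\bigB^{\rs}}$.

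The main obstacle is Galois equivariance. Fix $b \in \bigB^{\rs}(\Q)$. The strategy is to realise both $\bigExt_{b,\overline{\Q}}$ and $\bigHeis_{b,\overline{\Q}}$ as quotients of $\pi_1^{\mathrm{\acute{e}t}}(\bigaffcurve_{b,\overline{\Q}}, *)$, where $*$ is a tangential basepoint at $P_\infty$ in the sense of \cite{Deligne-droiteprojective}. On the Mumford side, pulling back the $\bigHeis$-torsor $\Gmtorsor{\sh{M}^2} \to \Gmtorsor{\sh{M}}$ from Proposition \ref{proposition: V(M) admits a H-torsor} along the Abel--Jacobi map $P \mapsto [P - P_{\infty}]$ yields an $\bigHeis_b$-torsor on $\bigaffcurve_b$. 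On the representation side, one uses the étale map $\bigG \times \bigkappa \to \bigV$ of Proposition \ref{proposition: Kostant section E6} together with the double cover $\bigG^{sc} \to \bigG$, restricted to the transverse slice defining $\bigaffcurve \subset \bigV$, to obtain a natural $\bigExt_b$-torsor on $\bigaffcurve_b$. Both constructions are manifestly Galois equivariant. The decisive input is that $\pi_1^{\mathrm{\acute{e}t}}(\bigaffcurve_{b,\overline{\Q}}, *)$ admits essentially a unique continuous quotient that is a central extension of $\Jac_b[2]$ by $\{\pm 1\}$, refining the fixed projection $\pi_1 \twoheadrightarrow \Jac_b[2]$ (afforded by the étale double cover $[2] \colon \Jac_b \to \Jac_b$) and having squaring map $q_{\kappa}$; the uniqueness here is a consequence of the presentation of the pro-étale fundamental group of a once-punctured smooth projective genus-$3$ curve, together with the constraint on the loop around $P_\infty$ imposed by the tangential basepoint and the theta-characteristic structure. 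The $\Gamma_{\Q}$-actions on $\bigExt_b$ and $\bigHeis_b$ inherited from this universal quotient therefore coincide, so the geometric isomorphism descends to $\Q$, and functoriality of both constructions in $b$ (together with the uniqueness already established) globalises it to the required isomorphism over $\bigB^{\rs}$.
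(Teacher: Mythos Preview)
Your outline follows the same architecture as the paper's proof, and the uniqueness paragraph and the abstract fibrewise isomorphism are both fine. The gap is at what you call the ``decisive input'': you assert that $\pi_1^{\mathrm{\acute{e}t}}(\bigaffcurve_{b,\overline{\Q}},*)$ has essentially a unique quotient of the required shape, and you attribute this to ``the presentation of the pro-\'etale fundamental group \ldots\ together with the constraint on the loop around $P_{\infty}$ imposed by the tangential basepoint and the theta-characteristic structure.'' None of that extra structure is what forces uniqueness. The paper isolates the actual content as a purely group-theoretic statement (Lemma~\ref{lemma: group theory with central extension and the free group}): if $F_6$ is free on six generators, $f\colon F_6\twoheadrightarrow \mathrm{V}$ is given, and $\widetilde f,\widetilde f'\colon F_6\twoheadrightarrow \widetilde{\mathrm{V}}$ are any two surjective lifts, then $\ker\widetilde f=\ker\widetilde f'$, so there is a unique automorphism $\phi$ of $\widetilde{\mathrm{V}}$ with $\widetilde f'=\phi\widetilde f$. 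The proof is a short computation with an explicit presentation of $\widetilde{\mathrm{V}}$: any two lifts differ by a character $\chi\colon F_6\to\{\pm1\}$, and one checks that the normal generators of $\ker\widetilde f$ all lie in $\ker\chi$. No input about the puncture, the tangential basepoint, or the theta characteristic enters here; the tangential basepoint is used only to make the two surjections $\pi_1\to\genHeis$ and $\pi_1\to\genExt$ Galois-equivariant, not to pin down the quotient.

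A second, smaller issue: you work at an arbitrary $b\in\bigB^{\rs}(\Q)$ and then say functoriality and uniqueness globalise the result. The cleaner move, and the one the paper makes, is to work once at the generic point $\eta$ of $\bigB^{\rs}$: since $\bigExt$ and $\bigHeis$ are finite \'etale over $\bigB^{\rs}$, an isomorphism over $\eta$ spreads out automatically (Stacks Project, Tag 0BQM). Producing compatible isomorphisms at all $\Q$-points does not by itself yield a morphism of $\bigB^{\rs}$-schemes.
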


Since $\bigExt$ and $\bigHeis$ are finite \'etale, by \cite[Tag \href{https://stacks.math.columbia.edu/tag/0BQM}{0BQM}]{stacksproject} it suffices to prove the statements over the generic point $\eta$ of $\bigB^{\rs}$.
This we will achieve at the end of this section after some preparatory lemmas. 
Write $\genHeis$ and $\genExt$ for the generic fibres of $\bigHeis$ and $\bigExt$ respectively. 
We write $k$ for the function field of $\eta$ with separable closure $k^s$ and absolute Galois group $\Gamma_k$. We choose a geometric generic point $\bar{\eta}\colon \Spec k^s\rightarrow \bigB^{\rs}$ over $\eta$. 

We first prove that such an isomorphism exists when we forget the $\Gamma_k$-action. 
\begin{lemma}\label{lemma: 2 central extensions are isomorphic a abstract groups}
	There is an isomorphism of groups $\genExt_{\bar{\eta}}\simeq \genHeis_{\bar{\eta}}$ compatible with the central extensions. 
\end{lemma}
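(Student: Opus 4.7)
The plan is to exploit the classification of central extensions of an elementary abelian $2$-group by $\{\pm 1\}$ via their associated quadratic forms. Set $V := \Lambda_{\bar{\eta}}/2\Lambda_{\bar{\eta}} \simeq \Jac_{\bar{\eta}}[2]$ (using Proposition \ref{proposition: bridge jacobians root lattices}); this is an elementary abelian $2$-group of rank $6$. Both $\genExt_{\bar{\eta}}$ and $\genHeis_{\bar{\eta}}$ are central extensions of $V$ by $\{\pm 1\}$, hence correspond to classes in $\HH^2(V,\Z/2)$. This cohomology group is canonically identified with the set of $\Z/2$-valued quadratic forms on $V$, the identification sending the class of an extension $E$ to the squaring map $q_E \colon V \to \{\pm 1\}$, $\bar v \mapsto \widetilde v^{\,2}$ (well-defined because the extension is central and $V$ is $2$-torsion); the polarization of $q_E$ recovers the commutator pairing on $E$. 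Two such extensions are isomorphic compatibly with the central $\{\pm 1\}$ and the identity on $V$ if and only if their associated quadratic forms coincide. Hence the plan reduces to showing $q_{\genExt_{\bar{\eta}}} = q_{\genHeis_{\bar{\eta}}}$ on $V$.

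The computation on the $\genHeis$ side is immediate from the construction: since $\genHeis = \{\widetilde\omega \in G(\sh{M}^2) \mid \widetilde\omega^{\,2} = q_{\sh{M}}(\omega)\}$, the squaring map is tautologically $q_{\sh{M}}$, which equals $q_\kappa$ by Lemma \ref{lemma: quadratic form mumford theta group is in fact a quadratic form}.

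On the $\genExt$ side I would compute the squaring directly inside $G^{sc}_{\bar{\eta}} \simeq \Sp_8$. Recall $\genExt_{\bar{\eta}}$ is the preimage of $A[2] \simeq \Lambda/2\Lambda \subset G_{\bar{\eta}} \simeq \PSp_8$. Given $\bar\lambda \in \Lambda/2\Lambda$, use the identification $\Lambda/2\Lambda \simeq \Lambda^\vee/2\Lambda^\vee$ of Lemma \ref{lemma: centralizer kostant same as mod 2 root lattice} to write the corresponding element of $A[2]$ in the form $\lambda^\vee(-1)$, lift it to $G^{sc}_{\bar{\eta}}$ and square. Expressing $\lambda^\vee$ as a sum of coroots and applying the standard (Tits) description of how $2$-torsion of a maximal torus in an adjoint group lifts to the simply connected cover, I expect to obtain $\widetilde\lambda^{\,2} = (-1)^{(\lambda,\lambda)/2} = q_\Lambda(\bar\lambda)$, i.e.\ $q_{\genExt_{\bar{\eta}}} = q_\Lambda$. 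Combining this with Proposition \ref{prop: quadratic forms identified}, which asserts $q_\Lambda = q_\kappa$ under the identification $\Lambda/2\Lambda \simeq \Jac[2]$, yields the desired equality of quadratic forms.

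The main obstacle I anticipate is the sign calculation for the squaring map on $\genExt_{\bar{\eta}}$: one must pin down explicitly how a 2-torsion element of $A \subset H$ (viewed inside $G = \PSp_8$) lifts to $\Sp_8$ and verify that the square of the lift is $(-1)^{(\lambda,\lambda)/2}$. An alternative route is to observe that both groups are extraspecial $2$-groups of order $2^{1+6}$, whose isomorphism type is determined by the Arf invariant of their squaring form; but identifying these Arf invariants comes down to essentially the same computation.
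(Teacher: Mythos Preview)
Your approach is exactly the paper's: reduce to the classification of central extensions of $\Lambda_{\bar\eta}/2\Lambda_{\bar\eta}$ by $\{\pm 1\}$ via their squaring quadratic forms (the paper cites \cite[Theorem~2.4.1]{Lurie-minisculereps} for this), identify the form on $\genHeis_{\bar\eta}$ as $q_\kappa$ by its very definition together with Lemma~\ref{lemma: quadratic form mumford theta group is in fact a quadratic form}, and conclude via Proposition~\ref{prop: quadratic forms identified}. The one step you leave as an ``expected'' computation---that the squaring form on $\genExt_{\bar\eta}$ equals $q_\Lambda$---is exactly what the paper imports from \cite[Proposition~A.2]{thorne-planequarticsAIT}, so the $\Sp_8$ lift-and-square calculation you outline has already been carried out in the literature and you may simply cite it.

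One caution on your alternative route: knowing only the Arf invariant of the squaring form tells you the two extraspecial groups are abstractly isomorphic, but not that there is an isomorphism inducing the \emph{given} identification $\Lambda_{\bar\eta}/2\Lambda_{\bar\eta}\simeq \Jac_{\bar\eta}[2]$ on the quotients, which is what ``compatible with the central extensions'' demands and what is used downstream in the proof of Proposition~\ref{propostion: 2 central extensions coincide}. For that you really need equality of the quadratic forms themselves, not just of their Arf invariants.
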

\begin{proof}
    By \cite[Theorem 2.4.1]{Lurie-minisculereps}, central extensions of $\bigLambda_{\bar{\eta}}/2\bigLambda_{\bar{\eta}}$ by $\{\pm 1\}$ as abstract groups are classified by quadratic forms of $\Lambda_{\bar{\eta}}/2\Lambda_{\bar{\eta}}$. 
    According to \cite[Proposition A.2]{thorne-planequarticsAIT}, the quadratic form corresponding to $\genExt_{\bar{\eta}}$ is given by the standard quadratic form $\bigLambda_{\bar{\eta}}/2\bigLambda_{\bar{\eta}} \rightarrow \{\pm 1\} \colon \lambda \mapsto (-1)^{(\lambda,\lambda)/2}$. 
    By Proposition \ref{prop: quadratic forms identified} and Lemma \ref{lemma: quadratic form mumford theta group is in fact a quadratic form}, this coincides with the quadratic form corresponding to $\genHeis_{\bar{\eta}}$ transported along the isomorphism $\Lambda_{\bar{\eta}}/2\Lambda_{\bar{\eta}} \simeq \Jac_{\bar{\eta}}[2]$.
\end{proof}

For the rest of this section we fix abstract groups $\widetilde{\mathrm{V}}$ and $\mathrm{V}$ and a central extension 
\begin{displaymath}
1 \rightarrow \{\pm 1 \} \rightarrow \widetilde{\mathrm{V}} \rightarrow \mathrm{V} \rightarrow 1
\end{displaymath}
that is isomorphic to the central extension $\genExt_{\bar{\eta}}$ of $\bigLambda_{\bar{\eta}}/2\bigLambda_{\bar{\eta}}$ by $\{\pm1\}$. (We hope that the group $\mathrm{V}$, which is only used in \S\ref{subsection: comparting two central extensions}, will not be confused with the representation $V$.)
This extension comes with a quadratic form $q\colon \mathrm{V} \rightarrow \{ \pm 1 \}$ defined by $q(v) = \widetilde{v}^2$ where $\widetilde{v}$ is a lift of $v$ to $\widetilde{\mathrm{V}}$. 

It will be useful to give a presentation of the group $\widetilde{\mathrm{V}}$. 
Let $e_1,\dots,e_6$ be a basis for the $\F_2$-vector space $\mathrm{V}$, which we assume satisfies $q(e_1) = -1$.
If we choose a lift $\widetilde{e}_i \in \widetilde{\mathrm{V}}$ of $e_i$, a presentation for $\widetilde{\mathrm{V}}$ is given as follows:
\begin{itemize}
    \item The generators are given by the symbols $\widetilde{e}_i$ for $i =1\dots,6$.
    \item The relations are given by (we set $-1 \coloneqq \widetilde{e}_1^2$):
    \begin{displaymath}
    \begin{cases}
    (-1)^2 = 1,\\
    \widetilde{e}_i^2 = q(e_i),\\
    [\widetilde{e}_i,-1] = 1,\\
    [\widetilde{e}_i,\widetilde{e}_j]=q(e_i)q(e_j)q(e_i+e_j).
    \end{cases}
    \end{displaymath}
    
\end{itemize}

The proof of the following lemma is purely group-theoretic.

\begin{lemma}\label{lemma: group theory with central extension and the free group}
	Let $F_6$ be the free group on six generators and $f\colon F_6 \rightarrow \mathrm{V} $ a surjective homomorphism. 
	If $\widetilde{f},\widetilde{f}'$ are two surjective homomorphisms $F_6 \rightarrow \widetilde{\mathrm{V}}$ lifting $f$, then there exists a unique isomorphism $\phi \colon \widetilde{\mathrm{V}} \rightarrow \widetilde{\mathrm{V}}$ such that $\widetilde{f}' = \phi \widetilde{f}$. 
\end{lemma}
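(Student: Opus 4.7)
Uniqueness is immediate from the surjectivity of $\widetilde{f}$: any $\phi$ satisfying the conclusion is determined on $\widetilde{f}(F_6)=\widetilde{\mathrm{V}}$. For existence, the strategy is to extract from the pair $(\widetilde{f},\widetilde{f}')$ a character $\chi\colon \mathrm{V}\to\{\pm 1\}$ that measures their discrepancy, and to take $\phi$ to be ``multiplication by $\chi$'' on each fibre of the projection $\widetilde{\mathrm{V}}\to\mathrm{V}$. Fix free generators $x_1,\dots,x_6$ of $F_6$ once and for all.

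\textbf{Building the character.} Because $\widetilde{f}(w)$ and $\widetilde{f}'(w)$ both project to $f(w)\in\mathrm{V}$, their ratio lies in the central subgroup $\{\pm 1\}\subset\widetilde{\mathrm{V}}$, so the formula $\widetilde{f}'(w)=\epsilon(w)\widetilde{f}(w)$ defines a function $\epsilon\colon F_6\to\{\pm 1\}$. Centrality of $\{\pm 1\}$ immediately gives $\epsilon(vw)=\epsilon(v)\epsilon(w)$, so $\epsilon$ is a homomorphism and factors through the abelianization $F_6^{\mathrm{ab}}=\Z^{6}$. To push it further down to $(\Z/2\Z)^{6}$, I would invoke the presentation of $\widetilde{\mathrm{V}}$ recalled just above the lemma: any lift $\widetilde{v}$ of $v\in\mathrm{V}$ satisfies $\widetilde{v}^{2}=q(v)$ (since changing the lift by the central element $-1$ does not affect the square), so $\widetilde{f}(x_i)^{2}=q(f(x_i))=\widetilde{f}'(x_i)^{2}$ and therefore $\epsilon(x_i^{2})=1$. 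Since $f$ is a surjection between groups of order $2^{6}$, the induced map $F_6^{\mathrm{ab}}/2\xrightarrow{\sim}\mathrm{V}$ is an isomorphism, so $\epsilon$ descends to the required character $\chi\colon\mathrm{V}\to\{\pm 1\}$ with $\epsilon=\chi\circ f$.

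\textbf{Defining $\phi$ and checking.} Set $\phi(\widetilde{v})\coloneqq \chi(v)\widetilde{v}$, where $v$ denotes the image of $\widetilde{v}$ in $\mathrm{V}$. Centrality of $\chi(v)$ in $\widetilde{\mathrm{V}}$ makes $\phi$ a group homomorphism, and since $\chi$ is $\{\pm 1\}$-valued $\phi$ is an involution and hence an automorphism. On the generator $\widetilde{f}(x_i)$ one computes $\phi(\widetilde{f}(x_i))=\chi(f(x_i))\widetilde{f}(x_i)=\epsilon(x_i)\widetilde{f}(x_i)=\widetilde{f}'(x_i)$, so $\phi\circ\widetilde{f}$ and $\widetilde{f}'$ agree on a generating set of $F_6$ and hence on all of $F_6$; by surjectivity of $\widetilde{f}$ this upgrades to $\widetilde{f}'=\phi\widetilde{f}$ globally. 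The only real subtlety is the identity $\widetilde{v}^{2}=q(v)$ used above, which is the statement that squaring on the central extension factors through a well-defined function on the quotient; the explicit presentation in hand reduces this to an inspection, so I do not anticipate a serious obstacle.
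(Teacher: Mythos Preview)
Your proof is correct. Both you and the paper begin by noting that $\widetilde{f}'(w)=\epsilon(w)\widetilde{f}(w)$ for a homomorphism $\epsilon\colon F_6\to\{\pm 1\}$, but you diverge in how you use this. The paper shows $\ker\widetilde{f}=\ker\widetilde{f}'$ by writing down an explicit presentation of $\widetilde{\mathrm{V}}$ and checking that each relator word lies in the kernel of every character $F_6\to\{\pm 1\}$; the isomorphism $\phi$ then comes from the universal property of quotients. You instead observe that $\epsilon$ descends along $f$ to a character $\chi\colon\mathrm{V}\to\{\pm 1\}$ (which is automatic since $\{\pm 1\}$ is abelian of exponent $2$ and $F_6^{\mathrm{ab}}/2\xrightarrow{\sim}\mathrm{V}$), and then write $\phi$ directly as $\widetilde{v}\mapsto\chi(v)\widetilde{v}$. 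Your route is cleaner: it never needs the explicit presentation of $\widetilde{\mathrm{V}}$, and your invocation of $\widetilde{v}^2=q(v)$ is in fact unnecessary, since any homomorphism to $\{\pm 1\}$ already kills squares. The paper's approach has the minor advantage of making the kernel description explicit, which might be useful elsewhere, but for this lemma your argument is the more economical one.
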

\begin{proof}
    To prove the lemma it suffices the prove that $\ker \widetilde{f} = \ker \widetilde{f}'$. 
    Since any two lifts of an element of $\mathrm{V}$ to an element of $\widetilde{\mathrm{V}}$ differ by an element of $\{ \pm 1\}$, there exists a function $\chi: F_6 \rightarrow \{\pm 1 \}$ such that $\widetilde{f}'(g) = \chi(g) \widetilde{f}(g)$ for all $g \in F_6$. 
    Since the subgroup $\{\pm 1\}$ is central in $\widetilde{\mathrm{V}}$, $\chi$ is a homomorphism of groups. 
    So it will be enough to show that $\ker(\chi \widetilde{f}) = \ker \widetilde{f}$ for every character $\chi: F_6 \rightarrow \{ \pm 1\}$, where now $\widetilde{f}$ is a preferred choice of lifting of $f$. 
    We make this choice as follows. Choose generators $g_1,\dots,g_6$ of $F_6$ and let $e_i = f(g_i)$. We may assume that $q(e_1) = -1$. 
    Choose an element $\widetilde{e}_i \in \widetilde{\mathrm{V}}$ lying above $e_i$. 
    We define $\widetilde{f}\colon F_6 \rightarrow \widetilde{\mathrm{V}}$ by sending $g_i$ to $\widetilde{e}_i$. 
    Then the presentation of $\widetilde{\mathrm{V}}$ given above implies that the kernel of $\widetilde{f}$ is generated by the following words:
    \begin{displaymath}
    \begin{cases}
    g_1^4,\\
    g_i^2Q(g_i), \\
    [g_i,g_1^2], \\
    [g_i,g_j]Q(g_i)Q(g_j)Q(g_ig_j).
    \end{cases}
    \end{displaymath}
    Here we set $Q(g) \coloneqq g_1^2$ if $q(f(g)) = -1$ and $Q(g) \coloneqq 1$ otherwise. 
    Since every such word has trivial image under every character $\chi: F_6 \rightarrow \{ \pm 1\}$, we see that the kernel of $\chi \widetilde{f}$ is generated by the same words. This concludes the proof of the lemma. 
\end{proof}

We now investigate the structure of the \'etale fundamental group of the affine curve $\bigaffcurve_{\eta} = \bigprojcurve_{\eta} \setminus \{P_{\infty}\}$ where $P_{\infty}$ is the marked $k$-rational point at infinity. 
Choose an isomorphism between $k[[t]]$ and the completed local ring of $\bigprojcurve_{\eta}$ at $P_{\infty}$, and write $\Spec k[[t]] \rightarrow \bigprojcurve_{\eta}$ for the induced map on schemes. Let $y\colon \Spec k((t)) \rightarrow \bigaffcurve_{\eta}$ be the restriction of this map to $\bigaffcurve_{\eta}$. 
Let $\Omega$ be a separable closure of $k((t))$ and $\bar{y}\colon\Spec \Omega \rightarrow \bigaffcurve_{\eta} $ be a geometric point over it. 
The geometric point $\bar{y}$ will serve as our basepoint of $\bigaffcurve_{\bar{\eta}}$, and is sometimes called a \emph{tangential basepoint}, following \cite[\S15]{Deligne-droiteprojective}. 
We write $\pi_1(\bigaffcurve_{\bar{\eta}},\bar{y})$ for the \'etale fundamental group of $\bigaffcurve_{\bar{\eta}}$ with respect to the geometric point $\bar{y}$. 
It is isomorphic to the profinite completion of the free group on six generators, and acquires a natural continuous $\Gamma_{k((t))}$-action since $\bar{y}$ comes from a $k((t))$-rational point. 
The natural map $\Gamma_{k((t))} \rightarrow \Gamma_k$ has a splitting (since $\Omega = \cup_{n\geq 1} k^s((t^{1/n}))$ because $k$ has characteristic $0$), so the group $\pi_1(\bigaffcurve_{\bar{\eta}},\bar{y})$ also has a continuous $\Gamma_k$-action. 
We will construct homomorphisms from $\pi_1(\bigaffcurve_{\bar{\eta}},\bar{y})$ into various groups by considering torsors of $\bigaffcurve_{\bar{\eta}}$ under these groups. The following lemma, which follows from the definition of the \'etale fundamental group, explains how this works.

\begin{lemma}\label{lemma: constructing torsors}
	Let $\mathcal{G}$ be a finite $k$-group equipped with the discrete topology. Let $T$ be a scheme over $k$ and $T\rightarrow \bigaffcurve_{\eta}$ a right $\mathcal{G}$-torsor. Let $\bar{t}\colon \Spec\Omega \rightarrow T$ be a geometric point above $\bar{y}$. Then we can associate to this data a continuous homomorphism $\phi_{\bar{t}}\colon \pi_1(\bigaffcurve_{\bar{\eta}},\bar{y}) \rightarrow \mathcal{G}_{k^s}$. It is surjective if and only if $T$ is geometrically connected. 
	Let $\bar{t}'$ be another geometric point of $T$ above $\bar{y}$.  Then $\bar{t}' = \bar{t}\cdot h$ for some $h \in \pi_1(\bigaffcurve_{\bar{\eta}},\bar{y})$ and $\phi_{\bar{t}'}$ is given by the composition of $\phi_{\bar{t}}$ with conjugation by $\phi_{\bar{t}}(h)$. 
\end{lemma}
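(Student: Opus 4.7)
The plan is to apply the standard fiber-functor formalism of the étale fundamental group to the torsor $T \to \bigaffcurve_\eta$ after base-changing to the geometric generic point. First I would base change $T \to \bigaffcurve_\eta$ to $\bar{\eta}$; since $\mathcal{G}$ is a finite étale $k$-group, the resulting $\mathcal{G}_{k^s}$-torsor $T_{\bar{\eta}} \to \bigaffcurve_{\bar{\eta}}$ is itself finite étale (a torsor under a finite étale group scheme being automatically finite étale). Pulling back along the geometric basepoint $\bar{y}$ then gives the fiber set $F_{\bar{y}}(T_{\bar{\eta}})$, which carries a natural left action of $\pi_1(\bigaffcurve_{\bar{\eta}},\bar{y}) = \Aut(F_{\bar{y}})$ together with the right action of $\mathcal{G}_{k^s}$ coming from the torsor structure. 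Since $\mathcal{G}_{k^s}$ acts by morphisms over $\bigaffcurve_{\bar{\eta}}$ (hence by natural transformations of $F_{\bar{y}}$), these two actions commute, and the torsor property says that the right action of $\mathcal{G}_{k^s}$ is simply transitive.

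With this in hand, I would define $\phi_{\bar{t}}$ as follows: for each $\gamma \in \pi_1(\bigaffcurve_{\bar{\eta}},\bar{y})$, the element $\gamma \cdot \bar{t}$ lies in the same $\mathcal{G}_{k^s}$-orbit as $\bar{t}$, so by simple transitivity there is a unique $\phi_{\bar{t}}(\gamma) \in \mathcal{G}_{k^s}$ with $\gamma \cdot \bar{t} = \bar{t} \cdot \phi_{\bar{t}}(\gamma)$. Then commutativity of the two actions gives
\[
(\gamma_1 \gamma_2) \cdot \bar{t} \;=\; \gamma_1 \cdot (\bar{t} \cdot \phi_{\bar{t}}(\gamma_2)) \;=\; (\gamma_1 \cdot \bar{t}) \cdot \phi_{\bar{t}}(\gamma_2) \;=\; \bar{t} \cdot \phi_{\bar{t}}(\gamma_1) \phi_{\bar{t}}(\gamma_2),
\]
so $\phi_{\bar{t}}$ is a homomorphism. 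Continuity is automatic: the $\pi_1$-action factors through the finite automorphism group of $F_{\bar{y}}(T_{\bar{\eta}})$, which is a finite discrete quotient of $\pi_1$.

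For the surjectivity claim, I would use the standard fact that for a finite étale cover of a connected scheme, connectedness of the cover is equivalent to transitivity of the $\pi_1$-action on the geometric fiber. Since $\mathcal{G}_{k^s}$ acts simply transitively on $F_{\bar{y}}(T_{\bar{\eta}})$, transitivity of the $\pi_1$-action through $\phi_{\bar{t}}$ is equivalent to $\phi_{\bar{t}}(\pi_1) = \mathcal{G}_{k^s}$, and geometric connectedness of $T$ over $k$ is equivalent to connectedness of $T_{\bar{\eta}}$. Finally, for the change-of-basepoint assertion, if $\bar{t}' = \bar{t}\cdot h$ with $h \in \pi_1$, a direct manipulation of the defining relation $\gamma \cdot \bar{t}' = \bar{t}' \cdot \phi_{\bar{t}'}(\gamma)$ (substituting $\bar{t}' = h \cdot \bar{t}$ and moving $h$ across) yields
\[
\phi_{\bar{t}'}(\gamma) \;=\; \phi_{\bar{t}}(h)^{-1}\,\phi_{\bar{t}}(\gamma)\,\phi_{\bar{t}}(h),
\]
which is the claimed conjugation formula.

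Since all the ingredients are formal properties of the fiber functor, there is no genuine obstacle here; the only point requiring care is the bookkeeping of left versus right conventions for the $\pi_1$- and $\mathcal{G}$-actions, and ensuring that they genuinely commute (which relies on the torsor action being by scheme-morphisms over $\bigaffcurve_{\bar{\eta}}$, hence natural in the fiber functor).
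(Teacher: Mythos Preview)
Your proposal is correct and is precisely the argument one would give. The paper itself does not prove this lemma at all: it is introduced with the remark that it ``follows from the definition of the \'etale fundamental group'' and no further details are given. Your fiber-functor computation is exactly the unpacking of that sentence, and your verification of the homomorphism property, continuity, the surjectivity criterion via transitivity on the fiber, and the conjugation formula are all standard and correct.

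One small remark on bookkeeping, which you already flag yourself: the statement writes $\bar{t}' = \bar{t}\cdot h$ with $h\in\pi_1$, whereas in your setup $\pi_1$ acts on the left and $\mathcal{G}_{k^s}$ on the right. What is really meant is that $\bar{t}'$ lies in the $\pi_1$-orbit of $\bar{t}$ (equivalently, $\bar{t}' = \bar{t}\cdot\phi_{\bar{t}}(h)$ for some $h$), and as stated this existence claim tacitly requires transitivity of the $\pi_1$-action, i.e.\ geometric connectedness of $T$. In the paper the lemma is only ever applied to geometrically connected torsors (and in fact to situations where $\bar{t}'$ differs from $\bar{t}$ by an element of the centre of $\mathcal{G}_{k^s}$), so this causes no trouble; your derivation of the conjugation formula is valid whenever such an $h$ exists.
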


Let $\bigprojcurve'\rightarrow \bigprojcurve_{\eta}$ be the $\Jac_{\eta}[2]$-torsor given by pulling back the multiplication-by-$2$ map $\Jac_{\eta} \xrightarrow{\times 2} \Jac_{\eta}$ via the Abel--Jacobi map with respect to the point $P_{\infty}$. 
There exists an obvious $k$-rational point above $P_{\infty}$ in $\bigprojcurve'$ corresponding to the origin in $\Jac_{\eta}$ upstairs. 
Define $T_1$ as the restriction of $C'$ to $\bigaffcurve_{\eta}$. 
Then the $k((t))$-rational point $y\colon \Spec k((t)) \rightarrow \bigaffcurve_{\eta}$ lifts to a $k((t))$-rational point $t_1\colon \Spec k((t)) \rightarrow T_1$. 
Using Lemma \ref{lemma: constructing torsors} we obtain a continuous $\Gamma_k$-equivariant homomorphism $\pi_1(\bigaffcurve_{\bar{\eta}},\bar{y}) \rightarrow \Jac_{\eta}[2]$.

On the other hand, we define the $\bigLambda_{\eta}/2\bigLambda_{\eta}$-torsor $T_2 \rightarrow \bigaffcurve_{\eta}$ as follows: recall from \cite[Proposition 3.6]{Thorne-thesis} that $\bigaffcurve_{\eta}$ can be realized as a closed subscheme of $\bigV_{\eta}$. 
We know the action map $\bigG_{\eta} \rightarrow \bigV_{\eta} : g\mapsto g\cdot \sigma(\eta)$ is \'etale, and in fact a torsor under the group $Z_{\bigG_{\eta}}(\sigma(\eta))$. 
Taking the pullback along $\bigaffcurve_{\eta} \rightarrow \bigV_{\eta}$ and transporting the torsor structure along the isomorphism $Z_{\bigG_{\eta}}(\sigma(\eta)) \simeq \bigLambda_{\eta}/2\bigLambda_{\eta}$ defines a $\bigLambda_{\eta}/2\bigLambda_{\eta}$-torsor $T_2$ such that the following diagram is commutative. (This diagram already appears right above Theorem 4.2 in \cite{Thorne-thesis}.)
\begin{center}
\begin{tikzcd}
	T_2 \arrow[d] \arrow[r] & G_{\eta} \arrow[d] \\
	\bigaffcurve_{\eta} \arrow[r]                & V_{\eta}          
\end{tikzcd}
\end{center}

In the proof of \cite[Theorem 4.15]{Thorne-thesis}, Thorne shows: 
\begin{lemma}\label{lemma: Thorne thesis torsor extends and is iso}
	The torsor $T_2 \rightarrow \bigaffcurve_{\eta}$ extends to a $\bigLambda_{\eta}/2\bigLambda_{\eta}$-torsor $\widetilde{\bigprojcurve} \rightarrow \bigprojcurve_{\eta}$. Moreover, the pushout of the torsor $\widetilde{\bigprojcurve}$ along the isomorphism $\bigLambda_{\eta}/2\bigLambda_{\eta} \simeq \Jac_{\eta}[2]$ from Proposition \ref{proposition: bridge jacobians root lattices} is isomorphic to $C'\rightarrow \bigprojcurve_{\eta}$. 
\end{lemma}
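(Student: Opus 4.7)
The plan has two parts: first, construct the extension $\widetilde{\bigprojcurve}$ across $P_\infty$, and second, identify its pushout under the isomorphism $\bigLambda_\eta/2\bigLambda_\eta \simeq \Jac_\eta[2]$ with $\bigprojcurve'$.

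For the extension, since $\bigprojcurve_\eta$ is smooth of dimension one and we are in characteristic zero, the normalization $\widetilde{\bigprojcurve}$ of $\bigprojcurve_\eta$ in the function field of $T_2$ is a finite cover that is étale over $\bigaffcurve_\eta$. It remains étale at $P_\infty$ if and only if the tame inertia generator at $P_\infty$ acts trivially on a geometric fibre of $T_2$, equivalently the composition $\hat{\Z}(1) = I_{P_\infty} \hookrightarrow \pi_1(\bigaffcurve_{\bar\eta},\bar y) \to \bigLambda_{\bar\eta}/2\bigLambda_{\bar\eta}$ is zero. I would establish this via the $\G_m$-equivariance of the situation: the action $\lambda\cdot(x,y)=(\lambda^3 x, \lambda^4 y)$ on $\bigprojcurve\to \bigB$ described after Proposition \ref{proposition: bridge jacobians root lattices} is compatible with a $\G_m$-action on $\bigV$ built from the cocharacter $\check\rho$, relative to which both the Kostant section $\sigma\colon\bigB\to\bigV$ and the embedding $\bigaffcurve\hookrightarrow \bigV$ are equivariant. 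Concretely, after choosing a local uniformizer $t$ at $P_\infty$ that scales with nonzero weight under this $\G_m$, the composite $\Spec k^s((t)) \xrightarrow{\bar y} \bigaffcurve_{\bar\eta} \hookrightarrow \bigV_{\bar\eta}$ can be written as $g(t)\cdot \sigma(\bar\eta)$ for an explicit element $g(t)\in \bigG(k^s((t)))$ produced by $\G_m$-rescaling. This splits the pullback of the $\bigG$-torsor $G_\eta \to V_\eta,\, g\mapsto g\cdot\sigma(\eta)$, forcing the tame inertia to act trivially and producing $\widetilde{\bigprojcurve}$.

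For the identification of the pushout with $\bigprojcurve'$, it suffices to compare them on $\bigaffcurve_\eta$, since both are already unramified extensions to $\bigprojcurve_\eta$. Both are $\Jac_\eta[2]$-torsors over $\bigaffcurve_\eta$ and both carry a canonical lift of the tangential basepoint $\bar y$: for $\bigprojcurve'|_{\bigaffcurve_\eta}$ the tautological preimage of the origin in $\Jac_\eta$ (using the rational basepoint $P_\infty$), and for the pushout of $T_2$ the lift afforded by the local trivialisation constructed above. By Lemma \ref{lemma: constructing torsors}, each pointed torsor determines a monodromy homomorphism $\pi_1(\bigaffcurve_{\bar\eta},\bar y)\to \Jac_\eta[2]$. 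The monodromy of $\bigprojcurve'|_{\bigaffcurve_\eta}$ is the canonical one factoring through the Abel--Jacobi map followed by the Kummer boundary, while the isomorphism $\bigLambda_\eta/2\bigLambda_\eta\simeq \Jac_\eta[2]$ of Proposition \ref{proposition: bridge jacobians root lattices} is constructed by Thorne precisely so as to intertwine the monodromy of $T_2$ with this canonical one. Comparing the two homomorphisms and using that $\HH^1_{\text{ét}}(\bigaffcurve_\eta,\Jac_\eta[2])$ classifies pointed torsors yields the desired isomorphism.

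The main obstacle is the local analysis at $P_\infty$: making the $\G_m$-equivariance precise requires unwinding how $\bigaffcurve \hookrightarrow \bigV$ behaves near the subregular nilpotent corresponding to $P_\infty$, writing down the uniformizer explicitly using the equation in Proposition \ref{proposition: bridge jacobians root lattices}, and checking compatibility with $\check\rho$ to produce the trivialising element $g(t)$. Once this is done, the matching of the pushout with $\bigprojcurve'$ follows essentially from the definition of the isomorphism in Proposition \ref{proposition: bridge jacobians root lattices}, which was set up exactly to make this identification hold.
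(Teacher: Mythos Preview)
The paper gives no proof here; the lemma is simply a citation to \cite[Theorem~4.15]{Thorne-thesis}.  Your proposal is an attempt to reconstruct that argument, so let me comment on the sketch itself.

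First, note that your Part~1 is logically superfluous once Part~2 is established on the open curve: if the pushout of $T_2$ along $\bigLambda_\eta/2\bigLambda_\eta\simeq \Jac_\eta[2]$ is isomorphic to $C'|_{\bigaffcurve_\eta}$, then since $C'$ visibly extends across $P_\infty$ and pushout along an isomorphism of finite $k$-groups is an equivalence of categories of torsors, $T_2$ extends as well.  As written, your Part~1 also has a gap.  Conjugation by $\check\rho(\lambda)$ (or any cocharacter of $\bigG$) preserves invariants and hence the fibre $\bigV_\eta$, but you have given no reason why it carries the image of $\bar y$ to $\sigma(\bar\eta)$; and the scalar $\G_m$-action on $\bigV$ changes the fibre, so it does not stay in $\bigV_\eta$ at all.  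The trivialising element $g(t)\in \bigG(k^s((t)))$ is not produced by either action, and in Thorne's actual argument its existence comes from a direct local analysis of the transverse slice near the subregular nilpotent, not from a formal equivariance statement.

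Your Part~2 identifies the right mechanism, but the key step---that the isomorphism of Proposition~\ref{proposition: bridge jacobians root lattices} ``was constructed by Thorne precisely so as to intertwine the monodromy''---is not a proof but an appeal to the internals of \cite{Thorne-thesis}; this is exactly the citation the paper makes.  If you want an independent argument, you must either unpack Thorne's construction of that isomorphism, or verify the intertwining by other means (for instance, by trivialising both local systems over the Weyl cover $\liet^{\rs}\to\bigB^{\rs}$ as in Proposition~\ref{proposition: monodromy of J[2]} and comparing the resulting homomorphisms $\pi_1(\bigaffcurve_{\bar\eta},\bar y)\to \Lambda_T/2\Lambda_T$, then invoking the uniqueness statement in Part~4 of that proposition).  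Note also that your use of the lifted basepoint $t_2$ in Part~2 presupposes the local trivialisation from Part~1; if you instead compare classes in $\HH^1_{\text{\'et}}(\bigaffcurve_\eta,\Jac_\eta[2])$ directly, no such basepoint is needed.
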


So again there exists a point $t_2\colon \Spec k((t)) \rightarrow T_2$ lifting the point $y$, which we will fix. We obtain a continuous $\Gamma_k$-equivariant homomorphism $\pi_1(\bigaffcurve_{\bar{\eta}},\bar{y}) \rightarrow \bigLambda_{\eta}/2\bigLambda_{\eta}$.

\begin{lemma}\label{lemma: existence of non-abelian torsors}
	We have the following. 
	\begin{enumerate}
		\item There exists an $\genHeis$-torsor $\widetilde{T}_1 \rightarrow \bigaffcurve_{\eta}$ which factors as $\widetilde{T}_1  \rightarrow T_1 \rightarrow \bigaffcurve_{\eta}$. The $k$-scheme $\widetilde{T}_1 $ is geometrically connected. Moreover there exists a $\Gamma_k$-equivariant continuous homomorphism $\pi_1(\bigaffcurve_{\bar{\eta}} , \bar{y}) \rightarrow \genHeis$ lifting the morphism $\pi_1(\bigaffcurve_{\bar{\eta}} , \bar{y}) \rightarrow \Jac_{\eta}[2]$.
		\item There exists a $\genExt$-torsor $\widetilde{T}_2 \rightarrow \bigaffcurve_{\eta}$ which factors as $\widetilde{T}_2 \rightarrow T_2 \rightarrow \bigaffcurve_{\eta}$. The $k$-scheme $\widetilde{T}_2$ is geometrically connected.
		Moreover there exists a $\Gamma_k$-equivariant continuous homomorphism $\pi_1(\bigaffcurve_{\bar{\eta}} , \bar{y}) \rightarrow \genExt$ lifting the morphism $\pi_1(\bigaffcurve_{\bar{\eta}} , \bar{y}) \rightarrow \bigLambda_{\eta}/2\bigLambda_{\eta}$.
	\end{enumerate}
\end{lemma}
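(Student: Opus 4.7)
\noindent\emph{Plan.} Part 2 is the more direct of the two. By \cite[Proposition 3.6]{Thorne-thesis}, $\bigaffcurve_\eta$ is realised as a closed subscheme of $\bigV_\eta$ (a transverse slice at $\sigma(\eta)$), and the orbit map $\bigG^{sc}_\eta \to \bigV_\eta$, $g \mapsto g\cdot \sigma(\eta)$, is a right torsor under $\genExt_\eta = Z_{\bigG^{sc}_\eta}(\sigma(\eta))$. I would define $\widetilde{T}_2 \coloneqq \bigG^{sc}_\eta \times_{\bigV_\eta} \bigaffcurve_\eta$, which is a right $\genExt_\eta$-torsor over $\bigaffcurve_\eta$ by base change. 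The factorisation $\widetilde{T}_2 \to T_2 \to \bigaffcurve_\eta$ is tautological: pushing forward along $\genExt_\eta \to \bigLambda_\eta/2\bigLambda_\eta$ amounts to replacing $\bigG^{sc}_\eta$ by $\bigG_\eta$ in the definition of $\widetilde{T}_2$, which recovers $T_2$ by its construction.

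For Part 1 I would exploit the $\genHeis_\eta$-torsor $p\colon \Gmtorsor{\sh{M}_\eta^2} \to \Gmtorsor{\sh{M}_\eta}$ from Proposition \ref{proposition: V(M) admits a H-torsor}, noting that $p$ covers the doubling isogeny $[2]\colon \Jac_\eta \to \Jac_\eta$. First lift the Abel--Jacobi map $\alpha\colon \bigaffcurve_\eta \to \Jac_\eta$ (with basepoint $P_\infty$) to a morphism $\tilde\alpha\colon \bigaffcurve_\eta \to \Gmtorsor{\sh{M}_\eta}$, then set $\widetilde{T}_1 \coloneqq \bigaffcurve_\eta \times_{\Gmtorsor{\sh{M}_\eta},\,\tilde\alpha} \Gmtorsor{\sh{M}_\eta^2}$, which is an $\genHeis_\eta$-torsor by base change. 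Since $p$ covers $[2]$, the map $\widetilde{T}_1 \to \bigaffcurve_\eta \times_\eta \Jac_\eta$ obtained from the projections factors through $T_1 = \bigaffcurve_\eta \times_{\Jac_\eta,[2]} \Jac_\eta$, yielding the required factorisation $\widetilde{T}_1 \to T_1 \to \bigaffcurve_\eta$. The existence of $\tilde\alpha$ is equivalent to trivialising $\alpha^*\sh{M}_\eta$ on $\bigaffcurve_\eta$; this I expect to be the main technical step. A standard computation gives $\deg\alpha^*\sh{M}_\eta = g = 3$, and one needs to identify this line bundle with $\O_{\bigprojcurve_\eta}(3P_\infty)$, using that $\kappa = 2P_\infty$ is a rational theta characteristic and Riemann's singularity theorem.

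Finally, geometric connectedness of both $\widetilde{T}_i$ would follow from a monodromy argument: it suffices to show $\pi_1(\bigaffcurve_{\bar\eta},\bar y) \to \genHeis_{\bar\eta}$ (resp.\ $\to \genExt_{\bar\eta}$) is surjective. The image already surjects onto $\Jac_{\bar\eta}[2]$ (resp.\ $\bigLambda_{\bar\eta}/2\bigLambda_{\bar\eta}$) because $T_i$ is geometrically connected. An image not containing the central $\{\pm 1\}$ would define a set-theoretic splitting of the central extension, which is impossible: $\genHeis_{\bar\eta}$ and $\genExt_{\bar\eta}$ are classified by the nonzero quadratic forms $q_\kappa$ and $q_\Lambda$, which are identified by Proposition \ref{prop: quadratic forms identified}, so both extensions are non-split. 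Lemma \ref{lemma: constructing torsors} applied to geometric lifts of $\bar y$ to $\widetilde{T}_i$ then produces the claimed $\Gamma_k$-equivariant continuous lifts, which refine the existing homomorphisms to $\Jac_\eta[2]$ and $\bigLambda_\eta/2\bigLambda_\eta$ by the factorisations constructed above.
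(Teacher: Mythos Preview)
Your constructions of $\widetilde{T}_1$ and $\widetilde{T}_2$ match the paper's exactly, as does the surjectivity argument via non-splitness of the central extensions. For the triviality of $\alpha^*\sh{M}_\eta$ on $\bigaffcurve_\eta$ the paper simply cites \cite[Chapter 11, Exercise~10]{BirkenhakeLange-CAV}, which amounts to the computation you sketch.

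There is one genuine gap. You assert that Lemma~\ref{lemma: constructing torsors} applied to a geometric lift of $\bar y$ to $\widetilde{T}_i$ yields a $\Gamma_k$-equivariant homomorphism, but that lemma only produces a homomorphism $\phi_{\bar t}$ depending on the chosen geometric point $\bar t$, with different choices giving conjugate (not necessarily equal) homomorphisms. Since $\widetilde{T}_i$ need not admit a $k((t))$-rational point above $y$, equivariance does not follow formally. The paper closes this as follows: the intermediate torsor $T_i$ \emph{does} have a $k((t))$-rational point $t_i$ above $y$, and any two geometric lifts of $\bar t_i$ to $\widetilde{T}_i$ differ by an element of the central subgroup $\{\pm 1\}\subset \genHeis_\eta$ (resp.\ $\genExt_\eta$). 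Conjugation by central elements is trivial, so $\phi$ is independent of this residual choice and hence $\Gamma_k$-equivariant. You should insert this centrality argument; without it the equivariance claim is unsupported.
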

\begin{proof}
	For Part 1, recall from \S\ref{subsection: Mumford theta groups} that there exists $k$-schemes $\Gmtorsor{\sh{M}}$ and $\Gmtorsor{\sh{M}^2}$ together with an $\genHeis$-torsor $p\colon \Gmtorsor{\sh{M}^2}\rightarrow \Gmtorsor{\sh{M}}$ and a commutative diagram 
	\begin{center}
		\begin{tikzcd}
			\Gmtorsor{\sh{M}^2} \arrow[d] \arrow[r, "p"] & \Gmtorsor{\sh{M}} \arrow[d] \\
			\Jac_{\eta} \arrow[r, "\times 2"]                      & \Jac_{\eta}                         
		\end{tikzcd}
	\end{center}
	Let $i\colon \bigaffcurve_{\eta} \rightarrow \Jac_{\eta}$ be the Abel-Jacobi map with respect to the point $P_{\infty}$. If follows from \cite[Exercise 10 of Chapter 11]{BirkenhakeLange-CAV} that the pullback of the line bundle $\sh{M}$ along $i$ is trivial. 
	In other words, the map $i\colon \bigaffcurve_{\eta} \rightarrow \Jac_{\eta}$ lifts to a map $\widetilde{i}\colon \bigaffcurve_{\eta} \rightarrow \Gmtorsor{\sh{M}}$. Taking the pullback of the torsor $p$ along $\widetilde{i}$ defines an $\genHeis$-torsor $\widetilde{T}_1  \rightarrow \bigaffcurve_{\eta}$ which factors as $\widetilde{T}_1 \rightarrow T_1 \rightarrow \bigaffcurve_{\eta}$ compatible with the torsor structures. 
	
	Let $\bar{t}_1 \colon \Spec \Omega \rightarrow T_1$ be a geometric point above $t_1$. 
	Choose a geometric point $\bar{t}_1'\colon \Spec \Omega \rightarrow \widetilde{T}_1 $ lying above $\bar{t}_1$. By Lemma \ref{lemma: constructing torsors} it determines a continuous homomorphism $\phi \colon \pi_1(\bigaffcurve_{\bar{\eta}},\bar{y}) \rightarrow \genHeis_{k^s}$, whose projection to $\Jac_{\eta}[2]$ gives the previously constructed morphism $\pi_1(\bigaffcurve_{\bar{\eta}},\bar{y})\rightarrow \Jac_{\eta}[2]$. Changing $\bar{t}_1'$ means conjugating $\phi$ by an element of the form $\phi(h)$ where $h\in \pi_1(\bigaffcurve_{\bar{\eta}},\bar{y})$ lies in the image of the map $\pi_1(\left(T_1\right)_{k^s},\bar{t}_1) \rightarrow \pi_1(\bigaffcurve_{\bar{\eta}},\bar{y})$. 
	But in that case $\phi(h) \in \{\pm 1\}$, so $\phi(h)$ lies in the centre of $\genHeis_{k^s}$. 
	We conclude that the homomorphism $\phi$ is independent of the choice of $\bar{t}_1'$, hence is $\Gamma_k$-equivariant. 
	Moreover, the image of $\phi$ is a subgroup of $\genHeis_{k^s}$ whose projection to $\Jac_{\bar{\eta}}[2]$ is surjective. Since the $\{\pm 1\}$-extension $\genHeis_{k^s} \rightarrow \Jac_{\bar{\eta}}[2]$ is not split, this implies that $\phi$ itself must be surjective. We conclude that $\widetilde{T}_1$ must be geometrically connected, concluding Part 1 of the lemma. 
	
	For Part 2, we complete the diagram in the definition of $T_2$ to the following diagram 
	\begin{center}
	\begin{tikzcd}
	\widetilde{T}_2 \arrow[d] \arrow[r]   & G^{sc}_{\eta} \arrow[d] \\
	T_2 \arrow[d] \arrow[r] & G_{\eta} \arrow[d]      \\
	\bigaffcurve_{\eta} \arrow[r]                & V_{\eta}               
	\end{tikzcd}
	\end{center}
	where both squares are pullback and $G^{sc}_{\eta} \rightarrow G_{\eta}$ is the natural projection. Since $G^{sc}_{\eta} \rightarrow V_{\eta}$ is a $Z_{\bigG^{sc}}(\sigma(\eta))$-torsor, the morphism $\widetilde{T}_2 \rightarrow \bigaffcurve_{\eta}$ is a $\genExt$-torsor. 
	A similar argument to Part 1 shows that this data defines a homomorphism $ \pi_1(\bigaffcurve_{\bar{\eta}} , \bar{y}) \rightarrow \genExt_{k^s} $ which is independent of any choices, surjective and $\Gamma_k$-equivariant.
	
\end{proof}

We have completed all the preparations for the proof of Proposition \ref{propostion: 2 central extensions coincide}, which we give now. 
Ignoring the dotted arrow, Lemma \ref{lemma: existence of non-abelian torsors} implies the existence of the following diagram, commutative by Lemma \ref{lemma: Thorne thesis torsor extends and is iso}:
\begin{center}
\begin{tikzcd}
	{\pi_1(\bigaffcurve_{\bar{\eta}},\bar{y})} \arrow[r] \arrow[rd] & \genHeis \arrow[r] \arrow[d, "\Psi", dashed] & {\Jac_{\eta}[2]} \arrow[d, "\simeq"] \\
	& \genExt \arrow[r]                            & \bigLambda_{\eta}/2\bigLambda_{\eta}
\end{tikzcd}
\end{center}
By Lemma \ref{lemma: 2 central extensions are isomorphic a abstract groups}, we are in the situation of Lemma \ref{lemma: group theory with central extension and the free group}, so there exists a unique isomorphism $\Psi\colon \genHeis_{k^s} \rightarrow \genExt_{k^s}$ such that the above diagram with the dotted arrow added is commutative. (Lemma \ref{lemma: group theory with central extension and the free group} can be applied even if $F_6$ is the profinite completion of the free group on six generators since we are dealing with finite quotients here.)
Since the maps $\pi_1(\bigaffcurve_{\bar{\eta}},\bar{y}) \rightarrow \genHeis$ and $\pi_1(\bigaffcurve_{\bar{\eta}},\bar{y}) \rightarrow \genExt$ are $\Gamma_k$-equivariant, $\Psi$ is $\Gamma_k$-invariant as well. This proves that $\genExt$ and $\genHeis$ are isomorphic. 

To prove uniqueness, we note that the scheme of isomorphisms $\genExt \simeq \genHeis$ compatible with the central extensions is a torsor under the group $\left(\bigLambda_{\eta}/2\bigLambda_{\eta}\right)^{\vee} \simeq \bigLambda_{\eta}/2\bigLambda_{\eta}$, by \cite[Lemma 2.4]{thorne-planequarticsAIT}. Since $\bigLambda_{\eta}/2\bigLambda_{\eta}$ does not have any non-identity $k$-rational points by Proposition \ref{proposition: monodromy of J[2]}, this completes the proof of Proposition \ref{propostion: 2 central extensions coincide}.

\begin{corollary}\label{corollary: commutative diagram corollary central extensions}
	There is a commutative diagram of $\bigB^{\rs}$-group schemes with exact rows
	\begin{center}
	\begin{tikzcd}
		1 \arrow[r] & \{\pm1\} \arrow[r]                & \bigG_{\bigB^{\rs}}^{sc} \arrow[r] & \bigG_{\bigB^{\rs}} \arrow[r] & 1 \\
		1 \arrow[r] & \{\pm1\} \arrow[r] \arrow[u, "="] & \bigHeis \arrow[r] \arrow[u]       & {\Jac[2]} \arrow[r] \arrow[u] & 1
	\end{tikzcd}
	\end{center}
	enjoying the following properties:
	\begin{enumerate}
		\item The rightmost vertical arrow equals the composite of the isomorphism $\Jac[2] \simeq Z_{\bigG}(\sigma|_{\bigB^{\rs}})$ from Proposition \ref{proposition: bridge jacobians root lattices} with the inclusion $Z_{\bigG}(\sigma|_{\bigB^{\rs}}) \hookrightarrow \bigG_{\bigB^{\rs}}$. 
		\item The right-hand square is cartesian. 
	\end{enumerate}
\end{corollary}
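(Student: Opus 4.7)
The plan is to assemble the desired diagram by starting from the defining diagram of $\bigExt$ and then transporting everything along the isomorphism $\bigExt \simeq \bigHeis$ supplied by Proposition~\ref{propostion: 2 central extensions coincide}.

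First I would recall the commutative diagram used at the very beginning of \S\ref{subsection: comparting two central extensions}, where $\bigExt$ was defined as the fibre product
\[
\bigExt = \bigG^{sc}_{\bigB^{\rs}} \times_{\bigG_{\bigB^{\rs}}} (\bigLambda/2\bigLambda).
\]
By construction the right-hand square of the corresponding two-row diagram is cartesian, the left-hand vertical arrow is the identity on $\{\pm 1\}$, and the rightmost vertical arrow is precisely the inclusion $\bigLambda/2\bigLambda = Z_{\bigG}(\sigma|_{\bigB^{\rs}}) \hookrightarrow \bigG_{\bigB^{\rs}}$ (using Lemma~\ref{lemma: centralizer kostant same as mod 2 root lattice}). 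Both rows are exact.

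Next, Proposition~\ref{propostion: 2 central extensions coincide} provides an isomorphism $\Psi\colon \bigHeis \xrightarrow{\sim} \bigExt$ of $\bigB^{\rs}$-group schemes which preserves the central subgroup $\{\pm 1\}$ and induces on the quotients the isomorphism $\Jac[2] \xrightarrow{\sim} \bigLambda/2\bigLambda$ coming from Proposition~\ref{proposition: bridge jacobians root lattices}. I would use $\Psi$ to rewrite the bottom row of the $\bigExt$-diagram as a short exact sequence involving $\bigHeis$ and $\Jac[2]$; the compatibility of $\Psi$ with the subgroup $\{\pm 1\}$ guarantees that the leftmost vertical arrow remains the identity, and the compatibility on quotients guarantees that the induced rightmost vertical arrow factors through the isomorphism $\Jac[2] \simeq Z_{\bigG}(\sigma|_{\bigB^{\rs}})$ of Proposition~\ref{proposition: bridge jacobians root lattices} followed by the natural inclusion, as required by Property (1).

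Finally, to check Property (2), I would observe that cartesianness is preserved under replacing one corner by an isomorphic object via an isomorphism compatible with the two arrows out of it. Concretely, the right-hand square of the new diagram is obtained from the cartesian right-hand square of the $\bigExt$-diagram by precomposing the bottom horizontal arrow with $\Psi$ and the bottom-right corner with the identification $\Jac[2] \simeq \bigLambda/2\bigLambda$; both of these are isomorphisms, so the square remains cartesian. There is essentially no obstacle here: all the serious content is contained in Proposition~\ref{propostion: 2 central extensions coincide}, and the corollary is merely a tidy restatement of that proposition in the form most convenient for the orbit-parametrization arguments to follow.
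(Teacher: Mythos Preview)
Your proposal is correct and matches the paper's approach: the paper states the corollary without proof, treating it as an immediate consequence of Proposition~\ref{propostion: 2 central extensions coincide} together with the definition of $\bigExt$ as a pullback. Your write-up simply makes explicit the transport along the isomorphism $\bigHeis \simeq \bigExt$, which is exactly what the paper leaves implicit.
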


\subsection{Embedding the Selmer group}\label{subsection: twisting and embedding the selmer group}

We start with a well-known lemma which provides the link between the rational orbits of our representations and \'etale cohomology. 
Its proof will be postponed to the proof of Proposition \ref{proposition: G-orbits in terms of groupoids} and is largely formal. The case of a field is treated in \cite[Proposition 1]{BhargavaGross-AIT} and the more general case is based on the same idea. 
Recall that for a $\Q$-algebra $R$ and an element $b\in \bigB(R)$ we write $\bigV_b$ for the pullback of the morphism  $\bigpi\colon \bigV \rightarrow \bigB$ along $b$. 

\begin{lemma}\label{lemma: AIT}
	Let $R$ be a $\Q$-algebra. If $b\in \bigB^{\rs}(R)$ then there is a canonical bijection of sets
	$$ \bigG(R)\backslash \bigV_b(R) \simeq \ker\left(\HH^1(R,Z_{\bigG}(\sigma(b)) ) \rightarrow \HH^1(R,\bigG)\right).$$
	The distinguished orbit $\bigG(R)\cdot \bigsigma(b)$ corresponds to the trivial element in $\HH^1(R,Z_G(\sigma(b) ))$.
\end{lemma}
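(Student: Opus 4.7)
The plan is to apply the standard arithmetic invariant theory twisting construction. Concretely, given an $R$-point $v \in \bigV_b(R)$, I will define the transporter scheme
$$\mathrm{Transp}_{\bigG_R}(\bigsigma(b), v) = \{ g \in \bigG_R \mid g \cdot \bigsigma(b) = v \},$$
which carries an obvious right action of $Z_{\bigG}(\bigsigma(b))_R$. The first task is to show this is a $Z_{\bigG}(\bigsigma(b))$-torsor for the \'etale topology on $\Spec R$. Non-emptiness \'etale-locally comes from Proposition \ref{proposition: Kostant section E6}: the map $\bigG \times \bigkappa \to \bigV$ is \'etale, so an \'etale cover of $\Spec R$ pulls $v$ back to the image of some $R'$-point of $\bigG \times \bigkappa$, and since the point $\bigkappa \to \bigV$ through $b$ is $\bigsigma(b)$ (as $\bigsigma$ is a section of $\bigpi$), this produces an $R'$-point $g \in \bigG(R')$ with $g \cdot \bigsigma(b) = v$. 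Freeness of the action is immediate from the definition of the centralizer. Sending $v$ to the class of this torsor defines a map $\bigG(R) \backslash \bigV_b(R) \to \HH^1(R, Z_{\bigG}(\bigsigma(b)))$, and it lands in the kernel of the map to $\HH^1(R, \bigG)$ because $\mathrm{Transp}_{\bigG_R}(\bigsigma(b), v)$ embeds $Z_{\bigG}(\bigsigma(b))$-equivariantly into the trivial $\bigG_R$-torsor.

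Next I would construct an inverse. Given a $Z_{\bigG}(\bigsigma(b))$-torsor $T$ over $\Spec R$ whose pushout $T \times^{Z_{\bigG}(\bigsigma(b))} \bigG_R$ is the trivial $\bigG_R$-torsor, pick a trivialization i.e.\ a $\bigG_R$-equivariant isomorphism $T \times^{Z_{\bigG}(\bigsigma(b))} \bigG_R \simeq \bigG_R$. Under this identification the inclusion $T \hookrightarrow T \times^{Z_{\bigG}(\bigsigma(b))} \bigG_R$ singles out a $Z_{\bigG}(\bigsigma(b))$-torsor inside $\bigG_R$, and one verifies that the image of this torsor under the orbit map $g \mapsto g \cdot \bigsigma(b)$ is a single $R$-point $v \in \bigV_b(R)$, well-defined up to the $\bigG(R)$-action (corresponding to the choice of trivialization). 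The two constructions are mutually inverse by direct bookkeeping: starting from $v$, the torsor is (non-canonically) identified with $Z_{\bigG}(\bigsigma(b))_R$ by any choice of $g$ with $g \cdot \bigsigma(b) = v$ over a cover, and pushing back recovers $v$.

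Finally, the distinguished orbit $\bigG(R) \cdot \bigsigma(b)$ corresponds to $v = \bigsigma(b)$, for which the transporter contains the identity element of $\bigG(R)$ and is therefore the trivial torsor, establishing the last sentence.

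The main technical point — and the only non-formal step — is the verification that $\mathrm{Transp}_{\bigG_R}(\bigsigma(b), v)$ is an \'etale $Z_{\bigG}(\bigsigma(b))$-torsor, which is where the Kostant section input (Proposition \ref{proposition: Kostant section E6}) is essential. Once this is in place, the remainder is the standard equivalence between $G$-equivariant objects locally isomorphic to a fixed model and torsors under the automorphism group of that model, specialized to the action of $\bigG$ on $\bigV$ with model $\bigsigma(b)$; no further features of the $E_6$-setup enter. In particular, the proof works verbatim replacing $R$ by a field $k$, recovering \cite[Proposition 1]{BhargavaGross-AIT}.
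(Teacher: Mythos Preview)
Your proposal is correct and takes essentially the same approach as the paper. The paper defers the proof to Proposition~\ref{proposition: G-orbits in terms of groupoids}, where the same transporter-torsor mechanism is packaged in the language of the groupoids $\GrLie$ and $\GrLieE$; unwinding that formulation recovers exactly your construction, with the same non-formal input (\'etaleness of the action map from Proposition~\ref{proposition: Kostant section E6}, plus surjectivity onto $\bigV^{\rs}$ from Proposition~\ref{prop : graded chevalley}, which you use implicitly).
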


The bijection is given by sending the orbit $\bigG(R)\cdot v$ to the isomorphism class of the $Z_{\bigG}(\sigma(b))$-torsor $\{g\in \bigG \mid g\cdot v  = \sigma(b)  \} \rightarrow \Spec R$. 

\begin{lemma}\label{lemma: H^1(R,Sp) is trivial}
	Let $R$ be a $\Q$-algebra such that every locally free $R$-module of constant rank is free. For each $n\geq 1$ write $\Sp_{2n}$ for the split symplectic group over $\Q$ of rank $n$. 
	Then the pointed set $\HH^1(R,\Sp_{2n})$ is trivial for all $n\geq 1$.
	In particular, the pointed set $\HH^1(R,G^{sc})$ is trivial. 
\end{lemma}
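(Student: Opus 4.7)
The plan is to interpret $\HH^1(R,\Sp_{2n})$ as classifying isomorphism classes of symplectic $R$-modules of rank $2n$, that is pairs $(M,\omega)$ where $M$ is a locally free $R$-module of rank $2n$ and $\omega\in \bwedge{2}M^{\vee}$ is non-degenerate (meaning the induced map $M\to M^{\vee}$ is an isomorphism). This follows from the standard fact that $\Sp_{2n}$ is the stabilizer, in $\GL_{2n}$, of the standard symplectic form on the free module of rank $2n$, together with \'etale descent for such pairs. By the hypothesis on $R$, the underlying module $M$ of such a pair is then automatically free of rank $2n$. So it reduces to showing that any non-degenerate alternating form $\omega$ on $R^{2n}$ is $\GL_{2n}(R)$-equivalent to the standard one; equivalently, $(R^{2n},\omega)$ admits a symplectic basis.

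The strategy is symplectic Gram--Schmidt by induction on $n$. For the base case $n=0$ there is nothing to prove. For the inductive step the key point is to produce a hyperbolic pair: let $\pi_1\in (R^{2n})^{\vee}$ be projection onto the first coordinate. Since the map $R^{2n}\to (R^{2n})^{\vee}$, $v\mapsto \omega(v,-)$ is an isomorphism, there exists $v\in R^{2n}$ with $\omega(v,-)=\pi_1$; setting $e = (1,0,\dots,0)$ one then has $\omega(v,e)=1$. The submodule $P = Rv \oplus Re$ is free of rank $2$ and the map $R^{2n}\to R^2$, $w\mapsto (\omega(w,e),\omega(v,w))$ is a surjection split by the inclusion $P\hookrightarrow R^{2n}$. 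Its kernel $P^{\perp}$ is therefore a direct summand of $R^{2n}$, locally free of rank $2n-2$; by the hypothesis on $R$ it is free of rank $2n-2$, and the decomposition $R^{2n}=P\oplus P^{\perp}$ together with non-degeneracy of $\omega$ on $P$ force $\omega|_{P^{\perp}}$ to be non-degenerate. The inductive hypothesis produces a symplectic basis of $P^{\perp}$, which together with $v,e$ yields one of $R^{2n}$.

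The main obstacle is the construction of the hyperbolic pair, where one needs to exploit both the hypothesis (to ensure splittings give free complements) and the non-degeneracy (to realize a unimodular linear functional as $\omega(v,-)$ for some $v$); once past this, both the induction and the claim that $P^{\perp}$ is a non-degenerate symplectic module follow by bookkeeping in $\omega$.

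For the final assertion, recall from \S\ref{subsection: a stable grading} that $\bigG \simeq \PSp_8$, so that $\bigG^{sc} \simeq \Sp_8$ as group schemes over $\Q$. Applying the first part of the lemma with $n=4$ yields $\HH^1(R,\bigG^{sc})=\HH^1(R,\Sp_8)=\{*\}$, as required.
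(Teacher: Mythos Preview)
Your proof is correct and follows essentially the same approach as the paper: both interpret $\HH^1(R,\Sp_{2n})$ as classifying symplectic modules $(M,\omega)$, use the hypothesis to conclude $M$ is free, and then argue that any non-degenerate alternating form on $R^{2n}$ is standard. The only difference is that the paper cites \cite[Corollary 3.5]{Milnor-SymmetricBilinearForms} for this last step, whereas you spell out the symplectic Gram--Schmidt argument (which is exactly the proof in that reference).
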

\begin{proof}
	Since $\bigG\simeq \PSp_8$ we have $\bigG^{sc}\simeq \Sp_8$ so it suffices to prove the first part. The set $\HH^1(R,\Sp_{2n})$ is in canonical bijection with the set of isomorphism classes of pairs $(M,b)$, where $M$ is a projective $R$-module of rank $2n$ and $b: M\times M \rightarrow R$ is an alternating perfect pairing. Our assumptions imply that $M$ is free and the proof of \cite[Corollary 3.5]{Milnor-SymmetricBilinearForms} shows that any two alternating perfect pairings on $M$ are isomorphic. 
\end{proof}

We now piece all the ingredients obtained so far together and deduce our first main result.

\begin{theorem}\label{theorem: inject 2-descent into orbits}
	Let $R$ be a $\Q$-algebra such that every locally free $R$-module of constant rank is free and $b\in \bigB^{\rs}(R)$. Then there is a canonical injection $\eta_b \colon \Jac_b(R)/2\Jac_b(R) \hookrightarrow \bigG(R)\backslash \bigV_b(R)$ compatible with base change on $R$. 
	Moreover, the map $\eta_b$ sends the identity element to the orbit of $\bigsigma(b)$.
\end{theorem}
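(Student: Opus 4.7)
The plan is to define $\eta_b$ as the composite of the $2$-descent boundary from the Kummer sequence with the orbit-cohomology identification of Lemma \ref{lemma: AIT}. Since $2 \in R^\times$, the Kummer sequence $0 \to \Jac_b[2] \to \Jac_b \xrightarrow{[2]} \Jac_b \to 0$ is exact in the \'etale topology, producing an injective boundary map $\delta \colon \Jac_b(R)/2\Jac_b(R) \hookrightarrow \HH^1(R, \Jac_b[2])$. Under the identification $Z_{\bigG}(\sigma(b)) \simeq \Jac_b[2]$ of Proposition \ref{proposition: bridge jacobians root lattices}, Lemma \ref{lemma: AIT} realises $\bigG(R)\backslash \bigV_b(R)$ as $\ker\bigl(\HH^1(R, \Jac_b[2]) \to \HH^1(R, \bigG)\bigr)$ and sends the distinguished orbit $\bigG(R)\cdot \sigma(b)$ to the trivial class. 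It therefore suffices to show that $\delta$ lands in this kernel: $\eta_b$ will then be automatically injective (by injectivity of $\delta$), and the identity of $\Jac_b(R)/2\Jac_b(R)$ will map to $\delta(0) = 0$, hence to the distinguished orbit.

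The main obstacle is showing $\delta(y)$ dies in $\HH^1(R, \bigG)$. My strategy is to invoke Corollary \ref{corollary: commutative diagram corollary central extensions}, which produces a commutative square with right square cartesian relating the central extension $1 \to \{\pm 1\} \to \bigHeis_b \to \Jac_b[2] \to 1$ (coming from the Mumford theta group of $\sh{M}^2$) to the extension $1 \to \{\pm 1\} \to \bigG^{sc} \to \bigG \to 1$, in a way compatible with the inclusion $\Jac_b[2] \hookrightarrow \bigG$. Taking $\HH^1$, if $\delta(y)$ can be lifted to some class in $\HH^1(R, \bigHeis_b)$, then its image in $\HH^1(R, \bigG^{sc})$ is trivial by Lemma \ref{lemma: H^1(R,Sp) is trivial}, which forces its image in $\HH^1(R, \bigG)$ to be trivial by commutativity.

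To construct the required lift I would use the geometric $\bigHeis_b$-torsor $p \colon \Gmtorsor{\sh{M}^2} \to \Gmtorsor{\sh{M}}$ of Proposition \ref{proposition: V(M) admits a H-torsor}, which sits over $[2] \colon \Jac_b \to \Jac_b$. Given $y \in \Jac_b(R)$, the pullback $y^*\Gmtorsor{\sh{M}}$ is a $\G_m$-torsor on $\Spec R$, hence trivial by the hypothesis on $R$; so $y$ lifts to an $R$-point $\tilde{y}$ of $\Gmtorsor{\sh{M}}$ (equivalently, a trivialisation of $y^*\sh{M}$). Setting $P_y := p^{-1}(\tilde{y})$ gives an $\bigHeis_b$-torsor over $\Spec R$, and the compatibility of $p$ with $[2]$ identifies the pushforward of $P_y$ along $\bigHeis_b \to \Jac_b[2]$ with the Kummer torsor $[2]^{-1}(y)$. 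Thus $[P_y] \in \HH^1(R, \bigHeis_b)$ lifts $\delta(y)$, as required; note that although $P_y$ depends on the auxiliary choice of $\tilde{y}$, only the existence of some lift is needed, not canonicity.

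Finally, compatibility with base change on $R$ is automatic from the functoriality of every ingredient: the Kummer boundary, the bijection of Lemma \ref{lemma: AIT}, and the pullback construction of $P_y$.
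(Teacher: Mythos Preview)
Your proposal is correct and follows essentially the same approach as the paper's proof: both define $\eta_b$ via the Kummer boundary into $\HH^1(R,\Jac_b[2])$, invoke Corollary~\ref{corollary: commutative diagram corollary central extensions} and the triviality of $\HH^1(R,\bigG^{sc})$ from Lemma~\ref{lemma: H^1(R,Sp) is trivial} to reduce to lifting $\delta(y)$ to $\HH^1(R,\bigHeis_b)$, and then produce that lift by pulling back the $\bigHeis_b$-torsor $p\colon \Gmtorsor{\sh{M}^2}\to\Gmtorsor{\sh{M}}$ along a trivialisation of $y^*\sh{M}$ (using $\Pic(R)=0$). The only cosmetic difference is that the paper phrases the vanishing of $\HH^1(R,\G_m)$ directly rather than via the locally-free hypothesis, and does not spell out the base-change compatibility.
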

\begin{proof}
	By Corollary \ref{corollary: commutative diagram corollary central extensions}, we have a commutative diagram with exact rows of group schemes over $R$ (we continue to write $G$ and $G^{sc}$ for the base change of these $\Q$-groups to $R$): 
	\begin{center}
		\begin{tikzcd}
			1 \arrow[r] & \mu_2 \arrow[r]                 & \bigG^{sc} \arrow[r]    & \bigG \arrow[r]                        & 1 \\
			1 \arrow[r] & \mu_2 \arrow[r] \arrow[u, "="'] & \bigHeis_b \arrow[r] \arrow[u] & \Jac_b[2] \arrow[r] \arrow[u] & 1
		\end{tikzcd}
	\end{center}
	Moreover by Lemma \ref{lemma: AIT}, the kernel of the map of pointed sets $\HH^1(R,\Jac_b[2]) \rightarrow \HH^1(R,\bigG)$ induced by the rightmost vertical map of the diagram is in canonical bijection with the set of $\bigG(R)$-orbits in $\bigV_b(R)$. 
	Given $A\in \Jac_b(R)$ we define $\eta_b(A) \in \HH^1(R,\Jac_b[2])$ as the image of $A$ under the $2$-descent map $\Jac_b(R)/2\Jac_b(R) \hookrightarrow \HH^1(R,\Jac_b[2])$, given by the isomorphism class of the $\Jac_b[2]$-torsor $[2]^{-1}\left(A\right)$.
	To prove that $\eta_b(A)$ defines a $\bigG(R)$-orbit in $\bigV_b(R)$ we need to show that its class is killed under the map $\HH^1(R,\Jac_b[2]) \rightarrow \HH^1(R,G)$. 
	Using the triviality of $\HH^1(R,G^{sc})$ by Lemma \ref{lemma: H^1(R,Sp) is trivial} and the above commutative diagram, it is enough to show that $\eta_b(A)$ lies in the image of the map $\HH^1(R,\bigHeis_b) \rightarrow \HH^1(R,\Jac_b[2])$. 
	
	Recall from \S\ref{subsection: Mumford theta groups} that we have a commutative diagram 
	\begin{center}
		\begin{tikzcd}
			\Gmtorsor{\sh{M}^2_b} \arrow[d] \arrow[r, "p"] & \Gmtorsor{\sh{M}_b} \arrow[d] \\
			\Jac_b \arrow[r, "\times 2"]                      & \Jac_b                          
		\end{tikzcd}
	\end{center}
	where the vertical arrows are $\G_m$-torsors and where $p$ is an $\bigHeis_b$-torsor. 
	Since $\HH^1(R,\G_m)$ is trivial, the point $A\in \Jac_b(R)$ lifts to a point $\widetilde{A} \in \Gmtorsor{\sh{M}_b}(R)$. 
	Then the fibre of $p$ above $\widetilde{A}$ will be an $\bigHeis_b$-torsor lifting $\eta_b(A)$. This concludes the first part of the theorem.
	The definition of $\eta_b$ shows that it sends the identity element of $\Jac_b(R)/2\Jac_b(R)$ to the identity element of $\HH^1(R,\Jac_b[2])$. By Lemma \ref{lemma: AIT} this corresponds to the orbit of $\sigma(b)$, proving the second part of the theorem.

\end{proof}

\begin{corollary}\label{corollary: Sel2 embeds}
	Let $b\in \bigB^{\rs}(\Q)$ and write $\Sel_2 \Jac_b$ for the $2$-Selmer group of $\Jac_b$ over $\Q$. 
	Then the injection $\Jac_b(\Q)/2\Jac_b(\Q) \hookrightarrow \bigG(\Q)\backslash \bigV_b(\Q)$ of Theorem \ref{theorem: inject 2-descent into orbits} extends to an injection 
	$$\Sel_2 \Jac_b \hookrightarrow \bigG(\Q)\backslash \bigV_b(\Q).$$
\end{corollary}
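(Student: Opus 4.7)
The plan is to promote Theorem~\ref{theorem: inject 2-descent into orbits} from $J_b(\Q)/2J_b(\Q)$ to $\Sel_2 J_b$ by a diagram chase combined with the Hasse--Brauer--Noether theorem. Recall from Corollary~\ref{corollary: commutative diagram corollary central extensions} that we have a central extension of $\Q$-group schemes
\begin{equation*}
1 \to \mu_2 \to \bigHeis_b \to \Jac_b[2] \to 1
\end{equation*}
together with a map $\bigHeis_b \to G^{sc}$ lifting $\Jac_b[2]\hookrightarrow G$. For any $\Q$-algebra $R$, Lemma~\ref{lemma: AIT} identifies $G(R)\backslash V_b(R)$ with $\ker(\HH^1(R,\Jac_b[2])\to \HH^1(R,G))$. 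So to extend $\eta_b$ to $\Sel_2 \Jac_b$ it suffices to show that every Selmer class $s\in \HH^1(\Q,\Jac_b[2])$ dies in $\HH^1(\Q,G)$; the resulting map will automatically be injective because the bijection of Lemma~\ref{lemma: AIT} is a bijection onto a kernel, and it will agree with $\eta_b$ on the image of the $2$-descent map by the construction of $\eta_b$.

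To prove this vanishing, consider the connecting map $\partial\colon \HH^1(\Q,\Jac_b[2])\to \HH^2(\Q,\mu_2)$ associated with the central extension above. A class lifts to $\HH^1(\Q,\bigHeis_b)$ if and only if its obstruction $\partial$ vanishes, and any such lift is sent to $\HH^1(\Q,G^{sc})=1$ by Lemma~\ref{lemma: H^1(R,Sp) is trivial} (noting that $\Q$ is a field, so locally free modules of constant rank are free), hence to the trivial class in $\HH^1(\Q,G)$. Therefore it suffices to show $\partial(s)=0$ for every $s\in\Sel_2 \Jac_b$.

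By definition of the Selmer group, for each place $v$ of $\Q$ the restriction $s_v \in \HH^1(\Q_v,\Jac_b[2])$ lies in the image of $\Jac_b(\Q_v)/2\Jac_b(\Q_v)$. Applying Theorem~\ref{theorem: inject 2-descent into orbits} with $R=\Q_v$ (again locally free $\Q_v$-modules of constant rank are free), we obtained the lift of $s_v$ to $\HH^1(\Q_v,\bigHeis_b)$ by pulling back the $\bigHeis_b$-torsor $\Gmtorsor{\sh{M}_b^2}\to \Gmtorsor{\sh{M}_b}$ along a chosen lift of $s_v$ to $\Gmtorsor{\sh{M}_b}(\Q_v)$ (which exists because $\HH^1(\Q_v,\G_m)=1$). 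In particular $\partial(s)_v = \partial(s_v)=0$ for every $v$.

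Finally, since $\HH^1(\Q,\G_m)=1$ by Hilbert 90, the sequence $1\to\mu_2\to \G_m\xrightarrow{2}\G_m\to 1$ yields an injection $\HH^2(\Q,\mu_2)\hookrightarrow \Br(\Q)[2]$ compatible with localization at every $v$. The Hasse--Brauer--Noether theorem states that $\Br(\Q)\hookrightarrow \bigoplus_v \Br(\Q_v)$, so $\partial(s)=0$ in $\HH^2(\Q,\mu_2)$ as required. The main point — indeed the only nonformal ingredient — is this invocation of the local-global principle for the Brauer group; everything else is a diagram chase using results already established. A small verification at the end is that the extended map agrees with $\eta_b$ on $\Jac_b(\Q)/2\Jac_b(\Q)$, which is immediate from the commutativity of the relevant diagrams and the naturality of the construction in the coefficient ring.
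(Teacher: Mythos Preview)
Your proof is correct and follows essentially the same approach as the paper: both arguments reduce to showing that a Selmer class dies in $\HH^1(\Q,G)$ by applying Theorem~\ref{theorem: inject 2-descent into orbits} at every place and then invoking the Hasse principle for the Brauer group. The only cosmetic difference is that you phrase the local-to-global step in terms of the obstruction class $\partial(s)\in\HH^2(\Q,\mu_2)$, whereas the paper phrases it as the triviality of the kernel of $\HH^1(\Q,G)\to\prod_v\HH^1(\Q_v,G)$; these are equivalent since $\HH^1(\Q,G^{sc})=1$.
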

\begin{proof}
	
	We have a commutative diagram for every place $v$: 
	\begin{center}
		\begin{tikzcd}
			{\Jac_b(\Q)/2\Jac_b(\Q)} \arrow[d] \arrow[r, "\delta"] & {\HH^1(\Q,\Jac_b[2])} \arrow[r] \arrow[d] & {\HH^1(\Q,\bigG)} \arrow[d] \\
			{\Jac_b(\Q_v)/2\Jac_b(\Q_v)} \arrow[r, "\delta_v"]     & {\HH^1(\Q_v,\Jac_b[2])} \arrow[r]         & {\HH^1(\Q_v,\bigG)}        
		\end{tikzcd}
	\end{center}
	To prove the corollary it suffices to prove that $2$-Selmer elements in $\HH^1(\Q,\Jac_b[2])$ are killed under the natural map $\HH^1(\Q,\Jac_b[2]) \rightarrow \HH^1(\Q,G)$. 
	By definition, an element of $\Sel_2 \Jac_b$ consists of a class in $\HH^1(\Q,\Jac_b[2])$ whose restriction to $\HH^1(\Q_v,\Jac_b[2])$ lies in the image of $\delta_v$ for every place $v$. 
	So by Theorem \ref{theorem: inject 2-descent into orbits} the image of such an element in $\HH^1(\Q_v,\bigG)$ is trivial for every $v$. Since the restriction map $\HH^2(\Q,\mu_2) \rightarrow \prod_{v} \HH^2(\Q_v,\mu_2)$ has trivial kernel by the Hasse principle for the Brauer group, the kernel of $\HH^1(\Q,G) \rightarrow \prod_{v} \HH^1(\Q_v,G)$ is trivial too. The result follows.
\end{proof}

\section{Integral orbit representatives}\label{section: integral representatives}

In this section, we introduce integral structures for the pair $(\bigG,\bigV)$ and prove that for large primes $p$, the image of the map from Theorem \ref{theorem: inject 2-descent into orbits} applied to $R = \Q_p$ lands in the orbits which admit a representative in $\Z_p$. See Theorem \ref{theorem: integral representatives exist} for a precise statement. 
In \S\ref{subsection: integral structures}, we extend our constructions over $\Z[1/N]$ for some sufficiently large integer $N$. 
In \S\ref{subsection: some groupoids} and \S\ref{subsection: compactifications} we introduce the necessary technical background for the proof of Theorem \ref{theorem: integral representatives exist}. 
In \S\ref{subsection: case of square-free discriminant} we prove the case of square-free discriminant. 
In \S\ref{subsection: proof of theorem integral representatives} we combine all the above ingredients to prove Theorem \ref{theorem: integral representatives exist} in full generality. 
Finally in \S\ref{subsection: integrality, a global corollary}, we deduce an integrality result for orbits over $\Q$ (as opposed to orbits over $\Q_p$).

\subsection{Integral structures}\label{subsection: integral structures}

The pair $(\bigG,\bigV)$ naturally extends to a pair $(\intbigG,\intbigV)$ over $\Z$ with similar properties.
Indeed, our choice of pinning of $\bigH$ in \S\ref{subsection: a stable grading} determines a Chevalley basis of $\bigh$, hence a $\Z$-form $\intlieh$ of $\bigh$ (in the sense of \cite{Borel-propertieschevalley}) with adjoint group $\intbigH$, a split semisimple group of type $E_6$ over $\Z$.
The $\Z$-lattice $\intbigV = \bigV\cap \intlieh$ is admissible; define $\intbigG$ as the Zariski closure of $\bigG$ in $\GL(\intbigV)$. 
The $\Z$-group scheme $\intbigG$ has generic fibre $\bigG$ and acts faithfully on the free $\Z$-module $\intbigV$ of rank $42$. 
The automorphism $\bigtheta\colon \bigH \rightarrow \bigH$ of \S\ref{subsection: a stable grading} extends by the same formula to an automorphism $\intbigH\rightarrow \intbigH$, still denoted by $\bigtheta$. 
We have $\intbigH^{\bigtheta}_{\Z[1/2]}=\intbigG_{\Z[1/2]}$ and $\intbigG_{\Z[1/2]}$ is a split reductive group of type $C_4$ over $\Z[1/2]$.


Our main properties and constructions obtained so far work over $\Z[1/N]$ for some sufficiently large integer $N$, as we will now explain. 
After rescaling the polynomials $p_2,\dots,p_{12}\in \Q[\bigV]^{\bigG}$ fixed in \S\ref{subsection: a family of curves} using the $\G_m$-action on $\bigV$ we can assume they lie in $\Z[\intbigV]^{\intbigG}$. 
Write $\intbigB \coloneqq \Spec\Z[p_2,\dots,p_{12}]$ and write $\pi\colon \intbigV \rightarrow \intbigB$ for the corresponding morphism which extends the morphism $V\rightarrow B$ on $\Q$-fibres, already denoted by $\pi$. 
Recall that $\Delta \in \Q[\bigV]^{\bigG}$ is the Lie algebra discriminant of $\bigh$, a $\bigG$-invariant polynomial of degree $72$. 
We can assume, again after suitable rescaling $p_2,\dots,p_{12}$ using the $\G_m$-action, that $\Delta\in \Z[\bigV]^{\bigG}$. 
We define $\intbigB^{\rs} \coloneqq \Spec \Z[p_2,\dots,p_{12}][\Delta^{-1}]$. 
We extend the family of curves given by Equation (\ref{equation : E6 family middle of paper}) to the family $\intbigcurve \rightarrow \intbigB$ given by that same equation. 

Let us call a positive integer $N$ \define{good} if the following properties are satisfied (set $S \coloneqq \Z[1/N]$): 

\begin{enumerate}
	\item Each prime dividing the order of the Weyl group (so an element of $\{2,3,5\}$) is a unit in $S$. 
	\item The discriminant locus $\{\Delta = 0\}_S \rightarrow \Spec S $ has geometrically integral fibres. Moreover $\Delta$ and $\Delta_0$ (which by formula (\ref{equation: definition discriminant Delta0}) defines an element of $\Z[\intbigB]$) are equal up to a unit in $\Z[1/N]$.
	\item The morphism $\intbigcurve_S\rightarrow \intbigB_S$ is flat and proper with geometrically integral fibres. It is smooth exactly above $\intbigB_S^{\rs}$. 
	\item $S[\intbigV]^{\intbigG} = S[p_2,p_5,p_6,p_8,p_9,p_{12}]$. The Kostant section extends to a section $\sigma\colon \intbigB_S \rightarrow \intbigV^{\reg}$ of $\pi$ satisfying the following property: for any $b\in \intbigB(\Z) \subset \intbigB_S(S)$, we have $\sigma(N\cdot b) \in \intbigV(\Z)$. 
	\item There exists open subschemes $\intbigV^{\rs} \subset \intbigV^{\reg} \subset \intbigV_S$ such that if $S\rightarrow k$ is a map to a field and $v\in \intbigV(k)$ then $v$ is regular if and only if $v\in \intbigV^{\reg}(k)$ and $v$ is regular semisimple if and only if $v\in \intbigV^{\rs}(k)$. Moreover, $\intbigV^{\rs}$ is the open subscheme defined by the nonvanishing of the discriminant polynomial $\Delta$ in $\intbigV_S$. 
	\item The action map $\intbigG_S \times \intbigB_S \rightarrow \intbigV^{\reg}, (g,b) \mapsto g\cdot \sigma(b)$ is \'etale and its image contains $\intbigV^{\rs}$.
	\item Let $\intJac \rightarrow \intbigB_S^{\rs}$ denote the relative Jacobian of $\intbigcurve_S \rightarrow \intbigB_S^{\rs}$. Then there is an isomorphism $\intJac[2] \simeq \Lambda/2\Lambda$ of \'etale sheaves on $\intbigB_S^{\rs}$ whose restriction to $\bigB^{\rs}$ is the isomorphism of Proposition \ref{proposition: bridge jacobians root lattices}. It intertwines the natural pairings on both sides. 
	\item The group schemes $\bigHeis$ and $\bigExt$ over $\bigB^{\rs}$ have natural extensions to finite \'etale group schemes over $\intbigB^{\rs}_S$ and there exists an isomorphism between the two extending the isomorphism from Proposition \ref{propostion: 2 central extensions coincide}. 
	
\end{enumerate}

\begin{proposition}
	There exists a good integer $N$. 
\end{proposition}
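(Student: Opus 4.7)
The plan is a standard spreading-out argument: every one of the eight conditions is known over the generic point $\Spec\Q$ (either by the results cited in the list or by the material developed earlier in \S\ref{section: setup} and \S\ref{section: orbit parametrization}), and each is of a form that extends to $\Spec\Z[1/N]$ after inverting a suitable finite set of primes. I would start with $N_0=30$ (covering $(1)$) and successively enlarge $N$ so that the remaining conditions become true, using standard avatars of EGA IV.8--9 (constructible/open behaviour of geometric properties in flat families of finite presentation) together with the fact that $\Spec\Z[p_2,\dots,p_{12}]$ and its blow-ups are of finite presentation over $\Z$.

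For $(2)$, geometric irreducibility of the discriminant hypersurface is known over $\Q$ by Lemma~\ref{lemma: discriminant geometrically irreducible}, and geometric irreducibility of fibres of a finite-type morphism is constructible in the base (\cite[EGA~IV$_3$, 9.7.7]{}); this picks up only finitely many bad primes. Proposition~\ref{proposition: discriminant Delta and Delta0 agree} gives $\Delta=c\Delta_0$ for a unique $c\in\Q^{\times}$; include the primes dividing the numerator and denominator of $c$ into $N$. For $(3)$, properness is immediate from the closed embedding $\intbigcurve\hookrightarrow\P^2_{\intbigB}$; flatness, geometric integrality of fibres and smoothness on $\intbigB^{\rs}_S$ all hold on the generic fibre by Proposition~\ref{proposition: bridge jacobians root lattices}, and each is an open condition on the base by standard spreading-out. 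For $(4)$, the equality $\Q[V]^G=\Q[p_2,\dots,p_{12}]$ holds on the generic fibre by Proposition~\ref{prop : graded chevalley}; by classical results of Demazure and Seshadri for reductive group schemes in very good characteristic, after inverting enough primes the same equality holds on $\Z[1/N]$. The Kostant section is constructed from a fixed normal $\liesl_2$-triple $(E,X,F)$ with coefficients in $\Q$, so after inverting the denominators of these coefficients and the primes at which $E,X,F$ cease to form an $\liesl_2$-triple mod $p$, the section $\sigma$ extends to $\intbigB_S\to\intbigV$. The scaling property follows from $\G_m$-equivariance of $\sigma$: if the denominators of the image of $\sigma$ divide some power of $N$, then multiplying the argument by $N$ and using that $V$ has positive $\G_m$-weights $w_i$ scales the components by $N^{w_i}$, which clears denominators once $N$ is divisible by a sufficiently high power.

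For $(5)$ and $(6)$, the loci $V^{\reg}\subset V$ and $V^{\rs}\subset V$ are open over $\Q$, and their closures in $\intbigV_S$ cut out the correct subset on each fibre once $N$ absorbs the finitely many primes where these characterisations degenerate; the identity $V^{\rs}=\{\Delta\neq 0\}$ on fibres uses Part 3 of Proposition~\ref{prop : graded chevalley} together with flatness of $\Delta$. That the action map $\intbigG_S\times\intbigB_S\to\intbigV^{\reg}$ is étale spreads out from Proposition~\ref{proposition: Kostant section E6} using the standard fact that étaleness of a finite-presentation morphism is open on the source (\cite[EGA~IV$_4$, 17.7.1]{}); that its image contains $\intbigV^{\rs}$ follows by combining Part 2 of Proposition~\ref{prop : graded chevalley} (equality on geometric fibres) with the fact that the image is open. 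For $(7)$ and $(8)$, both sides of each desired isomorphism are finite étale over $\intbigB^{\rs}_S$ after enlarging $N$, and the isomorphisms are given on the generic fibre by Propositions~\ref{proposition: bridge jacobians root lattices} and \ref{propostion: 2 central extensions coincide}; since the scheme of isomorphisms between two finite étale $\intbigB^{\rs}_S$-group schemes is itself finite étale, and $\intbigB^{\rs}_S$ is integral with generic point $\eta$, a morphism defined over $\eta$ extends to a section after possibly inverting more primes (\cite[Stacks, Tag 0BQM]{}). Intertwining of pairings and of the central extensions is checked the same way.

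The main obstacle is bookkeeping rather than a single hard step. If there is any genuine subtlety, it is in condition $(4)$: ensuring that both the invariant theory is generated by $p_2,\dots,p_{12}$ over $S$ (which requires avoiding torsion primes of $\bigH$, already handled by $(1)$, and invoking a Chevalley-type theorem in good characteristic) and that the Kostant section exists integrally with the right $\G_m$-equivariance. All other conditions reduce cleanly to standard openness/spreading-out statements, each of which rules out only finitely many primes, so their product can be folded into $N$.
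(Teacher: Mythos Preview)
Your proof plan is correct and follows essentially the same spreading-out strategy as the paper, which likewise treats each condition separately and illustrates the method by working out Properties~3, 4 and~5 in detail while leaving the rest to the reader. The only notable difference is that for~(5) the paper constructs $\intbigV^{\reg}$ and $\intbigV^{\rs}$ intrinsically via upper-semicontinuity of the fibre dimension of the universal centralizer over $\intlieh$ (invoking \cite{Riche-KostantSectionUniversalCentralizer} for smoothness of this centralizer over the regular locus, so that the torus locus is open), whereas your sketch via ``closures'' is phrased a bit loosely but clearly intends the same standard spreading-out of an open condition.
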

\begin{proof}
	This follows from the principle of spreading out. It suffices to consider each property in the above list separately. As an example we will treat Properties $3$, $4$ and $5$ in more detail, leaving the others to the reader. 
	
	For Property $3$, we first choose an $N$ such that $\intbigcurve_S \rightarrow \intbigB_S$ is flat and proper. By \cite[Théorème 12.2.1(x)]{EGAIV-3} the locus where the fibres are geometrically integral is an open subscheme of $\intbigB_S$. 
	Since the fibres of $\bigprojcurve\rightarrow \bigB$ are geometrically integral (use the contracting $\G_m$-action and the geometric integrality of the central fibre), this subscheme equals $\bigB$ over $\Q$.
	By spreading out, we can enlarge $N$ so that this subscheme is the whole of $\intbigB_S$. 
	Moreover the locus of $\intbigB_S$ above which the morphism $\intbigcurve_S\rightarrow \intbigB_S$ is smooth is an open subscheme which coincides with the open subscheme $\intbigB_S^{\rs}$ after base change to $\Q$ by Part 2 of Proposition \ref{proposition: bridge jacobians root lattices}. Again by spreading out, we can enlarge $N$ such that these two open subschemes coincide over $S$. 
	
	For Property $4$, note that $\Z[1/2][\intbigV]^{\intbigG}$ is a finitely generated $\Z[1/2]$-algebra by \cite[Theorem 2]{Seshadri-GeometricReductivityArbitaryBase} and the fact that $\intbigG$ is reductive over $\Z[1/2]$. Moreover it contains the subring $\Z[1/2][p_2,\dots,p_{12}]$. Since this inclusion of finitely generated $\Z[1/2]$-algebras is an equality after tensoring with $\Q$, the same holds after tensoring with $\Z[1/N]$ for some even $N$. The claim about the Kostant section follows from considering the denominators of the morphism $\sigma\colon \bigB \rightarrow \bigV$ and spreading out. 
	
	Finally we consider Property $5$. We will construct open subschemes $\intlieh^{\rs}_S\subset \intlieh^{\reg}_S\subset \intlieh_S$ with similar properties; the subschemes $\intbigV^{\rs}\subset \intbigV^{\reg}\subset \intbigV_S$ will be obtained by restricting them to $\intbigV_S$. 
	Let $Z\rightarrow \intlieh$ be the universal centralizer of the adjoint action of $\intbigH$ on $\intlieh$, so $Z = Z_{\intbigH}(\Id_{\intlieh})$. 
	If $k$ is any field and $x\in \intlieh(k)$ then by definition $x$ is regular if and only if the dimension of $Z_x$ equals $\rk \bigH =6$. 
	By \cite[Théorème 13.1.3]{EGAIV-3} and the fact that the dimension of a group scheme can be computed at the identity, the function $x\mapsto \dim Z_x$ is upper-semicontinuous on $\intlieh$. So the locus $\intlieh^{\reg}$ where the fibre has dimension $6$ is an open subscheme of $\intlieh$. 
	Let $Z^{\reg} \rightarrow \intlieh^{\reg}$ be the restriction of $Z$ to $\intlieh^{\reg}$. 
	By \cite[Remark 4.4.2]{Riche-KostantSectionUniversalCentralizer}, the morphism $Z^{\reg}_S \rightarrow \intlieh^{\reg}_S$ is smooth for some $N$. In that case the locus $\intlieh^{\rs}_S$ where the fibres are tori is an open subscheme of $\intlieh^{\reg}_S$ \cite[Exposé X; Corollaire 4.9]{SGA3-TomeII}, as required. 
	The statement about the discriminant locus follows from spreading out.

\end{proof}

We henceforth fix a good integer $N$ for the remainder of this paper. 
We can then extend our previous results to $S$-algebras rather than $\Q$-algebras. 
We mention in particular:

\begin{proposition}\label{proposition: inject 2-descent orbits spreading out}
	
Let $R$ be an $S$-algebra and $b\in \intbigB^{\rs}(R)$. Suppose that every locally free $R$-module of constant rank is free. Then there is an injective map 
	$$\eta_b \colon \intJac_b(R)/2\intJac_b(R) \rightarrow \intbigG(R)\backslash \intbigV_b(R)$$
	which is compatible with base change on $R$. Moreover it sends the identity element of $\intJac_b(R)/2\intJac_b(R)$ to the orbit of $\sigma(b)$. 
\end{proposition}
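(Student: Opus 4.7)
The plan is to run the proof of Theorem \ref{theorem: inject 2-descent into orbits} verbatim over $\intbigB^{\rs}_S$, replacing each ingredient by the integral analogue furnished by the properties of a good $N$. The three ingredients that must be verified integrally are: the orbit-vs-cohomology dictionary (Lemma \ref{lemma: AIT}), the vanishing of $\HH^1(R,\intbigG^{sc})$ (Lemma \ref{lemma: H^1(R,Sp) is trivial}), and the Heisenberg construction of \S\ref{subsection: Mumford theta groups}.

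First I would establish the integral Lemma \ref{lemma: AIT}: for any $S$-algebra $R$ and $b\in \intbigB^{\rs}(R)$ there is a canonical bijection
\[
\intbigG(R)\backslash \intbigV_b(R) \simeq \ker\!\left(\HH^1(R, Z_{\intbigG}(\sigma(b))) \to \HH^1(R, \intbigG)\right),
\]
with $\sigma(b)$ corresponding to the trivial class. The key input is Property $6$: since the action map $\intbigG_S \times \intbigB_S \to \intbigV^{\reg}$, $(g,b)\mapsto g\cdot\sigma(b)$, is \'etale with image containing $\intbigV^{\rs}$, and since $b$ factors through $\intbigB^{\rs}_S$, for any $v\in \intbigV_b(R)$ the transporter $\{g\in \intbigG_R : g\cdot v = \sigma(b)\}$ is a $Z_{\intbigG}(\sigma(b))$-torsor over $R$. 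Next, by Property $1$ we have $2\in S^\times$, so $\intbigG^{sc}_S\simeq \Sp_{8,S}$, and the proof of Lemma \ref{lemma: H^1(R,Sp) is trivial} applies verbatim to give $\HH^1(R,\intbigG^{sc})=0$ under our hypothesis on $R$.

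Property $8$ extends the diagram of Corollary \ref{corollary: commutative diagram corollary central extensions} to $\intbigB^{\rs}_S$; pulling back along $b\colon \Spec R\to \intbigB^{\rs}_S$ yields the commutative diagram of $R$-group schemes with exact rows
\begin{center}
\begin{tikzcd}
1 \arrow[r] & \mu_2 \arrow[r]                 & \intbigG^{sc}_R \arrow[r]    & \intbigG_R \arrow[r]                        & 1 \\
1 \arrow[r] & \mu_2 \arrow[r] \arrow[u, "="'] & \bigHeis_b \arrow[r] \arrow[u] & \intJac_b[2] \arrow[r] \arrow[u] & 1
\end{tikzcd}
\end{center}
with cartesian right-hand square. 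Given $A\in \intJac_b(R)$, I would define $\eta_b(A)$ as the image of the Kummer class $[\,[2]^{-1}(A)\,]\in \HH^1(R,\intJac_b[2])$ under the bijection above. To see this is well-defined, the vanishing of $\HH^1(R,\intbigG^{sc})$ and the cartesian square reduce it to lifting $\eta_b(A)$ to a class in $\HH^1(R,\bigHeis_b)$. One produces such a lift via the integral analogue of the construction of \S\ref{subsection: Mumford theta groups}: the relative theta divisor on $\intJac \to \intbigB^{\rs}_S$ defines a symmetric line bundle $\sh{M}$ whose associated $\G_m$-torsors $\Gmtorsor{\sh{M}}$ and $\Gmtorsor{\sh{M}^2}$ fit into a $\bigHeis$-torsor $p\colon \Gmtorsor{\sh{M}^2}\to \Gmtorsor{\sh{M}}$; using $\HH^1(R,\G_m)=0$ one lifts $A$ to $\widetilde{A}\in \Gmtorsor{\sh{M}_b}(R)$, and $p^{-1}(\widetilde{A})$ is then an $\bigHeis_b$-torsor whose image in $\HH^1(R,\intJac_b[2])$ is $\eta_b(A)$. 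Injectivity is inherited from the Kummer map, and the identity element goes to the trivial class, hence to the orbit of $\sigma(b)$.

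The main obstacle I anticipate is verifying the integral extension of the theta-group constructions: one must check that $\sh{M}$, its rigidification, the theorem-of-the-cube isomorphism $[2]^*\sh{M}\simeq \sh{M}^4$, and the trivialization of the pullback of $\sh{M}$ along the Abel--Jacobi map (used in the proof of Lemma \ref{lemma: existence of non-abelian torsors}) all spread out from $\bigB^{\rs}$ to $\intbigB^{\rs}_S$. This should be straightforward because, by Property $3$, $\intJac \to \intbigB^{\rs}_S$ is a smooth abelian scheme with the section provided by $P_\infty$, and all the relevant constructions are functorial; if necessary, one enlarges $N$ to absorb any further obstruction (the existence of such an $N$ is already part of the definition of a good integer, cf.\ Property $8$).
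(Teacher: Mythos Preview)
Your proposal is correct and follows exactly the route the paper intends: the paper gives no separate proof of this proposition, merely stating that ``we can then extend our previous results to $S$-algebras rather than $\Q$-algebras,'' so the proposition is meant to be the spread-out version of Theorem~\ref{theorem: inject 2-descent into orbits} via the properties of a good $N$. Your detailed verification that each ingredient (the orbit--cohomology dictionary via Property~6, the vanishing of $\HH^1(R,\intbigG^{sc})$, the commutative diagram via Property~8, and the Heisenberg torsor construction) extends to $\intbigB^{\rs}_S$ is precisely the content the paper leaves implicit; your identified ``obstacle'' about the relative theta-divisor constructions is real but, as you note, is absorbed by enlarging $N$ if needed, which is already the philosophy behind the definition of a good integer.
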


We are now ready to state the main theorem of this section whose proof will be given at the end of \S\ref{subsection: proof of theorem integral representatives}. 
Write $\sh{E}_p$ for the set of all $b\in \intbigB(\Z_p)$ which lie in $\bigB^{\rs}(\Q_p)$. It consists of those $b\in \intbigB(\Z_p)$ with nonzero discriminant.

\begin{theorem}\label{theorem: integral representatives exist}
	Let $p$ be a prime not dividing $N$. Then for any $b\in \sh{E}_p$ the image of the map 
	$$\Jac_b(\Q_p)/2\Jac_b(\Q_p) \rightarrow \bigG(\Q_p)\backslash \bigV_b(\Q_p)$$
	from Theorem \ref{theorem: inject 2-descent into orbits} is contained in the image of the map $\intbigV(\Z_p) \rightarrow \bigG(\Q_p)\backslash \bigV(\Q_p)$. 
\end{theorem}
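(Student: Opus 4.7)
My plan is to follow the strategy outlined in the introduction: reformulate integrality of orbits as a problem of extending certain group-theoretic data from $\Q_p$ to $\Z_p$, and then solve that extension problem by deforming the given $b$ within $\intbigB_{\Z_p}$ to a situation of square-free discriminant, where the geometry of the compactified Jacobian can be exploited directly.

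First I would recast the question using a groupoid formulation of Lemma \ref{lemma: AIT} (this is Proposition \ref{proposition: G-orbits in terms of groupoids}): giving a $\bigG(\Q_p)$-orbit lying in the image of $\intbigV(\Z_p)$ is equivalent to extending the triple consisting of the group scheme $Z_{\bigG}(\sigma(b))$, its torsor produced from $A \in \Jac_b(\Q_p)/2\Jac_b(\Q_p)$, and the compatibility with $\bigG$, from $\Spec\Q_p$ to $\Spec\Z_p$. Corollary \ref{corollary: commutative diagram corollary central extensions} together with the triviality of $\HH^1$ of the simply connected cover (Lemma \ref{lemma: H^1(R,Sp) is trivial}) already gives us that the relevant torsor is pulled back from a $\bigHeis_b$-torsor, and the $\bigHeis_b$-torsor is explicitly constructed as the fibre above a chosen lift $\widetilde{A}\in \Gmtorsor{\sh{M}_b}(\Q_p)$ of $A$. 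So the task reduces to extending this $\bigHeis_b$-torsor, compatibly with its map into $\bigG^{sc}$, to an object over $\Z_p$.

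The key step is then the deformation argument. I would spread $b$ out to a section over $\Spec\Z_p$ of the family $\intbigB_{\Z_p}$ and, by applying Bertini theorems both on the generic fibre $\bigB_{\Q_p}$ and on the special fibre $\intbigB_{\F_p}$, construct a two-dimensional regular closed subscheme $C \hookrightarrow \intbigB_{\Z_p}$ containing $b$ on which the pullback of the discriminant polynomial $\Delta$ is square-free. Because the total space of the compactified Jacobian $\CJac \to \intbigB_S$ is nonsingular (this is the crucial input from the theory of the compactified Jacobian of the $E_6$ singularity, Altman--Kleiman style), the pullback $\CJac_C \to C$ is a regular arithmetic surface fibration; this lets me extend the class $A$ to a section of $\CJac_C$ away from a codimension-two locus. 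Over the locus where $\Delta$ is square-free the curves $\intbigcurve_b$ acquire at worst a single node, and in \S\ref{subsection: case of square-free discriminant} one shows directly (using the Néron model of the Jacobian and the simply-connected centralizer) that the orbit admits an integral representative. Combining this with the extension of reductive group schemes across codimension-two subschemes of regular surfaces (Lemma \ref{lemma: extend objects complement codimension 2}) propagates the integrality from the square-free locus to the generic point of $C$, and hence to $b$.

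The main obstacle I expect is the existence of the deformation curve $C$ with all the required properties simultaneously: regularity of the total space, square-freeness of $\Delta|_C$ at codimension-one points, and containment of the given section $b$. This is delicate because one needs Bertini-type statements over both $\Q_p$ and $\F_p$ (where the residue field is finite, so the usual density arguments do not apply directly) and one must ensure that the chosen $C$ meets the discriminant transversally at the relevant points. Once $C$ exists and $\CJac_C$ is seen to be regular, the rest is a relatively formal consequence of the square-free-discriminant case combined with the codimension-two extension lemma; without the smoothness of the compactified Jacobian one would not be able to do this propagation, which is why the $E_6$ singularity plays such a distinguished role here.
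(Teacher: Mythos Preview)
You have correctly identified all the key ingredients --- the groupoid reformulation, the square-free case, the smoothness of the compactified Jacobian, and the codimension-two extension lemma --- but they are assembled in the wrong order, and one step in your outline cannot be carried out as stated.

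The gap is in your deformation step.  You propose to take a Bertini slice $C\hookrightarrow \intbigB_{\Z_p}$ through $b$ and then, using the regularity of $\CJac$, ``extend the class $A$ to a section of $\CJac_C$ away from a codimension-two locus''.  But $A\in \Jac_b(\Q_p)$ is a point over a single fibre of $\CJac_C\to C$; regularity of a total space does not produce sections through a prescribed point of one fibre.  There is simply no mechanism here for spreading $A$ out along $C$.  (Also, $\CJac_C\to C$ has $3$-dimensional fibres, so it is not an ``arithmetic surface fibration''.)

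The paper resolves this by taking the Bertini slice \emph{inside $\CJac_{\Z_p}$} rather than inside $\intbigB_{\Z_p}$.  One first extends $P\in \Jac_b(\Q_p)$ to a $\Z_p$-point of $\CJac$ by properness, and then cuts $\CJac_{\Z_p}$ by hypersurfaces (Bertini over $\F_p$ via Poonen--Charles, lifted to $\Q_p$ via classical Bertini) down to a relative curve $\mathcal{X}\to\Spec\Z_p$ passing through that point, with the discriminant pulled back from $\intbigB$ square-free on the generic fibre.  This is where smoothness of $\CJac$ over $\Spec S$ is actually used: it guarantees that the ambient space in which Bertini is applied is smooth.  The projection $\mathcal{X}\to\intbigB_{\Z_p}$ gives the deformation $\tilde b$ of $b$, and the inclusion $\mathcal{X}\hookrightarrow\CJac$ gives the deformation $\tilde P$ of $P$ over $\mathcal{X}^{\rs}$ --- both simultaneously, with no separate extension step required.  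From there your outline is essentially correct: apply the square-free case (Corollary~\ref{corollary: int reps sqfree, general dedekind scheme}) on the Dedekind scheme $\mathcal{X}_{\Q_p}$, glue with the orbit constructed on a semi-local open of $\mathcal{X}$ containing a special-fibre point of nonzero discriminant, and invoke Lemma~\ref{lemma: extend objects complement codimension 2} on the regular surface $\mathcal{X}$ to fill in the missing finitely many closed points; finally pull back along $x\colon\Spec\Z_p\to\mathcal{X}$.
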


\subsection{Some groupoids}\label{subsection: some groupoids}

In this section we follow \cite[\S4.3]{Thorne-Romano-E8} and define some groupoids which will be a convenient way to think about orbits in our representation and a crucial ingredient for the proof of Theorem \ref{theorem: integral representatives exist}.
Throughout this section we fix a scheme $X$ over $S =  \Z[1/N]$. 

Before we define the groupoids we need to define the outer isomorphism scheme, a technical complication which arises because the group $\intbigH$ has outer automorphisms.
Let $H', H''$ be reductive group schemes over $X$ whose geometric fibres are adjoint semisimple of Dynkin type $E_6$. (See \cite[Definition 3.1.1]{Conrad-reductivegroupschemes} for the definition of a reductive group scheme over a general base.)
Since $H'$ is \'etale locally isomorphic to $H''$, the scheme of isomorphisms of reductive $X$-groups $\Isom_X(H',H'')$ is an $\Aut_X(H')$-torsor.
Define $\Out_X(H',H'')$ as the push-out of this torsor under the map $\Aut_X(H')\rightarrow \Out_X(H')$. It is the quotient of $\Isom_X(H',H'')$ by the action of $H'$.
Since $\Out_X(H')$ is a finite \'etale group scheme of order $2$ \cite[Theorem 7.1.9(2)]{Conrad-reductivegroupschemes}, the $X$-scheme $\Out_X(H',H'')$ is finite \'etale of order $2$ as well.

We define the groupoid $\GrLie_X$ whose objects are triples $(H',\chi',\theta')$ where 
\begin{itemize}
	\item $H'$ is a reductive group scheme over $X$ whose geometric fibres are adjoint semisimple of Dynkin type $E_6$.
	\item $\chi'$ is a section of the $X$-scheme $\Out_X(\intbigH_X,H')$.
	\item $\theta'\colon H' \rightarrow H'$ is an involution of reductive $X$-group schemes such that for each geometric point $\bar{x}$ of $X$ there exists a maximal torus $A_{\bar{x}}$ of $H'_{\bar{x}}$ such that $\theta'$ acts as $-1$ on $X^*(A_{\bar{x}})$. 
\end{itemize}
A morphism $(H',\chi',\theta') \rightarrow (H'',\chi'',\theta'')$ in $\GrLie_X$ is given by an isomorphism $\phi \colon H'\rightarrow H''$ such that $\phi\circ \chi'=\chi''$ and $\phi \circ \theta' = \theta'' \circ \phi$. The triple $(\intbigH_S,[\Id_{\intbigH_S}],\bigtheta_S)$ of \S\ref{subsection: integral structures} defines an object of $\GrLie_S$ by \cite[Corollary 14]{GrossLevyReederYu-GradingsPosRank}.
We note that there is a natural notion of base change and the groupoids $\GrLie_X$ form a stack over the category of schemes over $S$ in the \'etale topology. 

\begin{proposition}\label{proposition: G-torsors in terms of groupoids}
	Let $X$ be an $S$-scheme. 
	The assignment $(H',\chi',\theta')\mapsto \Isom((\intbigH_X,[\Id],\bigtheta_X),(H',\chi',\theta'))$ defines a bijection between: 
	\begin{itemize}
		\item The isomorphism classes of objects in $\GrLie_X$. 
		\item The set $\HH^1(X,\intbigG)$.
	\end{itemize}
\end{proposition}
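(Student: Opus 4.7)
The plan is to recognize $\GrLie_X$ as a neutral gerbe over $X_{\text{ét}}$ banded by $\intbigG_X$ and then invoke the standard dictionary between such gerbes and torsors: the set of isomorphism classes of objects is in canonical bijection with $\HH^1(X, \intbigG)$, with the bijection sending $(H', \chi', \theta')$ to the $\intbigG_X$-torsor $\Isom((\intbigH_X, [\Id], \bigtheta_X), (H', \chi', \theta'))$. Concretely this reduces to two checks: computing the automorphism sheaf of the distinguished object and proving that every object is étale-locally isomorphic to it.

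For the automorphism computation, I would unpack the definition: an automorphism $\phi$ of $(\intbigH_X, [\Id], \bigtheta_X)$ is an automorphism of $\intbigH_X$ whose class in $\Out_X(\intbigH_X)$ is trivial and which commutes with $\bigtheta_X$. Since $\intbigH$ is adjoint, the inner automorphism scheme is canonically $\intbigH$ itself, so $\phi = \Ad(h)$ for a unique $X$-point $h$ of $\intbigH$; the commutation condition then becomes $\bigtheta(h) = h$, and because $2$ is invertible on $S$ this gives $h \in \intbigH^{\bigtheta}_X = \intbigG_X$. Thus $\underline{\Aut}((\intbigH_X, [\Id], \bigtheta_X)) = \intbigG_X$.

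For étale local trivialization, given an object $(H', \chi', \theta')$, any adjoint reductive group of type $E_6$ is étale-locally isomorphic to $\intbigH_X$, and the rigidifier $\chi'$ lets me select such an isomorphism in the prescribed outer class, reducing to the case $(H', \chi') = (\intbigH_X, [\Id])$. It remains to show that an involution $\theta'$ of $\intbigH_X$ which acts as $-1$ on the character lattice of some maximal torus at each geometric point is étale-locally $\intbigH_X$-conjugate to $\bigtheta_X$. Over a separably closed field this is the Kac--Vinberg classification of stable involutions of simply laced groups cited in \cite{Thorne-thesis}. To upgrade this to étale locality, I would consider the orbit map $\intbigH_X \to \Aut(\intbigH_X)$, $h \mapsto h \bigtheta_X h^{-1}$: its stabilizer is the smooth affine group $\intbigG_X$ by the first step, so the map factors through a smooth immersion $\intbigH_X / \intbigG_X \hookrightarrow \Aut(\intbigH_X)$ whose image is the locus of stable involutions by the geometric-fibre classification; smooth surjections onto this locus then admit étale-local sections, providing the required conjugator.

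The main obstacle I anticipate is this last point: while the field-valued Kac--Vinberg statement is classical, verifying scheme-theoretically that the locus of stable involutions in $\Aut(\intbigH_X)$ is a single smooth $\intbigH_X$-orbit (possibly after further enlarging $N$) takes some care with orbit maps on reductive group schemes. Once this is in hand the bijection in the proposition follows formally from the gerbe-to-torsor dictionary, and the distinguished triple $(\intbigH_X, [\Id], \bigtheta_X)$ is sent to the trivial $\intbigG_X$-torsor, as required for compatibility with the pointed-set structure.
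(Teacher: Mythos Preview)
Your approach is essentially the paper's: compute the automorphism sheaf of the base object as $\intbigG_X$, then show every object is \'etale-locally isomorphic to it, and invoke descent. The automorphism computation is identical.

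For the \'etale-local trivialization step, the paper is a bit more direct than your orbit-map argument. Rather than identifying a ``locus of stable involutions'' inside $\Aut(\intbigH_X)$ and showing the orbit map hits it, the paper considers the transporter scheme $T=\{h\in H':\Ad(h)\circ\theta'=\theta''\}$ directly, cites \cite[Proposition~2.1.2]{Conrad-reductivegroupschemes} for its smoothness, and then reduces surjectivity to the case of an algebraically closed field. There, instead of invoking Kac--Vinberg, it gives an elementary argument: after conjugating so that $\theta'$ and $\theta''$ act by inversion on the same maximal torus $A'$, one has $\theta'=a\theta''$ for some $a\in A'(k)$, and writing $a=b^2$ gives $\theta'=b\theta''b^{-1}$. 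This sidesteps the obstacle you flagged: you never need to construct the stable-involution locus scheme-theoretically, only to check transporter surjectivity on geometric points. Your route would work, but the paper's is shorter and more self-contained.
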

\begin{proof}
	We first prove that every two triples $(H',\chi',\theta'), (H'',\chi'',\theta'')$ in $\GrLie_X$ are \'etale locally isomorphic.
	The proof of this fact given below is very similar to the proof of \cite[Lemma 2.3]{Thorne-Romano-E8}; we reproduce it here for convenience.
	
	The question being \'etale local on $X$, we may assume that $H' = H''$ and $\chi'=\chi''$. 
    Let $T$ denote the $X$-scheme of elements $h\in H'$ such that $\Ad(h)\circ \theta' = \theta''$; it is a closed subscheme of $H'$ that is $X$-smooth by \cite[Proposition 2.1.2]{Conrad-reductivegroupschemes}.
    Since smooth surjective morphisms have sections \'etale locally, it suffices to prove that $T\rightarrow X$ is surjective.
    Since the construction of $T$ is compatible with base change we may assume that $X=\Spec k$ where $k$ is an algebraically closed field. 

    By assumption, there exist maximal tori $A',A''\subset H'$ on which $\theta', \theta''\in H'(k)$ act through $-1$.
    Using the conjugacy of maximal tori, we may assume that $A'=A''$ so $\theta' = a\cdot \theta''$ for some $a\in A'(k)$. 
    Writing $a=b^2$ for some $b\in A'(k)$
    , we see that $\theta' = b\cdot b\cdot \theta'' = b\cdot \theta'' \cdot b^{-1}$.
    Therefore $\theta'$ is $H'(k)$-conjugate (even $A'(k)$-conjugate) to $\theta''$, as desired.
 
	We now claim that $\Aut((\intbigH_S,[\Id],\bigtheta_S)) = \intbigG_S$, which would prove the proposition by \'etale descent. 
	Indeed, $\Aut((\intbigH_S,[\Id],\bigtheta_S))$ consists of inner automorphisms of $\intbigH_S$ commuting with $\bigtheta$. 
	Since $\intbigH_S$ is adjoint, these are precisely the elements of $\intbigG_S$.
	
\end{proof}


We define the groupoid $\GrLieE_X$ whose objects are $4$-tuples $(H',\chi',\theta',\gamma')$ where $(H',\chi',\theta')$ is an object of $\GrLie_X$ and $\gamma'\in \lieh'$ (the Lie algebra of $H'$) satisfying $\theta'(\gamma') = -\gamma'$. 
A morphism $(H',\chi',\theta',\gamma')\rightarrow (H'',\chi'',\theta'',\gamma'')$ in $\GrLieE_X$ is given by an isomorphism $\phi \colon H' \rightarrow H''$ defining a morphism in $\GrLie_X$ and mapping $\gamma'$ to $\gamma''$. 

We define a map $\GrLieE_X \rightarrow \intbigB(X)$ (where $\intbigB(X)$ is seen as a discrete category) as follows. For an object $(H',\chi',\theta',\gamma')$ in $\GrLieE_X$, choose a faitfully flat extension $X'\rightarrow X$ such that there exists an isomorphism $\phi\colon (H',\chi',\theta')_{X'} \rightarrow (\intbigH_X,[\Id],\bigtheta_X)_{X'}$ in $\GrLie_{X'}$. We define the image of the object $(H',\chi',\theta',\gamma')$ under the map $\GrLieE_X \rightarrow \intbigB(X)$ by $\bigpi(\phi(\gamma'))$. This procedure is independent of the choice of $\phi$ and $X'$ and by flat descent defines an element of $\intbigB(X)$. 
For $b\in \intbigB(X)$ we write $\GrLieE_{X,b}$ for the full subcategory of elements of $\GrLieE_{X,b}$ mapping to $b$ under this map. 

Recall that for $b\in \intbigB(X)$, $\intbigV_b$ denotes the fibre of $b$ of the map $\bigpi\colon \intbigV \rightarrow \intbigB$. 

\begin{proposition}\label{proposition: H1 of stabilizer and GrLieE}
	Let $X$ be an $S$-scheme and let $b \in \intbigB_S(X)$. Then the assignment $$\mathcal{A} \mapsto \Isom((\intbigH_X,[\Id],\bigtheta_X,\sigma(b)),\mathcal{A})$$ defines a bijection between:
	\begin{itemize}
		\item Isomorphism classes of objects in $\GrLieE_{X,b}$ that are \'etale locally isomorphic to $(\intbigH_X,[\Id],\bigtheta_X,\sigma(b))$.
		\item The set $\HH^1(X,Z_{\intbigG_S}(\sigma(b)))$.
	\end{itemize}
	If $b \in \intbigB_S^{\rs}(X)$, then every object of $\GrLieE_{X,b}$ is \'etale locally isomorphic to $(\intbigH_X,[\Id],\bigtheta_X,\sigma(b))$.
\end{proposition}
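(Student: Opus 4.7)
The plan is to deduce this from the general principle that for any object $\mathcal{A}_0$ in a stack for the \'etale topology on $X$, the isomorphism classes of objects \'etale locally isomorphic to $\mathcal{A}_0$ are classified by $\HH^1(X,\Aut(\mathcal{A}_0))$, via the assignment sending $\mathcal{A}$ to the $\Aut(\mathcal{A}_0)$-torsor $\Isom(\mathcal{A}_0,\mathcal{A})$.

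First I would verify that $\GrLieE$ is a stack for the \'etale topology on $S$-schemes; this is routine because all data involved (reductive group schemes of type $E_6$, their outer isomorphism schemes, involutions, sections of Lie algebras) are compatible with \'etale descent, just as in $\GrLie$. I would then take $\mathcal{A}_0 = (\intbigH_X,[\Id],\bigtheta_X,\sigma(b))$ and compute its automorphism sheaf. By Proposition \ref{proposition: G-torsors in terms of groupoids}, the automorphism group scheme of the triple $(\intbigH_X,[\Id],\bigtheta_X)$ in $\GrLie_X$ is exactly $\intbigG_{S,X}$ (see also the last paragraph of that proposition's proof). An automorphism of this triple preserves $\sigma(b)$ precisely when it lies in the centralizer $Z_{\intbigG_S}(\sigma(b))$, so $\Aut(\mathcal{A}_0) = Z_{\intbigG_S}(\sigma(b))$ as group schemes over $X$. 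The first bullet point then follows from the descent principle just recalled.

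For the last claim, let $(H',\chi',\theta',\gamma')$ be any object of $\GrLieE_{X,b}$ with $b \in \intbigB_S^{\rs}(X)$. By Proposition \ref{proposition: G-torsors in terms of groupoids}, after passing to an \'etale cover of $X$ we may assume $(H',\chi',\theta') = (\intbigH_X,[\Id],\bigtheta_X)$, so that $\gamma' \in \intbigV(X)$ with $\pi(\gamma') = b$. Since $b$ is regular semisimple, $\gamma'$ factors through $\intbigV^{\rs}$ by Property 5 of a good integer $N$. Now Property 6 of \S\ref{subsection: integral structures} tells us that the action map
\[ \alpha\colon \intbigG_S \times \intbigB_S \longrightarrow \intbigV^{\reg}, \qquad (g,c)\mapsto g\cdot\sigma(c), \]
is \'etale with image containing $\intbigV^{\rs}$. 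Hence $\alpha$ restricted to $\intbigV^{\rs}$ is \'etale and surjective, so after a further \'etale cover we can lift $\gamma'\colon X \to \intbigV^{\rs}$ to a section of $\alpha$. Because $\pi(\gamma') = b$, such a lift is of the form $(g,b)$ with $g \in \intbigG_S(X)$ satisfying $g\cdot \sigma(b) = \gamma'$. Then $\Ad(g)$ is an isomorphism in $\GrLieE_X$ from $(\intbigH_X,[\Id],\bigtheta_X,\sigma(b))$ to $(\intbigH_X,[\Id],\bigtheta_X,\gamma')$, as required.

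The substantive input in the argument is already present in the prerequisites: Proposition \ref{proposition: G-torsors in terms of groupoids} supplies the automorphism group of the triple (and the \'etale-local triviality of the first three pieces of data), while Property 6 of the good integer $N$ supplies the \'etale-local transitivity of $\intbigG_S$ on $\intbigV^{\rs}$-sections with fixed invariants. The rest is formal descent, so I do not anticipate a genuine obstacle; the only care needed is in verifying that $\GrLieE$ is a stack in the \'etale topology and that the automorphism sheaf computation is functorial in the base, both of which are straightforward.
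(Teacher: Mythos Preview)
Your proposal is correct and follows essentially the same approach as the paper: compute $\Aut(\mathcal{A}_0)=Z_{\intbigG_S}(\sigma(b))$ and invoke \'etale descent for the first part, then use Proposition~\ref{proposition: G-torsors in terms of groupoids} to trivialize the triple and Property~6 to conjugate $\gamma'$ to $\sigma(b)$ \'etale locally for the second. The paper's proof is terser (it does not spell out the stack verification or the explicit lift through the action map), but the substance is identical.
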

\begin{proof}
	Since $\Aut((\intbigH_S,[\Id],\bigtheta_S,\sigma(b))) = Z_{\intbigG_S}(\sigma(b))$, the first statement follows from \'etale descent.
	It suffices to prove that every object $(H',\chi',\theta',\gamma')$ is \'etale locally isomorphic to $(\intbigH_X,[\Id],\bigtheta_X,\sigma(b))$ if $b\in \intbigB_S^{\rs}(X)$.
	We may reduce to the case that $(H',\chi',\theta')=(\intbigH_X,[\Id],\bigtheta_X)$ by Proposition \ref{proposition: G-torsors in terms of groupoids}.
	By Property 6 of \S\ref{subsection: integral structures} (which is a spreading out of Proposition \ref{proposition: Kostant section E6} over $S$), the action map $\intbigG_X \times \intbigB^{\rs}_X \rightarrow \intbigV^{\rs}_X$ is \'etale and surjective. 
	Therefore it has sections \'etale locally, hence $\gamma'$ is \'etale locally $\intbigG$-conjugate to $\sigma(b)$. 
	
\end{proof}

The following proposition gives an interpretation of the (not necessarily regular semisimple) $G$-orbits of $V$ in terms  of the groupoids $\GrLieE_X$ and $\GrLie_X$. 

\begin{proposition}\label{proposition: G-orbits in terms of groupoids}
	Let $X$ be an $S$-scheme and let $b\in \intbigB(X)$. 
	The following sets are in canonical bijection:
	\begin{itemize}
		\item The set of $\intbigG(X)$-orbits on $\intbigV_{b}(X)$. 
		\item Isomorphism classes of objects $(H',\chi',\theta',\gamma')$ in $\GrLieE_{X,b}$ such that $(H',\chi',\theta') \simeq (\intbigH_S,\chi,\bigtheta_S)_X$ in $\GrLie_X$. 
	\end{itemize}
	Consequently if $b\in \intbigB_S^{\rs}(X)$, then the following sets are in canonical bijection:
	\begin{itemize}
		\item The set of $\intbigG(X)$-orbits on $\intbigV_{b}(X)$. 
		\item The kernel of the map $\HH^1(X,Z_{\intbigG_S}(\sigma(b)))\rightarrow \HH^1(X,\intbigG)$. 
	\end{itemize}
\end{proposition}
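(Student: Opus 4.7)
The plan is to reduce the proposition to Propositions \ref{proposition: G-torsors in terms of groupoids} and \ref{proposition: H1 of stabilizer and GrLieE} by a direct construction of the bijection, which is essentially formal.

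For the first bijection, I would define a functor $F$ from the category of pairs $(X, v)$ with $v \in \intbigV_b(X)$ (morphisms being $\intbigG(X)$-conjugations) to $\GrLieE_{X,b}$ by sending $v$ to the object $(\intbigH_X, [\Id_{\intbigH_X}], \bigtheta_X, v)$. This clearly lands in the full subcategory picked out in the statement, and two elements $v, v' \in \intbigV_b(X)$ go to isomorphic objects if and only if there exists $g \in \Aut((\intbigH_X, [\Id], \bigtheta_X)) = \intbigG(X)$ (using the identification from the proof of Proposition \ref{proposition: G-torsors in terms of groupoids}) with $g\cdot v = v'$; hence $F$ is fully faithful on isomorphism classes. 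For essential surjectivity, given an object $(H', \chi', \theta', \gamma')$ with $(H', \chi', \theta') \simeq (\intbigH_X, [\Id], \bigtheta_X)$, choose such an isomorphism $\phi$ and observe that $\phi(\gamma')$ lies in $\bigV_b(X)$ (because $\phi$ is a morphism in $\GrLie_X$ and the map $\GrLieE_X \to \intbigB(X)$ is invariant under isomorphisms). Any two choices of $\phi$ differ by precomposition with an element of $\Aut(\intbigH_X, [\Id], \bigtheta_X) = \intbigG(X)$, so the $\intbigG(X)$-orbit of $\phi(\gamma')$ is well-defined and independent of $\phi$.

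For the second (regular semisimple) bijection, assume $b \in \intbigB_S^{\rs}(X)$. By the final clause of Proposition \ref{proposition: H1 of stabilizer and GrLieE}, every object of $\GrLieE_{X,b}$ is \'etale locally isomorphic to $(\intbigH_X, [\Id], \bigtheta_X, \sigma(b))$, so that the same proposition yields a bijection between isomorphism classes in $\GrLieE_{X,b}$ and $\HH^1(X, Z_{\intbigG_S}(\sigma(b)))$. The forgetful functor $\GrLieE_{X,b} \to \GrLie_X$, combined with the identifications of Proposition \ref{proposition: G-torsors in terms of groupoids} and the naturality of the Isom-torsor construction, translates into the pointed-set map $\HH^1(X, Z_{\intbigG_S}(\sigma(b))) \to \HH^1(X, \intbigG)$ induced by the inclusion $Z_{\intbigG_S}(\sigma(b)) \hookrightarrow \intbigG_S$. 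The objects whose underlying triple $(H', \chi', \theta')$ is \emph{globally} isomorphic to $(\intbigH_X, [\Id], \bigtheta_X)$ are precisely those mapping to the trivial class in $\HH^1(X, \intbigG)$, so combining with the first bijection identifies $\intbigG(X)\backslash \intbigV_b(X)$ with the kernel of this map.

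The only non-formal point is the naturality claim linking the forgetful functor to the map on cohomology; this just amounts to checking that pushout of a $Z_{\intbigG_S}(\sigma(b))$-torsor along the inclusion into $\intbigG_S$ corresponds, under the Isom-torsor dictionary, to forgetting the datum $\gamma'$, which is immediate from unwinding the definitions. I expect no substantial obstacle: everything follows by combining the two preceding propositions with the observation that the Kostant section $\sigma(b) \in \intbigV_b(X)$ itself provides a canonical basepoint realising the identity element of $\HH^1(X, Z_{\intbigG_S}(\sigma(b)))$ as the distinguished orbit.
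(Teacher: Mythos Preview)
Your proposal is correct and follows essentially the same approach as the paper: the paper constructs the bijection by sending $v$ to $(\intbigH_X,[\Id],\bigtheta_X,v)$ and inversely transports $\gamma'$ along a chosen isomorphism $\phi$, then deduces the second part by combining with Propositions~\ref{proposition: G-torsors in terms of groupoids} and~\ref{proposition: H1 of stabilizer and GrLieE}. You spell out the naturality check for the forgetful functor more explicitly than the paper does, but the argument is the same.
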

\begin{proof}
	For the first part, we construct an explicit bijection between these two sets. 
	If $v\in \intbigV_{b}(X)$ is a representative of a $\intbigG(X)$-orbit, we associate to $v$ the object $ (\intbigH_X,[\Id],\bigtheta_X,v)$ of $\GrLieE_{X,b}$. Changing $v$ by a $\intbigG(X)$-conjugate does not change the isomorphism class of this object, so this association is well-defined. 
	Conversely, if $(H',\chi',\theta',\gamma')$ is an object of $\GrLieE_{X,b}$ and $\phi \colon (H',\chi',\theta') \rightarrow (\intbigH_S,[\Id],\bigtheta_S)_X$ an isomorphism in $\GrLie_X$, we associate to it the element $v=\phi(\gamma') \in \intbigV_{b}(X)$. Changing the isomorphism $\phi$ does not change the $\intbigG(X)$-conjugacy class of $v$. 
	
 The second part follows from combining the first part with Propositions \ref{proposition: G-torsors in terms of groupoids} and \ref{proposition: H1 of stabilizer and GrLieE}.
\end{proof}

\begin{remark}
    The groupoids $\GrLie_X$ and $\GrLieE_X$ for varying $X$ are stacks in the \'etale topology over $S$, and one can show that $\GrLie \simeq \left[ S/\intbigG_S \right] $ and $\GrLieE \simeq \left[ \intbigV_S/\intbigG_S \right]$.
    We will not need these facts in what follows.
\end{remark}

\subsection{The compactified Jacobian}\label{subsection: compactifications}

Recall that $\intJac\rightarrow \intbigB_S^{\rs}$ denotes the relative Jacobian of the family of smooth curves $\intbigcurve^{\rs}_S \rightarrow \intbigB_S^{\rs}$. The morphism $\intJac \rightarrow \intbigB_S^{\rs}$ is proper and smooth.
In this section we introduce a compactification of this abelian scheme over $\intbigB_S$. The reader not interested in the details of the construction can simply admit its properties which are summarized in Corollary \ref{corollary: good compactifications exist}. 

We start with some generalities on torsion-free rank $1$ sheaves. By a \define{curve} over a field $k$ we mean a finite type scheme over $k$ such that every irreducible component has dimension $1$.
\begin{definition}
	Let $X$ be an integral projective curve over an algebraically closed field $k$. We say a coherent sheaf $I$ on $X$ is \define{torsion-free rank $1$} if it satisfies the following two conditions:
	\begin{enumerate}
		\item For each $p \in X$ the $\O_{X,p}$-module $I_p$ is torsion-free. 
		\item If $\eta \in X$ is the generic point then we have an isomorphism $I_{\eta} \simeq \O_{X,\eta}$ of $\O_{X,\eta}$-modules.
	\end{enumerate}
\end{definition}
If $X$ is smooth then every torsion-free rank $1$ sheaf is invertible, but for non-smooth $X$ this need not to be the case. 
For example, if $X$ is the projective closure of the plane curve $(y^2 = x^3)$ then the ideal sheaf of the origin is a torsion-free rank $1$ sheaf which is not invertible.
 
The above definition can be generalized to a family of curves.
\begin{definition}
	Let $\mathcal{X} \rightarrow T$ be a flat projective morphism whose geometric fibres are integral curves. A locally finitely presented $\O_{\mathcal{X}}$-module $I$ is \define{$T$-relatively torsion-free rank $1$} if the following conditions are satisfied:
	\begin{enumerate}
		\item The sheaf $I$ is flat over $T$.
		\item For every geometric point $t$ of $T$ the sheaf $I_t$ is torsion-free rank $1$ on $\mathcal{X}_t$.
	\end{enumerate}
\end{definition}

We apply the above definitions to our situation of interest.
The morphism $\intbigcurve_S \rightarrow \intbigB_S$ is flat, projective and its geometric fibres are integral curves. 
The Euler characteristic of the structure sheaf of the geometric fibres is constant, equal to $1-3 = -2$. 
The point at infinity defines a section $P_{\infty}\colon \intbigB_S \rightarrow \intbigcurve_S$ whose image lands in the smooth locus of the morphism.
Let $F$ be the functor sending a $\intbigB_S$-scheme $T$ to the set 
	\begin{align*}
	\left\{(I,\phi) \mid I \text{ is } T\text{-relatively torsion-free rank }1 \text{ on } \intbigcurve_T \rightarrow T ,\, \phi\colon \left(P_{\infty,T}\right)^*I\simeq \O_T    \right\}	/\simeq .
	\end{align*}
	Here we require $\phi$ to be an isomorphism of $\O_T$-modules, and we say two pairs $(I,\phi)$ and $(I',\phi')$ are isomorphic if there exists an isomorphism of $\O_{\intbigcurve_T}$-modules $I\simeq I'$ identifying $\phi$ with $\phi'$.
	 Let $F^0$ be the subfunctor of $F$ consisting of those torsion-free rank $1$ sheaves with Euler-characteristic $-2$ in each fibre. 
	 Altman and Kleiman \cite[Theorem 8.1]{AltmanKleiman-CompactifyingThePicardScheme} have shown that $F^0$ is representable. 
	 \begin{definition}
	 We call the scheme $\CJac \rightarrow \intbigB_S$ representing the functor $F^0$ the \define{compactified Jacobian} of the family $\intbigcurve_S \rightarrow \intbigB_S$. 
	 \end{definition}
	 
By \cite[Theorem 8.5]{AltmanKleiman-CompactifyingThePicardScheme} the morphism $\CJac \rightarrow \intbigB_S$ is projective\footnote{There are several nonequivalent definitions of a projective morphism but in this case they all agree, see \cite[Tag \href{https://stacks.math.columbia.edu/tag/0B45}{0B45}]{stacksproject}.}. 
Moreover since every torsion-free rank $1$ sheaf on a smooth curve is invertible, the restriction of $\CJac$ to $\intbigB_S^{\rs}$ is isomorphic to $\intJac$.  
The fibres of $\intbigcurve_S \rightarrow \intbigB_S$ have only planar singularities; we may therefore appeal to \cite[Theorem 9]{AltmanKleimanSteven-IrreducibilityCompactifiedJacobian} to obtain the following good properties of $\CJac$:

\begin{proposition}\label{proposition: compactified Jacobian props AIK}
    The morphism $\CJac \rightarrow \intbigB_S$ is flat and its geometric fibres are integral of dimension $3$.
\end{proposition}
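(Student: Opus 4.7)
The statement is essentially a direct application of the Altman--Kleiman--Steven theorem on compactified Jacobians of families of integral curves with planar singularities, so my plan is to verify that the hypotheses of that theorem hold in our situation and then to identify the fibre dimension with the arithmetic genus of a smooth plane quartic.

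First I would verify the hypotheses needed for \cite[Theorem 9]{AltmanKleimanSteven-IrreducibilityCompactifiedJacobian}. The morphism $\intbigcurve_S\to\intbigB_S$ is flat and projective with geometrically integral fibres by Property~3 of \S\ref{subsection: integral structures}. The section $P_\infty$ landing in the smooth locus, which is one hypothesis of \cite{AltmanKleiman-CompactifyingThePicardScheme} already invoked for representability of $F^0$, also holds. It remains to check that the fibres have only planar singularities. But by construction $\intbigcurve_S$ is the closure in $\P^2_{\intbigB_S}$ of the affine family defined by Equation~(\ref{equation : E6 family middle of paper}), so every geometric fibre $\intbigcurve_b$ is embedded as a plane quartic and hence every singular point of $\intbigcurve_b$ admits a regular local ring of embedding dimension at most $2$, i.e.\ is planar in the sense of \cite{AltmanKleimanSteven-IrreducibilityCompactifiedJacobian}.

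Given these hypotheses, the cited theorem yields that $\CJac\to\intbigB_S$ is flat with geometrically integral fibres. For the dimension statement, I would argue as follows: the AKS theorem also asserts that each geometric fibre $\CJac_b$ has the same dimension as the Picard scheme of a smooth deformation of $\intbigcurve_b$, equivalently the arithmetic genus $p_a(\intbigcurve_b)$. Since $\intbigcurve_b$ is a geometrically integral plane quartic, its arithmetic genus is $(4-1)(4-2)/2 = 3$. Thus every geometric fibre of $\CJac\to\intbigB_S$ is an integral scheme of dimension $3$, as required.

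The main obstacle, such as it is, lies in being careful about the normalisations of the various theorems cited: one should confirm that the conventions of Altman--Kleiman--Steven on $T$-relative torsion-free rank $1$ sheaves with fixed Euler characteristic match the functor $F^0$ defined here (the normalisation $\chi=-2$ together with the rigidification at $P_\infty$), and that the ``planar'' hypothesis in their paper is genuinely satisfied in family (not only fibrewise), which follows immediately from the global plane embedding $\intbigcurve_S\hookrightarrow \P^2_{\intbigB_S}$. Once these bookkeeping checks are done, the proposition follows with no further geometric input.
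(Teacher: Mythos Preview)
Your proposal is correct and follows exactly the paper's approach: the paper's proof consists of the single sentence ``The fibres of $\intbigcurve_S \rightarrow \intbigB_S$ have only planar singularities; we may therefore appeal to \cite[Theorem 9]{AltmanKleimanSteven-IrreducibilityCompactifiedJacobian}.'' Your write-up simply spells out in more detail why the AIK hypotheses hold and why the fibre dimension equals the arithmetic genus $3$, which is a welcome elaboration of the same argument.
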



The crucial additional property of $\CJac$, which follows from the fact that $\bigprojcurve\rightarrow \bigB$ is a semi-universal deformation of its central fibre, is the following. 

\begin{proposition}\label{proposition: smoothness compactified jacobian}
    For every geometric point $\Spec k \rightarrow \Spec S = \Spec \Z[1/N]$, the scheme $\CJac_k$ is smooth.  
\end{proposition}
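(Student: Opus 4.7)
The plan is to reduce smoothness of $\CJac_k$ to smoothness of the total space $\intbigcurve_k$, invoking the general principle that the relative compactified Jacobian of a flat proper family of integral projective curves with only planar singularities has smooth total space whenever the family itself does. The key observation is that our family is, locally around each singular point of each singular fibre, a miniversal deformation of the simple plane curve singularity $y^3 = x^4$ of type $E_6$.

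I would first verify that $\intbigcurve_k$ is a smooth $k$-scheme. Away from the singular locus of the morphism $\intbigcurve_k \to \intbigB_{S,k}$ this follows from Property $3$ of \S\ref{subsection: integral structures}, while smoothness along the section at infinity is clear from the explicit equation in affine coordinates near $P_{\infty}$. Around a singular point in a singular fibre, the family is described by
$$y^3 - x^4 - (p_2 x^2 + p_5 x + p_8)y - p_6 x^2 - p_9 x - p_{12} = 0$$
inside $\mathbb{A}^2 \times \intbigB_{S,k}$. This is the miniversal unfolding of the singularity $y^3 = x^4$: the six parameters $p_{12}, p_9, p_6, p_8, p_5, p_2$ couple to the monomials $1, x, x^2, y, xy, x^2 y$, which form a $k$-basis of the Tjurina algebra $k[x,y]/(y^2, x^3)$ (using $\mathrm{char}\, k \notin \{2, 3\}$, guaranteed by $N$ being good). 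Because the partial derivative of the defining equation with respect to $p_{12}$ is the nonzero constant $-1$, the resulting hypersurface is smooth.

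Next, I would invoke the following general principle, proved in various forms by Altman-Kleiman and refined in subsequent work of Kass and Casalaina-Martin-Kass-Viviani on the local structure of compactified Jacobians: for a flat proper family $f \colon \mathcal{X} \to T$ of integral projective curves with only planar singularities, if the total space $\mathcal{X}$ is smooth, then so is the relative compactified Jacobian $\overline{J}_f$. The deformation-theoretic heart of the argument is that at a point $(b, I) \in \overline{J}_f$, tangent vectors and obstructions are controlled by $\mathrm{Ext}^1_{\mathcal{X}}(i_* I, i_* I)$ and $\mathrm{Ext}^2_{\mathcal{X}}(i_* I, i_* I)$, where $i \colon \mathcal{X}_b \hookrightarrow \mathcal{X}$; smoothness of the surface $\mathcal{X}$ combined with the planarity of the fibre singularities (which forces torsion-free rank $1$ sheaves to have projective dimension at most $1$) implies that the obstruction space vanishes and the tangent space has the expected dimension, equal to $\dim \overline{J}_f$ as computed in Proposition \ref{proposition: compactified Jacobian props AIK}.

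Applying this principle to $\mathcal{X} = \intbigcurve_k$ and $T = \intbigB_{S,k}$ then immediately yields smoothness of $\CJac_k$. The main obstacle is the clean application of the general principle above; the precise deformation-theoretic calculation for non-locally-free torsion-free rank $1$ sheaves on a planar curve requires care, but is ultimately a standard result available in the cited references.
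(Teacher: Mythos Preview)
Your proposal has a real gap. The principle you invoke --- smoothness of the total space $\intbigcurve_k$ alone forces smoothness of $\CJac_k$ --- is stronger than what Fantechi--G\"ottsche--van Straten, Kass, or Casalaina-Martin--Kass--Viviani actually prove. Their hypothesis is \emph{local versality}: for every $b$ and every singular point $p \in \intbigcurve_{k,b}$, the Kodaira--Spencer map $T_b\intbigB_k \to T^1_{(\intbigcurve_{k,b},p)}$ must be surjective. Smoothness of $\intbigcurve_k$ at $p$ only says that some parameter appears with a unit coefficient in the local equation, i.e.\ that the Kodaira--Spencer map hits the class of $1$ in the Tjurina algebra --- much weaker than spanning it. (For instance, the slice $y^3 = x^4 + p_{12}$ has smooth total space but is certainly not versal at the origin.) Your deformation-theoretic sketch reflects this confusion: you call $\mathcal{X}$ ``the surface'', but $\intbigcurve_k$ has dimension $7$; the unobstructedness argument via projective dimension $\leq 1$ on a smooth surface is precisely what handles one-parameter families and does not extend to higher-dimensional bases.

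You correctly check versality at the central fibre (the six monomials span the $E_6$ Tjurina algebra), but your assertion that one sees a miniversal $E_6$ deformation ``around each singular point of each singular fibre'' is wrong: fibres over $b \neq 0$ carry strictly milder singularities, and versality there is a separate assertion you have not established. The paper closes exactly this gap by applying \cite{FantechiGottschevStraten-EulerNumberCompactifiedJacobian} only near the central fibre and then using the contracting $\G_m$-action: the singular locus of $\CJac_k$ is closed and $\G_m$-invariant, so if nonempty it would meet the fibre over $0$, contradicting smoothness there. That $\G_m$-argument (or, equivalently, a proof of versality at every fibre using it) is precisely the missing step in your proposal; once supplied, the two approaches coincide.
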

\begin{proof}
    By \cite[Corollary B.2]{FantechiGottschevStraten-EulerNumberCompactifiedJacobian}, $\CJac_k$ is smooth in a neighbourhood of the fibre above $0 \in \intbigB_k$. (In loc. cit. it is assumed that the characteristic of the base field is $0$ but the proof given works for any algebraically closed field of characteristic not dividing $N$.)
    To see that $\CJac_k$ is smooth everywhere, we use the contracting $\G_m$-action. Recall that we have defined a $\G_{m,k}$-action on $\intbigcurve_k \rightarrow \intbigB_k$ in \S\ref{subsection: a family of curves}. By functoriality this induces a $\G_{m,k}$-action on $\CJac_k$ such that the morphism $\CJac_k \rightarrow \intbigB_k$ is $\G_{m,k}$-equivariant. 
    If $Z$ is the singular locus of $\CJac_k$ then $Z$ is a closed subscheme which is invariant under the action of $\G_{m,k}$. Since the closure of every orbit of $\intbigB_k$ contains $0\in \intbigB_k$, this subscheme must intersect the fibre above $0\in \intbigB_k$ nontrivially, if it is nonempty. We conclude that $Z$ is empty and $\CJac_k$ is smooth, as required.
\end{proof}

\begin{remark} 
Although the total space $\CJac_k $ is smooth, the morphism $\CJac_k\rightarrow \intbigB_k$ will not be smooth over points which do not lie in $\intbigB_k^{\rs}$. 	
\end{remark}

For later reference, we summarize the relevant properties of $\CJac$ in the following corollary.

\begin{corollary}\label{corollary: good compactifications exist}
	The morphism $\CJac \rightarrow \intbigB_S$ constructed above is flat and projective and its restriction to $\intbigB^{\rs}_S \subset \intbigB_S$ is isomorphic to $\intJac \rightarrow \intbigB^{\rs}_S$. The morphism $\CJac \rightarrow \Spec S$ is smooth with geometrically integral fibres. For every geometric point $\Spec k \rightarrow \Spec S$, $\intJac_k$ is dense in $\CJac_k$ and the locus of $\CJac_k$ where the morphism $\CJac_k \rightarrow \intbigB_k$ is smooth is an open subset whose complement has codimension at least two in $\CJac_k$. 
	    
\end{corollary}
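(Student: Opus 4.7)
The plan is to assemble the listed properties from the constructions and results that precede the corollary, leaving only the codimension statement as genuinely new work. The flatness and projectivity of $\CJac \to \intbigB_S$ are recorded right after the definition of $\CJac$ (via Altman--Kleiman), and the identification $\CJac|_{\intbigB^{\rs}_S} \simeq \intJac$ is immediate because torsion-free rank-$1$ sheaves on smooth curves are line bundles, so the moduli functor $F^0$ restricted to $\intbigB^{\rs}_S$ reduces to the usual relative Picard functor that represents $\intJac$.

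Next I would deduce that $\CJac \to \Spec S$ is smooth with geometrically integral fibres. Proposition~\ref{proposition: smoothness compactified jacobian} already gives smoothness of each geometric fibre $\CJac_k$; combined with the flatness of $\CJac \to \intbigB_S$ and the smoothness of $\intbigB_S \to \Spec S$ this yields flatness of $\CJac \to \Spec S$, hence smoothness. For geometric integrality of $\CJac_k$: by Proposition~\ref{proposition: compactified Jacobian props AIK} every fibre of $\CJac_k \to \intbigB_k$ is integral, hence connected, and $\intbigB_k \simeq \A^6_k$ is connected, so $\CJac_k$ is connected; being smooth and connected it is integral. The density of $\intJac_k$ in $\CJac_k$ is then immediate since $\intJac_k$ is a nonempty open of an integral scheme.

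The main point, and the step where there is real content, is the codimension bound on the non-smooth locus of $\pi \colon \CJac_k \to \intbigB_k$. Let $Z \subset \CJac_k$ denote this closed locus. Over $\intbigB^{\rs}_k$ the fibres are abelian varieties, hence $Z \subset \pi^{-1}(\{\Delta=0\})$. By our choice of good $N$, the discriminant divisor $\{\Delta = 0\} \subset \intbigB_k$ is geometrically integral, of codimension $1$. Because $\pi$ is flat with integral fibres (Proposition~\ref{proposition: compactified Jacobian props AIK}) and $\{\Delta=0\}$ is integral, its preimage $\pi^{-1}(\{\Delta=0\})$ is irreducible of pure codimension $1$ in $\CJac_k$, and its generic point $\xi$ lies over the generic point $\eta$ of $\{\Delta=0\}$. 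To conclude it suffices to show $\xi \notin Z$, i.e.\ that $\pi$ is smooth at $\xi$. Since $\CJac_k$ and $\intbigB_k$ are both smooth and $\pi$ is flat, smoothness at $\xi$ is equivalent to the fibre $\CJac_\eta$ being smooth at its own generic point; and this holds because $\CJac_\eta$ is integral, hence generically smooth over the field $\kappa(\eta)$ of characteristic $0$ (or $>5$). This forces $Z \subsetneq \pi^{-1}(\{\Delta=0\})$ with $\pi^{-1}(\{\Delta=0\})$ irreducible of codimension $1$, so $Z$ has codimension $\geq 2$ in $\CJac_k$.

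The hardest ingredient is invoking the smoothness of $\CJac_k$ (Proposition~\ref{proposition: smoothness compactified jacobian}) together with the integrality of each fibre of $\pi$; every other step is formal book-keeping. The crucial technical input is the deformation-theoretic fact that the relative compactified Jacobian of a semi-universal deformation of the $E_6$ planar singularity has smooth total space, which is what allows the \textquotedblleft $\pi$ smooth iff fibre smooth\textquotedblright{} reduction to go through and give the codimension-$2$ bound essentially for free.
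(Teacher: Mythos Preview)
Your proof is correct and follows essentially the same approach as the paper's: both assemble the flatness, projectivity, smoothness, and integrality statements from Propositions~\ref{proposition: compactified Jacobian props AIK} and~\ref{proposition: smoothness compactified jacobian}, and both deduce the codimension-$2$ bound from the fact that $Z$ lies over the discriminant divisor while each (geometrically integral) fibre of $\CJac_k \to \intbigB_k$ is generically smooth. The only cosmetic difference is that the paper phrases the last step as a direct dimension count ($Z$ lies over a codimension-$1$ locus and meets each fibre properly), whereas you argue via the irreducibility of $\pi^{-1}(\{\Delta=0\})$ and a generic-point analysis; these are equivalent.
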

\begin{proof}
    The first sentence follows from Proposition \ref{proposition: compactified Jacobian props AIK} and the definition of $\CJac \rightarrow \intbigB_S$. 
    The smoothness of $\CJac \rightarrow \Spec S$ follows from Proposition \ref{proposition: smoothness compactified jacobian} and the flatness of $\CJac \rightarrow \Spec S$. The integrality of the geometric fibres of $\CJac\rightarrow \Spec S$ follows from the smoothness of $\CJac \rightarrow \Spec S$, the irreducibility of the fibres of $\CJac \rightarrow \intbigB_S$ and Lemma \ref{lemma: irreducibility fibres} below. 
    Moreover since $\intJac_k$ and $\CJac_k$ are both irreducible of the same dimension, $\intJac_k$ is dense in $\CJac_k$.
    Finally we prove the claim about the smooth locus of the morphism $\CJac_k \rightarrow \intbigB_k$; for the remainder of the proof we denote this morphism by $\phi$.
    Let $Z \subset \CJac_k$ denote the (reduced) closed subscheme where $\phi$ fails to be smooth. 
	The smoothness of $\intJac_k \rightarrow \intbigB^{\rs}_k$ shows that $Z$ is supported above the complement of $\intbigB^{\rs}_k$ in $\intbigB_k$. Moreover since the fibres of $\phi$ are geometrically integral they intersect $Z$ in a proper closed subset. Combining these two facts shows that $Z$ has codimension at least two in $\CJac_k$. 
\end{proof}

\begin{lemma}\label{lemma: irreducibility fibres}
	Let $f\colon X\rightarrow Y$ be a flat morphism of schemes which is locally of finite presentation. Suppose that $Y$ and the fibres of $f$ are irreducible. Then $X$ is irreducible. 
\end{lemma}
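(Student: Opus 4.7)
The plan is to exploit that a flat morphism locally of finite presentation is open (see \cite[Tag \href{https://stacks.math.columbia.edu/tag/01UA}{01UA}]{stacksproject}), which will allow the irreducibility of $Y$ to combine with the irreducibility of the fibres to force irreducibility of $X$.

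More precisely, I would argue by contradiction. Suppose $X$ is reducible, so we can write $X = X_1 \cup X_2$ where $X_1, X_2$ are proper closed subsets of $X$. Set $U_i \coloneqq X \setminus X_i$; these are nonempty open subsets of $X$ which satisfy $U_1 \cap U_2 = X \setminus (X_1 \cup X_2) = \emptyset$. Since $f$ is flat and locally of finite presentation it is an open morphism, so $f(U_1)$ and $f(U_2)$ are nonempty open subsets of $Y$. Because $Y$ is irreducible, these two opens must meet, so we can pick a point $y \in f(U_1) \cap f(U_2)$.

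Now consider the fibre $f^{-1}(y)$. Both $U_1 \cap f^{-1}(y)$ and $U_2 \cap f^{-1}(y)$ are open in $f^{-1}(y)$ (in the subspace topology), and both are nonempty by the choice of $y$. Since $f^{-1}(y)$ is irreducible by hypothesis, any two nonempty opens in it must intersect, yielding a point in $U_1 \cap U_2 \cap f^{-1}(y)$. This contradicts $U_1 \cap U_2 = \emptyset$, so $X$ is irreducible (note that $X$ is nonempty: $Y$ is irreducible hence nonempty, and the fibres are irreducible hence nonempty). The only nonformal ingredient is the openness of $f$, and since this is a standard consequence of flatness plus local finite presentation, I do not anticipate any substantial obstacle.
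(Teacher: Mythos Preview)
Your proof is correct and follows essentially the same approach as the paper: both use that $f$ is open (flat and locally of finite presentation) and then deduce irreducibility from the irreducibility of $Y$ and of the fibres. The only difference is that the paper cites \cite[Tag \href{https://stacks.math.columbia.edu/tag/004Z}{004Z}]{stacksproject} for the second step, whereas you spell out that argument directly.
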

\begin{proof}
	Since $f$ is open, this follows from \cite[Tag \href{https://stacks.math.columbia.edu/tag/004Z}{004Z}]{stacksproject}. 
\end{proof}


\subsection{The case of square-free discriminant}\label{subsection: case of square-free discriminant}

In this section we prove Theorem \ref{theorem: integral representatives exist} in the case of square-free discriminant. We follow the homonymous section \cite[\S5.1]{Thorne-Romano-E8} closely.
We start with some preparatory lemmas.
The first two lemmas are very similar to \cite[Lemma 5.2 and 5.3]{Thorne-Romano-E8}; their proofs will be omitted.

\begin{lemma}\label{lemma: trivial kernel of H1(R,G)->H1(K,G)}
	Let $R$ be a Noetherian regular integral domain with fraction field $K$ such that every locally free $R$-module of finite rank is free. Then the map $\HH^1(R,\intbigG) \rightarrow \HH^1(K,\intbigG)$ has trivial kernel.
\end{lemma}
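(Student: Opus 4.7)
The proof proceeds through the central isogeny
$$1 \to \mu_2 \to \intbigG^{sc} \to \intbigG \to 1,$$
which is a short exact sequence of \'etale sheaves of groups (since $2$ is invertible in the intended $S$-algebras to which this lemma is applied). Its associated long exact sequence in \'etale cohomology, together with functoriality with respect to $R \hookrightarrow K$, gives a commutative diagram of pointed sets with exact rows:
\begin{center}
\begin{tikzcd}
\HH^1(R, \intbigG^{sc}) \arrow[r] \arrow[d] & \HH^1(R, \intbigG) \arrow[r, "\delta_R"] \arrow[d] & \HH^2(R, \mu_2) \arrow[d] \\
\HH^1(K, \intbigG^{sc}) \arrow[r] & \HH^1(K, \intbigG) \arrow[r, "\delta_K"] & \HH^2(K, \mu_2).
\end{tikzcd}
\end{center}
By Lemma \ref{lemma: H^1(R,Sp) is trivial}, applied to both $R$ and $K$ (the free-module hypothesis trivially holds over a field), the leftmost two sets are trivial, and hence $\delta_R$ and $\delta_K$ have trivial kernel. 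A class $\xi \in \HH^1(R, \intbigG)$ that is trivial in $\HH^1(K, \intbigG)$ therefore has image $\delta_R(\xi) \in \HH^2(R, \mu_2)$ that vanishes in $\HH^2(K, \mu_2)$. Consequently it suffices to prove that the restriction map $\HH^2(R, \mu_2) \to \HH^2(K, \mu_2)$ is injective: once this is known, $\delta_R(\xi) = 0$ forces $\xi$ to come from the trivial set $\HH^1(R, \intbigG^{sc})$, and hence to be trivial itself.

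For that injectivity, I would invoke the Kummer sequence $1 \to \mu_2 \to \G_m \xrightarrow{\cdot 2} \G_m \to 1$, which yields the short exact sequence
$$0 \to \Pic(R)/2\Pic(R) \to \HH^2(R, \mu_2) \to \HH^2(R, \G_m)[2] \to 0$$
and its analogue over $K$. The hypothesis on $R$ gives $\Pic(R) = 0$, and $\Pic(K)=0$ holds trivially. A two-line diagram chase then reduces the desired injectivity to that of $\HH^2(R, \G_m) \to \HH^2(K, \G_m)$.

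This last step is the one genuine input from outside the paper: the classical theorem of Auslander--Goldman (in its scheme-theoretic form due to Grothendieck) that the cohomological Brauer group of a regular Noetherian integral domain injects into the Brauer group of its fraction field. This is the only obstacle in the argument; everything else is formal manipulation of the two exact sequences above, so once Auslander--Goldman is granted the proof concludes immediately.
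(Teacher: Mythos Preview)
Your argument is correct and is the standard route; the paper itself omits the proof entirely, referring the reader to the analogous Lemma~5.2 of Romano--Thorne. The reduction via the central isogeny $1\to\mu_2\to\intbigG^{sc}\to\intbigG\to 1$ to the injectivity of $\HH^2(R,\G_m)\to\HH^2(K,\G_m)$ for regular $R$ (Auslander--Goldman/Grothendieck) is exactly the expected proof.

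One minor bookkeeping point: Lemma~\ref{lemma: H^1(R,Sp) is trivial} as stated in the paper assumes $R$ is a $\Q$-algebra, whereas the present lemma is phrased for arbitrary regular Noetherian domains and is applied in \S\ref{subsection: case of square-free discriminant} to discrete valuation rings that are only $\Z[1/N]$-algebras. This is harmless, since the proof of Lemma~\ref{lemma: H^1(R,Sp) is trivial} given in the paper (classification of nondegenerate alternating forms on a free module) nowhere uses the $\Q$-algebra hypothesis; but strictly speaking you are citing a slightly stronger statement than the one recorded there.
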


\begin{lemma}\label{lemma: injective H^1 for quasifinite etale gp scheme}
	Let $X$ be a Dedekind scheme (i.e. a regular integral one-dimensional noetherian scheme) with function field $K$. Let $\Gamma$ be a quasi-finite \'etale commutative group scheme over $X$. Suppose that $\Gamma$ is a N\'eron model of its generic fibre: for every \'etale morphism $U\rightarrow X$ with $U$ a Dedekind scheme with function field $K(U)$, we have $\Gamma(U) = \Gamma(K(U))$. 
	Then the map $\HH^1(X,\Gamma) \rightarrow \HH^1(K,\Gamma)$ is injective.
\end{lemma}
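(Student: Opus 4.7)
The plan is to reduce the statement to a formal manipulation with the Leray spectral sequence for the inclusion $j\colon \Spec K \hookrightarrow X$ of the generic point. The Néron hypothesis will be used to identify $\Gamma$ with $j_* j^* \Gamma$ as étale sheaves of abelian groups on $X$, and injectivity on $\HH^1$ will follow immediately from the low-degree exact sequence.

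First I would check that $\Gamma \simeq j_* j^* \Gamma$ as sheaves on $X_{et}$. Let $f\colon U \to X$ be an étale morphism. Every connected component $U_i$ of $U$ is regular of dimension at most $1$ (being étale over the Dedekind scheme $X$), hence is itself Dedekind with some function field $K(U_i)$. The fibre product $U_i \times_X \Spec K$ is just $\Spec K(U_i)$ (the generic point of $U_i$), so
\[
(j_* j^* \Gamma)(U) \;=\; \prod_i \Gamma_K(\Spec K(U_i)) \;=\; \prod_i \Gamma(K(U_i)).
\]
The Néron hypothesis gives $\Gamma(U_i) = \Gamma(K(U_i))$ for every $i$, hence $\Gamma(U) = (j_* j^* \Gamma)(U)$, and the canonical map $\Gamma \to j_* j^* \Gamma$ is an isomorphism of étale sheaves on $X$.

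Second, I would invoke the Leray spectral sequence $\HH^p(X, R^q j_* \Gamma_K) \Rightarrow \HH^{p+q}(K, \Gamma_K)$, whose five-term exact sequence reads
\[
0 \to \HH^1(X, j_* \Gamma_K) \to \HH^1(K, \Gamma_K) \to \HH^0(X, R^1 j_* \Gamma_K).
\]
Combining this with the identification of the previous paragraph gives an injection $\HH^1(X, \Gamma) \hookrightarrow \HH^1(K, \Gamma)$. Finally, since $\Gamma$ is commutative and representable by a quasi-finite étale group scheme, its étale sheaf $\HH^1$ coincides with the group of isomorphism classes of $\Gamma$-torsors, which is the pointed set appearing in the statement.

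The only non-formal step is the identification $\Gamma \simeq j_* j^* \Gamma$, and the expected minor obstacle is simply bookkeeping: verifying that every étale cover of a Dedekind scheme decomposes into connected Dedekind components so that the Néron condition applies termwise. Everything else is a standard application of the Leray spectral sequence.
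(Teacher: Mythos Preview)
Your argument is correct. The paper does not give its own proof of this lemma (it defers to \cite[Lemma 5.3]{Thorne-Romano-E8}), so there is nothing to compare against directly; the Leray spectral sequence route you take is a standard and clean way to establish such statements, and the identification $\Gamma \simeq j_*j^*\Gamma$ is exactly the sheaf-theoretic content of the N\'eron hypothesis.

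One minor point of wording: by the paper's convention a Dedekind scheme is one-dimensional, so not every connected component $U_i$ of an \'etale $U \to X$ is literally Dedekind---for instance $\Spec K \to \Spec R$ is \'etale when $R$ is a discrete valuation ring. This is harmless for your argument: any zero-dimensional $U_i$ is flat over $X$ and hence maps to the generic point, so $U_i \times_X \Spec K = U_i$ and the equality $\Gamma(U_i) = \Gamma(K(U_i))$ is tautological in that case.
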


The following lemma is a special case of a result proven by Poonen and Stoll concerning hypersurfaces of arbitrary degree and dimension. 

\begin{lemma}\label{lemma: sqfree disc implies regular node}

	Let $R$ be a discrete valuation ring in which $N$ is a unit. Let $K = \Frac R$ and let $\ord_K: K^{\times} \twoheadrightarrow \Z$ be the normalized discrete valuation. 
	Let $b\in \intbigB(R)$ and suppose that $\ord_K \Delta(b) = 1$. Then $\intbigcurve_b$ is regular and its special fibre contains a unique singularity, which is a node. 
\end{lemma}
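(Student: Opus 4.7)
The plan is to reduce to a local computation on the special fibre. Let $\pi$ be a uniformizer of $R$, $k$ its residue field, and $F_b \in R[x,y,z]$ a homogeneous quartic defining $\intbigcurve_b \subset \P^2_R$. By property 2 of a good integer $N$, $\Delta$ and $\Delta_0$ agree up to a unit of $\Z[1/N]$, so $\ord_K \Delta_0(b) = 1$. Hence the generic fibre $\intbigcurve_{b,K}$ is smooth, while the special fibre $\intbigcurve_{b,k}$ is integral (property 3 of good $N$) but singular since $\Delta_0(b)$ is not a unit.

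The key input is the classical local-global identity for the discriminant of a plane curve:
\[
\ord_K \Delta_0(b) = \sum_P \mu_P, \qquad \mu_P := \mathrm{length}_R \left( \widehat{\O}_{\P^2_R,P} / (F_b,\, \partial F_b/\partial x,\, \partial F_b/\partial y) \right),
\]
where the sum runs over the finitely many closed singular points $P$ of $\intbigcurve_{b,k}$, coordinates $x,y$ being chosen in a suitable affine chart around each $P$. I would prove this by expressing $\Delta_0$ as a resultant and invoking multiplicativity of the resultant under localisation at closed points of $\P^2_k$; the invertibility of $N$ (hence of $2,3,5$) in $R$ prevents any small-characteristic pathologies.

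Granting the identity, each $\mu_P \geq 1$ since $P$ is singular on the special fibre, so there is a unique singular point $P_0$ and $\mu_{P_0} = 1$. A direct computation on completed local rings shows that $\mu_{P_0} = 1$ holds exactly when, after a suitable choice of local coordinates $u,v$ on $\P^2_R$ at $P_0$, one has $F_b \equiv Q(u,v) + c\pi \pmod{(u,v,\pi)^3}$ for some nondegenerate binary quadratic form $Q$ over $k$ and some $c \in R^\times$. The completed local ring of $\intbigcurve_b$ at $P_0$ is then $R[[u,v]]/(Q(u,v) + c\pi \cdot (\text{unit}))$, a regular two-dimensional local ring whose special fibre $k[[u,v]]/(Q(u,v))$ is an ordinary node. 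Regularity of $\intbigcurve_b$ at every other point is automatic: the generic fibre is smooth, and away from $P_0$ the special fibre is smooth, so the total space is regular at those points by standard criteria for Cartier divisors in a regular ambient scheme.

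The main obstacle is the discriminant identity in the second paragraph. In residue characteristic zero it is a classical consequence of the theory of the Milnor number; in the mixed-characteristic setting considered here one must argue it carefully, essentially by combining the description of $\Delta_0$ as a resultant with the length-multiplicativity of the intersection product on $\P^2_R$, using crucially the invertibility of the primes dividing $|W(E_6)|$ in $R$. Everything else in the proof is a routine completion-theoretic verification.
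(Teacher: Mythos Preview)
Your approach is correct in outline and is, in fact, precisely the content of the result the paper cites: the paper's proof consists of the single sentence ``the lemma now follows from the main result of \cite{PoonenStoll-Hypersurfacesdiscriminantuniformizer}'' (after the same reduction from $\Delta$ to $\Delta_0$ that you make). What you have sketched---the local--global identity expressing $\ord_K\Delta_0(b)$ as a sum of local lengths $\mu_P$, followed by the observation that $\mu_{P_0}=1$ forces the local form $Q(u,v)+c\pi\cdot(\text{unit})$---is exactly the argument Poonen and Stoll carry out for hypersurfaces of arbitrary degree and dimension.

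So there is no gap in your strategy, but you should be aware that the step you flag as ``the main obstacle'' is genuinely nontrivial in mixed characteristic and is the substance of an entire (short) paper; the resultant-and-intersection-multiplicativity argument you gesture at is the right idea, but making it precise over a DVR requires care (one must relate the divided discriminant to the scheme-theoretic Jacobian locus on $\P^2_R$, not just on the special fibre). Your local deduction from $\mu_{P_0}=1$ is fine: the condition forces $(F_b,\partial_x F_b,\partial_y F_b)$ to equal the maximal ideal of $\widehat{\O}_{\P^2_R,P_0}$, and reading off the images in $\mathfrak{m}/\mathfrak{m}^2$ gives exactly the nondegenerate-quadratic-plus-$\pi$-unit normal form you write down. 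The invertibility of $2,3,5$ is not actually needed for this particular lemma (Poonen--Stoll work in full generality); it enters elsewhere in the paper.
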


\begin{proof}
	Recall from \S\ref{subsection: discriminant polynomial} that $\Delta_0\in \Z[\intbigB]$ denotes the (divided) discriminant of a plane quartic curve. (It was originally defined as an element of $\Q[\bigB]$ but by the same formula it defines an element of $\Z[\intbigB]$.)
	Proposition \ref{proposition: discriminant Delta and Delta0 agree} and our assumptions on $N$ imply that $\Delta(b)$ and $\Delta_0(b)$ agree up to an element of $\Z[1/N]^{\times}$. So $\ord_K\Delta_0(b)=1$. 
	The lemma now follows from the main result of \cite{PoonenStoll-Hypersurfacesdiscriminantuniformizer}.
	 
\end{proof}

\begin{lemma}\label{lemma: squarefree disc properties of elements of V}
    Let $R$ be a discrete valuation ring with residue field $k$ in which $N$ is a unit.
    Let $\bar{k}$ be an algebraic closure of $k$.
    Let $K = \Frac R$ and let $\ord_K: K^{\times} \twoheadrightarrow \Z$ be the normalized discrete valuation.
    Let $x\in \intbigV(R)$ with $b=\pi(x)\in \intbigB(R)$ and suppose that $\ord_K \Delta(b)=1$.
    Then the reduction $x_k$ of $x$ in $\intbigV(k)$ is regular and $\intbigG(\bar{k})$-conjugate to $\sigma(b)_k$.
    In addition the $R$-group scheme $Z_{\intbigG}(x)$ is quasi-finite \'etale and has special fibre of order $2^5$.
\end{lemma}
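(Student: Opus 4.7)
The plan is to leverage the nodal degeneration provided by Lemma~\ref{lemma: sqfree disc implies regular node} together with the smoothness of the compactified Jacobian (Proposition~\ref{proposition: smoothness compactified jacobian}) in order to realize $Z_{\intbigG}(x)$ over $R$ as a twist of the $2$-torsion subgroup scheme of the N\'eron model of $\Jac_{b,K}$, from which the regularity of $x_k$, quasi-finiteness of $Z_{\intbigG}(x)$, and the order computation will all follow.

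I would first record the generic picture. Since $\ord_K\Delta(b)=1$ we have $\Delta(b)\neq 0$ in $K$, so $b_K\in \intbigB^{\rs}(K)$ and $x_K\in \intbigV^{\rs}(K)$. By Part~2 of Proposition~\ref{prop : graded chevalley} together with Lemma~\ref{lemma: AIT}, the element $x_K$ is $\intbigG(K^s)$-conjugate to $\sigma(b)_K$, so $Z_{\intbigG}(x)_K$ is a $K$-form of $Z_{\intbigG}(\sigma(b))_K\simeq \bigLambda_{b_K}/2\bigLambda_{b_K}\simeq \Jac_{b,K}[2]$, finite \'etale of order $2^6$. Under the injection $\eta_{b_K}$ of Theorem~\ref{theorem: inject 2-descent into orbits} the $\intbigG(K)$-orbit of $x_K$ corresponds to a class in $\Jac_{b,K}(K)/2\Jac_{b,K}(K)$.

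Next I would globalize this picture over $R$ using the compactified Jacobian. By Lemma~\ref{lemma: sqfree disc implies regular node}, $\intbigcurve_b$ is a regular arithmetic surface whose special fibre is a geometrically integral projective curve of arithmetic genus $3$ containing exactly one node; consequently the generalized Jacobian of $\intbigcurve_{b,k}$ is an extension of an abelian surface by $\G_m$. Write $\NeronJac_b$ for the N\'eron model of $\Jac_{b,K}$ over $R$. Proposition~\ref{proposition: smoothness compactified jacobian} shows that the total space of $\CJac|_{\Spec R}$ is smooth; combined with the Altman--Kleiman identification of the smooth locus of the compactified Jacobian with the identity component $\NeronJac^{\circ}_b$ of the N\'eron model, this exhibits $\NeronJac^{\circ}_b[2]\to \Spec R$ as a quasi-finite \'etale group scheme whose generic and special fibres have orders $2^6$ and $2\cdot 2^{2\cdot 2}=2^5$ respectively (the factor of $2$ in the special fibre coming from the $\G_m$). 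Twisting $\NeronJac^{\circ}_b[2]$ by the $R$-point of $\NeronJac^{\circ}_b$ lifting the class $\eta_{b_K}^{-1}([x_K])\in \Jac_{b,K}(K)/2\Jac_{b,K}(K)$ then produces a quasi-finite \'etale $R$-group scheme $\mathcal{Z}$ with the same fibre orders.

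The remaining and most delicate step is to identify $\mathcal{Z}$ with $Z_{\intbigG}(x)$ as $R$-group schemes. Both are separated and quasi-finite over $R$ with canonically isomorphic generic fibres, so I would invoke a rigidity argument in the spirit of Lemma~\ref{lemma: injective H^1 for quasifinite etale gp scheme}, applied to the class representing the isomorphism generically, to deduce the required integral identification. This simultaneously shows that $Z_{\intbigG}(x)$ is quasi-finite \'etale over $R$ with special fibre of order $2^5$, hence $Z_{\intbigG}(x_k)$ is zero-dimensional and $x_k$ is regular. For the conjugacy assertion, $\sigma(b)_k$ is regular (by Property~4 of a good $N$) with the same invariants as $x_k$, and by Property~6 the orbit map $\intbigG_{\bar{k}}\cdot \sigma(b)_k$ is open in the regular fibre $\intbigV^{\reg}_{b_k,\bar{k}}$; an irreducibility argument (or the Vinberg principle that regular elements in a given fibre form a single orbit over an algebraically closed field) then forces this orbit to contain $x_k$. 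The main obstacle is verifying the comparison $\mathcal{Z}\simeq Z_{\intbigG}(x)$: this requires a genuine compatibility between the orbit-theoretic $2$-descent of Theorem~\ref{theorem: inject 2-descent into orbits} and the Altman--Kleiman realization of $2$-torsion classes through the smooth compactified Jacobian, rather than a mere coincidence at the generic fibre.
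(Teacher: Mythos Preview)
Your approach has a fundamental circularity that prevents it from working as written. The comparison step asserts that $Z_{\intbigG}(x)$ is ``separated and quasi-finite over $R$'' before invoking rigidity, but quasi-finiteness of $Z_{\intbigG}(x)$ over $R$ amounts to $Z_{\intbigG}(x_k)$ being finite, which is essentially the regularity of $x_k$ --- precisely the conclusion you are trying to establish. Rigidity in the style of Lemma~\ref{lemma: injective H^1 for quasifinite etale gp scheme} only applies once the group scheme in question is already known to be quasi-finite \'etale and a N\'eron model of its generic fibre; neither is known for $Z_{\intbigG}(x)$ a priori. Without an independent argument that $x_k$ is regular, there is no mechanism to transport the generic isomorphism $Z_{\intbigG}(x)_K\simeq \intJac_{b,K}[2]$ across $\Spec R$. (Two smaller issues: the invocation of $\eta_{b_K}$ is unjustified since an arbitrary $x\in \intbigV(R)$ need not be $\Q_p$-soluble, and the ``twist'' of $\NeronJac_b^{\circ}[2]$ is unnecessary anyway because Lemma~\ref{lemma: centralizers with same invariants isomorphic} already gives $Z_{\intbigG}(x)_K\simeq Z_{\intbigG}(\sigma(b))_K$ as group schemes.)

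The paper's proof is entirely different and purely Lie-theoretic. It takes the Jordan decomposition $x_k=y_s+y_n$, lifts the decomposition $\mathfrak{z}_{\intlieh}(y_s)\oplus \image(\Ad y_s)$ to $R$ via Hensel's lemma, and realizes the centralizer of a suitable lift of $y_s$ as a reductive $R$-subgroup $L\subset \intbigH_R$ whose derived group has type $A_1$ (this is where the nodal degeneration of Lemma~\ref{lemma: sqfree disc implies regular node} enters, via \cite[Corollary~3.16]{Thorne-thesis}). The restriction of $\theta$ to $L$ is again stable, so after passing to $L/Z(L)\simeq \PGL_2$ the problem becomes an explicit $\liesl_2$-computation: the image of $x$ in $\liesl_{2,R}^{\theta=-1}$ is an off-diagonal matrix with entries of product valuation~$1$, hence its reduction is regular nilpotent, giving regularity of $x_k$. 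Conjugacy with $\sigma(b)_k$ is deduced from Vinberg theory in positive characteristic (Levy), and the order of the special fibre comes from $Z_{\intbigG}(x_k)=Z(L_k)[2]$ together with connectedness of $Z(L_k)$ (its character lattice is $\Lambda_{E_6}$ modulo a root). None of this uses the N\'eron model or the compactified Jacobian; rather, the lemma is later \emph{used} (in Proposition~\ref{prop: integral reps squarefree discr}) to establish the comparison $Z_{\intbigG}(\sigma(b))\simeq \NeronJac_b[2]$ that your argument would need as input.
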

\begin{proof}
    We are free to replace $R$ by a discrete valuation ring $R'$ containing $R$ such that any uniformizer in $R$ is also a uniformizer in $R'$.
    Therefore we may assume that $R$ is complete and $k$ algebraically closed.
    
    Let $x_k=y_s+y_n$ be the Jordan decomposition of $x_k\in \intbigV(k)$ as a sum of its semisimple and nilpotent parts. 
    Let $\intlieh_{0,k}=\mathfrak{z}_{\intlieh}(y_s)$ and $\intlieh_{1,k}=\image(\Ad(y_s))$.
    Then $\intlieh_{k}=\intlieh_{0,k}\oplus \intlieh_{1,k}$, where $\Ad(x_k)$ acts nilpotently on $\intlieh_{0,k}$ and invertibly on $\intlieh_{1,k}$.
    By Hensel's lemma, this decomposition lifts to an $\Ad(x)$-invariant decomposition of free $R$-modules $\intlieh_R = \intlieh_{0,R}\oplus \intlieh_{1,R}$, where $\Ad(x)$ acts topologically nilpotently on $\intlieh_{0,R}$ and invertibly on $\intlieh_{1,R}$.
    We claim that there exists a unique closed subgroup $L\subset \intbigH_R$ with Lie algebra $\intlieh_{0,R}$ such that $L$ is $R$-smooth with connected fibres.
    The uniqueness follows from \cite[Exp. XIV, Proposition 3.12]{SchemasenGroupesII}.
    To show existence, choose a regular semisimple element $\bar{r}$ of the reductive Lie algebra $\mathfrak{z}_{\intlieh}(y_s)$ and an arbitrary lift $r\in \intlieh_{0,R}$.
    The centralizer $\mathfrak{z}_{\intlieh}(r)$ is a Cartan subalgebra of $\intlieh_R$ whose reduction mod $k$ contains $y_s$.
    Since $k=k^s$, the algebra $\mathfrak{z}_{\intlieh}(r)$ is split so there exists an element $y_{s,R}\in \mathfrak{z}_{\intlieh}(r)$ lifting $y_s$ such that $\mathfrak{z}_{\intlieh}(y_{s,R})=\intlieh_{0,R}$.
    Then $L= Z_{\intbigH}(y_{s,R})$ is $R$-smooth, has Lie algebra $\intlieh_{0,R}$, and has connected fibres by \cite[Theorem 3.14]{Steinberg-Torsioninreductivegroups}.
    The construction shows that $L_k=Z_{\intbigH}(y_s)$.
    
    Lemma \ref{lemma: sqfree disc implies regular node} shows that the curve $\intbigcurve_{b,k}$ has a unique nodal singularity.
    Therefore\footnote{The proof of that corollary only depends on \cite[\S6.6]{Slodowy-simplesingularitiesalggroups} so is valid in any characteristic which is very good for $\lieh$, i.e. different from $2,3,5$; see the remark at the end of \cite[\S6.6]{Slodowy-simplesingularitiesalggroups}} \cite[Corollary 3.16]{Thorne-thesis} the derived group of $L$ has type $A_1$ and the centre $Z(L)$ of $L$ has rank $5$.
    Moreover the restriction $\theta_L$ of $\theta$ to $L$ is a stable involution, in the sense that for each geometric point of $\Spec R$ there exists a maximal torus of $L$ on which $\theta$ acts as $-1$, by \cite[Lemma 2.5]{Thorne-thesis}.
    There is an isomorphism $L/Z(L)\simeq \PGL_2$ inducing an isomorphism $\intlieh_{R,0}^{\der}\simeq \intlieh_{R,0}/\mathfrak{z}(\intlieh_{R,0}) \simeq \liesl_{2,R}$ under which $\theta_L$ corresponds to the involution $\xi = \Ad\left(\text{diag}(1,-1) \right)$. (The isomorphism $\intlieh_{R,0}^{\der}\simeq \intlieh_{R,0}/\mathfrak{z}(\intlieh_{R,0})$ exists by our assumptions on the residue characteristic of $N$, and by the same logic as the proof of Lemma \ref{proposition: G-torsors in terms of groupoids} any two stable involutions on $\liesl_{2,R} $ are \'etale locally conjugate.)
    The claims in the lemma now follow easily from explicit calculations in $\liesl_{2,R}$. 
    
    Indeed, to show that $x_k$ is regular it suffices to show that $y_n$ is regular nilpotent in $\mathfrak{z}_{\intlieh}(y_s)=\intlieh_{0,k}$.
    Let $x'$ denote the projection of $x$ in $\lieh_{0,R}^{\der}$. 
    The image of $x'$ under the isomorphism $\intlieh_{R,0}^{\der}\rightarrow \liesl_{2,R}$ corresponds to an element of the form $$\begin{pmatrix} 0 & a \\ b & 0 \end{pmatrix}$$ with $\ord_K(ab)=1$.
    Therefore the reduction of $x'$ in $\liesl_{2,k}$ is regular nilpotent, as desired.
    
    We show that $x_k$ is $\intbigG(k)$-conjugate to $\sigma(b)_k$. 
    By \cite[Corollary 2.6 and Theorem 2.20]{Levy-Vinbergtheoryposchar} (which extends Vinberg theory to good positive characteristic), the semisimple parts of $x_k$ and $\sigma(b)_k$ are $\intbigG(k)$-conjugate.
    Moreover both $x_k$ and $\sigma(b)_k$ are regular.
    Therefore it suffices to prove that $L^{\theta}(k)$ acts transitively on the regular nilpotent elements of $\intlieh_{k,0}^{\theta=-\Id}$.
    Using the fact that $\intbigH$ is adjoint, the character group of $Z(L)$ is given by the $E_6$ root lattice modulo the span of a root. Therefore $Z(L)$ has connected fibres.
    It follows that the exact sequence 
    $$
    1\rightarrow Z(L_k) \rightarrow L_k \rightarrow \PGL_{2,k} \rightarrow 1
    $$
    induces a surjection $L_k^{\theta} \rightarrow \PGL_{2,k}^{\xi}$.
    Since $\PGL_{2,k}^{\xi}$ acts transitively on the regular nilpotents of $\liesl_{2,k}^{\xi=-\Id}$, the statement for $L_k$ follows.
    
    Finally by \cite[Proposition 2.8]{Thorne-thesis}, $Z_{\intbigG}(x)_k=Z(L_k)[2]$.
    Therefore since $Z(L_k)$ is connected, $Z_{\intbigG}(x)_k$ has order $2^5$.

\end{proof}

In the next proposition, we will use a slight abuse of notation and for any $b\in \intbigB(R)$ with $\Delta(b)\neq 0$ we will write $\intJac_b$ (which is a priori only defined if $\Delta(b) \in R^{\times}$) for the $K$-scheme $\intJac_{b_K}$.

\begin{proposition}\label{prop: integral reps squarefree discr}
	Let $R$ be a discrete valuation ring in which $N$ is a unit. Let $K = \Frac R$ and let $\ord_K: K^{\times} \twoheadrightarrow \Z$ be the normalized discrete valuation. 
	Let $b\in \intbigB(R)$ and suppose that $\ord_K \Delta(b)\leq 1$. 
	Then:
	\begin{enumerate}
		\item If $x\in \intbigV_b(R)$, then $Z_{\intbigG}(x)(K) = Z_{\intbigG}(x)(R)$. 
		\item The natural map $\alpha\colon\intbigG(R)\backslash \intbigV_b(R) \rightarrow \intbigG(K)\backslash \intbigV_b(K)$ is injective and its image contains $\eta_b\left(\intJac_b(K)/2\intJac_b(K)\right)$. 
		\item If further $R$ is complete and has finite residue field then the image of $\alpha$ equals $\eta_b\left(\intJac_b(K)/2\intJac_b(K)\right)$. 
	\end{enumerate}
\end{proposition}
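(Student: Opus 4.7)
The plan is to treat the cases $\ord_K \Delta(b)=0$ and $\ord_K \Delta(b)=1$ separately. When $\ord_K \Delta(b)=0$, the point $b$ lies in $\intbigB^{\rs}(R)$, so $x\in \intbigV^{\rs}(R)$; combining an $R$-linear version of Lemma \ref{lemma: centralizers with same invariants isomorphic} (available because the action map $\intbigG_S\times \intbigB_S \rightarrow \intbigV^{\reg}_S$ is \'etale by Property 6 of \S\ref{subsection: integral structures}) with Proposition \ref{proposition: bridge jacobians root lattices} identifies $Z_{\intbigG}(x)$ with the finite \'etale $R$-group scheme $\bigLambda/2\bigLambda$, and the valuative criterion of properness immediately yields $Z_{\intbigG}(x)(K)=Z_{\intbigG}(x)(R)$. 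When $\ord_K \Delta(b)=1$, Lemma \ref{lemma: squarefree disc properties of elements of V} already supplies the quasi-finite \'etale structure with special fibre of order $2^5$ and generic fibre $\Jac_b[2]$ of order $2^6$; to upgrade to the N\'eron mapping property I would identify $Z_{\intbigG}(x)$ with the $2$-torsion of the N\'eron lft-model of $\Jac_b$, using that Lemma \ref{lemma: sqfree disc implies regular node} forces $\intbigcurve_b$ to be regular with a single nodal special-fibre singularity, so $\Jac_b$ has semistable reduction of toric rank $1$ and the N\'eron model's $2$-torsion has the expected special-fibre order $2^5$ and enjoys the N\'eron mapping property.

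\textbf{Part 2.} Injectivity of $\alpha$ is formal from Part 1: two $R$-orbits that agree over $K$ produce cohomology classes in $H^1(R,Z_{\intbigG}(x))$ via Proposition \ref{proposition: G-orbits in terms of groupoids} whose images in $H^1(K,Z_{\intbigG}(x))$ coincide, and Part 1 verifies the hypotheses of Lemma \ref{lemma: injective H^1 for quasifinite etale gp scheme}, so the classes already agree over $R$. For the image containment $\eta_b(\intJac_b(K)/2\intJac_b(K)) \subset \image(\alpha)$, the plan is to run the Mumford theta-group construction of $\eta_b$ from Theorem \ref{theorem: inject 2-descent into orbits} integrally. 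Given $A\in \Jac_b(K)$, properness of $\CJac\rightarrow \intbigB_S$ (Corollary \ref{corollary: good compactifications exist}) extends $A$ to an $R$-point of $\CJac_b$; the smoothness of the total space $\CJac$ together with the codimension-two smooth-complement property for $\CJac\rightarrow \intbigB_S$ ensure that the symmetric line bundle $\sh{M}$, its $\G_m$-torsor $\Gmtorsor{\sh{M}}$, the cube-theorem isomorphism $[2]^*\sh{M}\simeq \sh{M}^4$, and thus the $\bigHeis_b$-torsor $p\colon \Gmtorsor{\sh{M}^2}\rightarrow \Gmtorsor{\sh{M}}$ all extend from $K$ to $R$ (the codimension-two condition is precisely what makes symmetric-line-bundle descent work). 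The fiber of this extended $p$ over the lifted point of $\Gmtorsor{\sh{M}}(R)$ is an integral $\bigHeis_b$-torsor, and the groupoid formalism of \S\ref{subsection: some groupoids} (combined with the triviality of $H^1(R,\intbigG^{sc})$ via Lemma \ref{lemma: H^1(R,Sp) is trivial}) converts it to the desired integral orbit representative.

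\textbf{Part 3.} With $R$ complete and its residue field finite, I would prove the reverse containment by a counting argument. By Part 1 and Lemma \ref{lemma: injective H^1 for quasifinite etale gp scheme}, the image of $\alpha$ sits inside $H^1(K,\Jac_b[2])$ as a subgroup $\image(H^1(R,Z_{\intbigG}(x))\rightarrow H^1(K,\Jac_b[2]))$ of ``unramified'' classes. Local Tate duality, combined with the semistable reduction of $\Jac_b$ and the explicit computation of the component group (forced to be cyclic of order two by the single nodal singularity), identifies this unramified subgroup with the image of $\Jac_b(K)/2\Jac_b(K)$ in $H^1(K,\Jac_b[2])$ via an order computation; Part 2's inclusion then forces equality.

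\textbf{The main obstacle} is the ramified case of Part 1: one must produce a canonical $R$-isomorphism between $Z_{\intbigG}(x)$ and the full $2$-torsion of the N\'eron lft-model of $\Jac_b$, not merely a coincidence of orders on the two fibres. This same identification is what makes the integral extension of the theta-group torsor in Part 2 compatible with the $2$-descent map, and handling it carefully requires exploiting both Lemma \ref{lemma: squarefree disc properties of elements of V}'s control of the special fibre and the smoothness of the total space $\CJac$ (not just of the morphism) from Corollary \ref{corollary: good compactifications exist}.
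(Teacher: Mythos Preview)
Your Part~1 and the injectivity half of Part~2 are essentially the paper's argument. The substantive divergence, and the real problem, is in the \emph{containment} half of Part~2.

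You propose to extend the Mumford theta-group torsor $p\colon \Gmtorsor{\sh{M}^2}\rightarrow \Gmtorsor{\sh{M}}$ integrally using the compactified Jacobian $\CJac_b$. This cannot work as stated: $\CJac_b$ is not a group scheme over $R$, so the multiplication-by-$2$ map $[2]$, the translation maps $t_\omega$, and the cube-theorem isomorphism $[2]^*\sh{M}\simeq \sh{M}^4$ have no meaning on it. The codimension-two smooth-complement statement in Corollary~\ref{corollary: good compactifications exist} concerns the total family $\CJac_k$ over the whole base $\intbigB_k$, not the individual fibre $\CJac_b$ over $\Spec R$; it does not help you extend the theta-group data along a single $b$. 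You are conflating the role $\CJac$ plays in \S\ref{subsection: proof of theorem integral representatives} (a Bertini-style deformation in the \emph{base direction}) with what is needed here (an integral structure in the \emph{fibre direction}).

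The paper's route is far more direct and avoids the theta group entirely at this stage. Once you have identified $Z_{\intbigG}(\sigma(b))\simeq \NeronJac_b[2]$ (which you already sketch in Part~1), the key point is that $\NeronJac_b$ has \emph{connected} fibres because $\intbigcurve_b$ is regular with a single node. Hence $[2]\colon \NeronJac_b\rightarrow \NeronJac_b$ is a surjection of smooth $R$-group schemes, giving a short exact sequence of \'etale sheaves whose boundary map factors the $K$-level $2$-descent map through $\HH^1(R,\NeronJac_b[2])$. Then Lemma~\ref{lemma: trivial kernel of H1(R,G)->H1(K,G)} kills the obstruction in $\HH^1(R,\intbigG)$, and Proposition~\ref{proposition: G-orbits in terms of groupoids} produces the integral orbit.

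Your Part~3 also contains a concrete error: the component group of $\NeronJac_b$ is not cyclic of order two but \emph{trivial} (again because $\intbigcurve_b$ is regular and the special fibre is irreducible with one node; see \cite[\S9.5, Theorem~1 and \S9.2, Example~8]{BLR-NeronModels}). The paper does not use Tate duality at all: once $\HH^1(R,\intbigG)$ and $\HH^1(R,\NeronJac_b)$ are both trivial by Lang's theorem, the $\intbigG(R)$-orbits biject with $\HH^1(R,\NeronJac_b[2])\simeq \NeronJac_b(R)/2\NeronJac_b(R)=\intJac_b(K)/2\intJac_b(K)$, and Part~3 drops out.
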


The proof is very similar to the proof of \cite[Proposition 5.4]{Thorne-Romano-E8}, where an analogous result for the stable $3$-grading on $E_8$ is proved.

\begin{proof}
    If $\hat{R}$ is the completion of $R$ with fraction field $\hat{K}$, we have the equality $\intbigG(\hat{K})=\intbigG(\hat{R})\intbigG(K)$ \cite[Th\'eor\`eme 3.2]{Nisnevich-Espaceshomogenesprincipaux}. 
    We may therefore assume that $R$ is complete. 
    
    If $\ord_K \Delta(b)=0$, $\intJac_b$ is smooth over $R$ and $\intJac_b(K)=\intJac_b(R)$.
    Since $Z_{\intbigG}(x)$ is finite \'etale over $R$, the first part follows.
    By Proposition \ref{proposition: G-orbits in terms of groupoids} and Lemma \ref{lemma: injective H^1 for quasifinite etale gp scheme}, $\alpha$ is injective.
    Proposition \ref{proposition: inject 2-descent orbits spreading out} implies that $\eta_b\colon \intJac_b(K)/2\intJac_b(K)\rightarrow \intbigG(K)\backslash \intbigV_b(K)$ factors through $\intbigG(R)\backslash \intbigV_b(R)$, so the second part follows.
    If the residue field $k$ is finite, the pointed sets $\HH^1(R, \intbigG)$ and $\HH^1(R,\intJac_b)$ are trivial by \cite[III.3.11(a)]{milne-etalecohomology} and Lang's theorem.
    The third part then follows from the fact that the $2$-descent map $ \intJac_b(R)/2\intJac_b(R)\rightarrow \HH^1(R,\intJac_b[2])$ is an isomorphism.
    
    We now assume that $\ord_K \Delta(b)=1$.
Lemma \ref{lemma: sqfree disc implies regular node} implies that $\intbigcurve_b/R$ is regular and has a unique singularity, which is a node.
Write $\NeronJac_b$ for the N\'eron model of $\intJac_b$.  
The results of \cite[Chapter 9]{BLR-NeronModels} (in particular Theorem 1 of \S9.5 and Example 8 of \S9.2 therein) imply that $\NeronJac_b$ is a smooth group scheme over $R$ with connected fibres and that the special fibre of $\NeronJac_b$ is an extension of a $2$-dimensional abelian variety by a rank $1$ torus. The quasi-finite \'etale commutative group scheme $\NeronJac_b[2]$ has generic fibre of order $2^6$ and special fibre of order $2^5$. 


We claim that the map $\intbigG \rightarrow \intbigV_b^{\reg},\, g \mapsto g\cdot \sigma(b)$ is a torsor for the \'etale group scheme $Z_{\intbigG}(\sigma(b))$. 
Since this map is smooth (Property 6 of \S\ref{subsection: integral structures}) and surjective in the generic fibre (Proposition \ref{prop : graded chevalley}), it suffices to prove that it is surjective in the special fibre. 
Since every closed point of $\intbigV_{b,k}^{\reg}$ lifts to an element of $\intbigV_b(R')$ for some finite extension $R \subset R'$ of ramification index $1$, this follows from Lemma \ref{lemma: squarefree disc properties of elements of V} applied to $R'$.

We now prove the first part. Since $x$ is \'etale locally $\intbigG$-conjugate to $\sigma(b)$ over $R$ by the previous paragraph, it suffices to consider the case $x=\sigma(b)$. We show that the isomorphism $Z_{\intbigG}(\sigma(b))_K \simeq \intJac_b[2] $ of (a $\Z[1/N]$-analogue of) Proposition \ref{proposition: bridge jacobians root lattices} extends to an isomorphism $Z_{\intbigG}(\sigma(b)) \simeq \NeronJac_b[2] $. Indeed, by the N\'eron mapping property the former isomorphism extends to an open immersion $Z_{\intbigG}(\sigma(b))\rightarrow \NeronJac_b[2] $ of separated quasi-finite \'etale group schemes over $R$. Since the special fibre of $Z_{\intbigG}(\sigma(b))$ has order $2^5$ by Lemma \ref{lemma: squarefree disc properties of elements of V}, this is an isomorphism.
Part 1 then follows from the equality $\NeronJac_b[2](K) = \NeronJac_b[2](R)$.

To prove the remaining parts, note that the surjectivity of $\intbigG \rightarrow \intbigV_b^{\reg},\, g \mapsto g\cdot \sigma(b)$ implies that (in the notation of \S\ref{subsection: some groupoids}) every object of $\GrLieE_{R,b}$ is \'etale locally isomorphic to $(\intbigH_R,[\Id]_R,\theta_R,\sigma(b))$. By Propositions \ref{proposition: H1 of stabilizer and GrLieE} and \ref{proposition: G-orbits in terms of groupoids}, the $\intbigG(R)$-orbits of $\intbigV_b(R)$ are in bijection with the kernel of the map $\HH^1(R, Z_{\intbigG}(\sigma(b)))\rightarrow \HH^1(R,\intbigG)$. 
Since the map $\HH^1(R, Z_{\intbigG}(\sigma(b)))\rightarrow \HH^1(K, Z_{\intbigG}(\sigma(b)))$ is injective (using the isomorphism $Z_{\intbigG}(\sigma(b)))\simeq \NeronJac_b[2]$ and Lemma \ref{lemma: injective H^1 for quasifinite etale gp scheme}), the map $\intbigG(R)\backslash \intbigV_b(R) \rightarrow \intbigG(K)\backslash \intbigV_b(K)$ is injective too.
To show that the image of $\intbigG(R)\backslash \intbigV_b(R)  \rightarrow \intbigG(K)\backslash \intbigV_b(K)$ contains $\eta_b\left(\intJac_b(K)/2\intJac_b(K)\right)$, note that we have an exact sequence of smooth group schemes
\begin{align*}
0 \rightarrow \NeronJac_b[2]\rightarrow \NeronJac_b \xrightarrow{\times 2} \NeronJac_b \rightarrow 0,
\end{align*}
since $\NeronJac_b$ has connected fibres. This implies the existence of a commutative diagram:
\begin{center}
	\begin{tikzcd}
		 \NeronJac_b(R)/2\NeronJac_b(R) \arrow[d] \arrow[r , "="] & \intJac_b(K)/2\intJac_b(K) \arrow[d] \\
		{\HH^1(R,\NeronJac_b[2])} \arrow[r]      & {\HH^1(K,\intJac_b[2])}                        
	\end{tikzcd}
	\end{center}
It therefore suffices to prove that every element in the image of the map $\NeronJac_b(R)/2\NeronJac_b(R) \rightarrow  \HH^1(R,\NeronJac_b[2])$ has trivial image in $\HH^1(R,\intbigG)$. This follows from the injectivity of the map $\HH^1(R,\intbigG) \rightarrow \HH^1(K,\intbigG)$ (Lemma \ref{lemma: trivial kernel of H1(R,G)->H1(K,G)}).

If $R$ has finite residue field then Lang's theorem implies that $\HH^1(R,\intbigG) = \{1\}$. 
In this case the $\intbigG(R)$-orbits on $\intbigV_b(R)$ are in bijection with $\HH^1(R,\NeronJac_b[2])$.
The triviality of $\HH^1(R,\NeronJac_b)$ (again by Lang's theorem) shows that $\HH^1(R,\NeronJac_b[2])$ is in bijection with $\NeronJac_b(R)/2\NeronJac_b(R) = \intJac_b(K)/2\intJac_b(K)$. 
This proves Part 3, completing the proof of the proposition.

\end{proof}

The following corollary considers arbitrary Dedekind schemes. Since such schemes do not satisfy the conditions of Theorem \ref{theorem: inject 2-descent into orbits} (they can carry nontrivial vector bundles), we must switch our focus from orbits to groupoids, in the language of \S\ref{subsection: some groupoids}.

\begin{corollary}\label{corollary: int reps sqfree, general dedekind scheme}
	Let $X$ be a Dedekind scheme in which $N$ is a unit with function field $K$. 
	For every closed point $p$ of $X$ write $\ord_{p} \colon K^{\times} \twoheadrightarrow \Z$ for the normalized discrete valuation of $p$. 
	Let $b\in \intbigB(X)$ be a morphism such that $\ord_{p}(\Delta(b))\leq 1$ for all $p$. 
	Let $P\in \intJac_b(K)/2\intJac_b(K)$ and let $\eta_b(P)\in G(K) \backslash V_b(K)$ be the corresponding orbit from Theorem \ref{theorem: inject 2-descent into orbits}.
	Then the object of $\GrLieE_{K,b}$ corresponding to $\eta_b(P)$ using Proposition \ref{proposition: G-orbits in terms of groupoids} uniquely extends to an object of $\GrLieE_{X,b}$.

\end{corollary}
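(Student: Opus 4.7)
The plan is to build the required extension locally at each closed point of $X$ using Proposition \ref{prop: integral reps squarefree discr}, spread these local extensions out to a finite Zariski cover of $X$, and glue using the stack property of $\GrLieE$ together with a N\'eron-model argument for the automorphism sheaf.

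For the local step, fix a closed point $p\in X$ and put $R_p=\O_{X,p}$, a discrete valuation ring with fraction field $K$ for which $\ord_p \Delta(b)\leq 1$ by hypothesis. Part 2 of Proposition \ref{prop: integral reps squarefree discr} asserts that the injective map $\alpha_p\colon \intbigG(R_p)\backslash \intbigV_b(R_p) \hookrightarrow \intbigG(K)\backslash \intbigV_b(K)$ has $\eta_b(P)$ in its image, giving a unique orbit over $R_p$ lifting it. By Proposition \ref{proposition: G-orbits in terms of groupoids}, this corresponds to an object $\mathcal{A}_p \in \GrLieE_{R_p,b}$, unique up to isomorphism, extending the generic object $\mathcal{A}_K$ associated to $\eta_b(P)$. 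Choosing a representative $v_p \in \intbigV_b(R_p)$ of the underlying orbit, and spreading out using that $\intbigV_b \to X$ is of finite presentation, I obtain a morphism $v_{U_p}\colon U_p\to \intbigV_b$ on some Zariski neighborhood $U_p$ of $p$, hence an object $\tilde{\mathcal{A}}_{U_p}\in \GrLieE_{U_p,b}$ extending $\mathcal{A}_K$.

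Since $\Delta(b)$ is nonzero generically and $X$ is Noetherian of dimension one, $V(\Delta(b))$ consists of finitely many closed points; together with the open $X \setminus V(\Delta(b))$ (on which $b$ factors through $\intbigB^{\rs}$ so that an extension is built directly by the same argument at each local ring, using Proposition \ref{proposition: inject 2-descent orbits spreading out}), the finitely many $U_p$ cover $X$. To glue the $\tilde{\mathcal{A}}_{U_p}$ on overlaps $U\cap U'$, one must produce a canonical isomorphism between the two restrictions, which agree at the generic point by construction. The sheaf of such isomorphisms on $U\cap U'$ is a torsor for the automorphism sheaf of $\mathcal{A}_K$; this sheaf is identified over the regular semisimple locus with $\intJac_{b}[2]$ and extends to a quasi-finite \'etale group scheme over $X$, namely the $2$-torsion $\mathscr{N}[2]$ of the N\'eron model of $\intJac_{b_K}$. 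Lemma \ref{lemma: injective H^1 for quasifinite etale gp scheme} then implies that any such torsor with a $K$-point is trivial, yielding a canonical isomorphism on overlaps. Descent for the stack $\GrLieE$ glues the $\tilde{\mathcal{A}}_{U_p}$ into a global object $\mathcal{A}\in \GrLieE_{X,b}$.

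Uniqueness of the extension follows by applying the same N\'eron-type argument to the difference of two putative extensions: they agree locally at each $R_p$ by the injectivity of $\alpha_p$, and their $\mathscr{N}[2]$-torsor of isomorphisms is trivial by Lemma \ref{lemma: injective H^1 for quasifinite etale gp scheme}. The main obstacle I anticipate is the careful bookkeeping in the gluing step: while the stack structure of $\GrLieE$ supplies the descent formalism, one must control how the automorphism sheaf of $\mathcal{A}_K$ behaves across points where $\Delta(b)$ vanishes, which is precisely what the N\'eron model and Lemma \ref{lemma: injective H^1 for quasifinite etale gp scheme} achieve.
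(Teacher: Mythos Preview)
Your approach is correct but more circuitous than the paper's. Rather than constructing local extensions at each $\O_{X,p}$, spreading out, and gluing along a Zariski cover, the paper argues globally from the outset: by globalizing the arguments inside the proof of Proposition~\ref{prop: integral reps squarefree discr}, one sees directly over all of $X$ that the action map $\intbigG \to \intbigV_b^{\reg},\ g\mapsto g\cdot\sigma(b)$ is a torsor for $Z_{\intbigG}(\sigma(b))$ and that $Z_{\intbigG}(\sigma(b))\simeq \NeronJac_b[2]$, the $2$-torsion of the N\'eron model of $\intJac_b$. Proposition~\ref{proposition: H1 of stabilizer and GrLieE} then identifies isomorphism classes in $\GrLieE_{X,b}$ with $\HH^1(X,\NeronJac_b[2])$, and since $\NeronJac_b(X)=\intJac_b(K)$ the $2$-descent map $\intJac_b(K)/2\intJac_b(K)\to \HH^1(K,\intJac_b[2])$ factors through $\HH^1(X,\NeronJac_b[2])$, producing the required lift; uniqueness is immediate from Lemma~\ref{lemma: injective H^1 for quasifinite etale gp scheme}. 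Your gluing strategy works, but note that the step where you assert the automorphism sheaf ``extends to $\mathscr{N}[2]$'' over $X$ is exactly this global N\'eron identification, which you invoke without argument (it needs the order-$2^5$ computation from Lemma~\ref{lemma: squarefree disc properties of elements of V} at each bad point). The paper's route thus front-loads the one nontrivial input and then proceeds purely cohomologically, bypassing the open-cover bookkeeping and the cocycle verification.
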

\begin{proof}

    By the same logic as the proof of Proposition \ref{prop: integral reps squarefree discr}, the morphism
    $\intbigG \rightarrow \intbigV_b^{\reg},\, g \mapsto g\cdot \sigma(b)$ is a torsor for the \'etale group scheme $Z_{\intbigG}(\sigma(b))$ and the isomorphism $Z_{\intbigG}(\sigma(b))_K \simeq \intJac_b[2] $ of Proposition \ref{proposition: bridge jacobians root lattices} extends to an isomorphism $Z_{\intbigG}(\sigma(b)) \simeq \NeronJac_b[2]$, where $\NeronJac_b \rightarrow X$ denotes the N\'eron model of $\intJac_b$.
	So every object of $\GrLieE_{X,b}$ is \'etale locally isomorphic to $(\intbigH_X,[\Id]_X,\theta_X,\sigma(b))$.
	Therefore by Proposition \ref{proposition: H1 of stabilizer and GrLieE} the set of isomorphism classes of objects in $\GrLieE_{X,b}$ is in bijection with the pointed set $\HH^1(X,\NeronJac_b[2])$. 
	
	Let $\mathcal{A}\in \HH^1(K,\intJac_b[2])$ be the class corresponding to $\eta_b(P)$ under Proposition \ref{proposition: G-orbits in terms of groupoids}. It suffices to prove that $\mathcal{A}$ uniquely lifts under the natural map $\HH^1(X,\NeronJac_b[2]) \rightarrow \HH^1(K,\intJac_b[2])$.
	The equality $\NeronJac_b(X)=\intJac_b(K)$ implies that the $2$-descent map $\intJac_b(K)/2\intJac_b(K)\rightarrow \HH^1(K,\intJac_b[2])$ factors through $\HH^1(X,\NeronJac_b[2])\rightarrow \HH^1(K,\intJac_b[2])$, so $\mathcal{A}$ indeed lifts.
	The uniqueness follows from the injectivity of the map $\HH^1(X,\NeronJac_b[2]) \rightarrow \HH^1(K,\intJac_b[2])$ (Lemma \ref{lemma: injective H^1 for quasifinite etale gp scheme}).

\end{proof}

\subsection{The proof of Theorem \ref{theorem: integral representatives exist}}\label{subsection: proof of theorem integral representatives}

We now treat the general case. We will do this by deforming to the case of square-free discriminant, with the help of the following Bertini type theorem over $\Z_p$.

\begin{proposition}\label{proposition: Bertini type theorem}
	Let $p$ be a prime number. Let $\mathcal{Y} \rightarrow \Z_p$ be a smooth, quasiprojective morphism of relative dimension $d\geq 1$ with geometrically integral fibres. 
	Let $\mathcal{D} \subset \mathcal{Y}$ be an effective Cartier divisor. Assume that $\mathcal{Y}_{\F_p}$ is not contained in $\mathcal{D}$ (i.e. $\mathcal{D}$ is horizontal) and that $\mathcal{D}_{\Q_p}$ is reduced.
	Let $P\in \mathcal{Y}(\Z_p)$ be a section such that $P_{\Q_p} \not\in \mathcal{D}_{\Q_p}$. 
	Then there exists a closed subscheme $\mathcal{X} \hookrightarrow \mathcal{Y}$ containing the image of $P$ satisfying the following properties.
	\begin{itemize}
		\item $\mathcal{X} \rightarrow \Z_p$ is smooth of relative dimension $1$ with geometrically integral fibres.
		\item $\mathcal{X}_{\F_p}$ is not contained in $\mathcal{D}$ and the (scheme-theoretic) intersection $\mathcal{X}_{\Q_p} \cap \mathcal{D}_{\Q_p}$ is reduced.
	
	\end{itemize} 
\end{proposition}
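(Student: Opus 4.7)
I would induct on the relative dimension $d$; the base case $d=1$ is trivial with $\mathcal{X}=\mathcal{Y}$. For $d\geq 2$, fix an embedding $\mathcal{Y}\hookrightarrow \P^N_{\Z_p}$ coming from the quasiprojective hypothesis and seek a hypersurface $H\subset \P^N_{\Z_p}$ of sufficiently large degree $e$, containing the image of $P$, such that the Cartier divisor $\mathcal{H}\coloneqq \mathcal{Y}\cap H$ is smooth over $\Z_p$ of relative dimension $d-1$ with geometrically integral fibres, satisfies $\mathcal{H}_{\F_p}\not\subset \mathcal{D}_{\F_p}$, and has $\mathcal{H}_{\Q_p}\cap \mathcal{D}_{\Q_p}$ reduced. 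Applying the inductive hypothesis to $(\mathcal{H},\mathcal{H}\cap \mathcal{D})$ with its section $P$ then produces $\mathcal{X}$.

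The degree-$e$ hypersurfaces of $\P^N_{\Z_p}$ passing through $P$ form an affine space $\A^M_{\Z_p}$. Smoothness and geometric integrality of fibres together with the non-containment condition are open conditions in families, cutting out an open $U_1\subset \A^M_{\Z_p}$; the generic-fibre reducedness condition cuts out an open $U_2\subset \A^M_{\Q_p}$. It then suffices to find $x\in \A^M(\Z_p)$ with $x\in U_1$ and $x_{\Q_p}\in U_2$.

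Non-emptiness of $U_2$ follows from classical Bertini in characteristic zero, applied to $(\mathcal{Y}_{\Q_p},\mathcal{D}_{\Q_p})$ with passage through $P_{\Q_p}$. The main obstacle is non-emptiness of $(U_1)_{\F_p}$, since classical Bertini fails over finite fields: I would appeal to Poonen's Bertini smoothness theorem over $\F_p$ together with its refinements ensuring geometric irreducibility of the section and passage through the prescribed point $P_{\F_p}$, valid for $e$ sufficiently large. The condition $\mathcal{H}_{\F_p}\not\subset \mathcal{D}_{\F_p}$ is then automatic by a dimension count, since $\mathcal{H}_{\F_p}$ is geometrically integral of dimension $d-1$ while $\mathcal{D}_{\F_p}$ is a proper closed subscheme of $\mathcal{Y}_{\F_p}$.

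Given these non-emptiness results, the desired $x$ is produced by $p$-adic approximation: pick $\bar{x}\in (U_1)_{\F_p}(\F_p)$; the residue disk of $\Q_p$-points lifting $\bar{x}$ is a nonempty $p$-adic open in $\A^M(\Q_p)$, while $U_2(\Q_p)$ is Zariski-open and therefore $p$-adically dense, so the two intersect in a point $x_{\Q_p}$. Its integral extension $x\in \A^M(\Z_p)$ reduces to $\bar{x}$, and openness of $U_1$ then forces $x\in U_1$ as a $\Z_p$-point, since the generic-point image is a generization of $\bar{x}\in U_1$ and open sets are closed under generization.
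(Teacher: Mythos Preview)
Your overall approach matches the paper's: induct on $d$, use Poonen's Bertini theorem (plus the Charles--Poonen irreducibility refinement) over $\F_p$ and classical Bertini over $\Q_p$, then $p$-adically approximate to produce a hypersurface good on both fibres. However, your generization argument at the end contains a gap. You assert that the conditions defining $U_1$ (fibrewise smoothness, geometric integrality, non-containment in $\mathcal{D}$) are open on $\A^M_{\Z_p}$, so that $\bar x\in U_1$ forces $x_{\Q_p}\in U_1$. But the universal hypersurface-section family is not proper over the parameter space (since $\mathcal{Y}$ is only quasiprojective), and without properness these fibrewise conditions need not be open. A concrete counterexample: take $\mathcal{Y}=\A^2_{\Z_p}\subset\P^2_{\Z_p}$ and the conic section cut out by $f=(ps-1)(s-t)$; the special fibre is the smooth geometrically integral line $\{s=t\}$, but the generic fibre consists of two lines meeting at $(1/p,1/p)$, hence is singular and reducible. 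The bad locus over $\Q_p$ simply escapes to infinity modulo $p$.

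The paper avoids this by including smoothness and geometric irreducibility of $\mathcal{Y}_{\Q_p}\cap H$ among the $\Q_p$-Bertini conditions (your $U_2$), so that both fibres are verified directly rather than by generization; flatness of $\mathcal{Y}\cap H$ over $\Z_p$ then follows from a fibre-dimension criterion. This is a one-line fix to your argument. One further minor point: your claim that $\mathcal{H}_{\F_p}\not\subset\mathcal{D}_{\F_p}$ is ``automatic by a dimension count'' is not quite right, as both can have dimension $d-1$; one must also exclude the finitely many components of $\mathcal{D}_{\F_p}$, which is easy within Poonen's framework (for instance by requiring $H$ to pass through a chosen point of $\mathcal{Y}_{\F_p}\setminus\mathcal{D}_{\F_p}$).
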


\begin{proof}
If $d=1$ we can take $\mathcal{X} = \mathcal{Y}$ and there is nothing to prove. Thus for the rest of the proof we may assume that $d\geq 2$.
Fix a locally closed embedding $\mathcal{Y} \subset \P_{\Z_p}^n$.
We will induct on $d$ by finding a suitable hypersurface section using Bertini theorems over $\F_p$ and $\Q_p$.
Combining \cite[Theorem 1.2]{Poonen-BertiniTheoremsFiniteFields} and \cite[Theorem 1.1]{CharlesPoonen}, there exists a hypersurface $H$ in $\P^n_{\F_p}$ such that the (scheme-theoretic) intersection $\mathcal{Y}_{\F_p} \cap H$ is smooth, geometrically irreducible of codimension $1$ in $\mathcal{Y}_{\F_p}$, contains the point $P_{\F_p}$ and is not contained in $\mathcal{D}$. 

We will lift this hypersurface to a hypersurface in $\P^n_{\Z_p}$ with similar properties, as follows. Let $M$ be the projective space over $\Q_p$ parametrizing hypersurfaces of degree $\deg H$ in $\P^n_{\Q_p}$ containing the point $P_{\Q_p}$. By the classical Bertini theorem over $\Q_p$, there exists an open dense subscheme $U$ of $M$ such that every hypersurface $H'$ in $U$ has the property that $H' \cap \mathcal{Y}_{\Q_p}$ is smooth, geometrically irreducible of codimension $1$ and that $H'  \cap \mathcal{D}_{\Q_p}$ is reduced. 
The subset of $M(\Q_p)$ whose reduction mod $p$ is the hypersurface $H$ is an open $p$-adic ball of $M(\Q_p)$. Consequently, it intersects $U(\Q_p)$ nontrivially. (Since an open $p$-adic ball in a projective $\Q_p$-space cannot be contained in a proper Zariski closed subscheme.) So there exists a hypersurface $\mathcal{H} \subset \P^n_{\Z_p}$ lifting $H$ such that $\mathcal{H}_{\Q_p} \in U(\Q_p)$.
 
By \cite[Theorem 22.6]{Matsumura-CommutativeRingTheory}, the scheme $\mathcal{Y}\cap \mathcal{H}$ is flat over $\Z_p$. 
It follows that the scheme $\mathcal{Y} \cap \mathcal{H}\rightarrow \Z_p$ is smooth with geometrically integral fibres.  
By construction the special fibre of $\mathcal{Y} \cap \mathcal{H}$ is not contained in $\mathcal{D}$ and the generic fibre of $ \mathcal{H} \cap \mathcal{D}$ is reduced.
The proposition now follows by replacing $\mathcal{Y}$ by $\mathcal{Y}\cap \mathcal{H}$ and induction on the relative dimension of $\mathcal{Y} \rightarrow \Z_p$.

\end{proof}

We come back to our situation of interest. Recall from $\S 4.1$ that $\sh{E}_p$ denotes the subset of elements of $\intbigB(\Z_p)$ of nonzero discriminant.

\begin{corollary}\label{corollary: deform the point in the jacobian general case integral}
	Let $p$ be a prime not dividing $N$.
	Let $b\in \sh{E}_p$ and $P \in \Jac_b(\Q_p)$.
	Then there exists a morphism $\mathcal{X}\rightarrow\Z_p$ which is of finite type, smooth of relative dimension $1$ and has geometrically integral fibres, together with a point $x \in \mathcal{X}(\Z_p)$ satisfying the following properties. 
	\begin{enumerate}
		\item There exists a morphism $\widetilde{b}\colon \mathcal{X} \rightarrow \intbigB_{\Z_p}$ with the property that $\widetilde{b}(x) = b$ and that the discriminant $\Delta(\widetilde{b})$, seen as a map $\mathcal{X} \rightarrow \A^1_{\Z_p}$, is not identically zero on the special fibre and is square-free on the generic fibre of $\mathcal{X}$.
		\item Write $\mathcal{X}^{\rs}$ for the open subscheme of $\mathcal{X}$ where $\Delta(\widetilde{b})$ does not vanish. Then there exists a morphism $\widetilde{P}\colon \mathcal{X}^{\rs} \rightarrow \intJac$ lifting the morphism $ \mathcal{X}^{\rs} \rightarrow \intbigB_{\Z_p}^{\rs}$ satisfying $\widetilde{P}(x_{\Q_p}) = P$. 
	\end{enumerate}

\end{corollary}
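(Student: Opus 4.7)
The plan is to produce $\mathcal{X}$ by cutting out a curve inside the total space of the compactified Jacobian $\CJac$ of Corollary \ref{corollary: good compactifications exist}, with Proposition \ref{proposition: Bertini type theorem} as the Bertini-type input. First I would lift $P$ to an integral point of $\CJac$: since $P \in \Jac_b(\Q_p) = \CJac_b(\Q_p)$ and $\CJac \rightarrow \intbigB_{\Z_p}$ is proper, the valuative criterion yields a unique extension $\widetilde{P} \in \CJac(\Z_p)$ whose generic fibre is $P$.

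Next I would apply Proposition \ref{proposition: Bertini type theorem} to $\mathcal{Y} = \CJac_{\Z_p} \rightarrow \Z_p$ (smooth quasiprojective with geometrically integral fibres, by Corollary \ref{corollary: good compactifications exist}), to the effective Cartier divisor $\mathcal{D}$ obtained as the pullback of $\{\Delta = 0\} \subset \intbigB_{\Z_p}$ along $\CJac \rightarrow \intbigB_{\Z_p}$, and to the section $\widetilde{P}$. The hypotheses are verified as follows. The divisor $\mathcal{D}$ is horizontal because $\{\Delta = 0\}$ is a proper closed subscheme of $\intbigB_{\F_p}$ and $\CJac \rightarrow \intbigB$ is flat with nonempty fibres; $\mathcal{D}_{\Q_p}$ is reduced because $\{\Delta = 0\}_{\Q_p}$ is integral by Property $2$ of \S\ref{subsection: integral structures} and pulls back to a reduced divisor along the flat morphism $\CJac_{\Q_p} \rightarrow \intbigB_{\Q_p}$, whose fibres are geometrically integral; and $\widetilde{P}_{\Q_p} \notin \mathcal{D}_{\Q_p}$ because $\Delta(b) \neq 0$. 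The proposition then produces a closed subscheme $\mathcal{X} \hookrightarrow \CJac_{\Z_p}$ containing $\widetilde{P}$, smooth of relative dimension one over $\Z_p$ with geometrically integral fibres, such that $\mathcal{X}_{\F_p} \not\subset \mathcal{D}$ and $\mathcal{X}_{\Q_p} \cap \mathcal{D}_{\Q_p}$ is reduced.

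Setting $\widetilde{b}\colon \mathcal{X} \hookrightarrow \CJac \rightarrow \intbigB_{\Z_p}$ and taking $x \in \mathcal{X}(\Z_p)$ to be the lift of $\widetilde{P}$, Part (1) is immediate: $\widetilde{b}(x) = b$ by construction, $\Delta(\widetilde{b})$ is not identically zero on $\mathcal{X}_{\F_p}$ because $\mathcal{X}_{\F_p} \not\subset \mathcal{D}$, and $\Delta(\widetilde{b})$ is square-free on the smooth curve $\mathcal{X}_{\Q_p}$ because its zero divisor there equals the reduced scheme $\mathcal{X}_{\Q_p} \cap \mathcal{D}_{\Q_p}$. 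For Part (2), the identification of $\CJac$ with $\intJac$ over $\intbigB_S^{\rs}$ from Corollary \ref{corollary: good compactifications exist} makes the inclusion $\mathcal{X} \hookrightarrow \CJac$ restrict over $\mathcal{X}^{\rs}$ to a morphism $\widetilde{P}\colon \mathcal{X}^{\rs} \rightarrow \intJac$ lifting $\mathcal{X}^{\rs} \rightarrow \intbigB_{\Z_p}^{\rs}$, with $\widetilde{P}(x_{\Q_p}) = P$ by construction. The one nontrivial verification is the reducedness of $\mathcal{D}_{\Q_p}$, and the essential payoff of the compactified Jacobian construction is that both the properness of $\CJac / \intbigB$ (needed to integrate $P$) and the smoothness of the total space $\CJac / \Z_p$ together with the geometric integrality of its fibres over $\intbigB$ (needed to apply Bertini) are simultaneously available.
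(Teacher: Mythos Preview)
Your proof is correct and follows essentially the same approach as the paper: extend $P$ to an integral point of $\CJac$ by properness, then apply the Bertini-type Proposition \ref{proposition: Bertini type theorem} to $(\CJac_{\Z_p}, \mathcal{D}, \widetilde{P})$ and read off $\mathcal{X}$, $\widetilde{b}$, $\widetilde{P}$ from the resulting curve. The only noteworthy difference is in the verification that $\mathcal{D}_{\Q_p}$ is reduced: you argue via flatness of $\CJac_{\Q_p} \rightarrow \intbigB_{\Q_p}$ with geometrically integral (hence reduced) fibres, which preserves reducedness under pullback, whereas the paper argues via the regularity of $\CJac_{\Q_p}$ together with the fact that $\CJac_{\Q_p} \rightarrow \intbigB_{\Q_p}$ is smooth outside a codimension-$2$ locus (so the Cartier divisor $\mathcal{D}_{\Q_p}$ is $S_1$ everywhere and $R_0$ at its generic points); both are valid.
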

\begin{proof}
	We apply Proposition \ref{proposition: Bertini type theorem} with $\mathcal{Y} = \CJac_{\Z_p}$, the compactified Jacobian introduced in \S\ref{subsection: compactifications}.
	We define $\mathcal{D}$ to be the pullback of the discriminant locus $\{ \Delta = 0 \} \subset \intbigB_{\Z_p}$ under the morphism $\CJac_{\Z_p} \rightarrow \intbigB_{\Z_p}$. Since the latter morphism is proper, we can extend $P \in \Jac_b(\Q_p)$ to an element of $\CJac_b(\Z_p)$, still denoted by $P$. 
	
	We claim that the triple $(\mathcal{Y}, \mathcal{D},P)$ satisfies the assumptions of Proposition \ref{proposition: Bertini type theorem}. Indeed, the properties of $\mathcal{Y}$ follow from Corollary \ref{corollary: good compactifications exist}.
	Moreover $\mathcal{Y}_{\F_p}$ is not contained in $\mathcal{D}$ since $\Delta$ is nonzero mod $p$ by our assumptions on $N$ in \S\ref{subsection: integral structures}. 
	Since $\CJac_{\Q_p} \rightarrow \intbigB_{\Q_p}$ is smooth outside a subset of codimension $2$ in $\CJac_{\Q_p}$ and $\{\Delta = 0\}_{\Q_p} \subset \bigB_{\Q_p}$ is reduced, the scheme $\mathcal{D}_{\Q_p}$ is reduced too. 
	Finally $P_{\Q_p}\not \in \mathcal{D}_{\Q_p}$ since $b$ has nonzero discriminant. 
	
	We obtain a closed subscheme $\mathcal{X} \hookrightarrow \CJac_{\Z_p}$ satisfying the conclusion of Proposition \ref{proposition: Bertini type theorem}.
	Write $x\in \mathcal{X}(\Z_p)$ for the section corresponding to $P$, $\widetilde{b}$ for the restriction of $\CJac_{\Z_p} \rightarrow \intbigB_{\Z_p}$ to $\mathcal{X}$ and $\widetilde{P}$ for the restriction of the inclusion $\mathcal{X} \hookrightarrow \CJac_{\Z_p}$ to $\mathcal{X}^{\rs}$. We claim that the tuple $(\mathcal{X},x,\widetilde{b},\widetilde{P})$ satisfies the conclusion of the corollary. 
	This follows readily from Proposition \ref{proposition: Bertini type theorem}, except the statement that the discriminant map $\mathcal{X} \rightarrow \A^1_{\Z_p}$ is square-free on the generic fibre.
    This statement is equivalent to the pullback of the discriminant locus $\{\Delta = 0\} \subset \bigB_{\mathbb{Q}_p}$ along $\tilde{b}_{\mathbb{Q}_p}\colon \mathcal{X}_{\mathbb{Q}_p} \rightarrow \bigB_{\mathbb{Q}_p}$ being reduced. 
    Since this pullback is $\mathcal{X}_{\mathbb{Q}_p} \cap \mathcal{D}_{\Q_p}$ which is reduced by Proposition \ref{proposition: Bertini type theorem}, the statement is true and the corollary follows.

\end{proof}

We have done all the preparations for the proof of Theorem \ref{theorem: integral representatives exist}, which we give now. We keep the notation from this section and assume that we have made a choice of $(\mathcal{X},x,\widetilde{b},\widetilde{P})$ satisfying the conclusion of Corollary \ref{corollary: deform the point in the jacobian general case integral}.
The strategy is to extend the orbit $\eta_b(P)$ (which corresponds to the point $x_{\Q_p}$) to larger and larger subsets of $\mathcal{X}$.
 
Let $y \in \mathcal{X}$ be a closed point of the special fibre with nonzero discriminant which has an affine open neighbourhood containing $x_{\Q_p}$. 
Let $R$ be the semi-local ring of $\mathcal{X}$ at $x_{\Q_p}$ and $y$. Since every projective module of constant rank over a semi-local ring is free \cite{Hinohara-projmodulessemilocalring}, we can apply Theorem \ref{theorem: inject 2-descent into orbits} to obtain an element of $\bigG(R)\backslash \bigV_{\widetilde{b}}(R)$. 
We can spread this out to an element of $\bigG(U_1)\backslash \bigV_{\widetilde{b}}(U_1)$ where $U_1$ is an open subset of $\mathcal{X}$ containing $x_{\Q_p}$ and $y$.
Under the correspondence of Proposition \ref{proposition: G-orbits in terms of groupoids}, this corresponds to an object $(H_1,\chi_1,\theta_1,\gamma_1)$ of $\GrLieE_{U_1,\widetilde{b}}$ whose pullback along the point $x_{\Q_p} \in U_1(\Q_p)$ corresponds to the orbit $\eta_b(P)$.

Let $U_2 = \mathcal{X}_{\Q_p}$.  By Corollary \ref{corollary: int reps sqfree, general dedekind scheme}, the restriction of the object $(H_1,\chi_1,\theta_1,\gamma_1)$ to $U_1\cap U_2$ extends to an object $(H_2,\chi_2,\theta_2,\gamma_2)$ of $\GrLieE_{U_2,\widetilde{b}}$. We can glue these two objects to obtain an object $(H_0,\chi_0,\theta_0,\gamma_0)$ of $\GrLieE_{U_0,\widetilde{b}}$, where $U_0 = U_1 \cup U_2$. 
We observe that the complement of $U_0$ is a union of finitely many closed points since the special fibre of $\mathcal{X}$ is irreducible.
By Lemma \ref{lemma: extend objects complement codimension 2} below, we can extend $(H_0,\chi_0,\theta_0,\gamma_0)$ to an object $(H_3,\chi_3,\theta_3,\gamma_3) \in \GrLieE_{\mathcal{X},\widetilde{b}}$. 
Let $(H_4,\chi_4,\theta_4,\gamma_4)\in \GrLieE_{\Z_p,b}$ denote the pullback of the previous object along the point $x\colon \Spec \Z_p \rightarrow \mathcal{X}$. 
Since $\HH^1(\Z_p,\intbigG) = \{1\}$, Proposition \ref{proposition: G-orbits in terms of groupoids} implies that $(H_4,\chi_4,\theta_4,\gamma_4)$ determines an element of $\intbigG(\Z_p)\backslash \intbigV_b(\Z_p)$ mapping to $\eta_b(P)$ under the natural map $\intbigG(\Z_p) \backslash \intbigV_b(\Z_p) \rightarrow \bigG(\Q_p)\backslash \bigV_b(\Q_p)$. 
This completes the proof of Theorem \ref{theorem: integral representatives exist}.

\begin{lemma}\label{lemma: extend objects complement codimension 2}
	Let $X$ be an integral regular scheme of dimension $2$, and let $U\subset X$ be an open subset whose complement has dimension $0$. 
	If $b\in \intbigB_S(X)$, then restriction induces an equivalence of categories $\GrLieE_{X,b} \rightarrow \GrLieE_{U,b|_U}$.
\end{lemma}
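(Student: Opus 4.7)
The plan is to verify that the restriction functor is both fully faithful and essentially surjective, using throughout the following algebraic Hartogs principle: since $X$ is regular noetherian of dimension $2$ and $X\setminus U$ has codimension $\geq 2$, for every affine $X$-scheme $Y$ of finite type the restriction map $Y(X)\to Y(U)$ is a bijection, and similarly $\Gamma(X,E)\to \Gamma(U,E|_U)$ is an isomorphism for any vector bundle $E$ on $X$. Both are consequences of the local rings at the boundary points having depth $\geq 2$ and of $j_*\mathcal{O}_U=\mathcal{O}_X$ for the inclusion $j\colon U\hookrightarrow X$.

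For full faithfulness, given two objects $\mathcal{A}_i=(H_i,\chi_i,\theta_i,\gamma_i)$ of $\GrLieE_{X,b}$, the functor of morphisms $\mathcal{A}_1\to\mathcal{A}_2$ is represented by a closed subscheme of $\Isom_X(H_1,H_2)$. The latter is affine of finite type over $X$ because $H_1,H_2$ are adjoint of type $E_6$ and the automorphism/isomorphism schemes of such reductive group schemes are affine. The Hartogs principle for affine $X$-schemes therefore gives the required bijection on morphism sets.

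For essential surjectivity, let $(H',\chi',\theta',\gamma')$ be an object of $\GrLieE_{U,b|_U}$. Via Proposition \ref{proposition: G-torsors in terms of groupoids}, forgetting $\gamma'$ this data amounts to an $\intbigG_U$-torsor $T_U$. The key step is to extend $T_U$ to an $\intbigG_X$-torsor $T_X$: this is the extension theorem of Colliot-Th\'el\`ene--Sansuc for torsors under reductive group schemes across closed subschemes of codimension $\geq 2$ in regular noetherian schemes of dimension $2$ (see \cite{Conrad-reductivegroupschemes} for a modern treatment), and is the deepest ingredient of the argument. Applying Proposition \ref{proposition: G-torsors in terms of groupoids} again in the other direction, $T_X$ yields a triple $(H,\chi,\theta)\in\GrLie_X$ extending $(H',\chi',\theta')$; in particular the stability condition on $\theta$ at the boundary points is automatic, being built into the bijection with $\intbigG$-torsors.

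It remains to extend $\gamma'$. Twisting $\intbigV$ by $T_X$ produces a vector bundle $\intbigV_T$ on $X$ whose restriction to $U$ is the bundle of which $\gamma'$ is a section. By the vector-bundle form of Hartogs, $\gamma'$ extends uniquely to a global section $\gamma\in\Gamma(X,\intbigV_T)$. The two remaining conditions---namely $\theta(\gamma)=-\gamma$ and the equality $\pi(\gamma)=b$ in $\intbigB(X)$---are closed conditions that hold on the schematically dense open $U$, hence on $X$ by separatedness. This produces the required object of $\GrLieE_{X,b}$ extending $(H',\chi',\theta',\gamma')$, completing the essential surjectivity. The main obstacle is the reductive torsor extension step, everything else being a mild application of Hartogs.
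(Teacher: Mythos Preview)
Your proposal is correct and follows essentially the same approach as the paper: both use the Hartogs-type fact that sections of affine finite-type $X$-schemes extend across codimension $\geq 2$ for full faithfulness, and both invoke the Colliot-Th\'el\`ene--Sansuc extension theorem for reductive torsors (the paper cites \cite[Th\'eor\`eme 6.13]{ColliotTheleneSansuc-Fibresquadratiques} directly rather than via \cite{Conrad-reductivegroupschemes}) to extend $(H',\chi',\theta')$ and then extend $\gamma'$. The only cosmetic difference is that the paper packages the extension of $\gamma'$ by applying Hartogs directly to the affine closed subscheme $\{\gamma\in\lieh'' : \theta''(\gamma)=-\gamma,\ \pi(\gamma)=b\}$, whereas you first extend $\gamma'$ as a vector bundle section and then verify the conditions by density; these are equivalent.
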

\begin{proof}
	We will use the following fact \cite[Lemme 2.1(iii)]{ColliotTheleneSansuc-Fibresquadratiques} repeatedly: if $Y$ is an affine $X$-scheme of finite type, then restriction of sections $Y(X)\rightarrow Y(U)$ is bijective.
	To prove essential surjectivity, let $(H',\chi',\theta',\gamma')$ be an object of $\GrLieE_{U,b|_U}$.
	By \cite[Th\'eoreme 6.13]{ColliotTheleneSansuc-Fibresquadratiques} and Proposition \ref{proposition: G-torsors in terms of groupoids}, $(H',\chi',\theta')$ extends to an object $(H'',\chi'',\theta'')$ of $\GrLie_{X}$.
	If $Y$ is the closed subscheme of $\lieh''$ of elements $\gamma$ satisfying $\theta''(\gamma)=-\gamma$ and $\gamma$ maps to $b$ in $\intbigB(X)$, then $Y$ is affine and of finite type over $X$. 
	It follows by the fact above that $\gamma'$ lifts to an element $\gamma''\in \lieh''(X)$ and that $(H'',\chi'',\theta'',\gamma'')$ defines an object of $\GrLieE_{X,b}$. 
	Since the scheme of isomorphisms $\Isom_{\GrLieE}(\mathcal{A},\mathcal{A}')$ between two objects of $\GrLieE_{X,b}$ is $X$-affine, fully faithfulness follows again from the above fact.
\end{proof}

\subsection{A global consequence}\label{subsection: integrality, a global corollary}

Recall that $\sh{E}_p = \intbigB(\Z_p) \cap \bigB^{\rs}(\Q_p)$.
Define $\sh{E} \coloneqq \intbigB(\Z) \cap \bigB^{\rs}(\Q)$.
We state the following corollary, whose proof is completely analogous to the proof of \cite[Corollary 5.8]{Thorne-Romano-E8} and uses the fact that $\intbigG$ has class number $1$ (Proposition \ref{proposition: tamagawa}).
\begin{corollary}\label{corollary: weak global integral representatives}
	Let $b_0 \in \sh{E}$. Then for each prime $p$ dividing $N$ we can find an open compact neighbourhood $W_p$ of $b_0$ in $\sh{E}_p$ and an integer $n_p\geq 0$ with the following property. Let $M = \prod_{p | N} p^{n_p}$. Then for all $b\in \sh{E} \cap \left(\prod_{p| N} W_p \right)$ and for all $y \in \Sel_2(\Jac_{M\cdot b})$, the orbit $\eta_{M\cdot b}(y) \in \bigG(\Q) \backslash \bigV_{M\cdot b}(\Q)$ contains an element of $\intbigV_{M\cdot b}(\Z)$. 
\end{corollary}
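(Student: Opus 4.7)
The strategy follows the same pattern as \cite[Corollary 5.8]{Thorne-Romano-E8} and splits naturally into two parts: constructing local integral representatives at every prime, and then gluing them globally via the class number $1$ property of $\intbigG$ (Proposition \ref{proposition: tamagawa}).

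First, I would handle primes $p \nmid N$ uniformly. For any $b \in \sh{E}$, the integer $M$ is a unit at $p$, so $M \cdot b \in \sh{E}_p$, and Theorem \ref{theorem: integral representatives exist} directly produces a representative of $\eta_{M \cdot b}(y)$ in $\intbigV_{M \cdot b}(\Z_p)$ for every $y \in \Sel_2 \Jac_{M \cdot b}$. This step is independent of the choice of $W_p$ and $n_p$ and needs no further work.

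Next, I would construct $W_p$ and $n_p$ at each prime $p \mid N$. By Corollary \ref{corollary: Sel2 embeds}, the restriction of $\eta_b(y)$ to $\Q_p$ factors through the image of $\Jac_b(\Q_p)/2\Jac_b(\Q_p)$ in $\HH^1(\Q_p,\Jac_b[2])$, which is a finite group whose size is controlled uniformly as $b$ varies $p$-adically in a small enough compact neighbourhood. Shrinking $W_p$ if necessary, I would choose, continuously in $b \in W_p$, representatives in $\bigV_b(\Q_p)$ of each local orbit coming from $\Jac_b(\Q_p)/2\Jac_b(\Q_p)$. By the compactness of $W_p$ and $p$-adic continuity, these representatives have denominators bounded by a uniform power $p^{n_p}$. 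Recalling that the $\G_m$-action satisfies $\lambda \cdot v \in \bigV_{\lambda \cdot b}(\Q_p)$ and scales $v$ linearly while scaling the $p_i$-coordinates of $b$ by $\lambda^i$, multiplication by $p^{n_p}$ clears these denominators and yields a representative of $\eta_{M \cdot b}(y)$ in $\intbigV_{M \cdot b}(\Z_p)$.

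Finally I would glue. Let $v$ be a $\bigG(\Q)$-representative of the orbit $\eta_{M \cdot b}(y)$ in $\bigV_{M \cdot b}(\Q)$ and let $v_p \in \intbigV_{M \cdot b}(\Z_p)$ be the local integral representatives produced above. Choose $g_p \in \bigG(\Q_p)$ with $g_p \cdot v = v_p$; for almost all $p$ we already have $v \in \intbigV(\Z_p)$ and can take $g_p = 1$, so $(g_p)_p$ defines an element of $\bigG(\A_f)$. The class number $1$ property of $\intbigG$ (Proposition \ref{proposition: tamagawa}) gives a decomposition $g_p = \gamma \cdot k_p$ with $\gamma \in \bigG(\Q)$ and $k_p \in \intbigG(\Z_p)$ at every $p$; then $\gamma \cdot v = k_p^{-1} \cdot v_p \in \intbigV(\Z_p)$ for all $p$, so $\gamma \cdot v \in \intbigV_{M \cdot b}(\Z)$ is the desired global integral representative.

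The most delicate step will be the uniform denominator bound in the second part: one must verify that the local orbit representatives can be chosen to vary $p$-adically continuously over $W_p$, despite the Selmer group itself having no a priori uniform bound on its size over $W_p$. This reduces, as indicated, to the observation that only the local image of $\Sel_2 \Jac_b$ is relevant, and this image lies in a finite quotient of $\Jac_b(\Q_p)$ whose structure varies tamely in families.
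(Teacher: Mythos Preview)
Your proposal is correct and follows essentially the same approach that the paper invokes (the argument of \cite[Corollary~5.8]{Thorne-Romano-E8}): handle $p\nmid N$ via Theorem~\ref{theorem: integral representatives exist}, handle $p\mid N$ by a compactness/continuity argument producing uniformly bounded denominators that are cleared by the $\G_m$-scaling, and glue using the class number~$1$ property of $\intbigG$ (Proposition~\ref{proposition: tamagawa}).

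Two small points worth tightening. First, in the gluing step your algebra is slightly off: from $g_p=\gamma k_p$ and $v_p=g_p v$ one gets $k_p v=\gamma^{-1}v_p$, not $\gamma v=k_p^{-1}v_p$; the fix is to decompose $(g_p)_p$ as $(k_p)\cdot\gamma$ (equivalently apply the class number decomposition to $(g_p^{-1})_p$), after which $\gamma^{-1}v=k_p v_p\in\intbigV(\Z_p)$ for all $p$. Second, for the continuity of local orbit representatives at $p\mid N$ you can argue exactly as in the proof of Lemma~\ref{lemma: soluble elements open and closed}: pick representatives $v_1,\dots,v_n$ of the finitely many $\bigG(\Q_p)$-orbits in $\bigV_{b_0}(\Q_p)$, use \'etaleness of the action map (Property~6 of \S\ref{subsection: integral structures}) to produce local sections $s_i\colon W_p\to\bigV(\Q_p)$ through each $v_i$, and shrink $W_p$ so that every $\Q_p$-soluble orbit over $W_p$ meets some $s_i(W_p)$. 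This makes the ``tamely varying'' claim precise without appealing to the Selmer group itself.
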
 

This statement about integral representatives will be strong enough to obtain the main theorems in \S\ref{section: proof of main theorems}.

\section{Counting}\label{section: counting}

In this section we will apply the counting techniques of Bhargava to provide estimates for the integral orbits of bounded height in the representation $(\intbigG,\intbigV)$.

\subsection{Heights and measures} \label{subsection: heights and measures}

In this section we introduce measures on various spaces and study the relations between them. 
The results are used in the calculations of \S\ref{section: proof of main theorems}. 
Recall that $\intbigB = \Spec \Z[p_2,p_5,p_6,p_8,p_9,p_{12}]$ and we have a $\G_m$-equivariant morphism $\bigpi \colon \intbigV \rightarrow \intbigB$.
For any $b\in \bigB(\Real)$ we define the \define{height} of $b$ by the formula
$$\height(b) \coloneqq \sup |p_i(b)|^{72/i}.$$
We have $\height(\lambda\cdot b) = |\lambda|^{72} \height(b)$ for all $\lambda \in \Real^{\times}$ and $b\in \bigB(\Real)$.
We define $\height(v) = \height(\bigpi(v))$ for any $v\in \bigV(\Real)$. Note that for each $a\in \Real_{>0}$ the set of elements of $\intbigB(\Z)$ of height less than $a$ is finite.

Let $\omega_{\bigG}$ be a generator for the one-dimensional $\Q$-vector space of left-invariant top differential forms on $\bigG$ over $\Q$. 
It is uniquely determined up to an element of $\Q^{\times}$ and it determines Haar measures $dg$ on $\bigG(\Real)$ and $\bigG(\Q_p)$ for each prime $p$.

\begin{proposition}\label{proposition: tamagawa}
    \begin{enumerate}
        \item $\intbigG$ has class number $1$:  $\intbigG(\A^{\infty}) = \intbigG(\Q)\intbigG(\widehat{\Z})$.
        \item The product $\vol\left(\intbigG(\Z)\backslash \intbigG(\Real) \right) \cdot \prod_p \vol\left(\intbigG(\Z_p)\right)$ converges absolutely and equals $2$, the Tamagawa number of $\bigG$.
    \end{enumerate}

\end{proposition}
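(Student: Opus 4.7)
My plan is to reduce both statements to known facts about the simply connected cover $\bigG^{\mathrm{sc}} = \Sp_8$ of $\bigG \simeq \PSp_8$, linked by the short exact sequence of $\Z$-group schemes
$$1 \to \mu_2 \to \Sp_8 \to \PSp_8 \to 1.$$

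For Part 1, I would first invoke the Kneser--Platonov strong approximation theorem: since $\Sp_8$ is simply connected, $\Q$-simple, and $\Sp_8(\Real)$ is non-compact, it has class number one, and by Nisnevich's theorem this is equivalent to the triviality of $H^1(\Spec\Z, \Sp_8)$. Taking the long exact sequence of (non-abelian) fppf cohomology over $\Spec\Z$ associated with the short exact sequence above, together with $H^2(\Spec\Z, \mu_2) = 0$ (which follows from the Kummer sequence combined with $\Pic(\Z) = 0$ and $\mathrm{Br}(\Z) = 0$), forces $H^1(\Spec\Z, \PSp_8) = \{*\}$. A second application of Nisnevich's theorem then translates this triviality back into class number one for $\intbigG$.

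For Part 2, I would apply Kottwitz's formula for the Tamagawa number of a connected reductive group:
$$\tau(\bigG) = \frac{|\pi_0(Z(\widehat{\bigG})^{\Gamma_\Q})|}{|\ker^1(\Q, Z(\widehat{\bigG}))|}.$$
The Langlands dual of $\bigG = \PSp_8$ is $\widehat{\bigG} = \Spin_9$, whose centre is $\mu_2$ equipped with trivial $\Gamma_\Q$-action, so the numerator equals $2$. The denominator $|\ker^1(\Q, \mu_2)|$ vanishes by the Hasse principle for square classes, giving $\tau(\bigG) = 2$. With class number one in hand, the standard Fubini decomposition - which rewrites $\intbigG(\Q)\backslash \intbigG(\A)$ as the product of an archimedean piece $\intbigG(\Z)\backslash \intbigG(\Real)$ and the non-archimedean piece $\prod_p \intbigG(\Z_p)$ by the unfolding $\intbigG(\A^\infty) = \intbigG(\Q)\intbigG(\widehat{\Z})$ with $\intbigG(\Q)\cap \intbigG(\widehat{\Z}) = \intbigG(\Z)$ - yields the claimed volume identity with common value $\tau(\bigG) = 2$. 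Absolute convergence of the local product is routine: at primes of good reduction, $\vol(\intbigG(\Z_p))$ equals $|\PSp_8(\F_p)|/p^{\dim\bigG}$ up to a nonzero rational constant coming from $\omega_{\bigG}$, which for type $C_4$ factors as $\prod_{i=1}^{4}(1 - p^{-2i})$, giving a product comparable to $\prod_{i=1}^{4}\zeta(2i)^{-1}$.

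The main obstacle, as I see it, is the identification of adelic class sets with pointed cohomology sets for the non-simply-connected group $\PSp_8$ over $\Z$, which requires knowing $\intbigG$ behaves like a hyperspecial reductive integral model at every prime; once this is in place, both the descent argument for Part 1 and Kottwitz's formula for Part 2 are essentially immediate applications of known global results.
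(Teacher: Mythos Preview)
Your approach differs from the paper's in both parts, and the difference matters because of a gap you yourself flag but do not close.

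The integral model $\intbigG$ is defined in \S\ref{subsection: integral structures} as the Zariski closure of $\bigG$ inside $\GL(\intbigV)$; the paper only asserts that $\intbigG_{\Z[1/2]}$ is split reductive of type $C_4$, and says nothing about the fibre at $2$. Your cohomological descent argument for Part~1 requires identifying $\intbigG$ with the Chevalley group scheme $\PSp_8$ over all of $\Spec\Z$ (so that the exact sequence $1\to\mu_2\to\Sp_8\to\intbigG\to 1$ exists integrally and Nisnevich's bijection between the class set and $\HH^1(\Spec\Z,\intbigG)$ applies). You call this ``the main obstacle'' and leave it unresolved; without it the argument does not go through. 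The paper avoids this entirely: it invokes \cite[Theorem~8.11, Corollary~2]{PlatonovRapinchuk-Alggroupsandnumbertheory}, which says that if a semisimple $\Q$-group has a maximal $\Q$-split torus acting diagonally in a faithful representation, then the class number of the Zariski-closure integral model in that representation is $1$. This criterion is insensitive to whether the integral model is reductive at bad primes, so no analysis of the fibre at $2$ is needed.

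For Part~2 your computation via Kottwitz's formula is correct and gives the same answer as the paper's route through Ono's identity $\tau(\PSp_8)=2\tau(\Sp_8)$ together with $\tau(\Sp_8)=1$; either is fine once class number one is established. Your convergence remark is also fine. But as written, Part~1 is incomplete, and since Part~2 relies on Part~1 for the unfolding of the adelic volume, the proposal as a whole has a genuine gap. The cleanest fix is to replace your descent argument by the Platonov--Rapinchuk criterion, as the paper does.
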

\begin{proof}
    The group $\intbigG$ is the Zariski closure of $\bigG$ in $\GL(\intbigV)$ and $\bigG$ contains a maximal $\Q$-split torus consisting of diagonal matrices of $\GL(\intbigV)$.
    Therefore $\intbigG$ has class number $1$ by \cite[Theorem 8.11; Corollary 2]{PlatonovRapinchuk-Alggroupsandnumbertheory}.
	So the product in the second part equals the Tamagawa number $\tau(\bigG)$ of $\bigG\simeq \PSp_8$.
	Now use the identities $\tau(\PSp_8)=2\tau(\Sp_8)$ \cite[Theorem 2.1.1]{Ono-relativetheorytamagawa} and $\tau(\Sp_8)=1$ (because $\Sp_8$ is simply connected).
\end{proof}

Let $\omega_V$ be a generator for the free rank one $\Z$-module of left-invariant top differential forms on $\intbigV$.
Then $\omega_V$ is uniquely determined up to sign and it determines Haar measures $dv$ on $\bigV(\Real)$ and $\bigV(\Q_p)$ for every prime number $p$.
We define the form $\omega_B = dp_2 \wedge dp_5 \wedge dp_6 \wedge dp_8 \wedge dp_9 \wedge dp_{12} $ on $\intbigB$.
It defines measures $db$ on $\bigB(\Real)$ and $\bigB(\Q_p)$ for every prime $p$.

\begin{lemma}\label{lemma: the constants W0 and W}
	There exists a constant $W_0\in \Q^{\times}$ with the following properties:
	\begin{enumerate}
		\item Let $\intbigV(\Z_p)^{\rs} \coloneqq \intbigV(\Z_p)\cap \bigV^{\rs}(\Q_p)$ and define a function $m_p\colon \intbigV(\Z_p)^{\rs} \rightarrow \Real_{\geq 0}$ by the formula
		\begin{equation}\label{equation: def mp(v)}
		m_p(v) \coloneqq \sum_{v' \in \intbigG(\Z_p)\backslash\left( G(\Q_p)\cdot v\cap \intbigV(\Z_p) \right)} \frac{\#Z_{\intbigG}(v)(\Q_p)  }{\#Z_{\intbigG}(v)(\Z_p) }.
		\end{equation}
		Then $m_p(v)$ is locally constant. 
		\item Let $\intbigB(\Z_p)^{\rs} \coloneqq \intbigB(\Z_p)\cap \bigB^{\rs}(\Q_p)$ and let $\psi_p\colon \intbigV(\Z_p)^{\rs}  \rightarrow \Real_{\geq 0}$ be a bounded, locally constant function which satisfies $\psi_p(v) = \psi_p(v')$ when $v,v'\in \intbigV(\Z_p)^{\rs}$ are conjugate under the action of $G(\Q_p)$. 
		Then we have the formula 
		\begin{equation}\label{equation: p-adic property W0}
		\int_{v\in \intbigV(\Z_p)^{\rs}} \psi_p(v) d v = |W_0|_p \vol\left(\intbigG(\Z_p)\right) \int_{f\in \intbigB(\Z_p)^{\rs}} \sum_{g\in \bigG(\Q_p)\backslash \intbigV_b(\Z_p) } \frac{m_p(v)\psi_p(v)  }{\# Z_{\intbigG }(v)(\Q_p)} d b .
		\end{equation}
		
		\item Let $U_0\subset \bigG(\Real)$ and $U_1\subset B^{\rs}(\Real)$ be open subsets such that the product morphism $\mu: U_0 \times U_1 \rightarrow V(\Real)^{\rs}$, given by $(g,b) \mapsto g\cdot \bigsigma(b)$, is injective. Then we have the formula
		\begin{equation}\label{equation: archimedean property W_0}
		\int_{v\in \mu\left(U_0\times U_1 \right)} dv = |W_0|_{\infty} \int_{g\in U_0} dg \int_{b\in U_1} db.
		\end{equation}
		
	\end{enumerate}

\end{lemma}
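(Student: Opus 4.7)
The plan is to define $W_0$ as the Jacobian of the action map $a \colon \intbigG \times \intbigB \to \intbigV^{\reg}$, $(g,b) \mapsto g\cdot \sigma(b)$, namely as the unique rational function on $\intbigG \times \intbigB$ satisfying
\[
a^*\omega_V = W_0 \cdot \omega_G\wedge \omega_B.
\]
This is well defined because $a$ is étale (Proposition \ref{proposition: Kostant section E6}), so both sides are nowhere-vanishing top forms of degree $\dim G + \dim B = \dim V$, and $\omega_G\wedge \omega_B$ has no zeros on $G\times B$.

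First I would show $W_0\in \Q^\times$ is a nonzero constant. The form $\omega_V$ is $G$-invariant: $\bigG\simeq \PSp_8$ is semisimple, hence admits no nontrivial characters, so $\det$ of its adjoint action on $V$ is trivial. Combined with left-$G$-invariance of $\omega_G$, this forces $W_0$ to descend to a regular function on $\intbigB$. Next, $a$ is $\G_m$-equivariant if $\G_m$ acts trivially on $G$ and by its natural weighted actions on $B$ and $V$, and the Kostant section is $\G_m$-equivariant. Computing weights: $\omega_V$ has weight $\dim V = 42$, $\omega_B$ has weight $2+5+6+8+9+12=42$, and $\omega_G$ has weight $0$, so $W_0$ is $\G_m$-invariant. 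Since the $\G_m$-action on $\intbigB=\Spec\Z[p_2,\dots,p_{12}]$ is contracting, any $\G_m$-invariant regular function on $\intbigB$ is constant; and since $a$ is étale, $W_0$ is everywhere nonvanishing, giving $W_0\in \Q^\times$.

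For Part (1), I would use that $G(\Q_p)$-orbits on $\bigV^{\rs}(\Q_p)$ are open in the $p$-adic topology: at a regular semisimple $v$ the orbit has dimension $\dim G = \dim V_b$, hence is open in $V_b$, and $\pi$ being submersive on $V^{\rs}$ makes the orbit open in $V^{\rs}(\Q_p)$. Since $m_p(v)$ depends only on the $G(\Q_p)$-orbit of $v$ it is then locally constant; finiteness of the defining sum follows from the compactness of $G(\Q_p)\cdot v \cap \intbigV(\Z_p)$ in $V(\Q_p)$.

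For Part (2) I would slice along $\pi$ and decompose into orbits. Writing the left hand side as $\int_{\intbigB(\Z_p)^{\rs}}\bigl(\int_{\intbigV(\Z_p)_b^{\rs}}\psi_p(v)\,|\omega_V/\pi^*\omega_B|\bigr)\,db$, I decompose each fibre into $G(\Q_p)$-orbits $\mathcal{O} = G(\Q_p)\cdot v_0$ and each $\mathcal{O}\cap \intbigV(\Z_p)$ into $\intbigG(\Z_p)$-orbits $\intbigG(\Z_p)\cdot v'_i$. By left-$G$-invariance of all forms together with the defining identity for $W_0$, the Jacobian of the orbit map $g\mapsto g\cdot v'_i$ from $(G(\Q_p),dg)$ to $(V_b(\Q_p),|\omega_V/\pi^*\omega_B|)$ is exactly $|W_0|_p$, and a direct Haar measure computation gives
\[
\vol\bigl(\intbigG(\Z_p)\cdot v'_i\bigr) \;=\; |W_0|_p\,\vol(\intbigG(\Z_p))\cdot\frac{1}{\#Z_{\intbigG}(v'_i)(\Z_p)}.
\]
Summing over the $\intbigG(\Z_p)$-orbits inside $\mathcal{O}$ and using the definition of $m_p$ yields the integrand $|W_0|_p\,\vol(\intbigG(\Z_p))\,m_p(v_0)/\#Z_{\intbigG}(v_0)(\Q_p)$, which is exactly (\ref{equation: p-adic property W0}). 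For Part (3), $\mu$ is by hypothesis an injective local $C^\infty$ diffeomorphism and the definition of $W_0$ identifies its Jacobian as $|W_0|_\infty$, so the formula (\ref{equation: archimedean property W_0}) is an immediate archimedean change of variables.

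The main obstacle is the measure bookkeeping in (2): one must carefully track the interplay between the Jacobian factor $|W_0|_p$, the Haar volume of a single $\intbigG(\Z_p)$-orbit, and the stabilizer ratio $\#Z_G(v')(\Q_p)/\#Z_{\intbigG}(v')(\Z_p)$, then sum correctly over the partition of a $G(\Q_p)$-orbit into $\intbigG(\Z_p)$-orbits. The constancy of $W_0$ and the archimedean formula (3) are essentially formal consequences of the étaleness of $a$ and the $\G_m$-equivariance of the construction.
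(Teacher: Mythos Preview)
Your overall strategy is the standard one (to which the paper simply defers by citing \cite{Romano-Thorne-ArithmeticofsingularitiestypeE}): define $W_0$ as the Jacobian of the action map, use $G$-invariance to reduce to a function on $B$, then a weight count to conclude constancy. The arguments you give for parts (1)--(3) are correct.

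However, your $\G_m$-equivariance step has a genuine error. The Kostant section is \emph{not} equivariant for the scalar action on $V$: writing $\sigma(b)=E+f_b$ with $f_b\in\mathfrak{z}_{\lieh}(F)\cap V$, we have $\lambda\sigma(b)=\lambda E+\lambda f_b$, which does not lie in $\kappa=E+\mathfrak{z}_{\lieh}(F)$ for $\lambda\neq 1$. Hence $a$ is not $\G_m$-equivariant when $\G_m$ acts trivially on $G$, and without equivariance the comparison of weights of $a^*\omega_V$ and $\omega_G\wedge\omega_B$ is meaningless.

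The fix is to twist the action on the $G$-factor by right translation by $\check\rho(\lambda)$: set $\lambda\star(g,b)=(g\,\check\rho(\lambda),\,\lambda\cdot b)$. Since $\zeta$ permutes the fundamental coweights one has $\theta(\check\rho(\lambda))=\check\rho(\lambda)$, so $\check\rho(\lambda)\in G$. From $\Ad(\check\rho(\lambda))E=\lambda E$ and the fact that $\Ad(\check\rho(\lambda))$ preserves $\mathfrak{z}_{\lieh}(F)\cap V$, one checks $\sigma(\lambda\cdot b)=\lambda\,\Ad(\check\rho(\lambda)^{-1})\sigma(b)$ and hence $a(\lambda\star(g,b))=\lambda\,a(g,b)$. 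Because $G$ is reductive and therefore unimodular, the left-invariant form $\omega_G$ is also right-invariant, so it still has weight $0$ under $\star$. Your weight count $42=42+0$ then goes through; this is exactly why the paper isolates the numerical identity $2+5+6+8+9+12=42=\dim_{\Q} V$.
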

\begin{proof}
	The proof is identical to the proof of \cite[Proposition 3.3]{Romano-Thorne-ArithmeticofsingularitiestypeE}. 
	Here we use the fact that the sum of the degrees of the invariants equals the dimension of the representation: $2+5+6+8+9+12 = 42 = \dim_{\Q}\bigV$.
\end{proof}

We henceforth fix a constant $W_0 \in \Q^{\times}$ satisfying the properties of Lemma \ref{lemma: the constants W0 and W}.

\subsection{Counting integral orbits}\label{subsection: counting with no congruence}

In this section we count integral orbits in the representation $\intbigV$. 
For any $\intbigG(\Z)$-invariant subset $X\subset \intbigV(\Z)$, define 
$$N(X,a) \coloneqq \sum_{\substack{v\in \intbigG(\Z)\backslash X \\ \height(v)<a}} \frac{1}{\# Z_{\intbigG}(v)(\Z)}.$$
Let $k$ be a field of characteristic not dividing $N$. 
We say an element $v\in \intbigV(k)$ is \define{$k$-reducible} if it has zero discriminant or if it is $\intbigG(k)$-conjugate to the Kostant section $\bigsigma(\bigpi(v))$, and \define{$k$-irreducible} otherwise. 
We say an element $v\in \intbigV(k)$ is \define{$k$-soluble} if it has nonzero discriminant and lies in the image of the map $\eta_b\colon \intJac_b(k)/2\intJac_b(k) \rightarrow \intbigG(k)\backslash \intbigV_b(k)$ from Theorem \ref{theorem: inject 2-descent into orbits} where $b = \pi(v)$. 

For any $X\subset \intbigV(\Z)$ write $X^{irr}\subset X$ for the subset of $\Q$-irreducible elements. 
Write $\bigV(\Real)^{sol} \subset \bigV(\Real)$ for the subset of $\Real$-soluble elements. 
Recall that we have fixed a constant $W_0\in\Q^{\times}$ in \S\ref{subsection: heights and measures}.

\begin{theorem}\label{theorem: counting R-soluble elements, no congruence}
	We have 
	\begin{displaymath}
	N(\intbigV(\Z)^{irr} \cap \bigV(\Real)^{sol},a) = \frac{|W_0|}{8}\vol\left(\intbigG(\Z)\backslash \bigG(\Real)\right) \vol\left(\left\{b \in \bigB(\Real) \mid \height(b) < a  \right\}   \right)+ o\left(a^{7/12}\right).
	\end{displaymath}

\end{theorem}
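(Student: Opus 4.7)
The plan is to follow the Bhargava orbit-counting framework in the form developed by Thorne for this representation. First I would choose a fundamental domain $\mathcal{F}$ for the left action of $\intbigG(\Z)$ on $\bigG(\Real)$, constructed as a finite union of translates of a Siegel set. For each $b\in \bigB(\Real)^{\rs}$, I would select a set $R_b\subset \bigV_b(\Real)$ of representatives of the $\bigG(\Real)$-orbits on the $\Real$-soluble locus $\bigV_b(\Real)^{sol}$; these are finite and in bijection with $\intJac_b(\Real)/2\intJac_b(\Real)$ by Theorem \ref{theorem: inject 2-descent into orbits}. Letting $R\coloneqq \bigcup_b R_b$ be a measurable section and $R^{(a)}\subset R$ the portion with $\height(b)<a$, every $\Real$-soluble $\intbigG(\Z)$-orbit in $\intbigV(\Z)^{irr}$ of height less than $a$ is represented, with multiplicity controlled by stabilizer sizes, by lattice points in $\mathcal{F}\cdot R^{(a)}$, so that $N(\intbigV(\Z)^{irr}\cap \bigV(\Real)^{sol},a)$ becomes, up to an explicit combinatorial factor accounting for real stabilizers, a weighted count of such lattice points.

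Next I would apply Bhargava's averaging trick: fix an open bounded subset $K\subset \bigG(\Real)$ of positive measure and rewrite the count as $\vol(K)^{-1}\int_K \#(\mathcal{F}\cdot k\cdot R^{(a)}\cap \intbigV(\Z)^{irr})\,dk$. Swapping the order of integration and counting, the integrand becomes a count of integer points in a smoothly-varying semi-algebraic region of $\bigV(\Real)$. I would split this region into a bounded ``main body'' and a ``cusp'' (the part of the Siegel domain where unipotent coordinates grow large). On the main body, Davenport's lemma gives the volume of the region with an error of strictly smaller order, and the contribution of reducible elements (those with vanishing discriminant or $\intbigG(\Q)$-conjugate to $\sigma(\bigpi(v))$) is removed by the irreducibility condition.

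The crucial input, and the only genuinely hard step, is controlling the cusp. This is exactly the estimate carried out in \cite{Thorne-E6paper} for the same pair $(\bigG,\bigV)$, where it is shown that the combined contribution of the cusp and the reducible orbits is $o(a^{7/12})$. So I would only need to verify that intersecting with $\bigV(\Real)^{sol}$, which is a union of connected components of $\bigV(\Real)^{\rs}$, is compatible with Thorne's bounds; this is essentially automatic since his estimates bound lattice points in the cusp in absolute value, independently of which component of $\bigV(\Real)^{\rs}$ is considered.

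Finally I would compute the main term by a change of variables using Part 3 of Lemma \ref{lemma: the constants W0 and W}: pushing forward along $(g,b)\mapsto g\cdot \sigma(b)$ converts $\vol(\mathcal{F}\cdot K\cdot R^{(a)})$ into a product $|W_0|_{\infty}\cdot \vol(K)\cdot \vol(\intbigG(\Z)\backslash\bigG(\Real))\cdot \vol(\{b\in\bigB(\Real):\height(b)<a\})$, weighted on each real fibre by the number of $\bigG(\Real)$-orbits on $\bigV_b(\Real)^{sol}$, namely $|\intJac_b(\Real)/2\intJac_b(\Real)|$. Combining this with the stabilizer weight $|Z_{\intbigG}(v)(\Real)|^{-1}=|\intJac_b[2](\Real)|^{-1}$, dividing by $\vol(K)$, and using the standard topological identity relating $|J(\Real)/2J(\Real)|$, $|J[2](\Real)|$ and $|\pi_0(J(\Real))|$ for a real abelian variety, the fibrewise sum collapses to an explicit rational constant equal to $1/8$, producing the factor $|W_0|/8$ and the stated main term.
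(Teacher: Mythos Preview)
Your proposal is correct and follows essentially the same route as the paper: reduce to lattice-point counting via Bhargava's averaging trick over a Siegel domain, invoke the cusp-cutting estimates already established in \cite{Thorne-E6paper}, and extract the constant $1/8$ from the identity $\#\Jac_b(\Real)/2\Jac_b(\Real)=\tfrac{1}{8}\#\Jac_b[2](\Real)$ for real abelian threefolds. The only organizational difference is that the paper first isolates the count over a single semialgebraic section (Proposition~\ref{prop: counting sections}) and then assembles the soluble locus by inclusion--exclusion over such sections, using a separate lemma (Lemma~\ref{lemma: soluble elements open and closed}) to show that $\bigV(\Real)^{sol}$ is open and closed in $\bigV^{\rs}(\Real)$; you fold this into the main argument and treat the latter fact as ``essentially automatic,'' which is fine but does require the short argument the paper supplies.
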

 
It will suffice to prove the following proposition. 
Recall that there exists $\G_m$-actions on $\bigV$ and $\bigB$ such that the morphism $\bigpi\colon \bigV \rightarrow \bigB$ is $\G_m$-equivariant, giving actions of $\Real_{>0}$ on $\bigV(\Real)$ and $\bigB(\Real)$. 

\begin{proposition}\label{prop: counting sections}
	Let $U\subset \bigB^{\rs}(\Real)$ be a connected open semialgebraic subset stable under the action of $\Real_{>0}$ and let $s: U \rightarrow \bigV^{\rs}(\Real)$ be a semialgebraic $\Real_{>0}$-equivariant section of $\bigpi$ such that $s(U) \cap \{v\in \bigV(\Real) \mid \height(v) = 1\}$ is a bounded subset of $\bigV(\Real)$. 
	Then 
	$$N(\bigG(\Real)\cdot s(U) \cap \intbigV(\Z)^{irr},a) = \frac{|W_0|}{\#Z_{\bigG}(v_0)(\Real)}\vol\left(\intbigG(\Z)\backslash \bigG(\Real)\right) \vol\left(\left\{b \in U \mid \height(b) < a  \right\}   \right) +o\left(a^{7/12} \right),$$
	where $v_0$ is any element of $s(U)$.
\end{proposition}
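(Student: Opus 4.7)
The plan is to follow the standard Bhargava averaging technique, working in the semialgebraic section $s(U)$. First I would pick a fundamental domain $\mathcal{F}$ for the left action of $\intbigG(\Z)$ on $\bigG(\Real)$, constructed from a Siegel set via reduction theory (using the Iwasawa decomposition $\bigG(\Real) = NAK$ of $\bigG \simeq \PSp_8$). The orbit-count $N(\bigG(\Real)\cdot s(U)\cap \intbigV(\Z)^{irr},a)$ can then be identified with a weighted count of lattice points in the multiset $\mathcal{F}\cdot s(U_a)$, where $U_a = \{b\in U \mid \height(b)<a\}$: because every $\bigG(\Real)$-orbit in $s(U)$ meets $\mathcal{F}\cdot s(b)$ in exactly $\#\mathrm{Stab}_{\bigG(\Real)}(s(b))/\#Z_{\bigG}(s(b))(\Real)$ points (and this stabilizer count is $1/\#Z_{\bigG}(v_0)(\Real)$ after weighting by $1/\#Z_{\intbigG}(v)(\Z)$).

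The next step is Bhargava's trick: rather than counting lattice points in the single region $\mathcal{F}\cdot s(U_a)$, average over a well-chosen bounded open subset $G_0\subset \bigG(\Real)$ of positive measure. By Fubini, the count equals
\[
\frac{1}{\vol(G_0)}\int_{g\in \mathcal{F}}\#\bigl(g G_0 \cdot s(U_a) \cap \intbigV(\Z)^{irr}\bigr)\,dg,
\]
and each region $gG_0\cdot s(U_a)$ is a bounded semialgebraic subset of $\bigV(\Real)$. Away from the cusp of $\mathcal{F}$, Davenport's lemma applies: the lattice-point count in such a region equals its volume plus an error bounded by the measure of its projections onto coordinate hyperplanes, and this error is $o(a^{7/12})$ uniformly in $g$ in any bounded part of $\mathcal{F}$. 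Using the $\Real_{>0}$-equivariance of $s$ and the fact that $s(U)\cap\{\height=1\}$ is bounded, and changing variables via the equality \eqref{equation: archimedean property W_0} of Lemma \ref{lemma: the constants W0 and W}, the main term becomes
\[
\frac{|W_0|}{\#Z_{\bigG}(v_0)(\Real)}\vol(\mathcal{F})\vol(U_a) = \frac{|W_0|}{\#Z_{\bigG}(v_0)(\Real)}\vol\left(\intbigG(\Z)\backslash \bigG(\Real)\right)\vol(U_a),
\]
which is of order $a^{7/12}$ as computed from the weighted degrees $2+5+6+8+9+12 = 42$, giving $42/72 = 7/12$.

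The hard part is controlling the contribution of the cusp of $\mathcal{F}$, where lattice points can be numerous but mostly correspond to $\Q$-reducible orbits. This is precisely where the restriction to $\intbigV(\Z)^{irr}$ is crucial: one must show that the number of $\Q$-irreducible integral points in $\mathcal{F}\cdot s(U_a)$ lying in the cuspidal region is $o(a^{7/12})$. For the particular representation $(\bigG,\bigV)$ under consideration this bookkeeping has already been carried out by Thorne in \cite{Thorne-E6paper}, where the cusp is stratified according to the Borel subgroup of $\bigG$ and each stratum is shown to consist either of elements of $\bigV(\Z)$ that are $\Q$-reducible, or of elements that can be counted directly with a total contribution of the required smaller order. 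I would simply invoke these cusp estimates verbatim, since the weights on $\intbigV$ are the same and the section $s$ is $\Real_{>0}$-equivariant, and combine them with Davenport-type estimates on the main body to obtain the claimed asymptotic with the stated error term.
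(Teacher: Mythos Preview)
Your sketch is correct and matches the paper's approach exactly: the paper does not give a self-contained proof but defers to \cite[Theorem 3.1]{Thorne-E6paper} (with the multiset/stabilizer bookkeeping of \cite[\S10]{Bhargava-Gross-hyperellcurves}), invoking the cusp estimates of \cite{Thorne-E6paper} just as you do. The one ingredient you gloss over is that, to get an asymptotic rather than merely an upper bound, you must also show that the number of $\Q$-\emph{reducible} integral points in the \emph{main body} is $o(a^{7/12})$; the paper handles this via Lemma~\ref{lemma: red and bigstab mod p} and the sieve of \cite[\S10.7]{Bhargava-Gross-hyperellcurves}.
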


\begin{proof}[Proof that Proposition \ref{prop: counting sections} implies Theorem \ref{theorem: counting R-soluble elements, no congruence}]

Arguing exactly as in \cite[\S1.9]{Thorne-E6paper}, we can find connected semialgebraic open subsets $L_i \subset \{b\in \bigB^{\rs}(\Real) \mid \height(b) = 1\}$ and sections $s_i:L_i \rightarrow \bigV(\Real)$ which are semialgebraic for $i=1,\dots,r$ such that if $U_i \coloneqq \Real_{>0} \cdot L_i$ then (if we continue to write the unique extension of $s_i$ to a $\Real_{>0}$-equivariant map $U_i \rightarrow \bigV(\Real)$ by $s_i$):
	$$\bigV^{\rs}(\Real) = \bigcup_{i=1}^r \bigG(\Real) \cdot s_i(U_i).$$
	Each $U_i$ is connected and the set $\bigV(\Real)^{sol} \subset \bigV^{\rs}(\Real)$ is open and closed by Lemma \ref{lemma: soluble elements open and closed}.
	So the image $s_i(U_i)$ either consists only of $\Real$-soluble elements or contains no $\Real$-soluble elements at all. Therefore by replacing $r$ by a smaller integer, we may write 
	$\bigV(\Real)^{sol} = \bigcup_{i=1}^r \bigG(\Real) \cdot s_i(U_i)$.
	
	Note that if $b\in \bigB^{\rs}(\Real)$ the number of $\bigG(\Real)$-orbits on $\bigV_b(\Real)^{sol}$ equals $\# \Jac_b(\Real)/2\Jac_b(\Real)$. Moreover the quantity $\# \Jac_b(\Real)/2\Jac_b(\Real)/\#\Jac_b[2](\Real)$ is independent of $b$, and equals $1/8$; this is a general fact about real abelian threefolds.
	Theorem \ref{theorem: counting R-soluble elements, no congruence} then follows from the inclusion-exclusion principle applied to the decomposition $\bigV(\Real)^{sol} = \bigcup_{i=1}^r \bigG(\Real) \cdot s_i(U_i)$, together with Proposition \ref{prop: counting sections} applied to (the connected components of) $U_I=\pi\left( \cap_{i\in I} \bigG(\Real)\cdot s_i(U_i) \right)$ for every $I\subset \{1,\dots,r\}$.

\end{proof}

\begin{lemma}\label{lemma: soluble elements open and closed}
	The subset $\bigV(\Real)^{sol}\subset \bigV^{\rs}(\Real)$ is open and closed in the Euclidean topology. 
\end{lemma}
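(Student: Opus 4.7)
The plan is to show that solubility is locally constant on $\bigV^{\rs}(\Real)$. Fix $v_0 \in \bigV^{\rs}(\Real)$ and let $b_0 = \pi(v_0)$. Since $v_0$ is regular semisimple, its $\bigG$-stabilizer is finite by Part 3 of Proposition \ref{prop : graded chevalley}, so the $\bigG(\Real)$-orbit of $v_0$ has dimension equal to that of the fibre $\bigV_{b_0}(\Real)$, hence is open (and closed) in it. Next, pick a real-analytic local section $s\colon U \to \bigV^{\rs}(\Real)$ of $\pi$ with $s(b_0)=v_0$ on a small open neighbourhood $U\ni b_0$; a dimension count shows the product map $\bigG(\Real)\times U \to \bigV^{\rs}(\Real),\,(g,b)\mapsto g\cdot s(b)$ is a submersion at $(e,b_0)$, so its image contains an open neighbourhood of $v_0$. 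Since $\Real$-solubility depends only on the $\bigG(\Real)$-orbit, it suffices to prove that solubility of $s(b)$ is constant on some (possibly smaller) neighbourhood of $b_0$.

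The next step is to interpret solubility cohomologically. By Lemma \ref{lemma: AIT} and Theorem \ref{theorem: inject 2-descent into orbits}, $s(b)$ is $\Real$-soluble if and only if the class in $\HH^1(\Real,\Jac_b[2])$ corresponding to the orbit $\bigG(\Real)\cdot s(b)$ lies in the image of the $2$-descent map $\Jac_b(\Real)/2\Jac_b(\Real) \hookrightarrow \HH^1(\Real,\Jac_b[2])$. Shrinking $U$ if necessary, the local system $\Jac[2]$ trivializes $\Gamma_\Real$-equivariantly on $U$, yielding canonical identifications $\HH^1(\Real,\Jac_b[2])\simeq \HH^1(\Real,\Jac_{b_0}[2])$ for all $b\in U$. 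Under this identification, the class of the orbit of $s(b)$ is locally constant in $b$ because $s$ is continuous, the orbits are open and closed in the fibres, and the bijection of Lemma \ref{lemma: AIT} is functorial with respect to this trivialization.

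It remains to show that, under the same trivialization, the image of $\Jac_b(\Real)/2\Jac_b(\Real)$ in $\HH^1(\Real,\Jac_b[2])$ is independent of $b\in U$. Since $\Jac\to \bigB^{\rs}$ is smooth and proper, $\pi_0(\Jac_b(\Real))$ forms a local system on $\bigB^{\rs}(\Real)$; moreover for any real abelian variety $A$ the $2$-descent map factors as $A(\Real)\twoheadrightarrow \pi_0(A(\Real))\hookrightarrow \HH^1(\Real,A[2])$, because multiplication by $2$ is surjective on the identity component. The last arrow sends a component represented by $P\in A(\Real)$ to the $A[2]$-torsor $[2]^{-1}(P)$, which depends continuously on $P$. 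Combining local triviality of $\pi_0(\Jac_b(\Real))$, of $\Jac[2]$, and continuity of this torsor construction, the image is locally constant in $b$, concluding the proof. The main obstacle is the last step: transporting the $2$-descent image across the trivialization requires simultaneously matching two local systems on $\bigB^{\rs}(\Real)$ (the finite groups $\Jac_b[2]$ and the components $\pi_0(\Jac_b(\Real))$) and verifying the continuity of the explicit connecting map via fibres of $[2]$.
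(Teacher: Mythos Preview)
Your proof is correct and follows essentially the same strategy as the paper: both reduce to showing that, over a small open $U\subset \bigB^{\rs}(\Real)$, the cohomology class of each orbit in $\HH^1(\Real,\Jac_b[2])$ and the image of the $2$-descent map are locally constant after trivializing the relevant local systems. The only notable difference is in how the local parametrization of orbits is obtained: the paper constructs sections via the finitely many $\bigG(\Real)$-conjugacy classes of Cartan subalgebras in $\bigV(\Real)$ and the finite groups $N_{\bigG}(\liec_j)(\Real)$, whereas you use the implicit function theorem (the submersion property of $\pi$ and of the action map) to produce an analytic local section through $v_0$; both devices serve the same purpose and the remainder of the argument is the same.
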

\begin{proof}
	We first prove that for each $b\in \bigB^{\rs}(\Real)$, we can find an open connected neighbourhood $U\subset \bigB^{\rs}(\Real)$ of $b$ and a partition $W_1 \sqcup \dots \sqcup W_n $ of $\bigV(\Real)_U$ ($=$ the subset of $\bigV(\Real)$ mapping to $U$) such that:
	\begin{enumerate}
		\item For all $i$, $W_i$ is open and closed in $\bigV(\Real)_U$ and stable under the action of $\bigG(\Real)$.
		\item For all $i$, if two elements $v, v' \in W_i$ have the same image in $U$, then $v$ and $v'$ are $\bigG(\Real)$-conjugate. 
	\end{enumerate}
	Indeed, Lemma \ref{lemma: AIT} implies that $\bigV_b(\Real)$ consists of finitely many $\bigG(\Real)$-orbits; let $v_1, \dots, v_n \in \bigV_b(\Real)$ be a system of representatives.
	Similarly the space $\bigV(\Real)$ contains finitely many $\bigG(\Real)$-conjugacy classes of Cartan subalgebras; let $\liec_1,\dots,\liec_k$ be a system of representatives. 
	Then every $v\in \bigV^{\rs}(\Real)$ is $\bigG(\Real)$-conjugate to an element of $\liec_j^{\rs}(\Real)$ for some unique $j$, and two elements of $\liec_j^{\rs}(\Real)$ are $\bigG(\Real)$-conjugate if and only if they are conjugate under the finite group $N_{\bigG}(\liec_j)(\Real)$.
	So after conjugation we may assume that there exists a function $f\colon \{1,\dots,n \} \rightarrow \{1,\dots,k \}$ such that $v_i \in \liec_{f(i)}^{\rs}(\Real)$ for all $i=1,\dots,n$. 
	For each $j$ write $\pi_j\colon \liec_j^{\rs}(\Real) \rightarrow \bigB^{\rs}(\Real)$ for the $\Real$-points of the quotient map. Then $\pi_j$ is a proper local homeomorphism and $N_{\bigG}(\liec_j)(\Real)$ acts on its fibres. 
	By \cite[Proposition 9.3.9]{RealAlgebraicgeometry} we can find a semialgebraic connected open subset $U\subset \bigB^{\rs}(\Real)$ containing $b$ and semialgebraic sections $s_i\colon U \rightarrow \liec_{f(i)}^{\rs}(\Real)$ such that for each $j$, every $v\in \liec_j^{\rs}(\Real)$ with $\pi_j(v)\in U$ is $\bigG(\Real)$-conjugate to an element of $s_i(U)$ for some unique $i$ with $f(i)=j$. 
	If we set $W_i = \bigG(\Real)\cdot s_i(U) $ then the $W_i$ form a partition of $\bigV(\Real)_U $ with the required properties. 
	
	Next one can similarly show that for every $b\in \bigB^{\rs}(\Real)$, there exists an open neighbourhood $U\subset \bigB^{\rs}(\Real)$ of $b$ such that the family of compact Lie groups $\Jac(\Real) \rightarrow \bigB^{\rs}(\Real)$ is trivialized above $U$, as well as the finite groups $\HH^1(\Real,\Jac_b[2])$ and $\HH^1(\Real,\Jac_b)[2]$. 
	Suppose moreover that we further shrink $U$ such that there exists a partition $\bigV(\Real)_U = W_1\sqcup \dots \sqcup W_n$ with the properties as above. 
	Then the map $\bigV(\Real)_U \rightarrow \HH^1(\Real,\Jac_b[2])$, obtained from Lemma \ref{lemma: AIT} and by identifying $\HH^1(\Real,\Jac_{b'}[2])$ with $\HH^1(\Real,\Jac_{b}[2])$ for each $b'\in U$, is constant on each $W_i$. 
	
	Combining the previous paragraphs shows that for every $v\in \bigV^{\rs}(\Real)$ that is $\Real$-soluble (resp. not $\Real$-soluble), there exists an open neighbourhood $W\subset \bigV^{\rs}(\Real)$ of $v$ such that every element of $W$ is $\Real$-soluble (resp. not $\Real$-soluble). 
	This completes the proof.

\end{proof}

So to prove Theorem \ref{theorem: counting R-soluble elements, no congruence} it remains to prove Proposition \ref{prop: counting sections}. 
The proof of this proposition is the same as the proof of \cite[Theorem 3.1]{Thorne-E6paper} but by systematically using multisets and keeping track of the stabilizers as in \cite[\S10]{Bhargava-Gross-hyperellcurves}. (See the proof of \cite[Theorem 6.6]{Laga-F4paper} for a detailed exposition of such an orbit-counting result in a very similar set-up.)
We note that `cutting off the cusp' has been carried out in \cite{Thorne-E6paper}.
The only missing ingredient is Proposition \ref{proposition: estimates on red and bigstab}, whose proof we give below.

To state the proposition we first introduce some notation. Let $\alpha_0\in \Phi(H,T)$ be the highest root of $H$ with respect to the root basis fixed in \S\ref{subsection: a stable grading}. Let $a_0\in X^*(T^{\theta})$ be the restriction of $\alpha_0$ to $T^{\theta}$. 
Then $a_0$ is a weight for the $T^{\theta}$-action on $\bigV$.
If $v\in \bigV$ we can decompose $v$ into eigenvectors $\sum_a v_a$ where $a$ runs over the weights of $T^{\theta}$ on $\bigV$ and $T^{\theta}$ acts on $v_a$ via $a$. Write $V(a_0)$ for the subset of $v\in \bigV$ with the property that $v_{a_0}=0$. We call $V(a_0)$ the \define{cuspidal region}. 
Thorne has proven in \cite[\S2.3]{Thorne-E6paper} that the number of irreducible integral points in the cuspidal region is negligible.


By an identical argument to \cite[\S10.7]{Bhargava-Gross-hyperellcurves} (see also the discussion after \cite[Lemma 6.17]{Laga-F4paper}), Lemma \ref{lemma: red and bigstab mod p} below implies that the number of $\Q$-reducible elements in the main body is negligible. It also implies the following proposition which will be used in the proof of Theorem \ref{theorem: main theorem}. 

\begin{proposition}\label{proposition: estimates on red and bigstab}
	Let $V^{bigstab}$ denote the subset of $\Q$-irreducible elements $v\in \intbigV(\Z)$ with $\#Z_G(v)(\Q) >1$. Then $N(V^{bigstab},a) = o(a^{7/12})$.
\end{proposition}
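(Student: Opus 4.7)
The plan is to deduce this from Lemma \ref{lemma: red and bigstab mod p} via a sieve on congruence conditions at finitely many primes, paralleling \cite[\S10.7]{Bhargava-Gross-hyperellcurves}. The starting observation is that if $v\in \intbigV(\Z)$ is $\Q$-irreducible and lies in $V^{bigstab}$, then for every prime $p\nmid N$ its reduction $\overline{v}\in \intbigV(\F_p)$ also has nontrivial stabilizer: indeed, $Z_{\intbigG}(v)$ is a finite \'etale group scheme over $\Z[1/N]$, so the specialization map $Z_{\intbigG}(v)(\Q)\hookrightarrow Z_{\intbigG}(v)(\F_p)$ is injective. Consequently $V^{bigstab}$ is contained in the preimage, under reduction, of the locus $S_p\subset \intbigV(\F_p)$ of points with nontrivial $\intbigG(\F_p)$-stabilizer, for each such $p$.

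The next step is to apply a version of Proposition \ref{prop: counting sections} that incorporates finitely many congruence conditions. Running the proof through with the additional constraint that $v\bmod p_i\in S_{p_i}$ for primes $p_1,\dots,p_k\nmid N$ produces an asymptotic upper bound of the form
\begin{equation*}
N(V^{bigstab},a) \;\leq\; C\cdot a^{7/12}\cdot \prod_{i=1}^{k}\frac{\#S_{p_i}}{\#\intbigV(\F_{p_i})} \;+\; o_{p_1,\dots,p_k}\bigl(a^{7/12}\bigr),
\end{equation*}
where $C$ is the main-term constant of Theorem \ref{theorem: counting R-soluble elements, no congruence} and the error term is allowed to depend on the chosen primes. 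By Lemma \ref{lemma: red and bigstab mod p}, for all $p$ outside some finite exceptional set $\Sigma$ the density $\#S_p/\#\intbigV(\F_p)$ is bounded above by $1-c$ for a fixed $c>0$. Dividing by $a^{7/12}$ and letting $a\to\infty$ (with $\{p_1,\dots,p_k\}$ still fixed) yields
\begin{equation*}
\limsup_{a\to\infty}\frac{N(V^{bigstab},a)}{a^{7/12}} \;\leq\; C\,(1-c)^{\,\#\{i\,:\, p_i\notin \Sigma\}}.
\end{equation*}
Now taking $k\to\infty$ along primes outside $\Sigma$ forces the right-hand side to $0$, which gives $N(V^{bigstab},a)=o(a^{7/12})$ as claimed.

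The main obstacle is to set up the congruence-condition counting cleanly and to verify that the cusp-cutoff arguments of \cite{Thorne-E6paper} remain valid in their presence; however, because we only require an upper bound and we are free to send $a\to\infty$ before $k\to\infty$, we do not need any uniformity in the set of primes. This is precisely why a full square-free sieve (which would be necessary to turn the upper bound in Theorem \ref{theorem: first main theorem intro} into an equality) is \emph{not} required here, and the formal deduction from Lemma \ref{lemma: red and bigstab mod p} and the congruence-enhanced orbit count suffices.
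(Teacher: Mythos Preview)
Your approach is correct and is precisely the one the paper adopts (it defers to \cite[\S10.7]{Bhargava-Gross-hyperellcurves} and Lemma \ref{lemma: red and bigstab mod p}). One small correction: the assertion that $Z_{\intbigG}(v)$ is finite \'etale over $\Z[1/N]$ is false at primes $p\mid \Delta(v)$, so your specialization argument does not directly show $\overline{v}\in S_p$ there; the fix is to work with the paper's set $V_p^{bigstab}$, which by definition also contains the locus $p\mid \Delta(v)$, and then the containment $V^{bigstab}\subset V_p^{bigstab}$ for all $p\nmid N$ is immediate by your case $p\nmid \Delta(v)$ together with this definitional case. Lemma \ref{lemma: red and bigstab mod p} is stated for $V_p^{bigstab}$, so the rest of your sieve goes through unchanged.
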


Let $N$ be the integer of \S\ref{subsection: integral structures} and let $p$ be a prime not dividing $N$. We define $\bigV_p^{red}\subset \intbigV(\Z_p)$ to be the set of vectors whose reduction mod $p$ is $\F_p$-reducible.
We define $\bigV_p^{bigstab} \subset \intbigV(\Z_p)$ to be the set of vectors $v\in \intbigV(\Z_p)$ such that $p | \Delta(v)$ or whose image in $\intbigV(\F_p)$ has nontrivial stabilizer in $\intbigG(\F_p)$. 

\begin{lemma}\label{lemma: red and bigstab mod p}
	We have 
	$$\lim_{Y\rightarrow +\infty} \prod_{N<p<Y} \int_{V_p^{red}} dv = 0,$$
	and
	$$\lim_{Y\rightarrow +\infty} \prod_{N<p<Y} \int_{V_p^{bigstab}} dv = 0.$$
	
\end{lemma}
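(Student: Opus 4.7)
The plan is to parametrize $\bigG(\F_p)$-orbits on $\bigV(\F_p)^{\rs}$ via Lang's theorem applied to the Kostant stabilizer, and then to use equidistribution of Frobenius classes in $W = W(E_6)$ through the cover $f\colon \liet^{\rs} \to \bigB^{\rs}$ of Proposition \ref{proposition: monodromy of J[2]}. First I would observe that the vanishing locus $\{\Delta = 0\}$ has codimension $1$ in $\intbigV$, so the $p$-adic density of vectors with $p \mid \Delta(v)$ is $O(1/p)$ and contributes negligibly to both integrals. It therefore suffices to analyse, for each $b \in \bigB^{\rs}(\F_p)$, the fraction of $\bigV_b(\F_p)$ consisting of $\F_p$-reducible elements and of elements with nontrivial $\bigG(\F_p)$-stabilizer.

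For such $b$, Proposition \ref{proposition: G-orbits in terms of groupoids} combined with Lang's theorem $\HH^1(\F_p, \bigG) = 1$ identifies $\bigG(\F_p) \backslash \bigV_b(\F_p)$ with $\HH^1(\F_p, Z_{\bigG}(\sigma(b)))$, which has cardinality $|\Jac_b[2](\F_p)|$. Moreover, Lemma \ref{lemma: centralizers with same invariants isomorphic} ensures $Z_{\bigG}(v) \simeq Z_{\bigG}(\sigma(b))$ as $\F_p$-group schemes for every $v \in \bigV_b(\F_p)^{\rs}$, so all orbits share the common size $|\bigG(\F_p)|/|\Jac_b[2](\F_p)|$. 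The Kostant orbit therefore occupies exactly the fraction $1/|\Jac_b[2](\F_p)|$ of $\bigV_b(\F_p)^{\rs}$, while the locus of elements with nontrivial stabilizer equals all of $\bigV_b(\F_p)^{\rs}$ when $\Jac_b[2](\F_p) \neq 0$ and is empty otherwise.

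After integrating over $b$ using the fibration $\bigpi$, both densities reduce to averages over $b \in \bigB^{\rs}(\F_p)$. Part $1$ of Proposition \ref{proposition: monodromy of J[2]} shows that the Frobenius at $b$ acts on $\Jac_b[2]$ through its image in $W$ under the Galois cover $f$. Applying a Chebotarev-type estimate (equivalently, the Lang--Weil bound on each stratum of $\bigB^{\rs}_{\F_p}$ indexed by a conjugacy class $[w] \subset W$), one finds as $p \to \infty$
\begin{align*}
\int_{\bigV_p^{red}} dv &= \frac{1}{|W|} \sum_{w \in W} \frac{1}{|(\bigLambda_T/2\bigLambda_T)^w|} + O(p^{-1/2}), \\
\int_{\bigV_p^{bigstab}} dv &= \frac{\#\{w \in W : (\bigLambda_T/2\bigLambda_T)^w \neq 0\}}{|W|} + O(p^{-1/2}).
\end{align*}

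The key step is to show that both limiting averages are strictly less than $1$. The first is immediate: the identity $w = 1$ contributes only $1/2^6 = 1/64$, and every term is at most $1$. The second requires producing some $w \in W(E_6)$ acting on $\bigLambda_T/2\bigLambda_T$ without nonzero fixed points; a Coxeter element $c$ suffices, since its characteristic polynomial on $\bigLambda_T \otimes \Q$ is $\Phi_{12}(x)\,\Phi_3(x)$ (from the $E_6$ exponents $\{1,4,5,7,8,11\}$), which reduces modulo $2$ to $(x^2+x+1)^3$ and does not have $1$ as a root. Granting these two facts, both densities are bounded above by some $\beta < 1$ for all sufficiently large $p$, whence $\sum_p \bigl(1 - \int_{\bigV_p^{red}} dv\bigr) = \infty$ and similarly for $\bigV_p^{bigstab}$, so the corresponding infinite products both converge to $0$.
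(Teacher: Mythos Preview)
Your proposal is correct and follows essentially the same route as the paper's proof: both reduce to averaging over $b\in\intbigB^{\rs}(\F_p)$ via Lang's theorem, invoke the $W$-cover $\liet^{\rs}\to\bigB^{\rs}$ and a Chebotarev/Lang--Weil estimate to express the limiting density as a class-function average over $W$, and then exhibit a Coxeter element with no nonzero fixed vector in $\Lambda_T/2\Lambda_T$. The only cosmetic difference is that the paper verifies the Coxeter step by computing $\det(1-w_{\mathrm{cox}})=\Phi_{12}(1)\Phi_3(1)=3$ is odd, whereas you reduce the characteristic polynomial $\Phi_{12}\Phi_3$ modulo~$2$ and observe $1$ is not a root; these are the same computation.
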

\begin{proof}
	The proof is very similar to the proof of \cite[Proposition 6.9]{Thorne-Romano-E8}. We only treat the case of $V_p^{bigstab}$, the case of $V_p^{red}$ being analogous and treated in detail in \cite[\S10.7]{Bhargava-Gross-hyperellcurves}. 
	Let $p$ be a prime not dividing $N$. We have the formula
	$$ \int_{V_p^{bigstab}} dv = \frac{1}{\#\intbigV(\F_p)}\# \{v\in \intbigV(\F_p) \mid \Delta(v) = 0 \text{ or } Z_{\intbigG}(v)(\F_p) \neq 1 \}   .$$
	Since $\{ \Delta = 0 \}$ is a hypersurface we have
	$$\frac{1}{\#\intbigV(\F_p)}\# \{v\in \intbigV(\F_p) \mid \Delta(v) = 0\} = O(p^{-1}).  $$
	If $v\in \intbigV^{\rs}(\F_p)$ then $\#Z_{\intbigG}(v)(\F_p)$ depends only on $\pi(v)$ by (the $\Z[1/N]$-analogue of) Lemma \ref{lemma: centralizers with same invariants isomorphic}.
	Moreover by Proposition \ref{proposition: G-orbits in terms of groupoids} and Lang's theorem we have $\#\intbigV^{\rs}(\F_p) = \#\intbigG(\F_p) \#\intbigB^{\rs}(\F_p) $.
	So to prove the lemma it suffices to prove that there exists a $0< \delta <1$ such that
	$$
	\frac{1}{\# \intbigB^{\rs}(\F_p)}\#\{b\in \intbigB^{\rs}(\F_p) \mid Z_{\intbigG}(\sigma(b))(\F_p) \neq 1 \} \rightarrow \delta
	$$
	as $p \rightarrow +\infty$.
	We will achieve this using the results of \cite[\S9.3]{Serre-lecturesonNx(p)}. 
	Recall from \S\ref{subsection: a stable grading} that $\bigT$ is a split maximal torus of $\bigH$ with Lie algebra $\liet$ and Weyl group $W$.
	These objects spread out to objects $\intbigT, \intbigH,\intbigt$ over $\Z$. In \S\ref{subsection: further properties of J[2]} we have defined a $W$-torsor $f\colon \liet^{\rs}\rightarrow \bigB^{\rs}$ which extends to a $W$-torsor $\intbigt_S^{\rs} \rightarrow \intbigB_S^{\rs}$, still denoted by $f$. The group scheme $\Jac[2] \rightarrow \intbigB^{\rs}_S$ is trivialized along $f$ and the monodromy action is given by the natural action of $W$ on $\Lambda_T/2\Lambda_T$ by the same logic as Proposition \ref{proposition: monodromy of J[2]}.
	Let $C\subset W$ be the subset of elements of $W$ which fix some nonzero element of $\Lambda_T/2\Lambda_T$.
	Then \cite[Proposition 9.15]{Serre-lecturesonNx(p)} implies that
	$$
	\frac{1}{\# \intbigB^{\rs}(\F_p)}\#\{b\in \intbigB^{\rs}(\F_p) \mid Z_{\intbigG}(\sigma(b))(\F_p) \neq 1 \}  = \frac{\#C}{\#W}+O(p^{-1/2}).
	$$
	To finish the proof it suffices to show that $C \neq W$. 
	Let $w_{cox}\in W$ be a Coxeter element. Then the determinant of $1-w_{cox}$ on $\Lambda$ is \cite[Theorem 10.6.1]{Carter-SimpleGroupsLieType1972}:
	$$ \prod_{i} \left(1-e^{2\pi i (\deg(p_i)-1)/12} \right) = \Phi_{12}(1)\Phi_3(1)=3.$$
	(Here $\Phi_n$ denotes the $n$-th cyclotomic polynomial.)
	Since this determinant is odd, the action of a Coxeter element on the mod $2$ root lattice does not fix any nonzero vector. This implies that $w_{cox} \not\in C$, as desired.

\end{proof}

\subsection{Counting with congruence conditions}\label{subsection: congruence conditions}

We now introduce variants of Theorem \ref{theorem: counting R-soluble elements, no congruence} by imposing certain congruence conditions. 
We start with a version which involves only finitely many such congruence conditions. 
Let $M$ be a positive integer and $w\colon \intbigV(\Z/M\Z) \rightarrow \Real$ a function. For any $\intbigG(\Z)$-invariant subset $X\subset \intbigV(\Z)$ we write 
$$N_w(X,a) \coloneqq \sum_{\substack{v\in \intbigG(\Z)\backslash X \\ \height(v)<a}} \frac{w\left(v \mod M \right) }{\# Z_{\intbigG}(v)(\Z)}.$$
We write $\mu_w$ for the average of $w$ where we put the uniform measure on $\intbigV(\Z/M\Z)$.
The following theorem follows from the proof of Theorem \ref{theorem: counting R-soluble elements, no congruence} in the same way as \cite[\S2.5]{BS-2selmerellcurves}.
Recall that we have fixed a constant $W_0\in\Q^{\times}$ in \S\ref{subsection: heights and measures}.

\begin{theorem}\label{theorem: counting R-soluble elements, finite congruence}
	We have
	\begin{displaymath}
	N_w(\intbigV(\Z)^{irr} \cap \bigV(\Real)^{sol},a) = \mu_w\frac{|W_0|}{8}\vol\left(\intbigG(\Z)\backslash \bigG(\Real)\right) \vol\left(\left\{b \in \bigB(\Real) \mid \height(b) < a  \right\}   \right)+ o\left(a^{7/12}\right).
	\end{displaymath}

\end{theorem}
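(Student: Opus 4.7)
The plan is to derive Theorem \ref{theorem: counting R-soluble elements, finite congruence} by revisiting the proof of Theorem \ref{theorem: counting R-soluble elements, no congruence} and inserting the weight $w$ at the lattice-point-counting stage, following the averaging argument of \cite[\S2.5]{BS-2selmerellcurves}. The key observation is that the proof of the unweighted theorem reduces (via Proposition \ref{prop: counting sections} and its proof) to counting points of the lattice $\intbigV(\Z)$ lying in a bounded, semialgebraic region $R_a \subset \bigV(\Real)$ obtained from a fundamental domain for $\intbigG(\Z)$ on $\bigG(\Real) \cdot s(U)$, intersected with the height-$<a$ locus and with the complement of the cusp. What needs to be shown is that replacing $\#(\intbigV(\Z) \cap R_a)$ by $\sum_{v \in \intbigV(\Z) \cap R_a} w(v \bmod M)$ multiplies the main term by $\mu_w$ while preserving the error.

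First, I would split the sum according to residue classes:
\begin{equation*}
N_w(\intbigV(\Z)^{irr}\cap \bigV(\Real)^{sol},a) \;=\; \sum_{c \in \intbigV(\Z/M\Z)} w(c)\, N\bigl( L_c \cap \intbigV(\Z)^{irr}\cap \bigV(\Real)^{sol},\,a\bigr),
\end{equation*}
where $L_c \subset \intbigV(\Z)$ is the coset of vectors reducing to $c$ modulo $M$. For each fixed $c$, I would run the proof of Theorem \ref{theorem: counting R-soluble elements, no congruence} verbatim, but now counting points of the translated sublattice $L_c$ (of covolume $M^{42}$ times that of $\intbigV(\Z)$) rather than of $\intbigV(\Z)$. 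By Davenport's lemma, or more precisely the same semialgebraic lattice-point-counting estimate used to obtain the leading asymptotic in Proposition \ref{prop: counting sections}, the count for $L_c$ is $M^{-42}$ times the count for $\intbigV(\Z)$, up to an error absorbed by $o(a^{7/12})$.

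Next, I would sum over $c$ weighted by $w(c)$. The main term becomes
\begin{equation*}
\left(\frac{1}{M^{42}}\sum_{c \in \intbigV(\Z/M\Z)} w(c)\right) \cdot \frac{|W_0|}{8}\vol\bigl(\intbigG(\Z)\backslash \bigG(\Real)\bigr)\vol\bigl(\{b\in\bigB(\Real): \height(b)<a\}\bigr),
\end{equation*}
and the parenthesized factor is exactly $\mu_w$. Since there are only finitely many residue classes, the errors combine into a single $o(a^{7/12})$ term.

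The only point that requires minor care is the cusp region: the bound on the number of irreducible integral points in $V(a_0)$ used in the proof of Theorem \ref{theorem: counting R-soluble elements, no congruence} must also hold for each coset $L_c$, but Thorne's cusp estimate in \cite[\S2.3]{Thorne-E6paper} is proved uniformly for translated lattices, so no new input is required. I expect no serious obstacle; the whole argument is a formal bookkeeping exercise once the unweighted theorem is in hand.
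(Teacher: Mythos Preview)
Your proposal is correct and is precisely the argument the paper has in mind: the paper's entire proof is the single sentence that the result ``follows from the proof of Theorem \ref{theorem: counting R-soluble elements, no congruence} in the same way as \cite[\S2.5]{BS-2selmerellcurves},'' and the decomposition into residue classes modulo $M$, counting each translated sublattice $L_c$ via the same geometry-of-numbers estimate (with covolume scaled by $M^{42}$), and summing the weights to recover $\mu_w$ is exactly the content of \cite[\S2.5]{BS-2selmerellcurves}. Your remark that the cusp estimate from \cite[\S2.3]{Thorne-E6paper} holds uniformly for translated lattices is the one point not made fully explicit in either reference, but it is routine and presents no obstacle.
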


We now consider the situation where we impose infinitely many congruence conditions which is needed to sieve out those orbits not corresponding to $2$-Selmer elements. 
Suppose we are given for each prime $p$ a $\intbigG(\Z_p)$-invariant function $w_p\colon \intbigV(\Z_p) \rightarrow [0,1]$ with the following properties:
\begin{itemize}
	\item The function $w_p$ is locally constant outside the closed subset $\{v\in \intbigV(\Z_p) \mid \Delta(v) = 0\} \subset \intbigV(\Z_p)$. 
	\item For $p$ sufficiently large, we have $w_p(v) = 1$ for all $v \in \intbigV(\Z_p)$ such that $p^2 \nmid \Delta(v)$. 
\end{itemize}
In this case we can define a function $w\colon \intbigV(\Z) \rightarrow [0,1]$ by the formula $w(v) = \prod_{p} w_p(v)$ if $\Delta(v) \neq 0$ and $w(v) = 0$ otherwise. Call a function $w\colon \intbigV(\Z) \rightarrow [0,1]$ defined by this procedure \define{acceptable}. 
For any $\intbigG(\Z)$-invariant subset $X\subset \intbigV(\Z)$ we define 
\begin{equation}
N_w(X,a) \coloneqq \sum_{\substack{v\in \intbigG(\Z)\backslash X \\ \height(v)<a}} \frac{w(v)}{\# Z_{\intbigG}(v)(\Z)}.
\end{equation}
The proof of the following inequality is standard. (Details can be found in the first part of the proof of \cite[Theorem 2.21]{BS-2selmerellcurves}.) 

\begin{theorem}\label{theorem: counting infinitely many congruence conditions}
	If $w\colon \intbigV(\Z) \rightarrow [0,1]$ is an acceptable function we have
	\begin{displaymath}
	N_w(\intbigV(\Z)^{irr}\cap \bigV(\Real)^{sol} ,a) \leq \frac{|W_0|}{8} \left(\prod_p \int_{\intbigV(\Z_p)} w_p(v) d v \right)  \vol\left(\intbigG(\Z) \backslash \bigG(\Real) \right)\vol\left(\{b\in \bigB(\Real) \mid \height(b) < a \} \right)  + o(a^{7/12}). 
	\end{displaymath}

\end{theorem}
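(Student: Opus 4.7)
The plan is a standard Euler-product sieve: since each $w_p \leq 1$, any truncation of the Euler product to finitely many primes dominates $w$ pointwise, so the finite-congruence result Theorem \ref{theorem: counting R-soluble elements, finite congruence} automatically delivers an upper bound, and one just has to let the truncation tend to infinity.

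Concretely, for integers $Y, K \geq 1$ I would first construct a mod-$M_{Y,K}$ approximation $w^{(Y,K)} \geq w$, where $M_{Y,K} = \prod_{p \leq Y} p^K$. For each prime $p \leq Y$, since $w_p$ is $[0,1]$-valued and locally constant off the closed set $\{\Delta = 0\}$, I would define $w_p^{(K)} \colon \intbigV(\Z/p^K\Z) \to [0,1]$ by setting it equal to $1$ on every residue class meeting $\{\Delta = 0\}$ and equal to the supremum of $w_p$ on every other class; this pulls back to a function $\geq w_p$ on $\intbigV(\Z_p)$ converging pointwise to $w_p$ as $K \to \infty$. Combined with the trivial bound $w_p \leq 1$ for $p > Y$, the product $w^{(Y,K)}(v) = \prod_{p \leq Y} w_p^{(K)}(v \bmod p^K)$ is a well-defined mod-$M_{Y,K}$ weight satisfying $w \leq w^{(Y,K)}$ pointwise. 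Applying Theorem \ref{theorem: counting R-soluble elements, finite congruence} to $w^{(Y,K)}$, and noting that the average $\mu_{w^{(Y,K)}}$ factors as $\prod_{p \leq Y} \int_{\intbigV(\Z_p)} w_p^{(K)}\,dv$ under the normalization $\vol(\intbigV(\Z_p)) = 1$, one obtains an upper bound on $N_{w^{(Y,K)}}$ of the desired shape. Taking $\limsup_{a \to \infty}$ kills the $o(a^{7/12})$ error; then letting $K \to \infty$ (bounded convergence on each $p$-adic integral) and $Y \to \infty$ (the full Euler product converges absolutely by the acceptability hypothesis, since $\{v : p^2 \mid \Delta(v)\}$ has $p$-adic volume $O(p^{-2})$ for $p$ large, so each Euler factor is $1 + O(p^{-2})$) yields the claimed inequality.

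The only real obstacle is the direction of the inequality: the $o(a^{7/12})$ error in Theorem \ref{theorem: counting R-soluble elements, finite congruence} is not claimed to be uniform in $M$, which is precisely why the limit argument produces only an upper bound. Upgrading it to an equality would require a uniformity estimate in the style of \cite[Theorem 2.13]{BS-2selmerellcurves} controlling the density of vectors $v \in \intbigV(\Z)$ for which some large prime $p$ satisfies $p^2 \mid \Delta(v)$; the author flags in Step~(3) of \S\ref{subsection: intro methods} that no such estimate is available here, which is why the main theorem only asserts an upper bound of $3$ rather than equality.
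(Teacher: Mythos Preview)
Your proposal is correct and follows exactly the standard truncation-and-sieve argument that the paper invokes by citing \cite[Theorem 2.21]{BS-2selmerellcurves}; the paper gives no further details, so you have essentially written out what the reference contains. Two small cosmetic points: your $w_p^{(K)}$ converges to $w_p$ only almost everywhere (it stays equal to $1$ on the measure-zero set $\{\Delta=0\}$), which is all that bounded convergence needs; and the absolute convergence of the Euler product, while true, is not actually required for the upper-bound direction, since a product of factors in $[0,1]$ has monotone decreasing partial products and hence converges regardless.
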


\begin{remark}
	If we would be able to prove a so-called uniformity estimate bounding the error term occurring in Theorem \ref{theorem: counting R-soluble elements, no congruence} similar to \cite[Theorem 2.13]{BS-2selmerellcurves}, then we can strengthen the above inequality to an actual equality, which would lead to an equality in Theorem \ref{theorem: main theorem}.
\end{remark}

To count $2$-Selmer elements in $\sh{E}_{\min}$ we will require a slight variant of the above theorem.
We write $\bigB(\Real)_{\min}\subset \bigB(\Real)$ for the subset of elements $b$ satisfying the following condition: either $p_5(b)>0$, or $p_5(b)=0$ and $p_9(b)\geq 0$. 
For any $X \subset \bigV(\Real)$ we write $X_{\min}\subset X$ for the subset of elements whose image under $\pi$ lies in $\bigB^{\rs}(\Real)_{\min}$.

\begin{theorem}\label{theorem: counting infinitely many congruence conditions minimal}
	Let $w\colon \intbigV(\Z) \rightarrow [0,1]$ be an acceptable function satisfying $w(v) = w(-v)$ for all $v\in \intbigV(\Z)$. Then we have
	\begin{displaymath}
	N_w(\intbigV(\Z)^{irr}\cap \bigV(\Real)^{sol}_{\min} ,a) \leq \frac{|W_0|}{8} \left(\prod_p \int_{\intbigV(\Z_p)} w_p(v) d v \right)  \vol\left(\intbigG(\Z) \backslash \bigG(\Real) \right)\vol\left(\{b\in \bigB(\Real)_{\min} \mid \height(b) < a \} \right)  + o(a^{7/12}). 
	\end{displaymath}
\end{theorem}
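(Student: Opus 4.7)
The plan is to deduce this from Theorem~\ref{theorem: counting infinitely many congruence conditions} by exploiting an involution that folds the count over $\bigB(\Real)$ onto the count over $\bigB(\Real)_{\min}$. I would consider the $\Z$-linear involution $\iota\colon \intbigV \to \intbigV$ sending $v \mapsto -v$. Because $\intbigG$ acts linearly, $\iota$ commutes with this action; it preserves the $\Q$-irreducible locus, fixes the height function (since $|p_i(-v)|^{72/i} = |p_i(v)|^{72/i}$ for each $i$), and by the hypothesis $w(-v) = w(v)$ also respects the weight. Via $\pi$ it descends to the involution $\bar\iota$ of $\intbigB$ acting as $(-1)^i$ on $p_i$, which fixes $p_2, p_6, p_8, p_{12}$ and negates $p_5, p_9$. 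Consequently $\bar\iota$ exchanges $\bigB(\Real)_{\min}$ with its complement in $\bigB(\Real)$ modulo the codimension-two subspace $L = \{p_5 = p_9 = 0\}$, and
\[
\vol\bigl(\{b\in \bigB(\Real)_{\min} \mid \height(b) < a\}\bigr) = \tfrac{1}{2}\vol\bigl(\{b\in \bigB(\Real) \mid \height(b) < a\}\bigr).
\]

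The critical step I would tackle first is to verify that $\iota$ preserves $\bigV(\Real)^{sol}$. The $\G_m$-equivariance of the Kostant section gives $\sigma(\bar\iota(b)) = -\sigma(b)$, and the substitution $(x,y) \mapsto (-x,y)$ defines an isomorphism $\bigprojcurve_b \xrightarrow{\sim} \bigprojcurve_{\bar\iota(b)}$ of $\Real$-schemes, inducing an isomorphism $\Jac_b \simeq \Jac_{\bar\iota(b)}$. Using the naturality of the orbit parametrization constructed in \S\ref{section: orbit parametrization}, these identifications should intertwine $\eta_b$ and $\eta_{\bar\iota(b)}$ with $\iota$ on the $\bigG(\Real)$-orbit side, so that $v$ is $\Real$-soluble if and only if $-v$ is.

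Granting this, $\iota$ induces a $\Z/2$-action on $\intbigG(\Z)\backslash(\intbigV(\Z)^{irr} \cap \bigV(\Real)^{sol})$ which is free on orbits whose image in $\intbigB$ lies outside $L$ and which swaps orbits above $\bigB(\Real)_{\min}$ with those above its complement. The number of $b \in \intbigB(\Z) \cap L$ with $\height(b) < a$ is $O(a^{(2+6+8+12)/72}) = O(a^{7/18})$, and adapting the counting argument underlying Theorem~\ref{theorem: counting R-soluble elements, no congruence} (restricting the $\Real_{>0}$-invariant sections of Proposition~\ref{prop: counting sections} to fibres above $L$) bounds the contribution of orbits above $L$ by $o(a^{7/12})$. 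Hence
\[
N_w\bigl(\intbigV(\Z)^{irr}\cap \bigV(\Real)^{sol}_{\min}, a\bigr) = \tfrac{1}{2} N_w\bigl(\intbigV(\Z)^{irr}\cap \bigV(\Real)^{sol}, a\bigr) + o(a^{7/12}),
\]
and the claim follows by halving Theorem~\ref{theorem: counting infinitely many congruence conditions}.

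The main obstacle is the naturality check in the second paragraph; once that is in place the rest is measure-theoretic bookkeeping and a routine boundary estimate.
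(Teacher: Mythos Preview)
Your proposal is correct and follows essentially the same route as the paper: both deduce the result from Theorem~\ref{theorem: counting infinitely many congruence conditions} via the involution $v\mapsto -v$, using that it preserves $\Real$-solubility and that the contribution from $\{p_5=p_9=0\}$ is $o(a^{7/12})$. The paper simply asserts the solubility-preservation step without argument, whereas you sketch it via the curve isomorphism $(x,y)\mapsto(-x,y)$; this sketch is on the right track and needs no further justification for the present purpose.
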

\begin{proof}
	This can be proved by adapting the counting arguments in \S\ref{subsection: counting with no congruence}, but we can deduce it easily from Theorem \ref{theorem: counting infinitely many congruence conditions}.
	Indeed, observe that $p_i(-b) = (-1)^ip_i(b)$ for any $b\in \bigB(\Real)$. So, away from elements $v$ with $p_5(v) = p_9(v) = 0$, we see that every $\intbigG(\Z)$-orbit in $\bigV(\Real)_{\min}$ gives rise to exactly two $\intbigG(\Z)$-orbits in $\intbigV(\Real)$. 
	Moreover an element $v\in \bigV(\Real)$ is $\Real$-soluble if and only if $-v$ is.
	Since the number of $\intbigG(\Z)$-orbits in $\intbigV(\Z)$ whose invariants $p_5$ and $p_9$ vanish is $o(a^{7/12})$, we see that 
	\begin{align*}
		N_w(\intbigV(\Z)^{irr}\cap \bigV(\Real)^{sol} ,a) = 2N_w(\intbigV(\Z)^{irr}\cap \bigV(\Real)^{sol}_{\min} ,a)+o\left(a^{7/12} \right).
	\end{align*}
	The theorem now follows from the equality $\vol\left(\{b\in \bigB(\Real) \mid \height(b) <a  \} \right) = 2 \vol\left(\{b\in \bigB(\Real)_{\min} \mid \height(b) <a  \} \right) $.
	
\end{proof}

\section{Proof of the main theorem}\label{section: proof of main theorems}

In this section we prove the first main theorem stated in the introduction. 
Recall that we write $\sh{E}$ for the set of elements $b\in \intbigB(\Z)$ of nonzero discriminant. We write $\sh{E}_{\min} \subset \sh{E}$ for the subset of $b\in \sh{E}$ such that:
\begin{itemize}
	\item No prime $q$ has the property that $q^i$ divides $p_i(b)$ for all $i\in \{2,5,6,8,9,12  \}$.
	\item Either $p_5(b)>0$, or $p_5(0) =0$ and $p_9(b) \geq 0$. 
\end{itemize}
The set $\sh{E}_{\min}$ is in canonical bijection with the set of isomorphism classes of pairs $(X,P)$ where $X/\Q$ is a smooth, geometrically connected and projective curve of genus $3$ which is not hyperelliptic and $P \in X(\Q)$ is a marked hyperflex point (this follows from \cite[Lemma 4.1]{Thorne-E6paper}). 
We recall that we have defined a height function $\height$ for $\sh{E}$ in \S\ref{subsection: heights and measures}. We say a subset $\mathcal{F}\subset \sh{E}$ is defined by \define{finitely many congruence conditions} if $\mathcal{F}$ is the preimage of a subset of $\intbigB(\Z/N\Z)$ under the reduction map $\sh{E} \rightarrow \intbigB(\Z/N\Z)$ for some $N\geq 1$.

\begin{theorem}\label{theorem: main theorem}
	Let $\mathcal{F}\subset \sh{E}$ be a subset defined by finitely many congruence conditions or $\mathcal{F} = \sh{E}_{\min}$.
	Then we have 
	\begin{equation*}
	\limsup_{a\rightarrow \infty} \frac{ \sum_{b\in \mathcal{F},\; \height(b)<a }\# \Sel_2\Jac_b  }{\#  \{b \in \mathcal{F} \mid \height(b) < a \}}	\leq 3.
	\end{equation*}
\end{theorem}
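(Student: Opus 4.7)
The plan is to split each Selmer group into its identity element and its non-identity elements, embed the latter into $\Q$-irreducible $\intbigG(\Z)$-orbits on $\intbigV(\Z)$ via the machinery of \S\ref{section: orbit parametrization} and \S\ref{section: integral representatives}, and then apply the orbit-counting results of \S\ref{section: counting} with an appropriate weight function tracking the local Selmer conditions. Concretely, write $\#\Sel_2 \Jac_b = 1 + (\#\Sel_2 \Jac_b - 1)$: the first term contributes $\#\{b \in \mathcal{F} : \height(b)<a\}$, which is precisely the denominator. So the theorem reduces to showing
\begin{equation*}
\limsup_{a \to \infty} \frac{\sum_{b \in \mathcal{F},\, \height(b)<a} (\#\Sel_2 \Jac_b - 1)}{\#\{b \in \mathcal{F} : \height(b)<a\}} \leq 2.
\end{equation*}
By Corollary \ref{corollary: Sel2 embeds} each non-identity class $y \in \Sel_2 \Jac_b$ determines a $\bigG(\Q)$-orbit $\eta_b(y)$ on $\bigV_b(\Q)$, and by Theorem \ref{theorem: inject 2-descent into orbits} this orbit is $\Q$-irreducible (since the identity is the only class mapping to the Kostant orbit $\bigG(\Q)\cdot\sigma(b)$). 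The orbit is $\Real$-soluble because $y$ satisfies the local Selmer condition at $\infty$, and $\Q_p$-soluble at every finite prime.

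Next I would partition $\mathcal{F}$ into finitely many pieces, using Corollary \ref{corollary: weak global integral representatives} to ensure that on each piece we may choose a uniform rescaling factor $M$ such that for every non-identity $y \in \Sel_2 \Jac_{M \cdot b}$ the orbit $\eta_{M \cdot b}(y)$ has an integral representative in $\intbigV(\Z)$. After this cosmetic rescaling (which only rescales the height by $M^{72}$), the non-identity Selmer elements inject into $\Q$-irreducible, $\Real$-soluble orbits on $\intbigV(\Z)$ satisfying a $\Q_p$-solubility congruence at each finite prime. Proposition \ref{proposition: estimates on red and bigstab} shows that the locus where $Z_{\intbigG}(v)(\Q)$ is nontrivial contributes $o(a^{7/12})$, so outside a negligible set the map from Selmer classes to integral orbits is truly injective with trivial stabilizer correction. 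Accordingly, I would define an acceptable weight function $w = \prod_p w_p$ where $w_p : \intbigV(\Z_p) \to [0,1]$ weights each integral orbit by the reciprocal of the number of $\intbigG(\Z_p)$-orbits it decomposes into within its $\bigG(\Q_p)$-orbit class, multiplied by the indicator that the $\bigG(\Q_p)$-orbit is $\Q_p$-soluble; concretely $w_p(v) = \#Z_{\intbigG}(v)(\Q_p) / (m_p(v) \cdot \#Z_{\intbigG}(v)(\Z_p))$ times this indicator, so that summing over $\intbigG(\Z_p)$-orbits inside a single $\bigG(\Q_p)$-orbit recovers exactly the Selmer-local contribution.

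Applying Theorem \ref{theorem: counting infinitely many congruence conditions} (or Theorem \ref{theorem: counting infinitely many congruence conditions minimal} for $\mathcal{F}=\sh{E}_{\min}$) then yields
\begin{equation*}
\sum_{b, y \neq 0} 1 \leq \frac{|W_0|}{8}\Bigl(\prod_p \int_{\intbigV(\Z_p)} w_p \, dv\Bigr) \vol(\intbigG(\Z)\backslash \bigG(\Real)) \vol(\{b \in \bigB(\Real) : \height(b)<a\}) + o(a^{7/12}).
\end{equation*}
Using Lemma \ref{lemma: the constants W0 and W}(2), each local integral $\int_{\intbigV(\Z_p)} w_p\, dv$ equals $|W_0|_p \vol(\intbigG(\Z_p))$ times $\int_{b \in \intbigB(\Z_p)^{\rs}} \#\text{im}(\eta_{b,\Q_p}) \, db$ with the identity contribution removed, and this residual average is known to equal the $p$-adic density of $\sh{E}$ (respectively $\sh{E}_{\min}$) times $(2-1)=1$ at each place, by the standard Bhargava--Gross--type calculation relating the image of $\eta_{b,\Q_p}$ to a Lagrangian in $\HH^1(\Q_p, \Jac_b[2])$. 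Combining the local factors, invoking the Tamagawa mass formula of Proposition \ref{proposition: tamagawa}(2), matching the archimedean measure $|W_0|_{\infty} \vol(\intbigG(\Z)\backslash \bigG(\Real))$ against the denominator $\#\mathcal{F}_{<a}$ via the density of $\mathcal{F}$, and collecting constants yields the desired upper bound of $2$ for the non-identity ratio, hence $3$ overall.

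The main obstacle will be the local mass computation at finite primes: one needs to identify $\int_{\intbigV(\Z_p)} w_p\, dv$ with the average over $b \in \intbigB(\Z_p)$ of $\#\bigG(\Q_p)\backslash \bigV_b(\Q_p)^{sol} - 1$, and then recognize the product over all places of these averages as computing $\mathbb{E}_b[\#\Sel_2\Jac_b - 1] = 2$ in light of the Poonen--Rains heuristic. The accounting between $m_p(v)$, $\#Z_{\intbigG}(v)(\Q_p)$ and $\#Z_{\intbigG}(v)(\Z_p)$ must be done carefully, but this is exactly the bookkeeping already carried out in \cite{Bhargava-Gross-hyperellcurves} and \cite{Thorne-Romano-E8}, and the rigidity of our setup (in particular the triviality of $\HH^1$ of the simply connected cover by Lemma \ref{lemma: H^1(R,Sp) is trivial} and of $\HH^1(\Z_p,\intbigG)$ by Lang's theorem) ensures there are no hidden stabilizer terms beyond the negligible bigstab locus.
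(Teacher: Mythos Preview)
Your overall architecture matches the paper's proof closely: split off the identity, embed non-identity Selmer classes into $\Q$-irreducible $\Real$-soluble integral orbits via Corollary~\ref{corollary: Sel2 embeds} and Corollary~\ref{corollary: weak global integral representatives}, kill bigstab with Proposition~\ref{proposition: estimates on red and bigstab}, weight by an acceptable function, apply Theorem~\ref{theorem: counting infinitely many congruence conditions} (or its minimal variant), and finish with the Tamagawa number. The paper also first proves a local version (Proposition~\ref{proposition: local result of main theorem}) on small neighbourhoods $\sh{E}_W$ and then patches these together; your partition step is the same idea.

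Where your sketch goes wrong is the local mass computation, which you correctly flag as the crux but then resolve incorrectly. The local integral $\int_{\intbigV(\Z_p)} w_p\,dv$ does \emph{not} compute $\int_b (\#\operatorname{im}\eta_{b,\Q_p}-1)\,db$, and there is no ``$(2-1)=1$'' at each place, nor any appeal to Poonen--Rains. What Lemma~\ref{lemma: the constants W0 and W}(2) actually gives is
\[
\int_{\intbigV(\Z_p)} w_p(v)\,dv \;=\; |W_0|_p\,\vol(\intbigG(\Z_p))\int_{b}\frac{\#\bigl(\Jac_b(\Q_p)/2\Jac_b(\Q_p)\bigr)}{\#\Jac_b[2](\Q_p)}\,db,
\]
and the key point is that for a $3$-dimensional abelian variety over $\Q_p$ the ratio $\#(A(\Q_p)/2A(\Q_p))/\#A[2](\Q_p)$ is the \emph{constant} $|1/8|_p$, independent of $b$. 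The archimedean factor $1/8$ in Theorem~\ref{theorem: counting R-soluble elements, no congruence} is the real analogue of the same fact. These combine via the product formula: $\prod_v|W_0|_v=1$ and $(1/8)\prod_p|1/8|_p=1$, leaving exactly $\vol(\intbigG(\Z)\backslash\bigG(\Real))\prod_p\vol(\intbigG(\Z_p))=\tau(\bigG)=2$ by Proposition~\ref{proposition: tamagawa}. Your weight function formula $w_p(v)=\#Z_{\intbigG}(v)(\Q_p)/(m_p(v)\#Z_{\intbigG}(v)(\Z_p))$ is also off; the paper takes $w_p(v)=\bigl(\sum_{v'}\#Z_{\intbigG}(v')(\Q_p)/\#Z_{\intbigG}(v')(\Z_p)\bigr)^{-1}=1/m_p(v)$ on the soluble locus, and it is this choice that makes the integrand in Lemma~\ref{lemma: the constants W0 and W}(2) collapse to the ratio above.
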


The proof is along the same lines as the discussion in \cite[\S7]{Thorne-Romano-E8}. We will assume that $\mathcal{F} = \sh{E}_{\min}$, the other case being very similar.

We first prove a `local' result. Recall that $\sh{E}_p$ is the set of elements $b\in \intbigB(\Z_p)$ of nonzero discriminant, and define $\sh{E}_{p,\min} \subset \sh{E}_p$ to be the subset of those $b$ that do not lie in $p\cdot \intbigB(\Z_p)$. (Recall that there is a $\G_m$-action on $\intbigB$ which satisfies $\lambda\cdot p_i = \lambda^i p_i$.)

\begin{proposition}\label{proposition: local result of main theorem}
	Let $b_0 \in \sh{E}_{\min}$. Then we can find for each prime $p$ dividing $N$ an open compact neighbourhood $W_p$ of $b_0$ in $\sh{E}_p$ such that the following condition holds. Let $\sh{E}_W = \sh{E} \cap \left(\prod_{p | N} W_p \right)$, and let $\sh{E}_{W,\min} = \sh{E}_W \cap \sh{E}_{\min}$. Then we have 
	\begin{equation*}
	\limsup_{a\rightarrow \infty} \frac{ \sum_{b\in \sh{E}_{W,\min},\; \height(b)<a }\# \Sel_2\Jac_b   }{\#  \{b \in \sh{E}_{W,\min} \mid \height(b) < a \}}	\leq 3.
	\end{equation*}
\end{proposition}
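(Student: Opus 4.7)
The plan is to combine the orbit parametrisation and integral representative results of \S\ref{section: orbit parametrization}--\S\ref{section: integral representatives} with the orbit-counting upper bound of Theorem \ref{theorem: counting infinitely many congruence conditions minimal}, following the standard Bhargava framework. First I would apply Corollary \ref{corollary: weak global integral representatives} to $b_0$ to obtain open compact neighbourhoods $W_p\subset \sh{E}_p$ for $p\mid N$ and an integer $M=\prod_{p\mid N}p^{n_p}$ such that the rescaled orbit $\eta_{Mb}(\alpha)$ has a representative in $\intbigV_{Mb}(\Z)$ for every $\alpha\in \Sel_2\Jac_b$ and $b\in \sh{E}_W$. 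Together with Theorem \ref{theorem: integral representatives exist} at primes $p\nmid N$ and Theorem \ref{theorem: inject 2-descent into orbits} (which sends the identity Selmer class to the reducible Kostant orbit of $\sigma(Mb)$), this identifies every non-identity Selmer class with a $\Q$-irreducible, $\Real$-soluble $\intbigG(\Z)$-orbit in $\intbigV(\Z)$ with invariants in $\intbigB(\Real)_{\min}$ via Corollary \ref{corollary: Sel2 embeds}.

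Next, construct an acceptable weight function $w=\prod_p w_p$ on $\intbigV(\Z)$ by setting
\[
w_p(v) = \begin{cases}\dfrac{1}{m_p(v)} & \text{if } \pi(v)\in MW_p \text{ (for } p\mid N\text{) and the }\bigG(\Q_p)\text{-orbit of }v\text{ lies in the image of }\eta_{\pi(v)},\\ 0 & \text{otherwise,}\end{cases}
\]
with $m_p$ as in (\ref{equation: def mp(v)}); for $p\nmid N$ the condition on $MW_p$ is dropped. Proposition \ref{prop: integral reps squarefree discr} shows that $w_p(v)=1$ whenever $p\nmid MN$ and $p^2\nmid \Delta(v)$ (then $m_p=1$ and every integral orbit is locally soluble), and the isomorphism $\Jac_b\simeq \Jac_{-b}$ coming from the $\G_m$-action at $\lambda=-1$ forces $w(v)=w(-v)$. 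The factor $1/m_p$ is engineered so that summing $w(v)/\#Z_{\intbigG}(v)(\Z)$ over $\intbigG(\Z)$-orbits in a single $\bigG(\Q)$-orbit returns the product of indicators of local solubility at all finite places, yielding (after absorbing negligible contributions from orbits with non-trivial $\Q$-stabilizer via Proposition \ref{proposition: estimates on red and bigstab})
\[
N_w\bigl(\intbigV(\Z)^{irr}\cap \bigV(\Real)^{sol}_{\min},\; M^{72}a\bigr) \;=\; \sum_{\substack{b\in \sh{E}_{W,\min}\\ \height(b)<a}}\!\bigl(\#\Sel_2\Jac_b - 1\bigr) + o\!\bigl(a^{7/12}\bigr).
\]

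Apply Theorem \ref{theorem: counting infinitely many congruence conditions minimal} to $w$. Using equation (\ref{equation: p-adic property W0}), the inner sum in the $p$-adic integral collapses to $\#\Jac_b(\Q_p)/2\Jac_b(\Q_p)$ divided by $\#\Jac_b[2](\Q_p)$, the point being that $Z_{\bigG}(v)\simeq \Jac_b[2]$ is abelian so the stabilizer has constant order on every $\Q_p$-orbit above a fixed $b$ (Proposition \ref{proposition: bridge jacobians root lattices}). Standard local $2$-descent on the abelian variety $\Jac_b/\Q_p$ gives $\#\Jac_b(\Q_p)/2\Jac_b(\Q_p) = \#\Jac_b[2](\Q_p)\cdot |2|_p^{-g}$ with $g=3$, and the analogous real identity $\#\Jac_b(\Real)/2\Jac_b(\Real) = \#\Jac_b[2](\Real)\cdot 2^{-g} = \#\Jac_b[2](\Real)/8$ is precisely the source of the factor $1/8$ already present in Theorem \ref{theorem: counting infinitely many congruence conditions minimal}. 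Combining the product formula $\prod_v|W_0|_v=1$, the Tamagawa identity $\vol(\intbigG(\Z)\backslash \bigG(\Real))\prod_p \vol(\intbigG(\Z_p))=2$ (Proposition \ref{proposition: tamagawa}), and $\prod_p|2|_p^{-g}=2^g=8$, one obtains
\[
N_w \;\leq\; 2\cdot \#\bigl\{b\in \sh{E}_{W,\min} : \height(b)<a\bigr\}\cdot(1+o(1)).
\]
Dividing by the cardinality of $\sh{E}_{W,\min}$ below height $a$ and adding the identity element yields the claimed bound of $3$.

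The main obstacle is the careful bookkeeping underlying the second paragraph: proving acceptability of $w$ uniformly in the family, matching the weighted integral orbit count with the sum of Selmer sizes, and tracking the $M$-rescaling, which changes heights by $M^{72}$ and measures on $\intbigB$ by $M^{42}$ in a compatible way so that the ratio $N_w/\#\{b\}$ is unaffected. All the other steps are either formal consequences of the results assembled in \S\ref{section: orbit parametrization}--\S\ref{section: integral representatives} or standard Tamagawa-style manipulations.
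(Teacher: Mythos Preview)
Your proposal is correct and follows essentially the same route as the paper: choose the $W_p$ and the rescaling integer $M$ via Corollary \ref{corollary: weak global integral representatives}, build the acceptable weight $w=\prod_p w_p$ encoding local solubility together with the congruence conditions, use Proposition \ref{proposition: estimates on red and bigstab} to pass from the weight $w'=\#Z_{\bigG}(v)(\Q)\cdot w$ to $w$, apply Theorem \ref{theorem: counting infinitely many congruence conditions minimal}, and evaluate the local integrals via (\ref{equation: p-adic property W0}), the identity $\#\Jac_b(\Q_p)/2\Jac_b(\Q_p)=|1/8|_p\,\#\Jac_b[2](\Q_p)$, and Proposition \ref{proposition: tamagawa}.

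One small bookkeeping point you should not drop: for $p\nmid N$ the paper sets $W_p=\sh{E}_{p,\min}$ (and $n_p=0$) rather than removing the condition entirely. This is what forces the weighted count $N_w$ to match the sum over $M\cdot\sh{E}_{W,\min}$ on the nose; if you omit the $p$-adic minimality condition at those primes, your displayed equality relating $N_w$ to $\sum(\#\Sel_2\Jac_b-1)$ becomes only an inequality (which, to be fair, still suffices for the upper bound). Relatedly, to invoke Theorem \ref{theorem: counting infinitely many congruence conditions minimal} you need $w(v)=w(-v)$, and while the $\G_m$-action does give $\Jac_b\simeq\Jac_{-b}$ and hence symmetry of the solubility condition and of $m_p$, you also need the sets $W_p$ themselves to be stable under $b\mapsto -b$; this is easily arranged by symmetrising, but is worth saying explicitly.
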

\begin{proof}
	Choose the sets $W_p$ and integers $n_p\geq 0$ for $p| N$ satisfying the conclusion of Corollary \ref{corollary: weak global integral representatives}. We assume after shrinking the $W_p$ that they satisfy $W_p \subset \sh{E}_{p,\min}$. 
	If $p$ does not divide $N$, set $W_p = \sh{E}_{p,\min}$ and $n_p = 0$. Let $M = \prod_{p} p^{n_p}$. 
	
	For $v\in \intbigV(\Z)$ with $\bigpi(v) = b$, define $w(v) \in \Q_{\geq 0}$ by the following formula:
	\begin{displaymath}
	w(v) = 
	\begin{cases}
	\left( \sum_{v'\in \intbigG(\Z)\backslash \left( \intbigG(\Q)\cdot v \cap \intbigV(\Z) \right)}  \frac{\# Z_{\intbigG}(v')(\Q)}{\# Z_{\intbigG}(v')(\Z)} \right)^{-1} & \text{if }b\in p^{n_p}\cdot W_p \text{ and } \bigG(\Q_p)\cdot v \in \eta_{b}(\Jac_b(\Q_p)/2\Jac_b(\Q_p)) \text{ for all }p, \\
	0 & \text{otherwise.}
	\end{cases}
	\end{displaymath}
	Define $w'(v)$ by the formula $w'(v) = \#Z_{\intbigG}(v)(\Q) w(v)$. 
	Corollary \ref{corollary: Sel2 embeds} and Corollary \ref{corollary: weak global integral representatives} imply that if $b\in M \cdot \sh{E}_{W,\min}$, non-identity elements in the $2$-Selmer group of $\Jac_b$ correspond bijectively to $\bigG(\Q)$-orbits in $\bigV_b(\Q)$ that intersect $\intbigV(\Z)$ nontrivially, that are $\Q$-irreducible and that are soluble at $\Real$ and $\Q_p$ for all $p$. In other words, we have the formula:
	\begin{equation}\label{equation: selmer count vs orbit count}
	\sum_{\substack{b \in \sh{E}_{W,\min} \\ \height(b) <a}}\left( \#\Sel_2(\Jac_b)-1 \right) 
	= \sum_{\substack{b \in M\cdot\sh{E}_{W,\min} \\ \height(b) <M^{72}a}}\left( \#\Sel_2(\Jac_b)-1 \right)
	= N_{w'}(\intbigV(\Z)^{irr}\cap\bigV(\Real)^{sol}_{\min}  ,M^{72}a).
	\end{equation}
	Proposition \ref{proposition: estimates on red and bigstab} implies that
	\begin{equation}\label{equation: compare w and w'}
	 N_{w'}(\intbigV(\Z)^{irr}\cap \bigV(\Real)^{sol}_{\min}  ,M^{72}a) =  N_{w}(\intbigV(\Z)^{irr}\cap \bigV(\Real)^{sol}_{\min},M^{72}a) + o(a^{7/12}).
	\end{equation}
	It is more convenient to work with $w(v)$ than with $w'(v)$ because $w(v)$ is an acceptable function in the sense of \S\ref{subsection: congruence conditions}. 
	Indeed, for $v\in \intbigV(\Z_p)$ with $\bigpi(v)=b$, define $w_p(v) \in \Q_{\geq 0}$ by the following formula
	\begin{displaymath}
	w_p(v) = 
	\begin{cases}
	\left( \sum_{v'\in \intbigG(\Z_p)\backslash \left( \intbigG(\Q_p)\cdot v \cap \intbigV(\Z_p) \right)}  \frac{\# Z_{\intbigG}(v')(\Q_p)}{\# Z_{\intbigG}(v')(\Z_p)} \right)^{-1} & \text{if }b\in p^{n_p}\cdot W_p \text{ and } \bigG(\Q_p)\cdot v \in \eta_{b}(\Jac_b(\Q_p)/2\Jac_b(\Q_p)   ), \\
	0 & \text{otherwise.}
	\end{cases}
	\end{displaymath}
	Then an argument identical to \cite[Proposition 3.6]{BS-2selmerellcurves} shows that $w(v)  =\prod_pw_p(v)$ for all $v\in\intbigV(\Z)$. The remaining properties for $w(v)$ to be acceptable follow from Part 1 of Lemma \ref{lemma: the constants W0 and W} and Proposition \ref{prop: integral reps squarefree discr}. 
	From Lemma \ref{lemma: the constants W0 and W} we obtain the formula
	\begin{equation}\label{equation: mass formula w}
	\int_{v\in \intbigV(\Z_p)} w_p(v) d v = |W_0|_p \vol\left(\intbigG(\Z_p) \right) \int_{b \in p^{n_p}\cdot {W_p}} \frac{\#\Jac_b(\Q_p)/2\Jac_b(\Q_p)}{\#\Jac_b[2](\Q_p)}d b.
	\end{equation}
	Using the equality $\#\Jac_b(\Q_p)/2\Jac_b(\Q_p) = |1/8|_p  \#\Jac_b[2](\Q_p)$ which holds for all $b\in \sh{E}_p$, we see that the integral on the right hand side equals $|1/8|_p\vol(p^{n_p}\cdot W_p)=|1/8|_pp^{-n_p\dim_{\Q}V} \vol(W_p)$.
	Combining the identities (\ref{equation: selmer count vs orbit count}) and (\ref{equation: compare w and w'}) shows that 
	\begin{align*}
	\limsup_{a\rightarrow +\infty} a^{-7/12} \sum_{\substack{b \in \sh{E}_{W,\min} \\ \height(b) <a}}\left( \#\Sel_2(\Jac_b)-1 \right)
	& = \limsup_{a\rightarrow +\infty} a^{-7/12}N_w(\intbigV(\Z)^{irr}\cap \bigV(\Real)^{sol}_{\min} ,M^{72}a).\\
	\end{align*}
	This in turn by Theorem \ref{theorem: counting infinitely many congruence conditions minimal} is less then or equal to
	 \begin{displaymath}
	 \frac{|W_0|}{8} \left(\prod_p \int_{\intbigV(\Z_p)} w_p(v) d v \right)  \vol\left(\intbigG(\Z) \backslash \bigG(\Real) \right) 2^5M^{42}.
	 \end{displaymath}
	 Using (\ref{equation: mass formula w}) this simplifies to
	\begin{displaymath}
	\vol\left(\intbigG(\Z)\backslash \intbigG(\Real) \right) \prod_p \vol\left(\intbigG(\Z_p)\right) 2^5\prod_{p} \vol(W_p).
	\end{displaymath}
	On the other hand, an elementary sieving argument shows that
	\begin{displaymath}
	\lim_{a\rightarrow +\infty} \frac{\#  \{b \in \sh{E}_{W,\min} \mid ht(b) < a \}}{a^{7/12}} = 2^5\prod_p \vol(W_p).
	\end{displaymath}
	We conclude that 
	\begin{displaymath}
	\limsup_{a\rightarrow \infty} \frac{ \sum_{b\in \sh{E}_{W,\min},\; ht(b)<a } \left(\# \Sel_2\Jac_b-1 \right)  }{\#  \{b \in \sh{E}_{W,\min} \mid ht(b) < a \}}	\leq \vol\left(\intbigG(\Z)\backslash \intbigG(\Real) \right) \cdot \prod_p \vol\left(\intbigG(\Z_p)\right).
	\end{displaymath}
	Since the Tamagawa number of $\intbigG$ is $2$ (Proposition \ref{proposition: tamagawa}), the proposition follows. 
\end{proof}

To deduce Theorem \ref{theorem: main theorem} from Proposition \ref{proposition: local result of main theorem}, choose for each $i\geq1$ sets $W_{p,i} \subset\sh{E}_p$ (for $p$ dividing $N$) such that if $W_i = \sh{E} \cap \left( \prod_{p | N} W_{p,i} \right)$, then $W_i$ satisfies the conclusion of Proposition \ref{proposition: local result of main theorem} and we have a countable partition $\sh{E}_{\min} = \sh{E}_{W_1,\min} \sqcup \sh{E}_{W_2,\min} \sqcup \cdots$. 
By an argument identical to the proof of Theorem 7.1 in \cite{Thorne-Romano-E8}, we see that for any $\varepsilon >0$, there exists $k\geq 1$ such that 
\begin{displaymath}
\limsup_{a\rightarrow +\infty}  \frac{ \sum_{\substack{b \in \sqcup_{i\geq k} \sh{E}_{W_i,\min} , \height(b) < a  }} \left(\#\Sel_2\Jac_b -1\right)      }{ \# \{b \in \sh{E}_{\min} \mid \height(b) < a  \}  }<\varepsilon.
\end{displaymath}
This implies that 
\begin{align*}
\limsup_{a\rightarrow +\infty}  \frac{ \sum_{\substack{b\in \sh{E}_{\min} , \height(b) < a  }} \left(\#\Sel_2\Jac_b -1 \right)     }{ \# \{b \in \sh{E}_{\min} \mid \height(b) < a  \}  } &\leq 2 \limsup_{a\rightarrow +\infty}\frac{\# \{b \in \sqcup_{i<k} \sh{E}_{W_i,\min} \mid \height(b) < a \}  }{ \# \{b \in \sh{E}_{\min} \mid \height(b) < a  \}  } +\varepsilon \\
&\leq 2+\varepsilon.  
\end{align*}
Since the above inequality is true for any $\varepsilon >0$, we conclude the proof of Theorem \ref{theorem: main theorem}.

\section{Applications to rational points} \label{section: applications to rational points}

The aim of the last section of this paper is to prove the following concrete consequence of Theorem \ref{theorem: main theorem}.
Recall that for each $b\in \sh{E}$ we have a smooth projective curve $\bigprojcurve_b/\Q$ with marked rational point $P_{\infty} \in \bigprojcurve_b(\Q)$.

\begin{theorem}\label{theorem: poonen stoll analogue}
	A positive proportion of curves $\bigprojcurve_b$ for $b$ in $\sh{E}$ have only one rational point. More precisely, the quantity
	\begin{equation*}
	\liminf_{a\rightarrow \infty} \frac{ \# \{b \in \sh{E} \mid \height(b)<a ,\, \bigprojcurve_b(\Q)=\{P_{\infty}\} \}   }{\#  \{b \in \sh{E} \mid \height(b) < a \}}
	\end{equation*}
is strictly positive.
	
\end{theorem}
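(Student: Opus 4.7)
The plan is to adapt the strategy of Poonen--Stoll \cite{PoonenStoll-Mosthyperellipticnorational} to our non-hyperelliptic genus-$3$ family, combining the upper bound on the average size of $\Sel_2\Jac_b$ (Theorem \ref{theorem: main theorem}) with a $2$-adic Chabauty argument at the prime $p=2$.

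First I would establish an \emph{at-most equidistribution} of $2$-Selmer classes under the local $2$-descent map at the prime $2$ (call this Theorem \ref{theorem: equidistribution selmer}): for a compact congruence subfamily $\mathcal{F} \subseteq \sh{E}$ and a family of open compact subsets $U_b \subseteq \Jac_b(\Q_2)/2\Jac_b(\Q_2)$ depending continuously on $b$, the average over $b \in \mathcal{F}$ of $\#\{s\in\Sel_2\Jac_b : s|_{\Q_2}\in U_b\}$ is bounded above by the analogous average computed against the uniform measure. This should follow by rerunning the orbit count of \S\ref{section: proof of main theorems} with the acceptable weight function $w_2$ modified to be supported on orbits in $\intbigV(\Z_2)$ whose associated $2$-descent class lies in $U_b$; acceptability of the new weight is preserved. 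The crucial algebraic input is Theorem \ref{theorem: inject 2-descent into orbits}, which matches the identity Selmer class with the reducible (Kostant) orbit, so that the identity contribution is separated cleanly from the non-identity contribution and does not absorb into the upper bound.

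Next I would carry out the $2$-adic Chabauty step. Let $\rho_b\colon \bigprojcurve_b(\Q_2)\to \Jac_b(\Q_2)/2\Jac_b(\Q_2)$ be the composition of Abel--Jacobi based at $P_\infty$ with reduction modulo $2$. For $b$ with sufficiently good behaviour at $2$, a local formal-group analysis of the plane quartic (dimension $1$) inside its Jacobian (dimension $3$), modelled on the Newton-polygon computation of Poonen--Stoll, bounds $\rho_b(\bigprojcurve_b(\Q_2)\setminus\{P_\infty\})$ inside a subset $U_b$ of relative measure at most some absolute constant $\alpha_0$, uniformly on a compact congruence family. Any rational point $Q \in \bigprojcurve_b(\Q)\setminus\{P_\infty\}$ then yields a nontrivial class $[Q-P_\infty]\in \Jac_b(\Q)/2\Jac_b(\Q)\hookrightarrow \Sel_2\Jac_b$, provided $\Jac_b[2](\Q)=0$; the latter holds on a positive-density subfamily of $\sh{E}$ by Lemma \ref{lemma: bitangents and 2-torsion} applied at an auxiliary prime where the Galois action on bitangents is transitive. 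The local image at $2$ of such a nontrivial class lies in $U_b$.

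Finally, I would combine the two steps: choose a compact congruence subfamily $\mathcal{F}\subseteq \sh{E}$ of positive density on which $\Jac_b[2](\Q)=0$, and such that $\alpha_0$ is small enough for the at-most equidistribution of the first step to yield $\mathrm{avg}_{b\in\mathcal{F}}\#\{s\in\Sel_2\Jac_b\setminus\{0\} : s|_2\in U_b\} < 1$. Then a positive proportion of $b \in \mathcal{F}$ has no non-identity Selmer class with local image at $2$ in $U_b$, so $\bigprojcurve_b(\Q) = \{P_\infty\}$ for this positive proportion. The hardest step will be the uniform $2$-adic Chabauty bound on the relative measure of $\rho_b(\bigprojcurve_b(\Q_2))$: this requires a careful analysis of the plane quartic family and its formal logarithm in residue characteristic $2$, analogous to but technically distinct from the hyperelliptic computation of Poonen--Stoll, and likely requires imposing additional local conditions at $2$ to ensure enough control of the reduction type of $\bigprojcurve_b$ so that the constant $\alpha_0$ can be certified below the threshold required by step one.
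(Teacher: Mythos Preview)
Your overall strategy follows Poonen--Stoll and is close to the paper's, but there is a genuine logical gap in your Chabauty step. You assert that a rational point $Q\in\bigprojcurve_b(\Q)\setminus\{P_\infty\}$ yields a \emph{nontrivial} class $[Q-P_\infty]\in\Jac_b(\Q)/2\Jac_b(\Q)$ provided $\Jac_b[2](\Q)=0$. This is false: the class vanishes whenever $Q-P_\infty\in 2\Jac_b(\Q)$, which has nothing to do with rational $2$-torsion. If $Q-P_\infty=2R$ for some $R\in\Jac_b(\Q)$, then $R$ does give a (possibly nontrivial) Selmer class, but $R$ need not lie on the curve, so its local image at $2$ need not lie in your set $U_b\subset\Jac_b(\Q_2)/2\Jac_b(\Q_2)$. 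Your implication ``no nonzero Selmer class lands in $U_b$ $\Rightarrow$ $\bigprojcurve_b(\Q)=\{P_\infty\}$'' therefore fails.

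The paper (following \cite{PoonenStoll-Mosthyperellipticnorational}) repairs exactly this by working not with the map to $\Jac_b(\Q_2)/2\Jac_b(\Q_2)$ but with $\rho\log\colon\Jac_b(\Q_2)\setminus\Jac_b(\Q_2)_{\mathrm{tors}}\to\P^2(\F_2)$, the $2$-adic logarithm followed by \emph{projective} reduction. The projectivization makes the map invariant under scaling by powers of $2$, so $\rho\log(Q)=\rho\log(R)$ in the situation above. The key input (this is \cite[Corollary~6.3]{PoonenStoll-Mosthyperellipticnorational}) is that if $\sigma\colon\Sel_2\Jac_b\to\F_2^3$ is injective, then $\log\Jac_b(\Q)$ spans a \emph{saturated} $\Z_2$-submodule of $\Z_2^3$, forcing $\rho\log(Q)\in\P\sigma(\Sel_2\Jac_b)$ for every non-torsion $Q$. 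The paper then controls the size of $I=\rho\log\bigprojcurve_b(\Q_2)$ not by a uniform bound but by exhibiting a single explicit curve with $\#I=2$ (Proposition~\ref{proposition: existence good curve}) and spreading via local constancy (Lemma~\ref{lemma: rho locally constant in trivializing congruence class}); the torsion locus where $\rho\log$ is undefined is handled by a generic Manin--Mumford argument (Proposition~\ref{proposition: generic manin mumford}, Lemma~\ref{lemma: density nontrivial torsion is zero}). Your separate imposition of $\Jac_b[2](\Q)=0$ via bitangents is unnecessary: injectivity of $\sigma$ already forces it and is controlled directly by the equidistribution at $w=0$.
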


The proof will be given at the end of this section. We will achieve this by building on the work of Poonen and Stoll \cite{PoonenStoll-Mosthyperellipticnorational} where they prove the corresponding result for odd hyperelliptic curves. We advise the reader to consult the introduction of that paper where the strategy of the proof is carefully explained.
We start by introducing some notation from \cite{PoonenStoll-Mosthyperellipticnorational}.
\begin{itemize}
    \item For a field $k$ and integer $g \geq 1$ we let $\P$ be the usual map $k^g \setminus \{0 \} \rightarrow \P^{g-1}(k)$.
We write $\rho$ for the reduction map $\P^{g-1}(\Q_p)  = \P^{g-1}(\Z_p) \rightarrow \P^{g-1}(\F_p)$ or for the composition $\Q_p^g \setminus \{0\} \xrightarrow{\P} \P^{g-1}(\Q_p) \xrightarrow{\rho} \P^{g-1}(\F_p) $.
If $T$ is a subset of a set $S$ and $f$ is a function defined only on $T$, then $f(S)$ means $f(T)$.
    \item If $A$ is an abelian variety over $\Q_p$ of dimension $g$ we write $\log$ for the logarithm homomorphism
$A(\Q_p) \rightarrow \HH^0(A,\Omega^1_{A/\Q_p})^{\vee} \simeq \Q_p^g,$
see \cite[\S4]{PoonenStoll-Mosthyperellipticnorational}.
The map $\log$ is a local isomorphism with kernel $A(\Q_p)_{tors}$, the torsion points of $A(\Q_p)$.
The image of $\log$ is a lattice in $\Q_p^g$, so after choosing an appropriate basis of $1$-forms $\log$ is a surjective homomorphism $A(\Q_p) \rightarrow \Z_p^g$.
    \item We define $\rho\log$ as the composition of $\log\colon A(\Q_p) \rightarrow \Z_p^g$ with the partially defined map $\rho\colon \Z_p^g \dashrightarrow \P^{g-1}(\F_p)$. The map $\rho \log$ is defined on $A(\Q_p) \setminus A(\Q_p)_{tors}$.
    \item If $A$ is an abelian variety over $\Q$ we have the $2$-Selmer group $\Sel_2A$ associated to $A$, which comes with a homomorphism $\Sel_2 A \rightarrow A(\Q_2)/2A(\Q_2)$.
Write $\sigma$ for the composite of the latter homomorphism with the mod $2$ reduction of the logarithm map $\log \otimes \F_2 \colon A(\Q_2)/2A(\Q_2)  \rightarrow \F_2^g $: it defines a homomorphism $\sigma \colon \Sel_2 A \rightarrow \F_2^g$.
\end{itemize}


Recall that we have defined the abelian scheme $\Jac \rightarrow \bigB^{\rs}$ as the Jacobian of the family of smooth projective curves $\bigprojcurve^{\rs}\rightarrow \bigB^{\rs}$ in \S\ref{subsection: a family of curves}.

\begin{proposition}\label{proposition: generic manin mumford}
	Let $k$ be a field of characteristic zero with separable closure $k^s$ and let $\Spec k \rightarrow \bigB^{\rs}$ be a map to the generic point of $\bigB^{\rs}$. 
	Let $X/k$ be the curve corresponding to this map, with marked point $P_{\infty} \in X(k)$. Let $J_X$ be the Jacobian variety of $X$. 
	Use the point $P_{\infty}$ to embed $X$ in $J_X$. Let $J_X(k^s)_{tors}$ denote the torsion points in $J_X(k^s)$. 
	Then we have $X(k^s)\cap J_X(k^s)_{tors} = \{0\}$.
\end{proposition}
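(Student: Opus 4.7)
The plan is to dispose of the $2$-torsion case geometrically and then to extend to arbitrary torsion using Raynaud's Manin--Mumford theorem and the monodromy of $J_X[2]$ from Proposition \ref{proposition: monodromy of J[2]}.

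\textit{Two-torsion.} If $Q \in X(k^s)$ satisfies $2(Q - P_\infty) = 0$ in $J_X$, then $2Q \sim 2P_\infty$ as divisors on $X$. Since $X$ is non-hyperelliptic of genus $3$, any $g^1_2$ is excluded, so $h^0(2P_\infty) = 1$. Hence $|2P_\infty| = \{2P_\infty\}$, forcing $2Q = 2P_\infty$ as divisors and therefore $Q = P_\infty$.

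\textit{Setup for general torsion.} The monodromy representation on $\Lambda \otimes \Q$ is the standard $6$-dimensional representation of $W(E_6)$, which is absolutely irreducible; hence the generic Jacobian $J_X$ is geometrically simple. In particular, $X$ is not contained in any translate of a proper abelian subvariety of $J_X$, and Raynaud's theorem (Manin--Mumford) implies that $T := X(k^s) \cap J_X(k^s)_{tors}$ is finite. The set $T$ is also $\Gamma_k$-stable.

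\textit{Reduction to a specialization.} Suppose for contradiction $T \supsetneq \{P_\infty\}$. After passing to a finite \'etale cover of $\bigB^{\rs}$, the non-$P_\infty$ elements of $T$ extend to finitely many torsion sections of $\Jac$ lying on $\bigprojcurve^{\rs}$. It therefore suffices to exhibit a single geometric point $b \in \bigB^{\rs}$ at which $\bigprojcurve_b(\overline{k(b)}) \cap \Jac_b(\overline{k(b)})_{tors} = \{P_\infty\}$: the extended section would then specialize to a non-$P_\infty$ torsion point of $\bigprojcurve_b$, a contradiction.

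\textit{Main obstacle.} Constructing such a fiber $b$ is the main technical difficulty. Some torsion orders can be ruled out on every fiber by direct geometry: $2$-torsion by the argument above; $4$-torsion because $4(Q - P_\infty) = 0$ forces $4Q \sim K$, i.e.\ $Q$ is a hyperflex of $\bigprojcurve_b$, and the generic fiber has $P_\infty$ as its unique hyperflex (cutting out one additional hyperflex is a codimension-$2$ condition on plane quartics while cutting out one is codimension $1$); $3$-torsion because $3(Q - P_\infty) = 0$ forces $Q$ to be a total ramification point of the $g^1_3$ on $\bigprojcurve_b$ cut out by $|3P_\infty|$, and a Riemann--Hurwitz count shows that the generic fiber has $P_\infty$ as its only such point. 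Handling all remaining orders $N$ is the key additional step; one approach combines Proposition \ref{proposition: monodromy of J[2]}(2) (the absence of nonzero monodromy-fixed vectors in $\Lambda/2\Lambda$) with a big-monodromy statement on $J_X[\ell]$ for odd primes $\ell$ (provable from the generic simplicity of $J_X$), which forces the Galois orbit of any nontrivial element of $T - P_\infty$ to be too large to fit inside the finite set $T$, yielding the desired contradiction on a sufficiently generic fiber.
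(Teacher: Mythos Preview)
Your proposal has a genuine gap at the final and decisive step. The assertion that a big-monodromy statement on $J_X[\ell]$ for odd primes $\ell$ is ``provable from the generic simplicity of $J_X$'' is false: geometric simplicity of the generic Jacobian amounts only to irreducibility of the monodromy representation on $H^1(C_b,\Q)$, and this says nothing about the size of the image in $\Sp_6(\F_\ell)$ for any individual $\ell$. A priori the integral monodromy could be a finite group acting $\Q$-irreducibly. What is actually needed is that the monodromy contains the level-$2$ congruence subgroup $\ker(\Sp_6(\Z)\to\Sp_6(\Z/2\Z))$; the paper obtains this from Wajnryb's theorem on the monodromy of the versal deformation of a simple plane curve singularity, and then invokes the argument of Poonen--Stoll (their Theorem~7.1) to reduce directly to exact order $n\in\{2,4\}$. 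There is also a confusion in your simplicity argument: Proposition~\ref{proposition: monodromy of J[2]} describes the monodromy on the character lattice $\Lambda$ of the torus $A$, not on $H_1(C_b,\Z)$; these are identified only modulo $2$, and the integral monodromy on $H_1$ is in fact much larger than $W(E_6)$.

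The specialization set-up via Raynaud is also not carried through: you announce that it suffices to exhibit a single fibre $b$ with the desired property, but never construct one, and instead revert to arguing on the generic fibre, which is what was to be proved. Your $n=2$ argument is correct and matches the paper's; your $n=4$ argument via hyperflexes is correct and in fact cleaner than the paper's Riemann--Roch manipulation (which uses the big monodromy to produce an auxiliary point $Q\in X$ with $Q-P_\infty=3(P-P_\infty)$). But without the big-monodromy input, orders other than $2,3,4$ remain unexcluded and the argument is incomplete.
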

\begin{proof}
	We may assume that $k = \CC(p_2,p_5,p_6,p_8,p_9,p_{12})$ and $X$ is given by projective closure of the equation $y^3=x^4+(p_2x^2+p_5x+p_8)y+p_6x^2+p_9x+p_{12}$.
	Since this equation is the versal deformation of the singularity $y^3=x^4$, 
	\cite[Theorem 1(2)]{Wajnryb-monodromygroupplanecurvesingularity} shows that the monodromy group contains $\ker\left(\Sp_6(\Z) \rightarrow \Sp_6(\Z/2\Z) \right)$.
	 Suppose $P \in X(k^s)$ is a torsion point of exact order $n>1$.
	By an argument identical to \cite[Theorem 7.1]{PoonenStoll-Mosthyperellipticnorational} using the monodromy action and the fact that $X$ is not hyperelliptic, we may assume that $n=2$ or $4$.  
	If $n=4$ then $3P-3P_{\infty}$ is linearly equivalent to $Q-P_{\infty}$ for some $Q\in X(k^s)$ different from $P_{\infty}$, again using the monodromy action. 
	So $Q+2P_{\infty} \sim 3P$ and the line bundle $\mathcal{O}(Q+2P_{\infty})$ has at least $2$ independent global sections. Since the divisor $4P_{\infty}$ is canonical Riemann-Roch implies that $2P_{\infty}-Q$ is linearly equivalent to an effective divisor. This shows that $Q = P_{\infty}$, contradicting our previous assumptions.
	If $n=2$ then $2P-2P_{\infty}$ is a principal divisor, again a contradiction.
	 We have obtained a contradiction in all cases, proving the proposition. 
\end{proof}

For every prime $p$ we obtain a family of $p$-adic Lie groups $J(\Q_p)\rightarrow \bigB^{\rs}(\Q_p)$.
As before we define $\sh{E}_p = \intbigB(\Z_p)\cap \bigB^{\rs}(\Q_p)$.
We define a measure on $\sh{E}_p$ by restricting the measure on $\intbigB(\Z_p) = \Z_p^6$ defined in \S\ref{subsection: heights and measures}.
Following \cite[\S8.2]{PoonenStoll-Mosthyperellipticnorational}, we say $U \subset \sh{E}_p$ is a \define{congruence class} if $U$ is the preimage of a subset of $\intbigB(\Z_p/p^e\Z_p)$ under the reduction map $\intbigB(\Z_p) \rightarrow \intbigB(\Z_p/p^e\Z_p)$ for some $e\geq 1$.
We say a congruence class $U$ is \define{trivializing} if $\Jac(\Q_p) \rightarrow \bigB^{\rs}(\Q_p)$ can be trivialized above $U$, in the sense of \cite[Definition 8.1]{PoonenStoll-Mosthyperellipticnorational}.

The following equidistribution result is a crucial ingredient in the proof of Theorem \ref{theorem: poonen stoll analogue} and readily follows from the proof of Theorem \ref{theorem: main theorem}. (See \cite[Theorem 12.4]{Bhargava-Gross-hyperellcurves} for more details.)

\begin{theorem}\label{theorem: equidistribution selmer}
	Let $U \subset\sh{E}_2$ be a trivializing congruence class. For any $w\in \F_2^3$, the average size of $\#\{s\in \Sel_2\Jac_b\setminus \{0\} \mid \sigma(s)=w \}$ as $b$ varies in $\sh{E}\cap U$, is bounded above by $1/4$.
\end{theorem}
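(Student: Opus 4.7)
The plan is to adapt the proof of Theorem \ref{theorem: main theorem}, refining the weight function at $p=2$ so that it selects exactly those $2$-Selmer elements whose image in $\Jac_b(\Q_2)/2\Jac_b(\Q_2)$ has mod-$2$ logarithm equal to $w$. Because $U$ is trivializing, we may (after shrinking $U$ if necessary) fix a trivialization of $\Jac(\Q_2) \to U$ as in \cite[Definition 8.1]{PoonenStoll-Mosthyperellipticnorational}, and thereby define a surjective homomorphism $\sigma_b\colon \Jac_b(\Q_2)/2\Jac_b(\Q_2) \to \F_2^3$ that depends locally constantly on $b\in U$. The fibers of $\sigma_b$ all have cardinality $\#\Jac_b(\Q_2)/2\Jac_b(\Q_2)/8$, and the zero Selmer element always maps to $0$, so (after subtracting $1$ when $w=0$) the non-identity Selmer elements counted by $\#\{s\in \Sel_2\Jac_b\setminus\{0\}\mid \sigma(s)=w\}$ correspond, under Corollary \ref{corollary: Sel2 embeds} and Corollary \ref{corollary: weak global integral representatives}, to $\bigG(\Q)$-orbits in $\bigV_b(\Q)$ with a integral representative that are $\Q$-irreducible, soluble everywhere locally, and satisfy the additional constraint at $p=2$.

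Next I would introduce the modified acceptable weight $w^{(w)}\colon \intbigV(\Z)\to[0,1]$ defined as $w^{(w)}(v)=\prod_p w^{(w)}_p(v)$, where $w^{(w)}_p=w_p$ for $p\neq 2$ (as in the proof of Proposition \ref{proposition: local result of main theorem}) and where $w^{(w)}_2(v)$ is obtained from $w_2(v)$ by further multiplying by the indicator that the orbit $\bigG(\Q_2)\cdot v$ lies in $\eta_{\bigpi(v)}(\sigma_b^{-1}(w))$. The trivialization of the Jacobian over $U$ ensures that $w^{(w)}_2$ is locally constant on $\intbigV(\Z_2)\setminus\{\Delta=0\}$, so that $w^{(w)}$ is acceptable in the sense of \S\ref{subsection: congruence conditions} and Theorem \ref{theorem: counting infinitely many congruence conditions} applies.

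The only new piece of the mass computation is the local integral at $p=2$, which, by Lemma \ref{lemma: the constants W0 and W} and the surjectivity of $\sigma_b$, is
\begin{equation*}
\int_{\intbigV(\Z_2)} w_2^{(w)}(v)\,dv = |W_0|_2\,\vol(\intbigG(\Z_2)) \int_{b\in 2^{n_2} W_2} \frac{\#\sigma_b^{-1}(w)}{\#\Jac_b[2](\Q_2)}\,db,
\end{equation*}
which is exactly $1/8$ times the analogous integral appearing in the proof of Proposition \ref{proposition: local result of main theorem}. Running the rest of the argument of \S\ref{section: proof of main theorems} verbatim (the archimedean factor, the factor $2$ from the Tamagawa number via Proposition \ref{proposition: tamagawa}, and the volume computation on the base), and finally sieving over a countable partition of $\sh{E}_2\cap U$ into trivializing congruence classes, produces the upper bound $(3-1)/8 = 1/4$.

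The main obstacle is not any single hard estimate but the careful bookkeeping needed to verify that $w^{(w)}_2$ is indeed acceptable and $\bigG(\Z_2)$-invariant: one has to check that on the $p=2$ local orbit space, the condition ``$\sigma_b=w$'' depends only on the $\bigG(\Q_2)$-orbit and varies locally constantly in $v$, which uses both the trivialization on $U$ and Proposition \ref{prop: integral reps squarefree discr} to pass between integral and rational orbits. All other ingredients (the counting of irreducible integral orbits, control of reducible and large-stabilizer loci via Proposition \ref{proposition: estimates on red and bigstab}, and the inclusion-exclusion that upgrades the local statement to a global one) are already in place from Sections~\ref{section: counting} and~\ref{section: proof of main theorems}.
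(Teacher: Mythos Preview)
Your approach is correct and is precisely the one the paper has in mind: the paper's own proof is a one-line reference to the proof of Theorem~\ref{theorem: main theorem} together with \cite[Theorem~12.4]{Bhargava-Gross-hyperellcurves}, and your proposal is exactly the unwinding of that reference---refine $w_2$ by the indicator of $\eta_b(\sigma_b^{-1}(w))$, observe this cuts the $2$-adic mass by a factor of $8$, and conclude the bound $2/8=1/4$. Two small clarifications: the ``shrinking $U$'' and the ``sieving over a countable partition'' are really the same step (you partition $\sh{E}\cap U$ by the open compact sets $W_p$ for $p\mid N$ as in the passage from Proposition~\ref{proposition: local result of main theorem} to Theorem~\ref{theorem: main theorem}, taking $W_2\subset U$ since $2\mid N$); and the invocation of Proposition~\ref{prop: integral reps squarefree discr} is needed only to verify the second acceptability condition for the unmodified $w_p$ at large $p$, not for anything specific to the new $2$-adic constraint.
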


As for the average size of the $2$-Selmer group we only obtain an upper bound, but this will be enough for our purposes. 

Let $Z\subset \sh{E}_p$ be the subset of $b\in \sh{E}_p$ such that $\bigprojcurve_b(\Q_p)\cap \Jac_b(\Q_p)_{tors}\neq \{0\}$, where $\bigprojcurve_b$ is embedded in $\Jac_b$ via the Abel-Jacobi map with basepoint $P_{\infty}$.

\begin{lemma}\label{lemma: density nontrivial torsion is zero}
	The set $Z$ is closed in $\sh{E}_p$ and of measure zero. Moreover the set of all $b\in \sh{E}$ such that $b \in Z$ has density zero. 
\end{lemma}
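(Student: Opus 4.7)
The plan is to prove three claims in sequence: (a) $Z$ is closed in $\sh{E}_p$; (b) $Z$ has Haar measure zero in $\sh{E}_p$; (c) the set of $b \in \sh{E}$ with $b \in Z$ has density zero in $\sh{E}$.

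For (a), I will work with the $p$-adic analytic family of maps $\Phi_b = \log \circ AJ_{P_\infty}\colon \bigprojcurve_b(\Q_p) \to \Q_p^3$, which vary continuously with $b \in \sh{E}_p$ and whose zero sets are exactly $\bigprojcurve_b(\Q_p) \cap \Jac_b(\Q_p)_{\mathrm{tors}}$. Given $b_n \to b$ with witnesses $P_n \in \bigprojcurve_{b_n}(\Q_p) \setminus \{P_\infty\}$ satisfying $\Phi_{b_n}(P_n) = 0$, properness of $\bigprojcurve^{\rs} \to \bigB^{\rs}$ provides a subsequential limit $P_n \to P \in \bigprojcurve_b(\Q_p)$, and continuity of $\Phi$ in the family forces $\Phi_b(P) = 0$. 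If $P \neq P_\infty$ then $b \in Z$. To exclude $P = P_\infty$, I use that $AJ_{P_\infty}$ is an immersion along the section $P_\infty$ (which uses genus $\geq 2$ and smoothness of $\bigprojcurve^{\rs}$ along $P_\infty$), so the derivative of $\Phi$ in the fibre direction at $(b, P_\infty)$ is injective. A family version of the $p$-adic implicit function theorem then produces a tubular neighbourhood of the section $P_\infty$ inside $\bigprojcurve^{\rs}(\Q_p)$ on which $\Phi^{-1}(0)$ is exactly the section itself, contradicting $P_n \neq P_\infty$ for $n$ large.

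For (b), I reduce to an algebro-geometric input. For each $N \geq 1$, let $W_N \subset \bigprojcurve^{\rs}$ be the closed subscheme defined as the preimage of the zero section under the morphism $\bigprojcurve^{\rs} \to \Jac$ sending $P$ to $N \cdot (P - P_\infty)$. Proposition \ref{proposition: generic manin mumford}, applied at the generic point of $\bigB^{\rs}$, shows that the generic fibre of $W_N \to \bigB^{\rs}$ is set-theoretically $\{P_\infty\}$. Hence the image in $\bigB^{\rs}$ of $W_N$ with the section $P_\infty$ removed is a proper closed subscheme $Y_N \subsetneq \bigB^{\rs}$. Then $Z \subseteq \bigcup_{N \geq 1} Y_N(\Q_p) \cap \sh{E}_p$, and each term is a $p$-adic analytic subset of positive codimension in the affine space $\intbigB(\Z_p)$, hence of Haar measure zero; a countable union of null sets is null.

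For (c), I combine (a) and (b) with equidistribution of integer points. Given $\epsilon > 0$, regularity of Haar measure on $\intbigB(\Z_p) = \Z_p^6$ lets me find a compact open $U_\epsilon \subset \intbigB(\Z_p)$ containing $Z$ with $\vol(U_\epsilon) < \epsilon$. Standard lattice-point estimates applied to $\intbigB(\Z) = \Z^6$ with the $\G_m$-homogeneous height $\height$ then give $\#\{b \in \intbigB(\Z) \cap U_\epsilon : \height(b) < a\} \ll \epsilon \cdot a^{7/12}$, while $\#\{b \in \sh{E} : \height(b) < a\} \asymp a^{7/12}$. Letting $\epsilon \to 0$ concludes that $\sh{E} \cap Z$ has density zero in $\sh{E}$. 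The main obstacle will be step (a): ruling out the degenerate limit $P_n \to P_\infty$ requires a uniform-in-$b$ neighbourhood of the section $P_\infty$ on which $\Phi$ is injective, supplied by smoothness of $\bigprojcurve^{\rs}$ along $P_\infty$ and the family version of the $p$-adic implicit function theorem; once that is settled, steps (b) and (c) are essentially formal consequences of Proposition \ref{proposition: generic manin mumford} and standard sieving.
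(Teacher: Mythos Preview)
Your proposal is correct and follows essentially the same strategy as the Poonen--Stoll argument the paper invokes: the key input in both is Proposition~\ref{proposition: generic manin mumford}, combined with local $p$-adic analysis and a standard sieve for part (c). Your execution differs slightly from theirs in that Poonen--Stoll handle closedness and measure zero simultaneously by passing to a trivializing congruence class, where the torsion subgroup of $\Jac_b(\Q_p)$ is identified with a \emph{fixed} finite group $T$, reducing everything to finitely many analytic conditions indexed by $T\setminus\{0\}$; you instead treat (a) by a sequential argument via $\log$ and (b) via the algebraic loci $Y_N$. Both organisations work, though yours requires two small adjustments. In (a), the map $\log$ (and hence $\Phi$) is only well-defined as an analytic family after restricting to a trivializing congruence class, so you should localise there before running the implicit function theorem; closedness being a local property, this is harmless. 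In (c), outer regularity gives you a small compact open neighbourhood of the \emph{closure} of $Z$ in $\intbigB(\Z_p)$, not of $Z$ itself; since $Z$ is only closed in $\sh{E}_p$, you should note that $\overline{Z}\setminus Z$ lies in the discriminant locus $\{\Delta=0\}$, which also has measure zero.
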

\begin{proof}
	The first part follows from the previous proposition in the same way as \cite[Proposition 8.5]{PoonenStoll-Mosthyperellipticnorational} follows from \cite[Theorem 7.1]{PoonenStoll-Mosthyperellipticnorational}.
	The second part follows from the previous lemma in a similar way as \cite[Corollary 8.6]{PoonenStoll-Mosthyperellipticnorational} follows from \cite[Proposition 8.5]{PoonenStoll-Mosthyperellipticnorational}.
\end{proof}

\begin{lemma}\label{lemma: rho locally constant in trivializing congruence class}
	Let $U\subset \sh{E}_p$ be a trivializing congruence class. 
	Let $Z$ be as in Lemma \ref{lemma: density nontrivial torsion is zero}.
	Then $\rho \log \bigprojcurve_b(\Q_p)$ in $\P^2(\F_p)$ is locally constant as $b$ varies in $U\setminus Z$.
\end{lemma}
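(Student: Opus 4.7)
The plan is to adapt the argument of the analogous local-constancy statement for odd hyperelliptic curves in \cite{PoonenStoll-Mosthyperellipticnorational}. Fix $b_0 \in U \setminus Z$; the goal is to produce an open $p$-adic neighbourhood $V \subset U$ of $b_0$ on which $\rho\log \bigprojcurve_b(\Q_p) = \rho\log \bigprojcurve_{b_0}(\Q_p)$. Invoking the trivializing hypothesis on $U$, I would first identify the family $\Jac_b(\Q_p)$ for $b$ near $b_0$ with a fixed $p$-adic Lie group $\Q_p^3 \oplus T$, with $T$ finite, in a manner compatible with $\log$ and depending continuously on $b$. The first technical step is to show that the compact subset $\log \bigprojcurve_b(\Q_p) \subset \Q_p^3$ varies continuously in Hausdorff distance as $b$ ranges over $U$; this uses the properness and flatness of the integral model $\intbigcurve \rightarrow \intbigB_{\Z_p}$ together with Hensel-type lifting at smooth points of the common special fibre.

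With this continuity in hand, I would split $\bigprojcurve_{b_0}(\Q_p)$ into a small residue disc $D_{b_0}$ around $P_{\infty}$ and its compact complement $K_{b_0}$. On $K_{b_0}$ the $p$-adic norm $|\log P|$ is bounded below by some $\delta > 0$, since $b_0 \notin Z$ (so $\log$ is nowhere zero on $K_{b_0}$) and $K_{b_0}$ is compact. Then for $b$ sufficiently close to $b_0$, every $P \in \bigprojcurve_b(\Q_p) \setminus D_b$ (where $D_b$ is the residue disc around $P_{\infty}$ in $\bigprojcurve_b$) satisfies $|\log P - \log P_0| < \delta/p$ for some $P_0 \in K_{b_0}$, which forces $\rho\log P = \rho\log P_0$ since both lift to the same residue disc of $\P^2(\Q_p)$. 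On $D_{b_0} \setminus \{P_{\infty}\}$, picking a local parameter $t$ at $P_{\infty}$ yields a power-series expansion $\log(t) = a_1(b_0)\, t + O(t^2)$ in $\Q_p^3$, where $a_1(b_0) \in \Q_p^3 \setminus \{0\}$ is the image of the tangent vector $d/dt|_{P_{\infty}}$ under the identification $T_0\Jac_{b_0}(\Q_p) \simeq \Q_p^3$; hence $\rho\log$ is constant on $D_{b_0} \setminus \{P_{\infty}\}$, equal to $\rho(a_1(b_0))$. Since $a_1(b)$ varies continuously in $b$, shrinking $V$ further ensures $\rho(a_1(b)) = \rho(a_1(b_0))$. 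Running the same comparison with the roles of $b$ and $b_0$ reversed yields the desired equality of subsets of $\P^2(\F_p)$.

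The main obstacle is the Hausdorff-continuity of the family $\{\log \bigprojcurve_b(\Q_p)\}_{b \in U}$ in $\Q_p^3$, particularly at residue discs of $\bigprojcurve_b(\Q_p)$ whose reduction mod $p$ lands in the non-smooth locus of the special fibre of $\intbigcurve|_U$. I expect this to be handled either by shrinking $U$ to a smaller congruence class on which the integral model admits a well-understood resolution, or by exploiting the smoothness of the compactified Jacobian $\CJac$ from \S\ref{subsection: compactifications} to produce a uniform analytic parametrization of $\bigprojcurve_b(\Q_p) \hookrightarrow \Jac_b(\Q_p)$ near singular residue classes. Once this continuity is secured, the remainder of the argument is routine $p$-adic analysis as sketched.
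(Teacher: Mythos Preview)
Your strategy is essentially correct but is more laborious than necessary, and the ``main obstacle'' you identify is a red herring. You never need the integral model $\intbigcurve$ or the compactified Jacobian: since $\bigprojcurve^{\rs} \to \bigB^{\rs}$ is smooth and proper, the map of $p$-adic analytic manifolds $\bigprojcurve(\Q_p)_{U'} \to U'$ (with $U' = U \setminus Z$) is a proper submersion, hence a locally trivial fibration. Combined with the trivialization of $\Jac(\Q_p)_{U'}$ and the continuity of $\log$, this immediately gives the Hausdorff continuity of $b \mapsto \log \bigprojcurve_b(\Q_p)$ that you want, with no reference to any special fibre over~$\F_p$. Residue discs reducing to singular points of $\intbigcurve_{\F_p}$ play no role: everything happens over $\Q_p$, where all fibres are smooth.

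The paper's proof avoids your case split (disc at $P_\infty$ versus its complement) by a cleaner global observation. Rather than bounding $|\log P|$ from below away from $P_\infty$ and analysing the power series near $P_\infty$, one notes that the preimage of $0 \in \Z_p^3$ under $\log$ on $\bigprojcurve(\Q_p)_{U'}$ is precisely the smooth divisor $P_{\infty,U'}$, so the partially-defined composite $\rho\log$ extends to a \emph{continuous} map $e \colon \bigprojcurve(\Q_p)_{U'} \to \P^2(\F_p)$ on the whole family. Since the target is discrete, the fibres of $e$ are open and closed; since $\bigprojcurve(\Q_p)_{U'} \to U'$ is proper and open, so are their images in $U'$. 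Taking finite intersections and complements shows that the subset of $U'$ on which $e(\bigprojcurve_b(\Q_p))$ equals any fixed subset of $\P^2(\F_p)$ is open and closed, which is exactly local constancy. This replaces your Hausdorff-distance and $\delta/p$ estimates by a one-line topological argument.
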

\begin{proof}
    The proof is very similar to that of \cite[Proposition 8.7]{PoonenStoll-Mosthyperellipticnorational}; we sketch the details. 
	Let $U' = U\setminus Z$.
	Choose an isomorphism of $p$-adic analytic manifolds $\Jac(\Q_p)_{U'}\simeq \Z_p^3\times F\times U'$ over $U'$, where $F$ is a finite group. 
	We have a chain of analytic maps of $p$-adic manifolds
	\begin{align*}
	\bigprojcurve(\Q_p)_{U'} \rightarrow \Jac(\Q_p)_{U'} \xrightarrow{\log} \Z_p^3\times U' \twoheadrightarrow \Z_p^3 \dashrightarrow \P^2(\Q_p) \xrightarrow{\rho} \P^{2}(\F_p), 
	\end{align*}
	except that the dashed arrow is only defined on $\Z_p^3 \setminus \{0\}$.
	The inverse image of $0 \in \Z_p^3$ in $\bigprojcurve(\Q_p)_{U'}$ is $P_{\infty,U'}$, the section at infinity. Since the latter is a smooth divisor on $C(\Q_p)_{U'}$, the composition $\bigprojcurve(\Q_p)_{U'} \setminus P_{\infty,U'} \rightarrow \P^{2}(\F_p)$ extends to a continuous map $e\colon \bigprojcurve(\Q_p)_{U'} \rightarrow \P^2(\F_p)$.

	By continuity the fibres of $e$ are open and closed. 
	So are their images in $U'$, since $\bigprojcurve\rightarrow \bigB$ is flat and proper.
	Thus for each $c\in \F_2^3$, the set of $b\in U'$ such that $c\in e(\bigprojcurve_b(\Q_p))$ is open and closed.
	By considering intersections and complements of such sets, we see that $e(\bigprojcurve_b(\Q_p))$ is locally constant as $b$ varies in $U'$. The lemma follows from the equality $ \rho\log(\bigprojcurve_b(\Q_p)) =e(\bigprojcurve_b(\Q_p))$.
	
\end{proof}

\begin{proposition}\label{proposition: existence good curve}
	There exists an element $b\in \sh{E}$ such that $b \in \sh{E}_2 \setminus Z$ and $\#\rho \log \bigprojcurve_b(\Q_2) = 2$.
\end{proposition}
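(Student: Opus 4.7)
The plan is to produce the desired $b$ in two stages: first exhibit an explicit $b_0\in \sh{E}_2\setminus Z$ with $\#\rho\log\bigprojcurve_{b_0}(\Q_2)=2$, and then use local constancy together with strong approximation to pass to an element of $\sh{E}$. Note that the value of $\rho\log$ extended to $P_\infty$ always contributes at least one element to $\rho\log\bigprojcurve_b(\Q_2)$, so the real content of the condition is that this set is \emph{not too large}—a very mild requirement, as $\P^2(\F_2)$ only has seven elements.

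For the first stage, I would construct an explicit plane quartic $\bigprojcurve_{b_0}$ in our family that has good reduction at $2$ and few $\F_2$-points. Because the family \eqref{equation : E6 family middle of paper} has six free coefficients and the $\G_m$-action allows arbitrary rescaling, such examples are abundant; one may search by hand or with modest computer assistance among curves $y^3=x^4+(p_2x^2+p_5x+p_8)y+(p_6x^2+p_9x+p_{12})$ with small coefficients, checking smoothness over $\F_2$ and the number of $\F_2$-points of the reduction. Once such a $b_0$ is chosen, $\bigprojcurve_{b_0}(\Q_2)$ decomposes into residue disks indexed by $\bigprojcurve_{b_0,\F_2}(\F_2)$, and on each disk the logarithm is an analytic isomorphism onto an open ball in $\Z_2^3$. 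An explicit basis of $\HH^0(\bigprojcurve_{b_0},\Omega^1)$ (for instance obtained from the hyperflex at $P_\infty$ and the affine equation) then allows one to compute $\rho\log$ on each disk directly, and to verify that the resulting subset of $\P^2(\F_2)$ has exactly two elements. Lemma \ref{lemma: density nontrivial torsion is zero} ensures that one may simultaneously arrange $b_0\notin Z$, since $Z$ has measure zero.

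For the second stage, Lemma \ref{lemma: rho locally constant in trivializing congruence class} provides a trivializing congruence class $U\subset \sh{E}_2$ containing $b_0$ such that $\rho\log\bigprojcurve_b(\Q_2)=\rho\log\bigprojcurve_{b_0}(\Q_2)$ for all $b\in U\setminus Z$. Since $Z$ is closed of measure zero and $U$ is open with positive measure, $U\setminus Z$ is nonempty and open in $\sh{E}_2$. Because $\intbigB\simeq \A^6_\Z$, the set $\intbigB(\Z)$ is dense in $\intbigB(\Z_2)$, and the regular semisimple locus $\bigB^{\rs}$ is a Zariski-open subvariety whose complement does not swallow any open $2$-adic ball, so $\sh{E}$ is $2$-adically dense in $\sh{E}_2$. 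Choosing any $b\in \sh{E}\cap (U\setminus Z)$ then yields the desired element.

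The main obstacle is the first stage: locating a concrete 2-adic example with the required small image of $\rho\log$. Although this is in principle a finite computation, it is the step where one pays the price for working with a family more complicated than the hyperelliptic ones treated in \cite{PoonenStoll-Mosthyperellipticnorational}. By contrast, the spreading-out argument in the second stage is essentially formal, following the Poonen--Stoll template once the equidistribution-flavored tools of Lemmas \ref{lemma: density nontrivial torsion is zero} and \ref{lemma: rho locally constant in trivializing congruence class} are in hand.
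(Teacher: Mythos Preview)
Your overall strategy---exhibit an explicit curve with good reduction at $2$, few $\F_2$-points, and compute $\rho\log$ on residue disks---matches the paper's. However, there are two points worth flagging, one minor and one a genuine gap.

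\textbf{Minor point: stage 2 is unnecessary.} The paper simply writes down $b\in\sh{E}$ directly: the curve $y^3+y=x^4+x+1$ has integer coefficients $(p_2,p_5,p_6,p_8,p_9,p_{12})=(0,0,0,-1,1,1)$, so no $2$-adic approximation is needed. Your density argument is correct, but it is extra machinery that the explicit choice avoids.

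\textbf{Genuine gap: the appeal to measure zero for $b_0\notin Z$.} You write that ``Lemma \ref{lemma: density nontrivial torsion is zero} ensures that one may simultaneously arrange $b_0\notin Z$, since $Z$ has measure zero.'' This is circular. The only tool you have for transferring the value of $\rho\log\bigprojcurve_{b}(\Q_2)$ from one $b$ to a nearby one is Lemma \ref{lemma: rho locally constant in trivializing congruence class}, and that lemma is stated \emph{only} for $b\in U\setminus Z$. If your explicit $b_0$ happens to lie in $Z$, you cannot perturb it out of $Z$ while knowing that $\#\rho\log$ stays equal to $2$, because local constancy is not available at $b_0$. Concretely: if $Q\neq P_\infty$ is a torsion point on $\bigprojcurve_{b_0}(\Q_2)$, then $\log Q=0$, and as you perturb $b_0$ the direction $\rho\log Q'$ of the deformed point $Q'$ is not controlled---it could add new elements to the image.

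The fix is what the paper does: verify $b_0\notin Z$ \emph{directly}, as part of the same explicit computation. Once you have the power series for $\log$ on each residue disk, checking that none of the coordinate series has a common nontrivial zero in the disk (e.g.\ via Newton polygons) shows that $\bigprojcurve_{b_0}(\Q_2)\cap\Jac_{b_0}(\Q_2)_{\mathrm{tors}}=\{P_\infty\}$. In the paper's example the curve has a \emph{single} $\F_2$-point (namely $P_\infty$), so there is one residue disk, and the third coordinate $\int\omega_3 = x^5/5+O(x^{13})$ visibly has no nonzero root in $2\Z_2$. This simultaneously yields $b\notin Z$ and the value $\rho\log\bigprojcurve_b(\Q_2)=\{(1{:}1{:}0),(1{:}0{:}0)\}$.
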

\begin{proof}
	We choose $b\in \sh{E}$ such that $\bigprojcurve_b$ is isomorphic to the projective closure of the smooth curve $y^3+y = x^4+x+1$. Let $\mathcal{X}/\Z_2$ be the projective curve over $\Z_2$ given by the latter equation. 
	Then $\mathcal{X}$ has good reduction at $2$ and $\#\mathcal{X}(\F_2)=1$. Let $\NeronJac/\Z_2$ be the Jacobian of $\mathcal{X}$. We have $\NeronJac[2](\overbar{\F}_2) = 0$ because $\mathcal{X}_{\F_2}$ is up to substitution given by the supersingular normal form of \cite[Proposition 2.1]{Nart-nonhyperellipticcharacteristictwo}, so $\NeronJac[2](\F_2)$ is trivial too.
	To determine $\NeronJac[2](\Q_2)$, we explicitly compute the bitangents of $\mathcal{X}_{\Q_2}$ different from the line at infinity. They are of the form $y = ax+b$ for some $a,b\in \overbar{\Q}_2$. We solve for the equation $x^4+x+1-(ax+b)^3-(ax+b) = (x^2+cx+d)^2$ where $c,d\in \overbar{\Q}_2$. 
	Then $c$ and $d$ are polynomials in $a$ and $b$ and we are left with two polynomial conditions in $a$ and $b$. The resultant of these two polynomials with respect to the variable $b$ is up to a constant equal to 
	\begin{dmath*}
		4096+12288 a-126976 a^3+110592 a^6-165888 a^7-40704 a^9+70656 a^{10}-34560 a^{11}+17280 a^{15}+1344 a^{18}+480 a^{19}+a^{27}.
	\end{dmath*}
	A calculation in \texttt{Magma} \cite{MAGMA} shows that this polynomial is irreducible in $\Q_2$, so the absolute Galois group of $\Q_2$ acts transitively on these $27$ bitangents. 
	Thus Lemma \ref{lemma: bitangents and 2-torsion} implies that $\NeronJac[2](\Q_2 ) = 0$.
	By \cite[Lemma 10.1]{PoonenStoll-Mosthyperellipticnorational} we see that the image of $\NeronJac(\Q_2)$ under the logarithm map with respect to a $\Z_2$-basis of $\HH^0(\NeronJac,\Omega^1_{\NeronJac/\Z_2} )$ is $\left(2\Z_2\right)^3$. 
	We can compute the logarithm map explicitly on $\mathcal{X}(\Q_2)$ as follows. Since every element of $\mathcal{X}(\Q_2)$ reduces to the point at infinity $P_{\infty}$, the set $\mathcal{X}(\Q_2)$ consists of a single residue disk around $P_{\infty}$. 
	Homogenizing the above equation and setting $y$ equal to $1$ gives the equation 
	\begin{equation*}
	z+z^3 = x^4+xz^3+z^4. 	
	\end{equation*}
	The point $P_{\infty}$ now corresponds to the point $(0,0)$ and $x$ is a uniformizer at $(0,0)$. 
	The map $Q \mapsto x(Q)$ defines a homeomorphism $\mathcal{X}(\Q_2) \simeq 2\Z_2$.
	Taking the derivative of the above equation leads us to define
	\begin{equation*}
		\omega_1 = \frac{dx}{3z^2+1-3z^2x-4z^3}.
	\end{equation*}
	Moreover we set $\omega_2 = x\omega_1$ and $\omega_3 = z\omega_1$.
	Then $\{\omega_1,\omega_2,\omega_3\}$ forms a basis for the $\Z_2$-module $\HH^0(\mathcal{X},\Omega^1_{\mathcal{X}/\Z_2})$.
	The logarithm map on $\mathcal{X}(\Q_2)$ is given by explicitly integrating these $1$-forms. 
	A computation reveals that 
	\begin{align*}
		z = x^4-x^{12}+O(x^{13}), \\
		\omega_1 = \left(1-3x^8+3x^9+ O(x^{12}) \right) dx. 
	\end{align*}
	Here each $\omega_i$ has a power series expansion with coefficients in $\Z_2$.
	This implies that the logarithm map, using the uniformizer $x$ and the differentials $\omega_i$, is explicitly given by 
	\begin{align*}
		x \mapsto \left(x-\frac{x^9}{3}+\frac{3x^{10}}{10}+O(x^{13}), \frac{x^2}{2}-\frac{3x^{10}}{10}+O(x^{11}), \frac{x^5}{5}-O(x^{13})  \right)
	\end{align*}
	 This description shows that $\rho \log \mathcal{X}(\Q_2)=\{(1:1:0),(1:0:0)\}$. Moreover the last power series has no roots in $2\Z_2$ apart from $0$ by Newton polygon considerations. This implies that $\mathcal{X}(\Q_2) \cap \NeronJac(\Q_2)_{tors} = \{0\}$ hence $b$ does not lie in $Z$.

\end{proof}

We are now ready to prove Theorem \ref{theorem: poonen stoll analogue}. Let $U\subset \sh{E}_2$ be a trivializing congruence class containing an element $b_0\in \sh{E}$ satisfying the conclusion of Proposition \ref{proposition: existence good curve}. Shrink $U$ using Lemma \ref{lemma: rho locally constant in trivializing congruence class} so that the image of $\rho\log\bigprojcurve_b(\Q_2) \subset \P^2(\F_2)$ is constant for all $b\in U' = U\setminus Z$, say equal to $I$. 
Then \cite[Corollary 6.3]{PoonenStoll-Mosthyperellipticnorational} shows that $\bigprojcurve_b(\Q) = \{P_{\infty}\}$ for all $b\in \sh{E} \cap U'$ with the property that the map $\sigma\colon \Sel \Jac_b \rightarrow \F_2^3$ is injective and $I \cap \P\sigma(\Sel_2 \Jac_b) = \emptyset$.
By Theorem \ref{theorem: equidistribution selmer} and Lemma \ref{lemma: density nontrivial torsion is zero}, the proportion of $b\in \sh{E} \cap U$ satisfying these conditions is at least $1-1/4-\#I/4=1/4>0$.
This proves the theorem.


\begin{bibdiv}
\begin{biblist}

\bib{AltmanKleimanSteven-IrreducibilityCompactifiedJacobian}{inproceedings}{
      author={Altman, Allen~B.},
      author={Iarrobino, Anthony},
      author={Kleiman, Steven~L.},
       title={Irreducibility of the compactified {J}acobian},
        date={1977},
   booktitle={Real and complex singularities ({P}roc. {N}inth {N}ordic {S}ummer
  {S}chool/{NAVF} {S}ympos. {M}ath., {O}slo, 1976)},
       pages={1\ndash 12},
      review={\MR{0498546}},
}

\bib{AltmanKleiman-CompactifyingThePicardScheme}{article}{
      author={Altman, Allen~B.},
      author={Kleiman, Steven~L.},
       title={Compactifying the {P}icard scheme},
        date={1980},
        ISSN={0001-8708},
     journal={Adv. in Math.},
      volume={35},
      number={1},
       pages={50\ndash 112},
         url={https://doi.org/10.1016/0001-8708(80)90043-2},
      review={\MR{555258}},
}

\bib{MAGMA}{article}{
      author={Bosma, Wieb},
      author={Cannon, John},
      author={Playoust, Catherine},
       title={The {M}agma algebra system. {I}. {T}he user language},
        date={1997},
        ISSN={0747-7171},
     journal={J. Symbolic Comput.},
      volume={24},
      number={3-4},
       pages={235\ndash 265},
         url={http://dx.doi.org/10.1006/jsco.1996.0125},
        note={Computational algebra and number theory (London, 1993)},
      review={\MR{MR1484478}},
}

\bib{RealAlgebraicgeometry}{book}{
      author={Bochnak, Jacek},
      author={Coste, Michel},
      author={Roy, Marie-Fran\c{c}oise},
       title={Real algebraic geometry},
      series={Ergebnisse der Mathematik und ihrer Grenzgebiete (3) [Results in
  Mathematics and Related Areas (3)]},
   publisher={Springer-Verlag, Berlin},
        date={1998},
      volume={36},
        ISBN={3-540-64663-9},
         url={https://doi.org/10.1007/978-3-662-03718-8},
        note={Translated from the 1987 French original, Revised by the
  authors},
      review={\MR{1659509}},
}

\bib{Bhargava-Gross-hyperellcurves}{inproceedings}{
      author={Bhargava, Manjul},
      author={Gross, Benedict~H.},
       title={The average size of the 2-{S}elmer group of {J}acobians of
  hyperelliptic curves having a rational {W}eierstrass point},
        date={2013},
   booktitle={Automorphic representations and {$L$}-functions},
      series={Tata Inst. Fundam. Res. Stud. Math.},
      volume={22},
   publisher={Tata Inst. Fund. Res., Mumbai},
       pages={23\ndash 91},
      review={\MR{3156850}},
}

\bib{BhargavaGross-AIT}{incollection}{
      author={Bhargava, Manjul},
      author={Gross, Benedict~H.},
       title={Arithmetic invariant theory},
        date={2014},
   booktitle={Symmetry: representation theory and its applications},
      series={Progr. Math.},
      volume={257},
   publisher={Birkh\"{a}user/Springer, New York},
       pages={33\ndash 54},
         url={https://doi.org/10.1007/978-1-4939-1590-3_3},
      review={\MR{3363006}},
}

\bib{BirkenhakeLange-CAV}{book}{
      author={Birkenhake, Christina},
      author={Lange, Herbert},
       title={Complex abelian varieties},
     edition={Second},
      series={Grundlehren der Mathematischen Wissenschaften [Fundamental
  Principles of Mathematical Sciences]},
   publisher={Springer-Verlag, Berlin},
        date={2004},
      volume={302},
        ISBN={3-540-20488-1},
         url={https://doi.org/10.1007/978-3-662-06307-1},
      review={\MR{2062673}},
}

\bib{BLR-NeronModels}{book}{
      author={Bosch, Siegfried},
      author={L\"{u}tkebohmert, Werner},
      author={Raynaud, Michel},
       title={N\'{e}ron models},
      series={Ergebnisse der Mathematik und ihrer Grenzgebiete (3) [Results in
  Mathematics and Related Areas (3)]},
   publisher={Springer-Verlag, Berlin},
        date={1990},
      volume={21},
        ISBN={3-540-50587-3},
         url={https://doi.org/10.1007/978-3-642-51438-8},
      review={\MR{1045822}},
}

\bib{Borel-propertieschevalley}{incollection}{
      author={Borel, Armand},
       title={Properties and linear representations of {C}hevalley groups},
        date={1970},
   booktitle={Seminar on {A}lgebraic {G}roups and {R}elated {F}inite {G}roups
  ({T}he {I}nstitute for {A}dvanced {S}tudy, {P}rinceton, {N}.{J}., 1968/69)},
      series={Lecture Notes in Mathematics, Vol. 131},
   publisher={Springer, Berlin},
       pages={1\ndash 55},
      review={\MR{0258838}},
}

\bib{BS-4Selmer}{unpublished}{
      author={Bhargava, Manjul},
      author={Shankar, Arul},
       title={The average number of elements in the 4-{S}elmer groups of
  elliptic curves is 7},
        date={2013},
        note={Arxiv Preprint, available at
  \url{https://arxiv.org/abs/1312.7333v1}},
}

\bib{BS-5Selmer}{unpublished}{
      author={Bhargava, Manjul},
      author={Shankar, Arul},
       title={The average size of the 5-{S}elmer group of elliptic curves is 6,
  and the average rank is less than 1},
        date={2013},
        note={Arxiv Preprint, available at
  \url{https://arxiv.org/abs/1312.7859v1}},
}

\bib{BS-2selmerellcurves}{article}{
      author={Bhargava, Manjul},
      author={Shankar, Arul},
       title={Binary quartic forms having bounded invariants, and the
  boundedness of the average rank of elliptic curves},
        date={2015},
        ISSN={0003-486X},
     journal={Ann. of Math. (2)},
      volume={181},
      number={1},
       pages={191\ndash 242},
         url={https://doi.org/10.4007/annals.2015.181.1.3},
      review={\MR{3272925}},
}

\bib{BS-3Selmer}{article}{
      author={Bhargava, Manjul},
      author={Shankar, Arul},
       title={Ternary cubic forms having bounded invariants, and the existence
  of a positive proportion of elliptic curves having rank 0},
        date={2015},
        ISSN={0003-486X},
     journal={Ann. of Math. (2)},
      volume={181},
      number={2},
       pages={587\ndash 621},
         url={https://doi.org/10.4007/annals.2015.181.2.4},
      review={\MR{3275847}},
}

\bib{Carter-SimpleGroupsLieType1972}{book}{
      author={Carter, Roger~W.},
       title={Simple groups of {L}ie type},
   publisher={John Wiley \& Sons, London-New York-Sydney},
        date={1972},
        note={Pure and Applied Mathematics, Vol. 28},
      review={\MR{0407163}},
}

\bib{Conrad-reductivegroupschemes}{incollection}{
      author={Conrad, Brian},
       title={Reductive group schemes},
        date={2014},
   booktitle={Autour des sch\'{e}mas en groupes. {V}ol. {I}},
      series={Panor. Synth\`eses},
      volume={42/43},
   publisher={Soc. Math. France, Paris},
       pages={93\ndash 444},
      review={\MR{3362641}},
}

\bib{CharlesPoonen}{article}{
      author={Charles, Fran\c{c}ois},
      author={Poonen, Bjorn},
       title={Bertini irreducibility theorems over finite fields},
        date={2016},
        ISSN={0894-0347},
     journal={J. Amer. Math. Soc.},
      volume={29},
      number={1},
       pages={81\ndash 94},
         url={https://doi.org/10.1090/S0894-0347-2014-00820-1},
      review={\MR{3402695}},
}

\bib{ColliotTheleneSansuc-Fibresquadratiques}{article}{
      author={Colliot-Th\'{e}l\`ene, J.-L.},
      author={Sansuc, J.-J.},
       title={Fibr\'{e}s quadratiques et composantes connexes r\'{e}elles},
        date={1979},
        ISSN={0025-5831},
     journal={Math. Ann.},
      volume={244},
      number={2},
       pages={105\ndash 134},
         url={https://doi.org/10.1007/BF01420486},
      review={\MR{550842}},
}

\bib{Deligne-droiteprojective}{incollection}{
      author={Deligne, P.},
       title={Le groupe fondamental de la droite projective moins trois
  points},
        date={1989},
   booktitle={Galois groups over {${\bf Q}$} ({B}erkeley, {CA}, 1987)},
      series={Math. Sci. Res. Inst. Publ.},
      volume={16},
   publisher={Springer, New York},
       pages={79\ndash 297},
         url={https://doi.org/10.1007/978-1-4613-9649-9_3},
      review={\MR{1012168}},
}

\bib{SchemasenGroupesII}{book}{
      author={Demazure, M.},
      author={Grothendieck, A.},
       title={Sch\'{e}mas en groupes. {II}: {G}roupes de type multiplicatif, et
  structure des sch\'{e}mas en groupes g\'{e}n\'{e}raux},
      series={S\'{e}minaire de G\'{e}om\'{e}trie Alg\'{e}brique du Bois Marie
  1962/64 (SGA 3). Lecture Notes in Mathematics, Vol. 152},
   publisher={Springer-Verlag, Berlin-New York},
        date={1970},
      review={\MR{0274459}},
}

\bib{SGA3-TomeII}{article}{
      author={Demazure, P},
      author={Grothendieck, Alexander},
      author={others},
       title={Sch{\'e}mas en groupes (sga 3), tome ii},
        date={1962},
     journal={Lecture Notes in Mathematics},
      volume={152},
}

\bib{FantechiGottschevStraten-EulerNumberCompactifiedJacobian}{article}{
      author={Fantechi, B.},
      author={G\"{o}ttsche, L.},
      author={van Straten, D.},
       title={Euler number of the compactified {J}acobian and multiplicity of
  rational curves},
        date={1999},
        ISSN={1056-3911},
     journal={J. Algebraic Geom.},
      volume={8},
      number={1},
       pages={115\ndash 133},
      review={\MR{1658220}},
}

\bib{GrossHarris-theta}{incollection}{
      author={Gross, Benedict~H.},
      author={Harris, Joe},
       title={On some geometric constructions related to theta
  characteristics},
        date={2004},
   booktitle={Contributions to automorphic forms, geometry, and number theory},
   publisher={Johns Hopkins Univ. Press, Baltimore, MD},
       pages={279\ndash 311},
      review={\MR{2058611}},
}

\bib{EGAIV-3}{article}{
      author={Grothendieck, A.},
       title={\'{E}l\'{e}ments de g\'{e}om\'{e}trie alg\'{e}brique. {IV}.
  \'{E}tude locale des sch\'{e}mas et des morphismes de sch\'{e}mas. {III}},
        date={1966},
        ISSN={0073-8301},
     journal={Inst. Hautes \'{E}tudes Sci. Publ. Math.},
      number={28},
       pages={255},
         url={http://www.numdam.org/item?id=PMIHES_1966__28__255_0},
      review={\MR{217086}},
}

\bib{Hinohara-projmodulessemilocalring}{article}{
      author={Hinohara, Yukitoshi},
       title={Projective modules over semilocal rings},
        date={1962},
        ISSN={0040-8735},
     journal={Tohoku Math. J. (2)},
      volume={14},
       pages={205\ndash 211},
         url={https://doi.org/10.2748/tmj/1178244175},
      review={\MR{180580}},
}

\bib{Kleiman-PicardScheme}{incollection}{
      author={Kleiman, Steven~L.},
       title={The {P}icard scheme},
        date={2005},
   booktitle={Fundamental algebraic geometry},
      series={Math. Surveys Monogr.},
      volume={123},
   publisher={Amer. Math. Soc., Providence, RI},
       pages={235\ndash 321},
      review={\MR{2223410}},
}

\bib{Laga-F4paper}{unpublished}{
      author={Laga, Jef},
       title={Arithmetic statistics of {P}rym surfaces},
        date={2020},
        note={Preprint, available at
  \url{https://www.dpmms.cam.ac.uk/~jcsl5/}},
}

\bib{Levy-Vinbergtheoryposchar}{article}{
      author={Levy, Paul},
       title={Vinberg's {$\theta$}-groups in positive characteristic and
  {K}ostant-{W}eierstrass slices},
        date={2009},
        ISSN={1083-4362},
     journal={Transform. Groups},
      volume={14},
      number={2},
       pages={417\ndash 461},
         url={https://doi.org/10.1007/s00031-009-9056-y},
      review={\MR{2504929}},
}

\bib{Lurie-minisculereps}{article}{
      author={Lurie, Jacob},
       title={On simply laced {L}ie algebras and their minuscule
  representations},
        date={2001},
        ISSN={0010-2571},
     journal={Comment. Math. Helv.},
      volume={76},
      number={3},
       pages={515\ndash 575},
         url={https://doi.org/10.1007/PL00013217},
      review={\MR{1854697}},
}

\bib{Matsumura-CommutativeRingTheory}{book}{
      author={Matsumura, Hideyuki},
       title={Commutative ring theory},
      series={Cambridge Studies in Advanced Mathematics},
   publisher={Cambridge University Press, Cambridge},
        date={1986},
      volume={8},
        ISBN={0-521-25916-9},
        note={Translated from the Japanese by M. Reid},
      review={\MR{879273}},
}

\bib{Milnor-SymmetricBilinearForms}{book}{
      author={Milnor, John},
      author={Husemoller, Dale},
       title={Symmetric bilinear forms},
   publisher={Springer-Verlag, New York-Heidelberg},
        date={1973},
        note={Ergebnisse der Mathematik und ihrer Grenzgebiete, Band 73},
      review={\MR{0506372}},
}

\bib{milne-etalecohomology}{book}{
      author={Milne, James~S.},
       title={\'{E}tale cohomology},
      series={Princeton Mathematical Series},
   publisher={Princeton University Press, Princeton, N.J.},
        date={1980},
      volume={33},
        ISBN={0-691-08238-3},
      review={\MR{559531}},
}

\bib{Mumford-eqdefAVs}{article}{
      author={Mumford, D.},
       title={On the equations defining abelian varieties. {I}},
        date={1966},
        ISSN={0020-9910},
     journal={Invent. Math.},
      volume={1},
       pages={287\ndash 354},
         url={https://doi.org/10.1007/BF01389737},
      review={\MR{204427}},
}

\bib{Mumford-thetacharacteristicsalgebraiccurve}{article}{
      author={Mumford, David},
       title={Theta characteristics of an algebraic curve},
        date={1971},
        ISSN={0012-9593},
     journal={Ann. Sci. \'{E}cole Norm. Sup. (4)},
      volume={4},
       pages={181\ndash 192},
         url={http://www.numdam.org/item?id=ASENS_1971_4_4_2_181_0},
      review={\MR{292836}},
}

\bib{Nisnevich-Espaceshomogenesprincipaux}{article}{
      author={Nisnevich, Yevsey~A.},
       title={Espaces homog\`enes principaux rationnellement triviaux et
  arithm\'{e}tique des sch\'{e}mas en groupes r\'{e}ductifs sur les anneaux de
  {D}edekind},
        date={1984},
        ISSN={0249-6291},
     journal={C. R. Acad. Sci. Paris S\'{e}r. I Math.},
      volume={299},
      number={1},
       pages={5\ndash 8},
      review={\MR{756297}},
}

\bib{Nart-nonhyperellipticcharacteristictwo}{article}{
      author={Nart, Enric},
      author={Ritzenthaler, Christophe},
       title={Non-hyperelliptic curves of genus three over finite fields of
  characteristic two},
        date={2006},
        ISSN={0022-314X},
     journal={J. Number Theory},
      volume={116},
      number={2},
       pages={443\ndash 473},
         url={https://doi.org/10.1016/j.jnt.2005.05.014},
      review={\MR{2195934}},
}

\bib{Ono-relativetheorytamagawa}{article}{
      author={Ono, Takashi},
       title={On the relative theory of {T}amagawa numbers},
        date={1965},
        ISSN={0003-486X},
     journal={Ann. of Math. (2)},
      volume={82},
       pages={88\ndash 111},
         url={https://doi.org/10.2307/1970563},
      review={\MR{177991}},
}

\bib{Panyushev-Invarianttheorythetagroups}{article}{
      author={Panyushev, Dmitri~I.},
       title={On invariant theory of {$\theta$}-groups},
        date={2005},
        ISSN={0021-8693},
     journal={J. Algebra},
      volume={283},
      number={2},
       pages={655\ndash 670},
         url={https://doi.org/10.1016/j.jalgebra.2004.03.032},
      review={\MR{2111215}},
}

\bib{Poonen-BertiniTheoremsFiniteFields}{article}{
      author={Poonen, Bjorn},
       title={Bertini theorems over finite fields},
        date={2004},
        ISSN={0003-486X},
     journal={Ann. of Math. (2)},
      volume={160},
      number={3},
       pages={1099\ndash 1127},
         url={https://doi.org/10.4007/annals.2004.160.1099},
      review={\MR{2144974}},
}

\bib{PoonenRains-maximalisotropic}{article}{
      author={Poonen, Bjorn},
      author={Rains, Eric},
       title={Random maximal isotropic subspaces and {S}elmer groups},
        date={2012},
        ISSN={0894-0347},
     journal={J. Amer. Math. Soc.},
      volume={25},
      number={1},
       pages={245\ndash 269},
         url={https://doi.org/10.1090/S0894-0347-2011-00710-8},
      review={\MR{2833483}},
}

\bib{PlatonovRapinchuk-Alggroupsandnumbertheory}{book}{
      author={Platonov, Vladimir},
      author={Rapinchuk, Andrei},
       title={Algebraic groups and number theory},
      series={Pure and Applied Mathematics},
   publisher={Academic Press, Inc., Boston, MA},
        date={1994},
      volume={139},
        ISBN={0-12-558180-7},
        note={Translated from the 1991 Russian original by Rachel Rowen},
      review={\MR{1278263}},
}

\bib{PoonenStoll-Mosthyperellipticnorational}{article}{
      author={Poonen, Bjorn},
      author={Stoll, Michael},
       title={Most odd degree hyperelliptic curves have only one rational
  point},
        date={2014},
        ISSN={0003-486X},
     journal={Ann. of Math. (2)},
      volume={180},
      number={3},
       pages={1137\ndash 1166},
         url={https://doi.org/10.4007/annals.2014.180.3.7},
      review={\MR{3245014}},
}

\bib{PoonenStoll-Hypersurfacesdiscriminantuniformizer}{unpublished}{
      author={Poonen, Bjorn},
      author={Stoll, Michael},
       title={The valuation of the discriminant of a hypersurface},
        date={2020},
        note={Preprint, available at
  \url{http://math.mit.edu/~poonen/papers/discriminant.pdf}},
}

\bib{Reeder-torsion}{article}{
      author={Reeder, Mark},
       title={Torsion automorphisms of simple {L}ie algebras},
        date={2010},
        ISSN={0013-8584},
     journal={Enseign. Math. (2)},
      volume={56},
      number={1-2},
       pages={3\ndash 47},
         url={https://doi.org/10.4171/LEM/56-1-1},
      review={\MR{2674853}},
}

\bib{Riche-KostantSectionUniversalCentralizer}{article}{
      author={Riche, Simon},
       title={Kostant section, universal centralizer, and a modular derived
  {S}atake equivalence},
        date={2017},
        ISSN={0025-5874},
     journal={Math. Z.},
      volume={286},
      number={1-2},
       pages={223\ndash 261},
         url={https://doi.org/10.1007/s00209-016-1761-3},
      review={\MR{3648498}},
}

\bib{GrossLevyReederYu-GradingsPosRank}{article}{
      author={Reeder, Mark},
      author={Levy, Paul},
      author={Yu, Jiu-Kang},
      author={Gross, Benedict~H.},
       title={Gradings of positive rank on simple {L}ie algebras},
        date={2012},
        ISSN={1083-4362},
     journal={Transform. Groups},
      volume={17},
      number={4},
       pages={1123\ndash 1190},
         url={https://doi.org/10.1007/s00031-012-9196-3},
      review={\MR{3000483}},
}

\bib{Romano-Thorne-ArithmeticofsingularitiestypeE}{article}{
      author={Romano, Beth},
      author={Thorne, Jack~A.},
       title={On the arithmetic of simple singularities of type {$E$}},
        date={2018},
        ISSN={2522-0160},
     journal={Res. Number Theory},
      volume={4},
      number={2},
       pages={Art. 21, 34},
         url={https://doi.org/10.1007/s40993-018-0110-5},
      review={\MR{3787911}},
}

\bib{Thorne-Romano-E8}{article}{
      author={Romano, Beth},
      author={Thorne, Jack~A.},
       title={E8 and the average size of the 3-{S}elmer group of the {J}acobian
  of a pointed genus-2 curve},
        date={2020},
     journal={Proceedings of the London Mathematical Society},
  eprint={https://londmathsoc.onlinelibrary.wiley.com/doi/pdf/10.1112/plms.12388},
  url={https://londmathsoc.onlinelibrary.wiley.com/doi/abs/10.1112/plms.12388},
}

\bib{Saito-Discriminanthypersurfacevendim}{article}{
      author={Saito, Takeshi},
       title={The discriminant and the determinant of a hypersurface of even
  dimension},
        date={2012},
        ISSN={1073-2780},
     journal={Math. Res. Lett.},
      volume={19},
      number={4},
       pages={855\ndash 871},
         url={https://doi.org/10.4310/MRL.2012.v19.n4.a10},
      review={\MR{3008420}},
}

\bib{Pinceauxcourbesgenresdeux}{book}{
      author={Szpiro, L.},
      author={Beauville, A.},
      author={math{\'e}matique~de France, Soci{\'e}t{\'e}},
       title={S{\'e}minaire sur les pinceaux de courbes de genre au moins
  deux},
   publisher={Societ{\'e} math{\'e}matique de France},
        note={Ast\'{e}risque No. 86 (1981) (1981)},
      review={\MR{642675}},
}

\bib{Serre-lecturesonNx(p)}{book}{
      author={Serre, Jean-Pierre},
       title={Lectures on {$N_X (p)$}},
      series={Chapman \& Hall/CRC Research Notes in Mathematics},
   publisher={CRC Press, Boca Raton, FL},
        date={2012},
      volume={11},
        ISBN={978-1-4665-0192-8},
      review={\MR{2920749}},
}

\bib{Seshadri-GeometricReductivityArbitaryBase}{article}{
      author={Seshadri, C.~S.},
       title={Geometric reductivity over arbitrary base},
        date={1977},
        ISSN={0001-8708},
     journal={Advances in Math.},
      volume={26},
      number={3},
       pages={225\ndash 274},
         url={https://doi.org/10.1016/0001-8708(77)90041-X},
      review={\MR{466154}},
}

\bib{Shankar-2selmerhypermarkedpoints}{article}{
      author={Shankar, Ananth~N.},
       title={2-{S}elmer groups of hyperelliptic curves with marked points},
        date={2019},
        ISSN={0002-9947},
     journal={Trans. Amer. Math. Soc.},
      volume={372},
      number={1},
       pages={267\ndash 304},
         url={https://doi.org/10.1090/tran/7546},
      review={\MR{3968769}},
}

\bib{Slodowy-simplesingularitiesalggroups}{book}{
      author={Slodowy, Peter},
       title={Simple singularities and simple algebraic groups},
      series={Lecture Notes in Mathematics},
   publisher={Springer, Berlin},
        date={1980},
      volume={815},
        ISBN={3-540-10026-1},
      review={\MR{584445}},
}

\bib{stacksproject}{misc}{
      author={{Stacks Project Authors}, The},
       title={\textit{Stacks Project}},
        date={2018},
        note={\url{https://stacks.math.columbia.edu}},
}

\bib{Steinberg-Torsioninreductivegroups}{article}{
      author={Steinberg, Robert},
       title={Torsion in reductive groups},
        date={1975},
        ISSN={0001-8708},
     journal={Advances in Math.},
      volume={15},
       pages={63\ndash 92},
         url={https://doi.org/10.1016/0001-8708(75)90125-5},
      review={\MR{354892}},
}

\bib{Stoll-Twists}{article}{
      author={Stoll, Michael},
       title={Independence of rational points on twists of a given curve},
        date={2006},
        ISSN={0010-437X},
     journal={Compos. Math.},
      volume={142},
      number={5},
       pages={1201\ndash 1214},
         url={https://doi.org/10.1112/S0010437X06002168},
      review={\MR{2264661}},
}

\bib{ShankarWang-hypermarkednonweierstrass}{article}{
      author={Shankar, Arul},
      author={Wang, Xiaoheng},
       title={Rational points on hyperelliptic curves having a marked
  non-{W}eierstrass point},
        date={2018},
        ISSN={0010-437X},
     journal={Compos. Math.},
      volume={154},
      number={1},
       pages={188\ndash 222},
         url={https://doi.org/10.1112/S0010437X17007515},
      review={\MR{3719247}},
}

\bib{Thorne-thesis}{article}{
      author={Thorne, Jack~A.},
       title={Vinberg's representations and arithmetic invariant theory},
        date={2013},
        ISSN={1937-0652},
     journal={Algebra Number Theory},
      volume={7},
      number={9},
       pages={2331\ndash 2368},
         url={https://doi.org/10.2140/ant.2013.7.2331},
      review={\MR{3152016}},
}

\bib{Thorne-E6paper}{article}{
      author={Thorne, Jack~A.},
       title={{$E_6$} and the arithmetic of a family of non-hyperelliptic
  curves of genus 3},
        date={2015},
     journal={Forum Math. Pi},
      volume={3},
       pages={e1, 41},
         url={https://doi.org/10.1017/fmp.2014.2},
      review={\MR{3298319}},
}

\bib{thorne-planequarticsAIT}{article}{
      author={Thorne, Jack~A.},
       title={Arithmetic invariant theory and 2-descent for plane quartic
  curves},
        date={2016},
        ISSN={1937-0652},
     journal={Algebra Number Theory},
      volume={10},
      number={7},
       pages={1373\ndash 1413},
         url={https://doi.org/10.2140/ant.2016.10.1373},
        note={With an appendix by Tasho Kaletha},
      review={\MR{3554236}},
}

\bib{Wajnryb-monodromygroupplanecurvesingularity}{article}{
      author={Wajnryb, Bronislaw},
       title={On the monodromy group of plane curve singularities},
        date={1979/80},
        ISSN={0025-5831},
     journal={Math. Ann.},
      volume={246},
      number={2},
       pages={141\ndash 154},
         url={https://doi.org/10.1007/BF01420166},
      review={\MR{564684}},
}

\end{biblist}
\end{bibdiv}

\begin{footnotesize}
\textsc{Jef Laga  }\;  \texttt{jcsl5@cam.ac.uk}   \newline
\textsc{Department of Pure Mathematics and Mathematical Statistics, Wilberforce Road, Cambridge, CB3 0WB, UK}
\end{footnotesize}

\end{document}